\newtheorem{theorem}{Theorem}[section]
\newtheorem{Remark} [theorem]{Remark}
\newtheorem{Counter-example}[theorem]{Counter example}
\newtheorem{Claim}[theorem]{Claim}
\newtheorem{Lemma}[theorem]{Lemma}
\newtheorem{Proposition}[theorem]{Proposition}
\newtheorem{Definition}[theorem]{Definition}
\newtheorem{Corollary}[theorem]{Corollary}
\newtheorem*{theorem*}{Theorem}
\newcommand{\supp}{\text{supp}}
\newcommand{\diam}{\text{diam}}
\title{Polynomial Fourier decay and a cocycle version of Dolgopyat's method  for   self conformal measures}
\author{Amir Algom, Federico Rodriguez Hertz, and Zhiren Wang}
\date{}
\begin{document}
\maketitle
\begin{abstract}
We show that every self conformal measure with respect to a $C^2 (\mathbb{R})$ IFS $\Phi$  has polynomial Fourier decay under some mild and natural non-linearity conditions. In particular, every such measure  has polynomial  decay if $\Phi$ is  $C^\omega (\mathbb{R})$ and contains a non-affine map.

A key ingredient in our argument is a cocycle version of Dolgopyat's method, that does not require the cylinder covering of the attractor to be a Markov partition.  It is used to obtain spectral gap-type estimates for the transfer operator, which in turn imply a renewal theorem with an exponential error term in the spirit of Li (2022).  
 \end{abstract}

\section{Introduction}
\subsection{Background and main results}
Let $\nu$ be a Borel probability measure on $\mathbb{R}$. For every $q\in \mathbb{R}$   the Fourier transform of $\nu$ at $q$ is defined by 
\begin{equation*} 
\mathcal{F}_q (\nu) := \int \exp( 2\pi i q x) d\nu(x).
\end{equation*} 
The measure $\nu$ is called a \textit{Rajchman measure} if $ \mathcal{F}_q(\nu)= o(1)$ as $|q|\rightarrow \infty$. By the Riemann-Lebesgue Lemma, if $\nu$ is absolutely continuous then it is Rajchman. On the other hand, by  Wiener's Lemma  if $\nu$ has an atom then it is not  Rajchman. For measures that are both continuous (no atoms) and singular, determining whether or not $\nu$ is a Rajchman measure may be a challenging problem even for well structured measures. The Rajchman property has various geometric consequences on the measure $\nu$ and its support, e.g. regarding the uniqueness problem    \cite{li2019trigonometric}.

Further information about the rate of decay of $\mathcal{F}_q(\nu)$ has even stronger geometric consequences. For example,  by a classical Theorem of Davenport-Erd\H{o}s-LeVeque \cite{Davenport1964Erdos}, if $ \mathcal{F}_q(\nu)$ decays at a logarithmic rate then $\nu$-a.e. point is normal to all integer bases (see also \cite{Velani2022Aga}). Wide ranging geometric information can be derived  if $ \mathcal{F}_q(\nu)$ decays at a polynomial rate, that is, if there exists some $\alpha>0$ such that
$$ \mathcal{F}_q(\nu) = O\left( \frac{1}{|q|^\alpha}\right).$$
For example, by a  result of Shmerkin \cite{Shmkerin2014Abs}, it implies that  the convolution of $\nu$ with any measure of dimension $1$ is  absolutely continuous. We refer to Mattila's recent book \cite{Mattila2015new} for various further applications of Fourier decay in geometric measure theory and related fields.

The goal of this paper is to prove  that every measure in a fundamental class of fractal measures  has polynomial Fourier decay, as long as it satisfies some very mild non-linearity conditions; Furthermore, these conditions can be easily verified in concrete examples. To define this class of measures, let $\Phi= \lbrace f_1,...,f_n \rbrace$ be a finite set of strict contractions of a compact interval $I\subseteq \mathbb{R}$ (an \textit{IFS} - Iterated Function System), such that every $f_i$ is differentiable.  We say that  $\Phi$ is $C^\alpha$ smooth if every $f_i$ is at least $C^\alpha$ smooth for some $\alpha\geq 1$.  It is well known that there exists a unique compact set $\emptyset \neq K=K_\Phi \subseteq I$ such that
\begin{equation} \label{Eq union}
K = \bigcup_{i=1} ^n f_i (K).
\end{equation}
The set $K$ is called  the \textit{attractor} of the IFS $\lbrace f_1,...,f_n \rbrace$.  We always assume that there exist $i,j$ such that  the fixed point of $f_i$ does not equal the fixed point of $f_j$. This ensures that $K$ is infinite. We call $\Phi$  \textit{uniformly contracting} if 
$$0< \inf \lbrace |f '(x)|:\, f\in \Phi, x\in I \rbrace \leq \sup \lbrace |f '(x)| :\, f\in \Phi, x\in I  \rbrace <1.$$
Next, writing $\mathcal{A}= \lbrace 1,...,n\rbrace$, for every $\omega \in \mathcal{A} ^\mathbb{N}$ and $m\in \mathbb{N}$ let
$$f_{\omega|_m} := f_{\omega_1} \circ \circ \circ f_{\omega_m}.$$
Fix $x_0 \in I$.  Then we have a surjective coding map $\pi: \mathcal{A} ^\mathbb{N} \rightarrow K$ given by
\begin{equation} \label{Eq coding}
\omega \in \mathcal{A}^{\mathbb N} \mapsto x_\omega:= \lim_{m\rightarrow \infty}  f_{\omega|_m}  (x_0),
\end{equation}
which is  well defined  because of uniform contraction (see e.g. \cite[Section 2.1]{bishop2013fractal}). 

Let $\textbf{p}=(p_1,...,p_n)$ be a strictly positive probability vector, that is, $p_i >0$ for all $i$ and $\sum_i p_i =1$, and let $\mathbb{P}=\mathbf{p}^\mathbb{N}$ be the corresponding Bernoulli measure on $\mathcal{A}^\mathbb{N}$. We call the measure $\nu=\nu_\mathbf{p} = \pi \mathbb{P}$ on $K$ the \textit{self conformal measure} corresponding to $\mathbf{p}$, and note that our assumptions are known to imply that it is non-atomic.  Equivalently, $\nu_\mathbf{p}$ is the unique Borel probability  measure on $K$ such that
$$\nu = \sum_{i=1} ^n p_i\cdot  f_i\nu,\quad \text{ where } f_i \nu \text{ is the push-forward of } \nu \text{ via } f_i.$$ 
When all the maps in $\Phi$ are affine we call $\Phi$ a \textit{self-similar IFS} and $\nu$ a \textit{self-similar measure}.

Next,  we say that a $C^2 (\mathbb{R})$ IFS $\Psi$ is \textit{linear} if $g''(x) = 0$  for every  $x\in K_\Psi$  and $g\in \Psi$. This notion was introduced in our previous work \cite{algom2021decay}, though it  implicitly appeared in the literature prior to that, notably in the work of Hochman-Shmerkin \cite{hochmanshmerkin2015}. It is clear that if $\Psi$ is $C^\omega (\mathbb{R})$ and linear  then  it must be self-similar. While we believe such IFSs should exist, we are not aware of any known example of a  linear $C^r (\mathbb{R})$ smooth IFS that is not self-similar for  $r\geq 1$. A major source of examples of  non-linear IFSs arise in smooth dynamics from certain homoclinic intersections, see e.g. \cite[Section I]{Moreira1996stable}.

We can now state the main result of this paper. We say that a $C^r$ IFS $\Phi$ is  conjugate to an IFS $\Psi$ if there is a $C^r$ diffeomorphism $h$ such that  $\Phi = \lbrace h\circ g \circ h^{-1}\rbrace_{g\in \Psi}$. 
\begin{theorem} \label{Main Theorem}
Let $\Phi$ be a uniformly contracting $C^{r} (\mathbb{R})$   IFS where $r\geq 2$. If $\Phi$ is not conjugate to a linear IFS then every non-atomic self-conformal measure $\nu$ admits   some $\alpha=\alpha(\nu)>0$ such that
\begin{equation*}
\left| \mathcal{F}_q \left( g \nu \right) \right| = O\left(\frac{1}{|q|^\alpha} \right).
\end{equation*} 
\end{theorem}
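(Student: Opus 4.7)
The plan is to reduce Fourier decay to a quantitative renewal statement for the derivative cocycle $\tau_\omega(x) = -\log|f_\omega'(x)|$, and then to obtain this renewal statement via spectral-gap estimates for a family of twisted transfer operators, as foreshadowed by the abstract. Using the stationarity relation $\nu = \sum_i p_i f_i\nu$, iteration gives
\begin{equation*}
\mathcal{F}_q(\nu) = \sum_{|\omega| = n} p_\omega \int e^{2\pi i q f_\omega(x)} \, d\nu(x),
\end{equation*}
and the natural move is to replace the fixed $n$ by a stopping time $n = n(\omega,x,q)$ chosen so that $|f_\omega'(x)| \asymp 1/|q|$. After this reduction, the Fourier transform is controlled by oscillatory integrals supported on cylinders of scale $\asymp 1/|q|$, and a polynomial rate will follow once the relevant twisted transfer operator iterates contract at a uniform exponential rate in $\log|q|$.

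Accordingly, the second step is to set up a family of twisted transfer operators schematically of the form
\begin{equation*}
\mathcal{L}_{q}\phi(x) = \sum_{i=1}^n p_i\, e^{2\pi i q f_i(x)} \phi(f_i(x)),
\end{equation*}
(with an appropriate conformal weight) and to show that $\|\mathcal{L}_{q}^N\phi\|$ decays exponentially in $N$ at a rate independent of $q$, provided $N$ is taken proportional to $\log|q|$. This is the cocycle incarnation of Dolgopyat's $L^2$-contraction mechanism alluded to in the abstract: rather than working on a Markov rectangle, one runs the oscillatory cancellation directly on the symbolic space $\mathcal{A}^\mathbb{N}$, pairing admissible words $\omega,\omega'$ of equal length and exploiting the phase differences $q\bigl(f_\omega(x)-f_{\omega'}(x)\bigr)$ as $x$ ranges over $K$. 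Avoiding the Markov partition hypothesis is important because a generic cylinder cover of a self-conformal attractor has overlaps that are not combinatorially Markov.

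The third step, which I expect to be the main obstacle, is to verify the uniform non-integrability (UNI) input that feeds Dolgopyat's machine. Concretely one needs pairs of words $\omega\neq\omega'\in\mathcal{A}^n$ and points $x,y\in K$ for which
\begin{equation*}
\Delta_{\omega,\omega'}(x,y) := \tau_\omega(x)-\tau_{\omega'}(x)-\tau_\omega(y)+\tau_{\omega'}(y)
\end{equation*}
is not exponentially small in $n$. The expectation is that this non-concentration follows from the hypothesis that $\Phi$ is not conjugate to a linear IFS, via a Liv\v{s}ic-type cohomology argument: failure of UNI forces $\tau$ to be cohomologous to a locally constant function in a strong enough sense that, after integrating, one can build a $C^r$ diffeomorphism $h$ in which every $h\circ f_i\circ h^{-1}$ has vanishing second derivative on $h(K)$, contradicting non-conjugacy to a linear IFS. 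Making this implication rigorous, in particular controlling the regularity of the conjugating diffeomorphism and passing from cohomology on the symbolic side to a bona fide smooth conjugacy on the line, will be the technical heart of the argument.

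Finally, the $q$-uniform spectral gap for $\mathcal{L}_q$ is converted into a renewal theorem for $\tau_\omega$ with an exponential error term, following the scheme of Li (2022). Combined with the stopping-time decomposition of the first paragraph, this yields the polynomial rate $|\mathcal{F}_q(\nu)|=O(|q|^{-\alpha})$, with $\alpha$ determined by the spectral gap and hence depending on $\nu$.
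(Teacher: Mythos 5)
Your overall architecture — spectral gap for a twisted transfer operator, renewal theorem with exponential error \emph{\`a la} Li, equidistribution for the stopping-time random walk, then Fourier decay via a scale-$1/|q|$ linearization and oscillatory integrals — matches the paper's four-step scheme, and you correctly identify that non-conjugacy to linear should enter precisely through the (UNI) input to Dolgopyat's machine (the paper does exactly this in Claim~\ref{Claim UNI}, invoking the cohomological rigidity result from \cite{algom2021decay}). One small inaccuracy: the operator on which the paper proves the gap is $P_{a+ib}$, twisted by $e^{2\pi(a+ib)c(I,x)}$ with $c$ the \emph{derivative} cocycle $c(I,x)=-\log|f_I'(x)|$; the phases exploited in the Dolgopyat step are of the form $b\,(\log f_{\alpha_1}'-\log f_{\alpha_2}')$, not $q(f_\omega(x)-f_{\omega'}(x))$, and the $q$ only appears later at the linearization/oscillatory-integral stage. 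Your schematic $\mathcal{L}_q$ conflates these two twists.

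The genuine gap is that you acknowledge the Markov-partition problem but propose to circumvent it by ``running the oscillatory cancellation directly on the symbolic space.'' This does not work as stated: the Dolgopyat operators are built from a \emph{spatial} partition of $[0,1]$ with a triple-intersection property (Proposition~\ref{Prop. 5.6 Naud}), and the $L^2$-contraction estimate (Proposition~\ref{Prop. 5.3 Naud} via Lemma~\ref{Lemma 5.7 Nuad}) requires the reference measure to satisfy a Federer (doubling) property. Without separation, the self-conformal measure $\nu$ need not be Federer, and the cylinder cover of $K$ need not even be locally finite in a useful sense; moving to the shift space does not restore either property, because the estimates must ultimately be about the actual measure $\nu$ on $\mathbb{R}$. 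The paper's key new idea — which your proposal misses entirely — is the disintegration $\nu=\int\mu_\omega\,d\mathbb{Q}(\omega)$ and the parallel disintegration $P_s^N=\sum_\omega\mathbb{Q}([\omega])P_{s,\omega,N}$ of Theorem~\ref{Theorem disint}, built from a carefully chosen covering of $\Phi$ by overlapping sub-IFSs (Claim~\ref{Claim key}) that simultaneously preserves (UNI) and forces strong separation in every random piece. Each $\mu_\omega$ \emph{does} satisfy the Federer property uniformly in $\omega$, and this is what lets one construct Dolgopyat operators $N_s^J$ piece by piece and sum them up. Without this (or some substitute mechanism), the proposal's step 2 cannot be carried out.
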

Moreover, our argument yields easily verifiable conditions when Theorem \ref{Main Theorem} may be applied (without directly checking whether the IFS is conjugate to linear); As we discuss later, it suffices for a $C^2 (\mathbb{R})$ IFS to satisfy conditions \eqref{Add property 1} and \eqref{Add property 2} below for the conclusion of Theorem \ref{Main Theorem} to hold true. 


A great deal of attention has been given to the case when IFS in question is real analytic. This is mainly because such IFSs arise naturally  in number theory, e.g. as (finitely many) inverse branches of the Gauss map \cite{Sahl2016Jor}, as Furstenberg measures for some $\text{SL}(2,\mathbb{R})$ cocycles \cite{Yoccoz2004some, Avila2010jairo}, and are closely related to Patterson-Sullivan measures on limit sets of some Schottky groups \cite{Li2021Naud, Naud2005exp, Bour2017dya}, among others.  Combining Theorem \ref{Main Theorem} with a new method from a paper of Algom et al. \cite{Algom2023Wu}, we can derive the following Corollary regarding Fourier decay in  this setting:
\begin{Corollary} \label{Main Corollary}
Let $\Phi$ be a $C^\omega (\mathbb{R})$ IFS. If $\Phi$ contains a non-affine map then every non-atomic self-conformal measure $\nu$ admits some $\alpha=\alpha(\nu)>0$ such that
\begin{equation*}
\left| \mathcal{F}_q \left( \nu \right) \right| = O\left(\frac{1}{|q|^\alpha} \right).
\end{equation*}
\end{Corollary}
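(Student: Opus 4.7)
The plan is to deduce Corollary \ref{Main Corollary} from Theorem \ref{Main Theorem} together with a rigidity principle from the forthcoming work \cite{Algom2023Wu}. The role of \cite{Algom2023Wu} is to promote the hypothesis ``$\Phi$ is $C^\omega$ and contains a non-affine map'' into the hypothesis of Theorem \ref{Main Theorem}, namely that $\Phi$ is not $C^2$-conjugate to any linear IFS.

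As a warm-up, I would first observe that $\Phi$ itself is not linear. Let $f \in \Phi$ be the non-affine map guaranteed by hypothesis. Since $f \in C^\omega(\mathbb{R})$, its second derivative $f''$ is real-analytic on $I$ and not identically zero; hence $f''$ has only isolated zeros. Because $K = K_\Phi$ is infinite, there must exist some $x \in K$ with $f''(x) \neq 0$, so $\Phi$ is not linear in the sense defined above.

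The core of the argument is then to promote this to the statement that $\Phi$ is not $C^2$-conjugate to any linear IFS. Suppose for contradiction that $h$ is a $C^2$ diffeomorphism on a neighbourhood of $K$ for which $\Psi := \{h^{-1} \circ f \circ h : f \in \Phi\}$ is linear, with attractor $K_\Psi = h^{-1}(K)$. Writing the conjugacy as $f \circ h = h \circ g$ for each corresponding pair $(f, g) \in \Phi \times \Psi$ and differentiating twice, evaluation at $x \in K_\Psi$ (where $g''(x) = 0$ by linearity of $\Psi$) yields the cocycle-type relation
\[
T(g(x))\, g'(x) - T(x) = \frac{f''(h(x))}{f'(h(x))}\, h'(x), \qquad x \in K_\Psi,
\]
where $T := h''/h'$ is the non-linearity of the conjugacy $h$. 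Since each $f \in \Phi$ is real-analytic, the right-hand side carries strong analytic structure. I would then invoke the analytic rigidity method of \cite{Algom2023Wu} to argue that such a cocycle equation, in the presence of a non-affine real-analytic $f \in \Phi$, is only compatible with $\Phi$ being itself linear, which contradicts the previous paragraph.

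With $\Phi$ shown not to be $C^2$-conjugate to any linear IFS, Theorem \ref{Main Theorem} applies directly and produces the desired polynomial Fourier decay of $\nu$. The main obstacle is the rigidity step: translating the cocycle equation above into a structural constraint that forces linearity of $\Phi$ requires the full strength of the analytic methods of \cite{Algom2023Wu}, and is essentially the only non-trivial input beyond Theorem \ref{Main Theorem}. Once this rigidity is in hand, the deduction of the corollary is immediate.
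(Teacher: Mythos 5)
There is a genuine gap: the central ``rigidity step'' of your proposal is false. It is simply not true that a $C^\omega$ IFS containing a non-affine map must fail to be $C^2$-conjugate to a linear IFS. Take any self-similar IFS $\Psi$ with infinite attractor and any non-affine real-analytic diffeomorphism $h$ of a neighbourhood of the attractor; then $\Phi = h\circ \Psi\circ h^{-1}$ is $C^\omega$, typically consists entirely of non-affine maps, and yet is $C^\omega$ (hence $C^2$) conjugate to a linear IFS. In particular the cocycle equation you derive, $T(g(x))g'(x)-T(x)=\frac{f''(h(x))}{f'(h(x))}h'(x)$ with $T=h''/h'$, admits these solutions with $f$ non-affine, so no contradiction can be extracted from it, and no ``analytic rigidity'' can force $\Phi$ to be linear. (This is precisely the Kaufman / Mosquera--Shmerkin regime discussed in the introduction, which the paper treats as a genuine, non-vacuous case.) Your warm-up observation that $\Phi$ itself is not linear is correct, but ``not linear'' does not imply ``not conjugate to linear'', which is what Theorem \ref{Main Theorem} requires.

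You have also misidentified the role of \cite{Algom2023Wu}: it is not a rigidity principle but a Fourier decay theorem for analytic, non-affine images of self-similar measures (Theorem \ref{Theorem meng}). The paper's actual route is a dichotomy (Claim \ref{Claim JLMS}, proved via the Poincar\'e--Siegel theorem together with Lemma \ref{Lemma lin fede}): either $\Phi$ is not $C^2$-conjugate to linear, in which case Theorem \ref{Main Theorem} applies, or $\Phi$ is $C^\omega$-conjugate to a self-similar IFS via an analytic map $g$, which is necessarily non-affine (Lemma \ref{Lemma g not affine}); in the latter case every self-conformal measure is of the form $g\mu$ with $\mu$ self-similar, and polynomial decay follows from Theorem \ref{Theorem meng}. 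Your proposal collapses the second branch into the first, which cannot be done, so the argument as written does not prove Corollary \ref{Main Corollary}.
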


We emphasize that  no separation conditions are imposed on the IFS in both Theorem \ref{Main Theorem} and Corollary \ref{Main Corollary}. Thus, as we discuss below, these are essentially the first instances where polynomial decay is obtained for such measures without an underlying Markov partition, or a more specialized algebraic or dynamical setup. We remark that, simultaneously and independently of our work, Baker and Sahlsten \cite{Baker2023Sahl} obtained similar results but with a different method. See Remark \ref{Rmk BS23} for more details. Also, we note that Baker and Banaji \cite{baker2024polynomial} independently obtained a proof of Corollary \ref{Main Corollary}, with a method that differs considerably from that of \cite{Algom2023Wu}. We refer to \cite{Algom2023Wu, baker2024polynomial} for more discussion and comparisons between the two techniques.

 Relying on his own previous work \cite{Li2018decay}  and on the work of Bourgain-Dyatlov \cite{Bour2017dya}, Li \cite{li2018fourier} proved polynomial Fourier decay for Furstenberg measures for $\text{SL}(2,\mathbb{R})$ cocycles under mild assumptions. Corollary \ref{Main Corollary} gives a new proof of these results when the measure is self-conformal (see  \cite{Yoccoz2004some, Avila2010jairo} for  conditions that ensure this happens).  We  remark that, as we discuss in Section \ref{Section method} below, the renewal theoretic parts of our argument are closely related to Li \cite{Li2018decay, li2018fourier}.  Sahlsten-Stevens \cite[Theorem 1.1]{sahlsten2020fourier} proved polynomial Fourier decay for a class of stationary measures that includes self-conformal measures, with respect to  totally non-linear $C^\omega$ IFSs with strong separation (i.e.  the union \eqref{Eq union} is disjoint). Their methods also apply when the IFS is only $C^2(\mathbb{R})$ under the additional assumptions that $K_\Phi=[0,1]$ and some further separation conditions on $\Phi$. For self-conformal measures,  Theorem \ref{Main Theorem} improves these results since we  do not require any separation conditions on the IFS, or that the attractor is an interval. In fact, Corollary \ref{Main Corollary} removes virtually all the assumptions made in the corresponding result of Sahlsten-Stevens \cite[Theorem 1.1]{sahlsten2020fourier}, except for the existence of an analytic non-affine map in the IFS. Theorem \ref{Main Theorem} also improves our previous result \cite[Theorem 1.1]{algom2021decay} by upgrading the rate of decay in the non-conjugate to linear setting from logarithmic to polynomial. Finally, we point out that when the IFS in question is  conjugate to self-similar via a non-linear $C^2$ map, Kaufman \cite{Kaufman1984ber} (for some Bernoulli convolutions) and later Mosquera-Shmerkin \cite{Shmerkin2018mos} (for homogeneous self-similar measures) proved polynomial Fourier decay for all self-conformal measures. The only concrete examples of polynomial Fourier decay in the fully self-similar setup were given by Dai-Feng-Wang \cite{Dai2007Feng} and Streck  \cite{streck2023absolute} for some homogeneous IFSs that enjoy  nice number theoretic properties. It is known, though, that very few self-similar IFSs should fail this property; See Solomyak \cite{Solomyak2021ssdecay}.

Finally, combining Corollary \ref{Main Corollary} with the recent works of Br\'{e}mont \cite{bremont2019rajchman} and Li-Sahlsten \cite{li2019trigonometric} we obtain the following complete characterization of the Rajchman property for $C^\omega (\mathbb{R})$ IFSs:  Let  $\Phi$ be a $C^\omega (\mathbb{R})$ IFS. If $\Phi$ admits a non-atomic self-conformal measure that is not Rajchman, then $\Phi = \lbrace r_1\cdot x+t_1,...,r_n\cdot x+t_n\rbrace$ is self-similar and  there exists a Pisot number $r^{-1}$  such that $r_i = r^{\ell_i}$ for some $\ell_i \in \mathbb{Z}_+$. Furthermore, $\Phi$ is affinely conjugated to an IFS  that has all of its translations in  $\mathbb{Q}(r)$.  For more recent results  on Fourier decay for self similar measures we refer to \cite{Solomyak2021ssdecay, rapaport2021rajchman, varju2020fourier, algom2020decay, Shmerkin2018mos, Buf2014Sol, Dai2007Feng, Dai2012ber, streck2023absolute} and references therein.

\subsection{Outline of proof: A cocycle version of Dolgopyat's method} \label{Section method}
We proceed to discuss the method of proof of Theorem \ref{Main Theorem}. There is no loss of generality in assuming our IFS is $C^2 ([0,1])$. Fix a  self-conformal measure $\nu = \nu_\mathbf{p}$, and assume the conditions as in Theorem \ref{Main Theorem} are met. We aim to show that $\nu$ has polynomial Fourier decay. Our proof consists of four steps:

The first and most involved step  is arguing that the transfer operator corresponding to the derivative cocycle and $\mathbf{p}$ (Definition \ref{Def transfer operator}) satisfies a spectral gap-type estimate (Theorem \ref{Theorem spectral gap}). Both our result and  method of proof are strongly related to the works of Dolgopyat \cite{Dol1998annals, Dolgopyat2000Mix2}, Naud \cite{Naud2005exp}, and Stoyanov \cite{Stoyanov2011spectra, Stoyanov2001decay} (see also \cite{Araujo2016Melbourne, Avila2006yoccoz, Baladi2005Valle}) .  These papers utilize a proof technique that originates from the  work of Dolgopyat \cite{Dol1998annals}, commonly known  as Dolgopyat's method. It is used to obtain spectral gap-type estimates by, roughly speaking, a reduction to an $L^2$ contraction estimate (in the spirit of Proposition \ref{Prop. 5.3 Naud}), and then a proof of this estimate via the construction of  so-called Dolgopyat operators (in the spirit of Lemma \ref{Lemma 5.4 Naud}), that  control  cancellations of the transfer operator.

There are, among others, three critical  properties of $\nu$ and $\Phi$ that are used in Naud's version \cite{Naud2005exp} of Dolgopyat's method, which is the one we ultimately rely on here: That the union \eqref{Eq union} corresponds to a Markov partition (i.e. for $i\neq j$,  $f_i([0,1])$ and $f_j([0,1])$ may intersect only at their endpoints),  that the measure $\nu$ satisfies the Federer property (see e.g. Theorem \ref{Theorem disint} Part (6)), and that $\Phi$ satisfies the uniform non-integrability condition (UNI in short -  see Claim \ref{Claim UNI}). In our setting, since we assume no separation conditions (in particular, the cylinder covering is not a Markov partition),  we cannot assume $\nu$ has the Federer property. As for (UNI), in our previous work \cite[proof of Claim 2.13]{algom2021decay} we showed that if $\Phi$ is not conjugate to linear then (UNI) does hold true (regardless of any separation assumptions). In fact, our non-linearity assumption is only used to obtain the (UNI) condition.

One of the main innovations of this paper is the introduction a cocycle version of Dolgopyat's method, that we use get around these issues: First, we express $\nu$ as an integral over a certain family of random measures, and express the transfer operator as a corresponding convex combination of smaller "pieces" of itself. This is based upon a technique that first appeared in \cite{Galcier2016Lq}, and was subsequently applied in several other papers. e.g.  \cite{Antti2018orponen, Shmerkin2018Solomyak, solomyak2023absolute}. In our variant, that is  closest to the construction in \cite{Algom2022Baker},    these random measures typically satisfy a certain dynamical self-conformality relation with \textit{strong separation}, and  satisfy the Federer property. Critically, we are able to preserve the (UNI) condition into all the different pieces of the transfer operator corresponding to this decomposition. An important preliminary step is the construction of a special  covering of our IFS by (possibly overlapping) sub-IFSs (Claim \ref{Claim key}), followed by the construction of this disintegration in Theorem \ref{Theorem disint}. We then proceed to state and prove our spectral gap-type estimate (Theorem \ref{Theorem spectral gap}) by first reducing to a certain randomized $L^2$ contraction estimate (Proposition \ref{Prop. 5.3 Naud}), and then prove  this estimate via the construction of corresponding randomized  Dolgopyat operators (Lemma \ref{Lemma 5.4 Naud}). All of this discussion takes place in Section \ref{Section non-linearity implies s-gap}.

In the second step of our argument, we deduce from Theorem \ref{Theorem spectral gap} (our spectral gap estimate) a renewal theorem with an exponential error term. This is Proposition \ref{Proposition renewal} in Section \ref{Section renewal}. Such strong renewal Theorems were first proved by Li \cite{li2018fourier} for random walks arising from certain random matrix products. Our proof technique is based on that  of Li \cite{Li2018decay, li2018fourier}, and Boyer \cite{boyer2016rate}, relating the error term in the renewal operator to the resolvent of the transfer operator, on which we have good control thanks to  Theorem \ref{Theorem spectral gap}.

In the third step of our proof, we deduce from Proposition \ref{Proposition renewal} (our renewal Theorem) an effective equidistribution result for certain random walks driven by the derivative cocycle. This is Theorem \ref{Theorem equi}, that critically holds with an exponential error term.  The exponent we obtain is related to the size of the strip where we have spectral gap (the $\epsilon$ from Theorem \ref{Theorem spectral gap}). Finally, in Section \ref{Section Fourier decay} we obtain the desired Fourier decay bound on $\mathcal{F}_q (\nu)$ by combining Theorem \ref{Theorem equi} with our previous proof scheme from \cite[Section 4.2]{algom2020decay}, that relies on delicate linerization arguments and estimation of certain oscillatory integrals as in \cite{Hochman2020Host}.

As for the proof of Corollary \ref{Main Corollary}, our main  ingredient is that  the following dichotomy holds for any given $C^\omega (\mathbb{R})$ IFS:  Either it is not $C^2$ conjugate to linear, or it is $C^\omega$ conjugate to a self-similar IFS. This is Claim \ref{Claim JLMS}, that critically relies on the Poincar\'{e}-Siegel Theorem \cite[Theorem 2.8.2]{Katok1995Hass}. Corollary \ref{Main Corollary} then follows from Theorem \ref{Main Theorem} (if the IFS is not conjugate to linear), or from a result of Algom et al. \cite{Algom2023Wu} (if the IFS is analytically conjugate to self-similar).


\begin{Remark} \label{Rmk BS23} Let us now revisit the parallel project of Baker and Sahlsten \cite{Baker2023Sahl}  where they obtain similar results to ours. Roughly speaking, they extend the  arguments of Sahlsten-Stevens \cite{sahlsten2020fourier} that rely on the additive combinatorial approach of Bourgain and Dyatlov \cite{Bour2017dya}. This is a significant difference between the two papers, as we make no use of additive combinatorics in our work.   Baker and Sahlsten \cite{Baker2023Sahl} also require a spectral gap type estimate for the transfer operator without an underlying Markov partition. To this end, they utilize a disintegration technique as in \cite{Algom2022Baker}, and combine it with the proof outline of Naud \cite{Naud2005exp}. This has some similarities to our  corresponding argument. However, even in this part there are some essential differences; We invite the interested reader to compare our Section \ref{Section non-linearity implies s-gap} with \cite[Section 4]{Baker2023Sahl}.

\end{Remark}

\section{From non-linearity to spectral gap} \label{Section non-linearity implies s-gap} The main goal of this Section is to establish the spectral gap-type estimate Theorem \ref{Theorem spectral gap}. To do this, we  construct our random model in Theorem \ref{Theorem disint} in Section \ref{Section dis}, based on  Claim \ref{Claim key} in Section \ref{Section induced IFS}. This construction requires moving to an induced IFS of higher generation, but standard considerations show that this is allowed in our setting. We then proceed to state Theorem \ref{Theorem spectral gap}, and then  reduce it to a randomized $L^2$-contraction estimate Proposition \ref{Prop. 5.3 Naud}, in Section \ref{Section reduction}. We then reduce Proposition \ref{Prop. 5.3 Naud} to the key Lemma \ref{Lemma 5.4 Naud} in Section \ref{Section key Lemma}, and spend the rest of the Section proving it. The proof of Lemma \ref{Lemma 5.4 Naud} can be seen as a randomized version of Naud's arguments in \cite[Sections 5, 6, and  7]{Naud2005exp}.
\subsection{An induced IFS} \label{Section induced IFS}
Fix a $C^{2} (\mathbb{R})$ IFS $\Phi = \lbrace f_1,...,f_n \rbrace$ and write $\mathcal{A}= \lbrace 1,...,n\rbrace$. We  assume without the loss of generality that  for every $1\leq a\leq n$ the map $f_a$ is a self map of $[0,1]$, and that
\begin{equation} \label{Assumption on endpoints}
K=K_\Phi \text{ does not contain the points } 0,1.
\end{equation}
Indeed, this follows since we may assume the IFS acts on an open interval, and since Fourier decay estimates like in Theorem \ref{Main Theorem} are invariant under conjugation of the IFS by a non-singular affine map. Let
$$\tilde\rho:= \sup_{f\in \Phi} ||f'||_\infty \in (0,1),$$
and let
$$\rho_{\min} := \min_{i\in \mathcal{A}, x\in [0,1]} |f_i ' (x)|.$$
Let us recall the bounded distortion property \cite[Lemma 2.1]{algom2020decay}: There is some $L=L(\Phi)$ such that for every $\eta \in \mathcal{A}^*$ we have
\begin{equation} \label{Eq bdd distortion}
L^{-1} \leq \frac{\left|f_\eta ' (x)\right|}{\left|f_\eta ' (y)\right|}\leq L \text{ for all } x,y\in [0,1].
\end{equation}
Next, note that
\begin{equation*} 
\sup_{x\in [0,1], \xi\in \mathcal{A}^\mathbb{N}, n\in \mathbb{N}}  \left| \frac{d}{dx}\left( \log f' _{\xi|_n} \right)(x) \right| = \sup_{x\in [0,1], \xi\in \mathcal{A}^\mathbb{N}, n\in \mathbb{N}} \left| \sum_{k=1} ^n \frac{f_{\xi_k}''\circ f_{\sigma^k(\xi)|_{n-k}}(x)\cdot f_{\sigma^k(\xi)|_{n-k}}'(x)}{f_{\xi_{k}} '(x)\circ f_{\sigma^k(\xi)|_{n-k}}(x)} \right|$$
$$ \leq \sup_{x\in [0,1], f\in \Phi} \left| \frac{f''(x)}{f'(x)}\right| \cdot \frac{1}{1-\tilde{\rho}}.
\end{equation*}
So, there is a uniform constant $\tilde{C}:=2\cdot \sup_{x\in [0,1], f\in \Phi} \left| \frac{f''(x)}{f'(x)}\right|$, such that, assuming as we may (see the next paragraph) that $\tilde{\rho}<\frac{1}{2}$,
\begin{equation} \label{Eq tilde C}
\sup_{x\in [0,1], \xi\in \mathcal{A}^\mathbb{N}, n\in \mathbb{N}}  \left| \frac{d}{dx}\left( \log f' _{\xi|_n} \right)(x) \right| \leq \tilde{C}.
\end{equation}

For all $N$ let $\Phi^N$ be the corresponding induced IFS of generation $N$, that is,
$$\Phi^N = \lbrace f_{\eta}:\, \eta\in \mathcal{A}^N\rbrace.$$
Since $\Phi^N$ is finite, we may order its functions ($f_1,f_2$ and so on) at our will. Note that the same constants $L$ from \eqref{Eq bdd distortion} and  $\tilde{C}$ from  \eqref{Eq tilde C} will work for the induced IFS $\Phi^N$ for all $N$.


Our first step is, assuming $\Phi$ is not conjugate to linear, to construct an induced IFS that has useful separation and non-linearity properties:
\begin{Claim} \label{Claim key}
Let $\Phi$ be a  $C^2([0,1])$ IFS that is not conjugate to linear. Then there exists $N\in \mathbb{N}$ such that $\Phi^N$ admits $f_1,f_2,f_3,f_4$  satisfying:
\begin{enumerate}
\item For every $k\geq 5$ there exists $i\in \lbrace 1,3\rbrace$ such that 
$$f_i ([0,1]) \cup  f_{i+1} ([0,1]) \cup f_k ([0,1])$$
is a disjoint union. In particular, the union $f_i ([0,1]) \cup  f_{i+1} ([0,1])$ is disjoint for $i=1,3$.

\item There exists  $m',m>0$ such that: For every $x\in [0,1]$, for both $i=1,3$,
$$m\leq \left|\frac{d}{dx}  \left(  \log f_{i } ' - \log f_{i+1} ' \right) \left(x\right) \right|\leq m'.$$

\item We have
$$m-2\cdot \tilde{C}\cdot  \tilde{\rho}^N >0.$$
\end{enumerate}
\end{Claim}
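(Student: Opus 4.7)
The plan is to construct the four maps as length-$N$ words of the form (prefix)(common short UNI suffix), where the common suffix encodes a uniform non-integrability bound while the four prefixes are tuned to enforce both within-pair and between-pair spatial separation. Throughout I write $\psi_\eta:=\log|f_\eta'|$ and use the uniform estimate $C_0:=\sup_{\eta\in\mathcal{A}^*}\|\psi_\eta'\|_\infty<\infty$, which follows from the $C^2$ refinement of the bounded distortion inequality \eqref{Eq bdd distortion}.

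\textbf{Step 1: extract a UNI suffix.} Because $\Phi$ is not conjugate to linear, our previous work \cite[proof of Claim 2.13]{algom2021decay} supplies UNI words: for every sufficiently large $M$ there exist $\xi_1,\xi_2\in\mathcal{A}^M$ and a constant $c>0$ with $|(\psi_{\xi_1}-\psi_{\xi_2})'(x)|\geq c$ on $[0,1]$. Fix such an $M$ and, by possibly enlarging it, assume $4C_0\tilde\rho^M<c$.

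\textbf{Step 2: build disjoint, spatially separated prefixes.} Since $K$ is infinite and avoids $\{0,1\}$ by \eqref{Assumption on endpoints}, I select four distinct points $p_1,p_2,p_3,p_4\in K$ with $\delta_0:=\min_{i\in\{1,2\},\,j\in\{3,4\}}|p_i-p_j|>0$. Fix a generation $N_1$ so that $\tilde\rho^{N_1}<\min(|p_1-p_2|,|p_3-p_4|,\delta_0)/3$, and choose $\alpha_i\in\mathcal{A}^{N_1}$ with $p_i\in f_{\alpha_i}([0,1])$. Each $f_{\alpha_i}([0,1])$ has diameter at most $\tilde\rho^{N_1}$, so within each pair the cylinders are disjoint, and the two unions $R_P:=f_{\alpha_1}([0,1])\cup f_{\alpha_2}([0,1])$ and $R_Q:=f_{\alpha_3}([0,1])\cup f_{\alpha_4}([0,1])$ are at distance at least $\delta_0/3$.

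\textbf{Step 3: assemble and verify.} Put $N:=N_1+M$ and define
\[
f_1:=f_{\alpha_1\xi_1},\quad f_2:=f_{\alpha_2\xi_2},\quad f_3:=f_{\alpha_3\xi_1},\quad f_4:=f_{\alpha_4\xi_2}.
\]
For (2), the chain rule gives, for $i\in\{1,3\}$,
\[
(\psi_{f_i}-\psi_{f_{i+1}})'(x)=(\psi_{\xi_1}-\psi_{\xi_2})'(x)+\psi_{\alpha_i}'(f_{\xi_1}(x))\,f_{\xi_1}'(x)-\psi_{\alpha_{i+1}}'(f_{\xi_2}(x))\,f_{\xi_2}'(x),
\]
and the absolute value of the right-hand side lies in $[c/2,\,4C_0]$ by the choice of $M$ in Step 1, yielding (2) with $m=c/2$ and $m'=4C_0$. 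Condition (1) is immediate: within-pair disjointness is inherited from Step 2 via $f_i([0,1])\subseteq f_{\alpha_i}([0,1])$, and for any other $f_k\in\Phi^N$ the image $f_k([0,1])$ has diameter at most $\tilde\rho^N$; increasing $N_1$ if needed so that $\tilde\rho^N<\delta_0/3$ forces $f_k([0,1])$ to meet at most one of $R_P,R_Q$, so picking $i\in\{1,3\}$ to indicate the pair it misses supplies the required disjoint triple.

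\textbf{Main obstacle.} The central difficulty is the incompatibility between UNI and cylinder disjointness when no separation hypothesis is available: the UNI pair coming from non-linearity need not have disjoint images, and the naive correction of appending different suffixes multiplies the derivative difference by a small factor $f_\sigma'(x)$ and destroys UNI. The resolution exploits the asymmetry of the chain rule, namely that prepending arbitrary words perturbs $(\psi_{\xi_1}-\psi_{\xi_2})'$ only by an additive error of order $C_0\tilde\rho^M$, which can be absorbed once $M$ is large. This is what allows a single short UNI suffix to be reused with four distinct prefixes that achieve all the required separation simultaneously.
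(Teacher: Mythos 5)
Your Steps 2--3 are essentially sound: the four prefixes do give Part (1), and your chain-rule identity $(\log f_i'-\log f_{i+1}')'(x)=(\log f_{\xi_1}'-\log f_{\xi_2}')'(x)+(\log f_{\alpha_i}')'(f_{\xi_1}(x))f_{\xi_1}'(x)-(\log f_{\alpha_{i+1}}')'(f_{\xi_2}(x))f_{\xi_2}'(x)$ is correct. The genuine gap is Step 1, and it is the heart of the matter, not a citation formality. What \cite[proof of Claim 2.13]{algom2021decay} provides (and what the paper extracts from it at the start of Claim~\ref{Claim UNI}) is only the negation of \eqref{Eq AGY 2}: infinite words $\xi,\zeta$, a single point $x_0\in K$ and $c'>0$ with $|(\log f_{\xi|_n}'-\log f_{\zeta|_n}')'(x_0)|>c'$ for infinitely many $n$. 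It does not give, for arbitrarily large $M$, a pair $\xi_1,\xi_2\in\mathcal{A}^M$ with a lower bound valid on all of $[0,1]$, and certainly not with a constant $c$ independent of $M$; note also the quantifier slip in your Step 1 ($c$ is introduced after $M$, yet you then ``enlarge $M$'' to get $4C_0\tilde\rho^M<c$ as if $c$ were fixed). Making the pointwise bound uniform in $x$ forces one to compose on the inside with a word $f_{\omega|_k}$ sending $[0,1]$ into a small neighbourhood of $x_0$, and by the chain rule this multiplies the derivative difference by $f_{\omega|_k}'(x)$, so the constant obtainable at word length $M$ is of order $\rho_{\min}^{M}$ up to fixed factors. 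Your transposed architecture (separating prefixes outside, UNI words inside) needs precisely $c>4C_0\tilde\rho^M$ at the same length $M$, because the prefixes create an additive error of size about $C_0\tilde\rho^M$ that the UNI constant must beat; and no way of lengthening a UNI pair restores this: appending a common inner word multiplies the constant by $\inf|f_w'|$, which can be as small as $\rho_{\min}^{|w|}$ and hence decay faster than $\tilde\rho^M$, while prepending a common outer word leaves untouched an additive error at the scale of the original length. So the inequality you rely on is an unproved hypothesis, not something absorbed ``once $M$ is large.''

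The paper's construction is arranged exactly so that no such comparison is ever needed: within each pair the UNI words are the \emph{outer} maps and the separating word is a \emph{common inner} map, $f_1=f_{\xi|_n}\circ f_{\eta_1}$, $f_2=f_{\zeta|_n}\circ f_{\eta_1}$, so the contribution of $\log f_{\eta_1}'$ cancels identically and $(\log f_1'-\log f_2')'=\bigl[(\log f_{\xi|_n}'-\log f_{\zeta|_n}')'\circ f_{\eta_1}\bigr]\cdot f_{\eta_1}'$: the UNI lower bound is only multiplied by $\inf|f_{\eta_1}'|>0$, never pitted additively against a contraction rate, so the weak, localized UNI input of Claim~\ref{Claim UNI} suffices. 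Between-pair separation then comes from placing $\eta_1,\eta_2$ in far-apart parts of $K$ together with bounded distortion \eqref{Eq bdd distortion}, and the genuinely delicate scenario --- the UNI words' cylinders never being separated --- is handled by the separate Case 2 of the proof of Claim~\ref{Claim key}, a difficulty your scheme avoids only because it assumed the stronger uniform UNI statement to begin with. To repair your proof you would have to prove your Step 1, i.e.\ produce UNI pairs of length $M$ whose constant dominates $\tilde\rho^M$; that is a strictly stronger input than the paper needs or establishes.
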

We first remark that the existence of  an upper bound $m'$ as in Part (2) holds for all $N$ and every $f_i,f_j \in \Phi^N$; This is a standard fact, see e.g. \cite[Section 4]{Naud2005exp} or \eqref{Eq tilde C} for similar estimates. We thus focus on the lower bound, which is much harder to obtain, in our proof of Claim \ref{Claim key}.

We  require the following Claim:
\begin{Claim} \label{Claim UNI}
 There exist some $c,m',N_0>0$ such that for all $n>N_0$ there exist $\xi,\zeta \in \mathcal{A}^n$  such that for  every $x\in [0,1]$,
$$c< \left|\frac{d}{dx}  \left(  \log f_{\xi }  ' - \log  f_{\zeta}   ' \right) \left(x\right) \right|\leq m'.$$
\end{Claim}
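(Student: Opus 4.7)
The plan. The right-hand inequality is routine: expanding
$$\frac{d}{dx}\log|f_{\eta}'(x)| \;=\; \sum_{k=1}^{|\eta|} \frac{f_{\eta_k}''(y_k)}{f_{\eta_k}'(y_k)}\cdot f_{\eta_{k+1}\cdots \eta_{|\eta|}}'(x),\qquad y_k := f_{\eta_{k+1}\cdots \eta_{|\eta|}}(x),$$
and using \eqref{Eq bdd distortion} together with $|f_{\eta_{k+1}\cdots \eta_{|\eta|}}'(x)|\leq\tilde\rho^{|\eta|-k}$, each summand is bounded by $C\tilde\rho^{|\eta|-k}$ for a constant $C=C(\Phi)$. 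Summing the resulting geometric series yields a bound $m'$ independent of $\eta\in\mathcal{A}^{*}$ and $x\in[0,1]$, giving the right-hand inequality for any choice of $\xi,\zeta$.

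For the left-hand inequality I would specialize to periodic sequences $\xi=\overline{a}$, $\zeta=\overline{b}$ for words $a,b\in\mathcal{A}^{N}$ to be chosen, with $N\ge 1$ also chosen. Writing $D_n(\omega,x):=\frac{d}{dx}\log|f_{\omega|_n}'(x)|$ and using $f_{\omega|_{n+1}}=f_{\omega|_n}\circ f_{\omega_{n+1}}$, one gets the cocycle identity
$$D_{n+1}(\omega,x)=D_n(\omega,f_{\omega_{n+1}}(x))\cdot f_{\omega_{n+1}}'(x)+g_{\omega_{n+1}}'(x),\qquad g_a:=\log|f_a'|.$$
Along $\omega=\overline{a}$ this recursion is a uniform contraction in the supremum norm, so $D_{mN}(\overline{a},\cdot)$ converges uniformly as $m\to\infty$ to the unique continuous fixed point $H_a$ of $H_a=g_a'+(H_a\circ f_a)\cdot f_a'$, and the other residue-class subsequences $D_{mN+r}(\overline{a},\cdot)$ converge to explicit continuous limits $\Delta_{r,a}$ built from $H_a$. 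Consequently, the claim reduces to producing $a,b\in\mathcal{A}^{N}$ for which $|\Delta_{r,a}-\Delta_{r,b}|\ge 2c>0$ uniformly on $[0,1]$ for each residue class $r<N$.

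The non-linearity hypothesis enters through a qualitative dichotomy: if no such pair exists for any $N$, then $H_a\equiv H_b$ identically for all $a,b\in\mathcal{A}^{*}$, whence the common function $\psi:=H_a$ satisfies $g_a'(x)=\psi(x)-\psi(f_a(x))f_a'(x)$ for every $a\in\mathcal{A}$. Integrating yields $\log|f_a'(x)|=\Psi(x)-\Psi(f_a(x))+\mathrm{const}$ for a primitive $\Psi$ of $\psi$, and then $h(x):=\int_0^x e^{\Psi(t)}\,dt$ defines a $C^2$ orientation-preserving diffeomorphism conjugating every $f_a$ to an affine map, contradicting the hypothesis that $\Phi$ is not conjugate to linear. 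The hard part will be upgrading the resulting qualitative non-triviality of some $H_a-H_b$ to the required \emph{uniform} lower bound across all of $[0,1]$ and for every phase $r<N$; my plan is to enlarge the pool of candidate pairs by passing to $\Phi^{N}$ for $N$ large, where the family $\{H_a:a\in\mathcal{A}^{N}\}$ grows exponentially rich, and then to select an appropriate pair via a compactness/Baire-type argument on the continuous zero sets of the differences $\Delta_{r,a}-\Delta_{r,b}$, in the spirit of the authors' earlier work \cite[Claim 2.13]{algom2021decay}.
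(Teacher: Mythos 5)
Your treatment of the upper bound and your reduction machinery are fine: the recursion $D_{n+1}(\omega,x)=D_n(\omega,f_{\omega_{n+1}}(x))f_{\omega_{n+1}}'(x)+g_{\omega_{n+1}}'(x)$ is correct, along a periodic word it is a sup-norm contraction, and the limits $H_a$, $\Delta_{r,a}$ exist as you say; your computation that ``all $H_a$ equal'' forces a $C^2$ conjugacy to an affine (hence linear) IFS is also sound, and essentially re-derives the qualitative input that the paper simply imports from \cite[proof of Claim 2.13]{algom2021decay}. The genuine gap is in the selection step, and it is exactly the content of the claim. First, the dichotomy as you state it is unjustified: the failure, for every $N$ and every pair $a,b\in\mathcal{A}^N$, of a lower bound that is uniform over \emph{all} of $[0,1]$ and all phases $r$ only tells you that each continuous difference $\Delta_{r,a}-\Delta_{r,b}$ has a zero somewhere in $[0,1]$; it does not give $H_a\equiv H_b$, so no contradiction with non-linearity follows. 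Second, the step you yourself flag as ``the hard part'' --- upgrading non-vanishing of some $H_a-H_b$ at a point to a bound valid at \emph{every} $x\in[0,1]$, simultaneously for all residues $r<N$ --- is left as a plan; a compactness/Baire-type argument on zero sets is not supplied, and there is no evident mechanism forcing some difference to be zero-free on the whole interval (each difference could perfectly well change sign).

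The paper closes precisely this gap by a different and much simpler device, which is the idea missing from your proposal: localization by precomposition with a common word. From the non-conjugacy hypothesis one gets a single point $x_0\in K$, prefixes $\xi|_{n_0},\zeta|_{n_0}$ and $c'>0$ with $\bigl|\tfrac{d}{dx}\bigl(\log f_{\xi|_{n_0}}'-\log f_{\zeta|_{n_0}}'\bigr)(x_0)\bigr|>c'$. One then chooses $\omega$ coding $x_0$ and appends the \emph{same} word $\omega$ to both prefixes ($\xi'=\xi|_{n_0}*\omega$, $\zeta'=\zeta|_{n_0}*\omega$): the common inner word cancels in the difference, and since $f_{\omega|_k}([0,1])$ is a small neighbourhood of $x_0$, continuity keeps the bound $\geq c'/2$ at $f_{\omega|_k}(x)$ for every $x\in[0,1]$. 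Thus uniformity in $x$ is manufactured by shrinking the domain of evaluation rather than by searching for a pair of words whose difference function happens to be zero-free on all of $[0,1]$; no Baire-type selection over the family $\{H_a\}$ is needed (and none is available in your setup). Unless you can actually produce the zero-free pair your scheme requires, the proposal does not prove the claim.
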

\begin{proof}
By \cite[proof of Claim 2.13]{algom2021decay}, if for all $\sigma$-periodic\footnote{Observe that for the proof of \cite[Claim 2.13]{algom2021decay} to follow through it is enough to consider only $\sigma$-periodic elements. Furthermore, for such elements the limits in \eqref{Eq AGY 2}  exist by \eqref{Eq tilde C}. We remark that  similarly, in \cite[Claim 2.12]{algom2021decay} and its proof it is enough to consider only $\sigma$-periodic elements.}  $\xi,\zeta \in \mathcal{A}^\mathbb{N}$ and $x\in K$ we have
\begin{equation} \label{Eq AGY 2}
\lim_n \frac{d}{dx} \log f_{\xi|_n} ' (x) = \lim_n \frac{d}{dx} \log f_{\zeta|_n} ' (x),
\end{equation}
then $\Phi$ is $C^2$ conjugate to a linear IFS. Thus, since $\Phi$ is not conjugate to linear, there exist some $c'>0$, $x_0 \in K$ and $\sigma$-periodic $\xi,\zeta \in \mathcal{A}^\mathbb{N}$ such that for infinitely many $n$,
$$c'< \left|\frac{d}{dx}  \left(  \log  f_{\xi|_n }  ' - \log   f_{\zeta|_n }   ' \right) \left(x_0\right) \right|.$$
Recalling our coding map \eqref{Eq coding}, let $\omega \in \mathcal{A}^\mathbb{N}$ be such that $x_\omega = x_0$. Using $\sigma$-periodicity, by the uniform convergence (\cite[Claim 2.12]{algom2021decay} or \eqref{Eq tilde C}) as $n\rightarrow \infty$   of 
$$ \left|\frac{d}{dx}  \left(  \log  f_{\xi|_n }  ' - \log   f_{\zeta|_n }   ' \right) \left(\cdot \right) \right|$$
 there is some $N_1$ and some $k=k(N_1,c')$ such that for every $n>N_1$
$$\left|\frac{d}{dx}  \left(  \log  f_{\xi|_{n} }  ' - \log  f_{\zeta|_{n} }  ' \right) \left( x_0 \right) \right| -\frac{c'}{2} \leq \left|\frac{d}{dx}  \left(  \log  f_{\xi|_{n} }  ' - \log  f_{\zeta|_{n} }  ' \right) \left( f_{\omega|_k}(x)\right) \right| $$
Let $n>\max\lbrace  N_1,N_0 \rbrace$, and let $k=k(N_1,c')$ be as above. Then for all $x\in [0,1]$, 
$$\frac{c'}{2} \cdot  \rho_{\min} ^k \leq  \left|\frac{d}{dx}  \left(  \log  f_{\xi|_{n} }  ' - \log  f_{\zeta|_{n} }  ' \right) \left( f_{\omega|_k}(x)\right) \right| \cdot \left| f_{\omega|_k}'(x) \right|$$
$$=  \left|\frac{d}{dx}  \left(  \log \left( f_{\xi|_{n} } \circ f_{\omega|_k} \right) ' - \log \left(  f_{\zeta|_{n} } \circ f_{\omega|_k}  \right) ' \right) \left(x\right) \right|.$$
So, for $c=\frac{c'}{2} \cdot  \rho_{\min} ^k$, the  words $\xi' = \xi|_{n}*\omega|_{k}$ and $\zeta' = \zeta|_{n} *\omega|_{k}$ satisfy the claimed bound from below for all $n>\max\lbrace  N_1,N_0 \rbrace$. 
\end{proof}
$$ $$

\noindent{ \textbf{Proof of Claim \ref{Claim key}, Case 1:}}   In this case we assume that, with the notations of Claim \ref{Claim UNI}, that  there exist arbitrarily  large $n$ such that the $\xi,\zeta \in \mathcal{A}^n$ as in Claim \ref{Claim UNI} satisfy
$$\text{dist} \left( f_{\xi} ([0,1]),\, f_{\zeta} ([0,1]) \right) > 3 (\tilde\rho)^n.$$
Note that there exist some $k$ and $\eta_1,\eta_2 \in \mathcal{A}^k$ such that
$$\text{dist} \left( f_{\eta_1} ([0,1]),\, f_{\eta_2} ([0,1]) \right) \geq \frac{\diam (K)}{2}.$$
For example, $\eta_1,\eta_2$ can be chosen to be the $k$-prefixes of  codes of the endpoints of the interval $\overline{\text{conv}(K)}$.  Clearly  every large enough $k$ will work. We thus assume that $n$ is large enough so that $n>N_0$ as in Claim \ref{Claim UNI}, the first displayed equation holds, and 
\begin{equation} \label{Eq new assumption}
2 \tilde\rho ^n < \diam (K)/2.
\end{equation}

. 

We now define $N=n+k$ and our maps in $\Phi^N$ via
$$f_1:= f_{\eta_1} \circ f_{\xi} ,\quad f_2 := f_{\eta_1} \circ f_{\zeta}, \quad f_3:=f_{\eta_2}\circ f_{\xi},\quad f_4: =  f_{\eta_2}\circ  f_{\zeta}.$$
Let us show that all parts of the Claim hold: First, for part (2),  for all $x\in [0,1]$, 
$$\left|\frac{d}{dx}  \left(  \log f_{1 } ' - \log f_{2} ' \right) \left(x\right) \right|= \left|\frac{d}{dx}  \left(  \log \left( f_{\eta_1} \circ f_{\xi} \right) ' - \log \left(  f_{\eta_1} \circ f_{\zeta} \right) ' \right) \left(x\right) \right|$$ 
$$ = \left|\frac{d}{dx}  \left(  \log  f_{\xi}'   - \log  f_{\zeta} '  + \log \left(  f_{\eta_1}' \circ f_{\xi} \right) - \log \left(  f_{\eta_1}' \circ f_{\zeta} \right)  \right)  \left(x\right) \right| $$
$$ \geq \left|\frac{d}{dx}  \left(  \log  f_{\xi}'   - \log  f_{\zeta} ' \right)  \left( x \right) \right|  - \left| \frac{d}{dx}  \left(  \log \left(  f_{\eta_1}' \circ f_{\xi} \right) - \log \left(  f_{\eta_1}' \circ f_{\zeta} \right)  \right)  \left(x\right) \right|.$$
Now, by  Claim \ref{Claim UNI} since $n>N_0$,
$$\left|\frac{d}{dx}  \left(  \log  f_{\xi}'   - \log  f_{\zeta} ' \right)  \left( x \right)\right| \geq c.$$
On the other hand, arguing similarly to \eqref{Eq tilde C},
$$\left| \frac{d}{dx}   \log \left(  f_{\eta_1}' \circ f_{\xi} \right)(x) \right|\leq \tilde{C}\cdot \left| f_{\xi}' (x) \right|\leq \tilde{C}\cdot \tilde{\rho}^n.$$
Since we may assume $n$ is also large depending on $\tilde{C},c$ (that are both known a-priori), we conclude that
$$\left|\frac{d}{dx}  \left(  \log f_{1 } ' - \log f_{2} ' \right) \left(x\right) \right| \geq c-2\cdot \tilde{C}\cdot \tilde{\rho}^n>0.$$
The same calculation holds for $f_3,f_4$. This shows Part (2) holds true for our functions with
$$m:=c-2\cdot \tilde{C}\cdot \tilde{\rho}^n.$$
Part (3) essentially follows from the same argument since, assuming $n$ is sufficiently large depending on $c,\tilde{C}$, we can get that also
$$c-2\cdot \tilde{C}\cdot \tilde{\rho}^n -2\cdot\tilde{C}\cdot  \tilde{\rho}^{n+k}>0.$$

For Part (1), for every $x\in f_1([0,1])$ and $y\in f_2 ([0,1])$  there is some $z$ with
$$|f_1(x)-f_2(y)| = |f_{\eta_1} ' (z)| \cdot |  f_{\xi}(x)- f_{\zeta}(y)| \geq  \rho_{\min} ^k \cdot \text{dist} \left( f_{\xi} ([0,1]),\, f_{\zeta} ([0,1]) \right)$$
$$   \geq   \rho_{\min} ^k \cdot 3\cdot \tilde\rho^{n} > 0.$$
This shows that
\begin{equation*} 
\text{dist} \left( f_{1} ([0,1]),\, f_{2} ([0,1]) \right), \quad \text{dist} \left( f_{3} ([0,1]),\, f_{4} ([0,1]) \right) > \rho_{\min} ^k \cdot 3\cdot \tilde\rho^{n}.
\end{equation*}
Next, by the choice of $\eta_1,\eta_2$ and $f_1,f_2,f_3,f_4$ it is clear that
$$\text{dist} \left( f_{1} ([0,1]),\, f_{3} ([0,1]) \right),\, \text{dist} \left( f_{1} ([0,1]),\, f_{4} ([0,1]) \right),\, \text{dist} \left( f_{2} ([0,1]),\, f_{3} ([0,1]) \right)\, \text{dist} \left( f_{2} ([0,1]),\, f_{4} ([0,1]) \right)$$
$$ \geq \frac{\diam (K)}{2}.$$

Now, fix any $f_\ell$ with $\ell\geq 5$. If 
$$f_1 ([0,1]) \cup  f_{2} ([0,1]) \cup f_\ell ([0,1])$$
is a disjoint union then we are done. Otherwise, suppose
$$f_1 ([0,1]) \cap f_\ell ([0,1]) \neq \emptyset.$$
Then,  since  $\text{diam} f_\ell ([0,1]) \leq (\tilde\rho)^N$ and by \eqref{Eq new assumption},
$$ \text{dist} \left( f_{\ell} ([0,1]),\, f_{3} ([0,1]) \right),\quad  \text{dist} \left( f_{\ell} ([0,1]),\, f_{4} ([0,1]) \right)\geq \frac{\diam (K)}{2}-2(\tilde\rho)^N >0.$$
So, since the unions $f_1 ([0,1]) \cup  f_{2} ([0,1])$ and $f_3 ([0,1]) \cup  f_{4} ([0,1])$ have already been shown to be disjoint, the union
$$f_3 ([0,1]) \cup  f_{4} ([0,1]) \cup f_\ell ([0,1])$$
is disjoint. The other cases are similar.  The proof of Claim \ref{Claim key} in Case 1 is complete.
$$ $$
\noindent{ \textbf{Proof of Claim \ref{Claim key}, Case 2:}} Assume now that  for all $n>N_0$,  for $\xi,\zeta \in \mathcal{A}^n$ as in Claim \ref{Claim UNI},
$$\text{dist} \left( f_{\xi} ([0,1]),\, f_{\zeta} ([0,1]) \right) \leq 3  (\tilde\rho)^n.$$
Let us find some  $\omega_1,\omega_2 \in \mathcal{A}^\mathbb{N}$ such that for all $k$
\begin{equation} \label{Eq omega1 and omega2}
\text{dist} \left( f_{\omega_1|_k} ([0,1]),\, f_{\omega_2|_k} ([0,1]) \right) > 3 (\tilde\rho)^k.
\end{equation}
Such  $\omega_1,\omega_2 $ must exist since $K_\Phi$ is infinite. 



Then for all large $n'>N_0$ and $\xi,\zeta \in \mathcal{A}^{n'}$  as in Claim \ref{Claim UNI}, for all large $n$ for all $x\in [0,1]$, we have
$$ \left|\frac{d}{dx}  \left(  \log \left( f_{\omega_1|_n} \circ f_{\xi} \right) ' - \log \left( f_{\omega_2|_n} \circ f_{\zeta} \right) ' \right) \left(x\right) \right|$$
$$ =  \left|\frac{d}{dx}  \left(  \log f_{\omega_1|_n}' \circ f_{\xi}   - \log  f_{\omega_2|_n} ' \circ f_{\zeta} \right) \left(x \right) -  \frac{d}{dx}  \left(  \log  f_{\zeta }  ' - \log  f_{\xi } ' \right)  \left(x\right)    \right|$$
$$\geq  \left| \frac{d}{dx}  \left(  \log  f_{\zeta }  ' - \log  f_{\xi } ' \right) \left(x\right) \right|   - \left|\frac{d}{dx}  \left(  \log f_{\omega_1|_n}' \circ f_{\xi}   - \log  f_{\omega_2|_n} ' \circ f_{\zeta} \right) \left(x \right) \right|  $$
$$\geq c - 2\tilde{C}\cdot \tilde{\rho}^{n'} = c -o(1).$$

Thus,   $\xi'' = \omega_1|_n *\xi$  and $\zeta'' =  \omega_2|_n *\zeta$ are words that can be made arbitrarily long, that satisfy that for some $c'>0$ and for all $x$
$$ \left|\frac{d}{dx}  \left(  \log f_{\xi''}  ' - \log   f_{\zeta''}  ' \right)  \left(x\right) \right|>c'.$$
Also,  by \eqref{Eq omega1 and omega2}, 
$$\text{dist} \left( f_{\xi'' } ([0,1]),\, f_{\zeta''} ([0,1]) \right) > 3 (\tilde\rho)^n > 3 (\tilde\rho)^{n+n'}.$$
Thus, we are back in Case 1 with these $\xi'',\zeta''$ (that we recall can be made arbitrarily long) and $c'$.

Thus, we have completely reduced Case 2 to Case 1, completing the proof of Claim \ref{Claim key}. \hfill{$\Box$}



\subsection{The Euclidean derivative cocycle and associated transfer operator} \label{Section pre}
Fix a $C^{2}$ IFS $\Phi$ as in Section \ref{Section induced IFS}, and let us retain the other  assumptions and notations from that Section. Let $\textbf{p}=(p_1,...,p_n)$ be a strictly positive probability vector on $\mathcal{A}$, and let $\mathbb{P}=\mathbf{p}^\mathbb{N}$ be the corresponding product measure on $\mathcal{A}^\mathbb{N}$. Let $\nu=\nu_\mathbf{p}$ be the  corresponding self-conformal measure.

Note that for every $P$ the induced IFS $\Phi^P$ has the same attractor $K$ as $\Phi$. That is, $K_{\Phi^P} = K_\Phi$. Furthermore, $\nu$ is also a self-conformal measure with respect $\Phi^P$ and the induced probability vector $\mathbf{p}^P$ on $\mathcal{A}^P$. 

Thus, by working with the induced IFS $\Phi^N$ as in Claim \ref{Claim key}, we may assume without the loss of generality that our (not conjugate to linear) original IFS $\Phi$ already admits  $f_1,f_2,f_3,f_4 \in \Phi$ that satisfy:
\begin{enumerate}
\item For every $k\geq 1$ there exists $i\in \lbrace 1,3\rbrace$ such that 
\begin{equation} \label{Add property 1}
f_i ([0,1]) \cup  f_{i+1} ([0,1]) \cup f_k ([0,1]).
\end{equation}
is a disjoint union. In particular, the union $f_i ([0,1]) \cup  f_{i+1} ([0,1])$ is disjoint for $i=1,3$.

\item There exists  $m',m>0$ such that: For every $x\in [0,1]$, for both $i=1,3$,
\begin{equation} \label{Add property 2}
m\leq \left|\frac{d}{dx}  \left(  \log f_{i } ' - \log f_{i+1} ' \right) \left(x\right) \right|\leq m',\text{ and } m-2\cdot \tilde{C}\cdot \sup_{f\in \Phi} ||f'||_\infty>0.
\end{equation}
\end{enumerate}
Note that for the second part of \eqref{Add property 2} we use Claim \ref{Claim key} Part (3) and that, with the notation $\tilde{\rho}$ as in Section \ref{Section induced IFS},
$$ \sup_{f\in \Phi^N} ||f'||_\infty \leq \tilde{\rho}^N.$$

Let  $G$ to be the free semigroup generated by the family $\{f_a: 1\leq a\leq n\}$, which acts on $[0,1]$ by composing the corresponding $f_a$'s. We define the derivative cocycle $c:G\times [0,1]\rightarrow \mathbb{R}$ via
\begin{equation} \label{The der cocycle}
c(I,x)=-\log \left| f'_I (x) \right|.
\end{equation}
Slightly adjusting our notation from Section \ref{Section induced IFS}, let 
$$\rho:= \sup_{f\in \Phi} ||f'||_\infty \in (0,1).$$ Also, we note that by uniform contraction
\begin{equation} \label{Eq C and C prime}
0<D:= \min \lbrace -\log |f' (x)| : f\in \Phi, x\in I \rbrace, \quad D':= \max \lbrace -\log |f' (x)| : f\in \Phi, x\in I \rbrace < \infty.
\end{equation}
A major part in our argument is played by the transfer operator:
 \begin{Definition} \label{Def transfer operator}
 For every $s \in \mathbb{C}$ such that $|\Re(s)|$ is small enough, let $P_{s}:C^1([0,1]) \rightarrow C^1([0,1])$ denote the transfer operator defined by, for $g \in C^1$ and $x \in [0,1]$,
$$P_{s} (g)(x) = \int e^{2\pi \cdot s\cdot  c(a,x)} g\circ f_a(x)\, d\mathbf{p}(a). $$
 \end{Definition}
Note that $\nu$ is the unique stationary measure corresponding to the measure $\sum_{a\in \mathcal{A}} p_a \cdot \delta_{\lbrace f_a \rbrace}$ on $G$. So, the detailed discussion about this operator as in  \cite[Section 11.5]{Benoist2016Quint}   applies in the setting we are considering.

\subsection{Disintegration of $\nu$ and the transfer operator} \label{Section dis} 
We can now construct the random measures  we discussed in Section \ref{Section method}. We begin by recalling the notion of a model (as in e.g. \cite{Shmerkin2016Solomyak}):  Let $I$ be a finite set of $C^2 (\mathbb{R})$ iterated function systems $\lbrace f_1^{(i)},...,f_{k_i} ^{(i)} \rbrace, i\in I$.  Let $\Omega = I^\mathbb{N}$. For $\omega \in \Omega$ and $n\in \mathbb{N}\cup \lbrace \infty \rbrace$ let
$$X_n ^{(\omega)} := \prod_{j=1} ^n \lbrace 1,...,k_{\omega_j} \rbrace.$$
Define a coding map $\Pi_\omega : X_\infty ^{(\omega)} \rightarrow \mathbb{R}$ via
$$\Pi_\omega (u) = \lim_{n\rightarrow \infty} f^{ (\omega_1)} _{u_1} \circ \circ \circ f^{ (\omega_n)} _{u_n} (0).$$
Next, for each $i\in I$ let $\mathbf{p}_i = (p_1 ^{(i)},...,p_{k_i} ^{(i)})$ be a probability vector with strictly positive entries. On each $X_\infty ^{(\omega)}$ we define the product measure
$$\eta^{(\omega)} = \prod_{n=1} ^\infty \mathbf{p}^{(\omega_n)}.$$
We can now define
$$\mu_\omega:= \Pi_\omega \left( \eta^{(\omega)} \right). $$
Letting $\sigma:\Omega \rightarrow \Omega$ be the left shift, we have the following dynamical self-conformality relation:
\begin{equation} \label{Eq stochastic self-similarity}
\mu_\omega = \sum_{u\in X_1 ^{(\omega)}} p_u ^{(\omega_1)} f_u ^{(\omega_1)} \mu_{\sigma(\omega)}.
\end{equation}
We also define
$$K_\omega := \supp(\mu_\omega) = \Pi_\omega \left( X_\infty ^{(\omega)} \right),$$
and note that here we have
$$K_\omega = \bigcup_{u\in  X_1 ^{(\omega)}}  f_u ^{(\omega_1)} K_{\sigma(\omega)}.$$

Next, for $\omega\in \Omega, N\in \mathbb{N}$, and $s\in \mathbb{C}$ we define an operator $P_{s,\omega,N}:C^1\left([0,1]\right)\rightarrow C^1\left([0,1]\right)$ by
\begin{equation} \label{Eq dis operator}
P_{s,\omega,N} \left( g \right) (x):= \sum_{I\in X_N ^{(\omega)}} \eta ^{(\omega)} (I) e^{2\pi s c(I,x)} g\circ f_I (x), \quad \text{ where } \eta ^{(\omega)} (I) := \prod_{n=1} ^{|I|} \textbf{p}^{(\omega_n)}_{  I_n }.
\end{equation}
Note that we only need to know $\omega|_{N}$ in order for $P_{s,\omega,N}$ to be well defined. 
Iterating \eqref{Eq stochastic self-similarity} we have the following equivariance relations, whose proof is left to the reader: First, for every $\omega\in \Omega$, $N\in \mathbb{N}$, and $g\in C^1\left([0,1]\right)$,
\begin{equation} \label{Eq equivariance}
\int g(x) \, d\mu_{\omega}(x) =\int P_{0,\omega,N} \left( g \right) (x)\, d\mu_{\sigma^N \omega} (x).
\end{equation}
Furthermore, for all integers $0\leq \tilde{n}\leq n, N,$ we have that
\begin{equation} \label{Eq equivariance 2}
P_{s,\omega, nN} \left( g \right) (x) =   P_{s, \sigma^{\tilde{n}N} \omega,  Nn-N\tilde{n}}  \left( P_{s,\omega, \tilde{n}N} \left( g \right)  \right) \left( x \right).
\end{equation}

Finally, let $\mathbb{Q}$ be a $\sigma$-invariant measure on $\Omega$. The triplet $\Sigma = (\Phi_{i\in I} ^{(i)}, (\mathbf{p}_i)_{i\in I}, \mathbb{Q})$ is called a model. We say the model is non-trivial if $\mu_\omega$ is non-atomic for $\mathbb{Q}$-a.e. $\omega$. We say it is Bernoulli if $\mathbb{Q}$ is a Bernoulli measure. We are now ready to state the main result of this Section:
\begin{theorem} \label{Theorem disint}
Let $\Phi$ be a  $C^2 ([0,1])$ IFS satisfying properties \eqref{Add property 1} and \eqref{Add property 2}, and let $\nu=\nu_{\mathbf{p}}$ be a self-conformal measure. Then there exists a non-trivial Bernoulli model $\Sigma=\Sigma(\Phi, \nu)$ such that:
\begin{enumerate}
\item (Disintegration of measure) $\nu = \int \mu_{(\omega)} d\mathbb{Q}(\omega)$.

\item (Disintegration of operator) For every $N\in \mathbb{N},s\in \mathbb{C}, f\in C^1([0,1]),$ and $x\in [0,1]$  we have 
\begin{equation} \label{Dis of transfer}
P_s ^{N} \left(f \right)(x) = \sum_{\omega \in \Omega^{N}} \mathbb{Q}\left([\omega]\right) P_{s,\omega, N} \left( f \right)(x).
\end{equation}

\item (Non-trivial  branching) For every $i\in I$ we have $k_i =2$ or $k_i=3$.

\item (Separation) For every $\omega \in \Omega$ the union
$$\bigcup_{ u\in X_1 ^{(\omega)}} f_u ^{(\omega_1)} ([0,1])$$
is disjoint.

\item (UNI in all parts)  There exist $m',m>0$ and $N_0\geq 0$ such that for all $N\geq N_0$, for every $\omega \in \Omega$ there exist $\alpha_1 ^N, \alpha_2 ^N \in X_N ^{(\omega)}$ such that
 
$m \leq \left|\frac{d}{dx}  \left(  \log f_{\alpha_1 ^N } ' - \log f_{\alpha_2 ^N } ' \right) \left(x\right) \right| \leq m', \quad \text{ for all } x\in [0,1]$.

\item (Federer property) For every $D>1$ there exists $C_D=C_D(\Sigma)>0$ such that:

For every $\omega\in \Omega$, for every $x\in \supp (\mu_\omega)$ and $r>0$,
$$\mu_{(\omega)}  \left( B(x, Dr) \right) \leq C_d \mu_{(\omega)}  \left( B(x, r) \right).$$
\end{enumerate}
\end{theorem}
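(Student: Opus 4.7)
My plan is to construct the model $\Sigma$ explicitly: first define the sub-IFSs using \eqref{Add property 1} and \eqref{Add property 2}, then distribute the weights via a flow-conservation identity, and finally verify each of the six items.

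\textbf{Construction.} Set $I := \{5,\ldots,n\} \cup \{L, R\}$. For each $k \in \{5,\ldots,n\}$, use \eqref{Add property 1} to pick $i(k) \in \{1,3\}$ such that $f_{i(k)}([0,1])\cup f_{i(k)+1}([0,1])\cup f_k([0,1])$ is a disjoint union, and set $\Phi^{(k)} := \{f_{i(k)}, f_{i(k)+1}, f_k\}$. Also set $\Phi^{(L)} := \{f_1,f_2\}$ and $\Phi^{(R)} := \{f_3,f_4\}$, both disjoint by \eqref{Add property 1}. Parts (3) and (4) are then built in, and each $\Phi^{(i)}$ contains a UNI pair provided by \eqref{Add property 2}. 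Fix a small $c_0 > 0$; set $p_k^{(k)} := 1 - 2c_0$, $p_{i(k)}^{(k)} := p_{i(k)+1}^{(k)} := c_0$, and $\tilde{\mathbb{Q}}(k) := p_k/(1-2c_0)$ for $k \geq 5$, and then determine $\tilde{\mathbb{Q}}(L), \tilde{\mathbb{Q}}(R), p^{(L)}, p^{(R)}$ so that the flow-conservation identity
\begin{equation*}
p_a = \sum_{i \in I:\ f_a \in \Phi^{(i)}} \tilde{\mathbb{Q}}(i)\, p_a^{(i)} \quad \text{for every } a \in \mathcal{A}
\end{equation*}
holds, with all quantities strictly positive. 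Routine algebra shows that this linear system admits a positive solution for $c_0$ small enough. Finally, set $\mathbb{Q} := \tilde{\mathbb{Q}}^{\mathbb{N}}$.

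\textbf{Parts (1), (2), (6).} Part (1) will follow from the flow-conservation identity: the measure $\int \mu_\omega\, d\mathbb{Q}$ satisfies the same self-conformal relation as $\nu$, so equals $\nu$ by uniqueness. Part (2) is similar: for each $\alpha \in \mathcal{A}^N$, the telescoped identity
\begin{equation*}
p_\alpha = \sum_{\omega \in \Omega^N} \mathbb{Q}([\omega]) \sum_{I \in X_N^{(\omega)}:\ f_I = f_\alpha} \eta^{(\omega)}(I)
\end{equation*}
follows from iterating flow conservation, and matches \eqref{Dis of transfer} after expansion. Non-triviality (non-atomicity of $\mu_\omega$) is standard since each $\Phi^{(i)}$ contains at least two maps with distinct fixed points and disjoint images. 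Part (6) (Federer) is standard for measures generated by a tree of disjoint sub-cylinders with weights bounded below by a uniform positive constant and contraction ratios bounded above by $\rho < 1$; the doubling constant depends only on these bounds.

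\textbf{Part (5) is the main obstacle.} For any $\omega$, $\Phi^{(\omega_1)}$ contains a UNI pair $\{f_p, f_{p+1}\}$ with $p \in \{1,3\}$. I take $\alpha_1^N, \alpha_2^N \in X_N^{(\omega)}$ to agree everywhere except at the last coordinate, where $\alpha_1^N$ uses $f_p$ and $\alpha_2^N$ uses $f_{p+1}$. Writing $f_{\alpha_j^N} = h \circ f_{(\cdot)}$ with $h$ the common outer composition of length $N-1$, the chain rule gives
\begin{equation*}
\bigl[\log f_{\alpha_1^N}' - \log f_{\alpha_2^N}'\bigr]'(x) = (\log h')'(f_p(x))\,f_p'(x) - (\log h')'(f_{p+1}(x))\,f_{p+1}'(x) + (\log f_p' - \log f_{p+1}')'(x).
\end{equation*}
The last summand lies in $[m, m']$ by \eqref{Add property 2}, while the first two have total magnitude at most $2B\rho/(1-\rho)$ by the standard bounded-distortion bound $|(\log h')'(\cdot)| \leq B/(1-\rho)$ (uniform in the length of $h$) combined with $|f_p'|, |f_{p+1}'| \leq \rho$. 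Since Claim \ref{Claim key} permits the induced generation from Section \ref{Section induced IFS} to be arbitrarily large, I arrange $\rho$ small enough that the UNI term dominates the error, yielding the uniform bounds required for Part (5).
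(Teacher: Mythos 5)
Your model construction and the verification of Parts (1)--(4) are essentially the paper's: the paper distributes mass via the weights $p_a/n_a$, where $n_a$ counts the sub-IFSs containing $f_a$, and your linear system in the parameter $c_0$ achieves the same flow-conservation identity, from which (1) follows by uniqueness of the stationary measure and (2) by iterating flow conservation letter by letter. The genuine problem is Part (5). With your placement --- the two words agree in the outer $N-1$ letters and differ in the innermost one --- the common composition $h$ is evaluated at the two \emph{distinct} points $f_p(x)$ and $f_{p+1}(x)$, so it does not cancel and produces the additive error you bound by $2B\rho/(1-\rho)$. You then need $m> 2B\rho/(1-\rho)$, and your only justification is that the induced generation ``can be taken arbitrarily large so that $\rho$ is small''. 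That step fails: Claim \ref{Claim key} produces one generation $N$ with some constants $m,m'$; it does not assert that the same $m$ survives at all larger generations. More importantly, inducing cannot improve the relevant ratio: replacing the UNI pair $(f_p,f_{p+1})$ by $(f_p\circ f_w,\,f_{p+1}\circ f_w)$ multiplies the lower bound in \eqref{Add property 2} by $\inf|f_w'|$ (chain rule) while multiplying the contractions by roughly $\Vert f_w'\Vert_\infty$, so by bounded distortion the quantity $m_{\mathrm{pair}}/\bigl(C_1(\Vert f_p'\Vert_\infty+\Vert f_{p+1}'\Vert_\infty)\bigr)$ is essentially unchanged; and prefixing a common outer word only helps if the inequality you are trying to create already holds. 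Thus ``the UNI constant of the pair beats $C_1$ times its contraction'' is an extra quantitative hypothesis that neither \eqref{Add property 2} nor Claim \ref{Claim key} supplies, and your argument cannot manufacture it.

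The paper avoids this entirely by the opposite placement: $\alpha_1^N,\alpha_2^N$ differ in the \emph{outermost} letter and share the inner word $\eta|_{N-1}$, so that $\log f_{\alpha_1^N}'-\log f_{\alpha_2^N}'=(\log f_i'-\log f_j')\circ f_{\eta|_{N-1}}$; the common inner contribution cancels identically, no additive error term appears, and the level-one bound \eqref{Add property 2} is simply evaluated along the common inner map. If you keep your placement you must add the quantitative hypothesis above; otherwise switch to the paper's choice. Two smaller points: the differing (innermost) coordinate is indexed by $\Phi^{(\omega_N)}$, not $\Phi^{(\omega_1)}$ (harmless, since every sub-IFS contains a UNI pair); and in Part (6) your stated sufficient conditions (weights bounded below, contraction bounded above) are not actually sufficient for doubling --- you also need the gaps between distinct sibling cylinders to be bounded below by a constant multiple of the parent cylinder's diameter, uniformly in $\omega$, which is exactly what Part (4) together with bounded distortion gives and what the paper's analogues of Naud's Lemmas 6.1--6.2 establish before running the covering argument.
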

Recall that our IFS $\Phi$ meets the conditions of Theorem \ref{Theorem disint} courtesy of Claim \ref{Claim key}. The proof of Theorem \ref{Theorem disint} is given in the next three subsections.  
\subsubsection{Construction of the model}
Recall that we are assuming conditions \eqref{Add property 1} and \eqref{Add property 2} hold for $\Phi$.  For $i=1,2,3,4$ we define the IFSs
$$\Phi_1  = \Phi_2= \lbrace f_1, f_2 \rbrace, \text{ and } \Phi_3=\Phi_4= \lbrace f_3, f_4 \rbrace.$$
For every $k\geq 5$ define the IFS 
$$\Phi_k  = \lbrace f_i,f_{i+1},f_k \rbrace$$ 
as in \eqref{Add property 1}.

 We now define
$$I=\lbrace  \Phi_i:\, i\in \mathcal{A} \rbrace.$$
Thus, for every $i\in I$ we associate the IFS $\Phi_i$. 

Note that  certain $f_i\in \Phi$ appear in multiple IFS's in $I$. Write, for $i\in \mathcal{A}$,
$$n_i = \left| \lbrace \Phi_j:\, f_i \in \Phi_j \rbrace \right| .$$
Recall that $\nu=\nu_{\mathbf{p}}$ where $\mathbf{p}\in \mathcal{P}(\mathcal{A})$. We now define a probability vector $\mathbf{q}$ on $I$ as follows:  
$${\mathbf{q}}_1 = {\mathbf{q}}_2= \frac{p_1}{n_1}+ \frac{p_{2}}{n_{2}},\quad  {\mathbf{q}}_3 = {\mathbf{q}}_4= \frac{p_3}{n_3}+ \frac{p_{4}}{n_{4}},\quad {\mathbf{q}}_k= \frac{p_i}{n_i}+ \frac{p_{i+1}}{n_{i+1}}+\frac{p_{k}}{n_{k}} \text{ for }k\geq 5.$$
Let $\mathbb{Q}=\mathbf{q}^\mathbb{N}$. This will be our Bernoulli selection measure on $\Omega^\mathbb{N}$.

Next, for every $i\in I$ we define the probability vector
$$\tilde{\mathbf{p}}_1 = \tilde{\mathbf{p}}_2  = \left( \frac{\frac{p_1}{n_1}}{\frac{p_1}{n_1}+ \frac{p_{2}}{n_{2}} }, \, \frac{\frac{p_{2}}{n_{2}}}{\frac{p_1}{n_1}+ \frac{p_{2}}{n_{2}} }\right), \quad \tilde{\mathbf{p}}_3 = \tilde{\mathbf{p}}_4  = \left( \frac{\frac{p_3}{n_3}}{\frac{p_3}{n_3}+ \frac{p_{4}}{n_{4}} }, \, \frac{\frac{p_{4}}{n_{4}}}{\frac{p_3}{n_3}+ \frac{p_{4}}{n_{4}} }\right), $$
$$ \tilde{\mathbf{p}}_k = \left( \frac{\frac{p_i}{n_i}}{\frac{p_i}{n_i}+ \frac{p_{i+1}}{n_{i+1}}+\frac{p_{k}}{n_{k}} }, \, \frac{\frac{p_{i+1}}{n_{i+1}}}{\frac{p_i}{n_i}+ \frac{p_{i+1}}{n_{i+1}}+\frac{p_{k}}{n_{k}} }, \, \frac{\frac{p_{k}}{n_{k}}}{\frac{p_i}{n_i}+ \frac{p_{i+1}}{n_{i+1}}+\frac{p_{k}}{n_{k}} }\right)  \text{ for } k\geq 5.$$
Our model is now fully defined.

\subsubsection{Proof of Parts (1)-(5) of Theorem \ref{Theorem disint}}

\noindent{ \textbf{Proof of Part (1)}} We first argue that
$$\sum_{i\in \mathcal{A}} f_i p_i \cdot  \int \mu_{(\omega)}\, d\mathbb{Q}(\omega) = \int \mu_{(\omega)}\, d\mathbb{Q}(\omega).$$
The proof is almost entirely the same as \cite[Section 2.2]{Algom2022Baker}, so we omit the details. Part (1) now follows since $\nu$ is the unique measure satisfying this identity.
$$ $$
\noindent{ \textbf{Proof of Parts (2) and (3)}} These are straightforward given our construction. Indeed, for part (2), when $N=1$ it suffices to note that for every $i\in \mathcal{A}$,
$$ \sum_{j:\, f_i\in \Phi_j} {\mathbf{q}}_j \cdot \mathbf{p} _j (i) = n_i \cdot \frac{p_i}{n_i} = p_i.$$ 
For general $N$ similar considerations apply.
$$ $$
\noindent{ \textbf{Proof of Part (4)}} This follows directly from our construction and from Claim \ref{Claim key} part (1).
$$ $$
\noindent{ \textbf{Proof of Part (5)}} Let $\omega \in \Omega$ and let $N_0=1$. Choose any $N>N_0$. By \eqref{Add property 2} and our construction, there exist  $m',m>0$ such that for   some $i,j \in \Phi_{\omega_N}$  with 
$$m\leq \left|\frac{d}{dx}  \left(  \log f_{i} ' - \log f_{j } ' \right) \left(x\right) \right|\leq m', \text{ for every } x\in [0,1].$$
Let $\xi \in X^{(\omega)} _{N-1}$, and put $\alpha_1 := \xi*i, \alpha_2:=\xi*j\in X^{(\omega)} _{N}$. Then  for all $x\in [0,1]$, 
$$\left|\frac{d}{dx}  \left(  \log f_{\alpha_1 } ' - \log f_{\alpha_2} ' \right) \left(x\right) \right|= \left|\frac{d}{dx}  \left(  \log \left( f_{\xi} \circ f_{i} \right) ' - \log \left(  f_{\xi} \circ f_{j} \right) ' \right) \left(x\right) \right|$$ 
$$ = \left|\frac{d}{dx}  \left(  \log  f_{i}'   - \log  f_{j} '  + \log \left(  f_{\xi}' \circ f_{i} \right) - \log \left(  f_{\xi}' \circ f_{j} \right)  \right)  \left(x\right) \right| $$
$$ \geq \left|\frac{d}{dx}  \left(  \log  f_{i}'   - \log  f_{j} ' \right)  \left( x \right) \right|  - \left| \frac{d}{dx}  \left(  \log \left(  f_{\xi}' \circ f_{i} \right) - \log \left(  f_{\xi}' \circ f_{j} \right)  \right)  \left(x\right) \right|.$$
By arguing similarly to \eqref{Eq tilde C},
$$\left| \frac{d}{dx}   \log \left(  f_{\xi}' \circ f_{i} \right)(x) \right|\leq \tilde{C}\cdot \left| f_{i}' (x) \right|\leq \tilde{C}\cdot \rho.$$
By the choice of $i,j$ we conclude that
$$\left|\frac{d}{dx}  \left(  \log f_{\alpha_1 } ' - \log f_{\alpha_2} ' \right) \left(x\right) \right|  \geq  m-2\cdot \tilde{C}\cdot \rho>0,$$
where the last inequality is due to the second part of \eqref{Add property 2}. \hfill{$\Box$}

\subsubsection{Proof of Part (6) of Theorem \ref{Theorem disint}}
This part of the proof is modelled after Naud's work in \cite[Section 6]{Naud2005exp}. Similarly to Naud,  we require the following Lemmas and definitions, that will also be used elsewhere in this note. However, unlike Naud \cite{Naud2005exp}, the Federer property we establish is for the \textit{random} measures in our model. Thus, we make sure that all our estimates are \textit{uniform in} $\omega$ (i.e. depend only on the model and not the measure under consideration).  
\begin{Definition} \label{Def cylinders}
Fix $\omega\in \Omega$ and $n\in \mathbb{N}$. The cylinder that corresponds to $u \in X_n ^{(\omega)}$ is the set
$$C_u := f^{ (\omega_1)} _{u_1} \circ \circ \circ f^{ (\omega_n)} _{u_n} ([0,1]) \subseteq [0,1].$$
\end{Definition}
From now on we fix the $\omega$ in question and suppress it in our notation. Also, note that by the definition of $\mu_\omega$, \eqref{Eq stochastic self-similarity}, and  Theorem \ref{Theorem disint} Part (4),
\begin{equation} \label{Eq measure of cylinder}
\mu_\omega (C_u) = \mu_\omega ( f^{ (\omega_1)} _{u_1} \circ \circ \circ f^{ (\omega_n)} _{u_n} ([0,1]) ) = \eta^{(\omega)} ([u_1,...,u_n]) = \prod_{i=1} ^n \mathbf{p}^{(\omega_i)} _{u_i}.
\end{equation}
For a cylinder set $C_\alpha$ let $|C_\alpha|$ denote its diameter. Recall that we are fixing some $\omega \in \Omega$, and considering cylinders with respect to $K_\omega$. 
\begin{Lemma} \label{Lemma 6.1 Naud}
There exist constants $C>0$ and $0<\delta_1,\delta_2<1$ uniform in $\omega$ such that for all cylinders $C_\alpha \subseteq C_\beta$
$$C^{-1} \delta_1 ^{|\alpha|-|\beta|} \leq \frac{|C_\alpha|}{|C_\beta|} \leq C\cdot \delta_2 ^{|\alpha|-|\beta|}.$$
\end{Lemma}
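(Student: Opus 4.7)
The plan is a short bounded-distortion argument; the only real content is checking that the constants depend only on $\Phi$, not on $\omega$. First, the separation condition Theorem~\ref{Theorem disint}(4) makes the cylinders along any fixed $\omega$ a nested tree, so $C_\alpha\subseteq C_\beta$ forces $|\alpha|\ge|\beta|$ and $\beta$ to be a prefix of $\alpha$. Setting $k:=|\alpha|-|\beta|$ and writing $\alpha=\beta\ast\gamma$ with $\gamma\in X_k^{(\sigma^{|\beta|}\omega)}$, the map $f^{(\omega_1)}_{\alpha_1}\circ\cdots\circ f^{(\omega_{|\alpha|})}_{\alpha_{|\alpha|}}$ factors as $g\circ h$, where $g:=f^{(\omega_1)}_{\beta_1}\circ\cdots\circ f^{(\omega_{|\beta|})}_{\beta_{|\beta|}}$ and $h:=f^{(\omega_{|\beta|+1})}_{\gamma_1}\circ\cdots\circ f^{(\omega_{|\alpha|})}_{\gamma_k}$, so that $C_\beta=g([0,1])$ and $C_\alpha=g(h([0,1]))$.

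Next I would use that each $f_a$ is a $C^2$ injection $[0,1]\to[0,1]$, and therefore strictly monotone; so are $g$ and $h$. Three applications of the mean value theorem then produce $\eta,\xi,\theta\in[0,1]$ with
$$|C_\beta|=|g'(\eta)|,\qquad |h([0,1])|=|h'(\theta)|,\qquad |C_\alpha|=|g'(\xi)|\cdot|h'(\theta)|,$$
and hence
$$\frac{|C_\alpha|}{|C_\beta|}=\frac{|g'(\xi)|}{|g'(\eta)|}\cdot|h'(\theta)|.$$

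Each $f^{(\omega_j)}_{\beta_j}$ is one of the maps of the original IFS $\Phi$, so $g=f_{\beta^\ast}$ for some word $\beta^\ast\in\mathcal{A}^\ast$; the bounded-distortion bound \eqref{Eq bdd distortion} then gives $L^{-1}\le|g'(\xi)|/|g'(\eta)|\le L$. The chain rule together with the uniform contraction bounds $\rho_{\min}\le|f_a'|\le \rho<1$ recalled in Section~\ref{Section pre} gives $\rho_{\min}^k\le|h'(\theta)|\le\rho^k$. Combining these yields
$$L^{-1}\rho_{\min}^{|\alpha|-|\beta|}\;\le\;\frac{|C_\alpha|}{|C_\beta|}\;\le\;L\rho^{|\alpha|-|\beta|},$$
so one may take $C=L$, $\delta_1=\rho_{\min}$, $\delta_2=\rho$. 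I do not anticipate an obstacle: the constants $L$, $\rho$, $\rho_{\min}$ depend only on the underlying IFS $\Phi$, so the uniformity in $\omega$ demanded by the statement is automatic. The only micro-step to verify is monotonicity of $g$ and $h$, which follows from each $f_a$ being an injection of $[0,1]$ into $\mathbb{R}$.
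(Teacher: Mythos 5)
Your proof is correct and follows essentially the same route as the paper: write $\alpha=\beta\ast u$, express the diameters via the mean value theorem and the chain rule, control the ratio $|f_\beta'(\xi)|/|f_\beta'(\eta)|$ by the bounded distortion bound \eqref{Eq bdd distortion}, and bound the inner factor by the uniform contraction constants $\rho_{\min}^{|u|}$ and $\rho^{|u|}$, all of which depend only on $\Phi$ and hence are uniform in $\omega$. The only cosmetic difference is that you also justify the prefix relation $\alpha=\beta\ast u$ from the separation property, which the paper takes as implicit.
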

\begin{proof}
Write $\alpha=\beta\cdot u$ where $|u|=|\alpha|-|\beta|$. Then, omitting superscripts, for some $x_0$
$$|C_\alpha| = |f_\beta \circ f_u ([0,1])| = \left|f_\beta ' \circ f_u(x_0) \cdot f_u' (x_0)\right|$$
and for some $y_0$ we have
$$|C_\beta| = |f_\beta([0,1])| = \left|f_\beta ' (y_0) \right|.$$
Letting $L$ be as in \eqref{Eq bdd distortion}, recalling that $f_\beta$ is a composition of maps from $\Phi$,
$$L^{-1} \left( \min_{i\in I, f \in \Phi^{(i)}} \min_{x\in [0,1]} \left| f_i ' (x)\right| \right)^{|u|} \leq  L^{-1} \cdot \left|f_u' (x_0)\right| \leq  \frac{|C_\alpha|}{|C_\beta|} \leq L \cdot \left|f_u' (x_0)\right|\leq L\cdot \rho^{|u|},$$
as claimed.
\end{proof}

\begin{Lemma} \label{Lemma 6.2 Naud}
For every cylinder $C_\beta$ we have
$$C_\beta \cap K_\omega \subseteq \bigcup_{C_\alpha \subseteq C_\beta} C_\alpha,$$
where $|\alpha|=|\beta|+1$. Moreover, there is some $\lambda>0$ uniform in $\omega$ such that for any two distinct cylinders $C_{\alpha_i} \subseteq C_\beta$ with $i=1,2$ and $|\alpha_i|=|\beta|+1$ we have
$$\text{dist} \left( C_{\alpha_1},\, C_{\alpha_2} \right) \geq \lambda\cdot |C_\beta|.$$
\end{Lemma}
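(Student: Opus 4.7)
My plan is to prove the two assertions in sequence, using only the conformality relation \eqref{Eq stochastic self-similarity}, the separation property from Theorem \ref{Theorem disint}(4), and the bounded distortion estimate \eqref{Eq bdd distortion}. A preliminary observation (used implicitly throughout) is that the level-$1$ disjointness of Theorem \ref{Theorem disint}(4) iterates at every scale for the fixed $\omega$: each level-$(n+1)$ cylinder over $\omega$ is the image under some $f_{u_1}^{(\omega_1)}$ of a level-$n$ cylinder over $\sigma\omega$, and the maps $f_{u_1}^{(\omega_1)}$ have pairwise disjoint images, so by induction all cylinders of a given length over $\omega$ are pairwise disjoint.

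For the inclusion, I would unravel the fiber of $\Pi_\omega$ above $C_\beta \cap K_\omega$. Any $x\in C_\beta \cap K_\omega$ has the form $x=\Pi_\omega(u)$ for some $u\in X_\infty^{(\omega)}$; by the iterated disjointness at level $|\beta|$, the prefix $u|_{|\beta|}$ must equal $\beta$. Iterating \eqref{Eq stochastic self-similarity} once more applied to $\mu_{\sigma^{|\beta|}\omega}$ shows that $K_{\sigma^{|\beta|}\omega}=\bigcup_{v\in X_1^{(\sigma^{|\beta|}\omega)}} f_v^{(\omega_{|\beta|+1})}(K_{\sigma^{|\beta|+1}\omega})$, and pushing this forward by $f_\beta$ yields $C_\beta\cap K_\omega=\bigcup_v C_{\beta\cdot v}$. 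Finally, any cylinder $C_\alpha$ of length $|\beta|+1$ with $C_\alpha\subseteq C_\beta$ must coincide with some $C_{\beta\cdot v}$, since the level-$(|\beta|+1)$ cylinders are pairwise disjoint and $C_\alpha$ meets $C_\beta$.

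For the uniform separation, I set
$$\lambda_0 := \min_{i\in I}\,\min_{\substack{u,v\in X_1^{(i)}\\ u\neq v}} \mathrm{dist}\bigl(f_u^{(i)}([0,1]),\, f_v^{(i)}([0,1])\bigr),$$
which is positive because it is the minimum of finitely many positive quantities (the set $I$ is finite, and by Theorem \ref{Theorem disint}(3) each $k_i\le 3$). Given two distinct children $C_{\alpha_j}=f_\beta\circ f_{v_j}^{(\omega_{|\beta|+1})}([0,1])$ of $C_\beta$, pick points $p_j\in f_{v_j}^{(\omega_{|\beta|+1})}([0,1])$ realizing $\mathrm{dist}(C_{\alpha_1},C_{\alpha_2})$; by the mean value theorem,
$$\mathrm{dist}(C_{\alpha_1},C_{\alpha_2})=|f_\beta(p_1)-f_\beta(p_2)|=|f_\beta'(q)|\cdot|p_1-p_2|\ge |f_\beta'(q)|\cdot\lambda_0$$
for some $q\in[0,1]$. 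On the other hand, MVT again gives $|C_\beta|=|f_\beta([0,1])|=|f_\beta'(q')|$ for some $q'\in[0,1]$, and bounded distortion \eqref{Eq bdd distortion} yields $|f_\beta'(q)|\ge L^{-1}|f_\beta'(q')|=L^{-1}|C_\beta|$. Therefore $\mathrm{dist}(C_{\alpha_1},C_{\alpha_2})\ge L^{-1}\lambda_0|C_\beta|$, and $\lambda:=L^{-1}\lambda_0$ has the required uniformity, depending only on $\Phi$ and the finite family $I$, not on $\omega$.

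The main (mild) obstacle is making the iterated disjointness of cylinders for a \emph{fixed} $\omega$ explicit, as well as ensuring the constant $\lambda_0$ is extracted uniformly over $\omega\in\Omega$; both are handled by the finiteness of $I$ combined with Theorem \ref{Theorem disint}(4). No additional input beyond the already-established properties of the model is needed.
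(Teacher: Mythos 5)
Your proposal is correct and follows essentially the same route as the paper: the inclusion via separation (Theorem \ref{Theorem disint}(4)) iterated through the model relation \eqref{Eq stochastic self-similarity}, and the uniform gap via the mean value theorem, bounded distortion \eqref{Eq bdd distortion}, and the positive minimum of finitely many first-level gaps over the finite family $I$. The only cosmetic slip is writing $C_\beta\cap K_\omega=\bigcup_v C_{\beta\cdot v}$ where only the inclusion $C_\beta\cap K_\omega\subseteq\bigcup_v C_{\beta\cdot v}$ holds (the children $C_{\beta\cdot v}=f_{\beta\cdot v}([0,1])$ are larger than $f_{\beta\cdot v}(K_{\sigma^{|\beta|+1}\omega})$), but this is exactly what the lemma requires.
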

\begin{proof}
For the first assertion,  for every $n\in \mathbb{N}$ we have by definition
$$K_\omega = \bigcup_{u\in X_n ^{(\omega)}} f_u ^{(\omega)} K_{\sigma^n(\omega)}, \quad  K_\omega\subseteq K \subseteq [0,1] \text { for every } \omega, \text{ and } \quad K_{\sigma^n (\omega)}  = \bigcup_{i\in X^{(\sigma^n \omega)} _{1}} f^{ (\omega_{n+1})} _{i}  ( K_{\sigma^{n+1} (\omega)}).$$
By Theorem \ref{Theorem disint} part (4), 
$$C_\beta \cap K_\omega = f_{\beta} ^{(\omega)} ([0,1]) \cap  f_{\beta} ^{(\omega)} (K_{\sigma^n (\omega)}) =f_{\beta} ^{(\omega)} (K_{\sigma^n (\omega)}) .$$
So, 
$$C_\beta \cap K_\omega \subseteq   \bigcup_{i\in X^{(\sigma^n \omega)} _{1}} f_\beta ^{(\omega)} \circ f^{ (\omega_{n+1})} _{i} \circ ( K_{\sigma^{n+1} (\omega)}) \subseteq  \bigcup_{i\in X^{(\sigma^n \omega)} _{1}} f_\beta ^{(\omega)} \circ f^{ (\omega_{n+1})} _{i} ([0,1])=\bigcup_{C_\alpha \subseteq C_\beta} C_\alpha.$$

As for the second assertion, write $\alpha_1 = \beta\cdot i$ and   $\alpha_2 = \beta\cdot j$, where $i,j\in \Phi_{\omega_{|\beta|+1}}$. Then for any $x,y\in [0,1]$ there is some $z\in [0,1]$ such that
$$f_{\alpha_1} (x)- f_{\alpha_2 }(y) = f_\beta ( f_i(x)) - f_\beta (f_{j} (y)) = f_\beta ' (z)\cdot (f_i(x)-f_{j}(y)).$$
So, by bounded distortion \eqref{Eq bdd distortion},
$$ \left| f_{\alpha_1} (x)- f_{\alpha_2 }(y) \right| \geq L \cdot |C_\beta| \cdot  \text{dist} \left( C_{i},\, C_{j} \right) \geq L \cdot |C_\beta| \cdot \min_{s\in I, f,g\in \Phi_s}  \text{dist} \left( f([0,1]),\, g([0,1]) \right),$$
and by Theorem \ref{Theorem disint} part (4)
$$\min_{s\in I, f,g\in \Phi_s}  \text{dist} \left( f([0,1]),\, g([0,1]) \right)>0.$$
As required.
\end{proof}

\noindent{ \textbf{Proof of Theorem \ref{Theorem disint} Part (6)}} Fix $r>0$, $D>1$ and $x\in K_\omega$. In general, we aim to show that there exist cylinders $C_\alpha \subseteq C_\beta$ such that $|\alpha|-|\beta|$ depends only on $D$, and
$$C_\alpha \subseteq B(x,r) \text{ and } B(x, Dr)\cap K_\omega \subseteq C_\beta.$$
If this holds then by \eqref{Eq measure of cylinder}:
$$ \frac{\mu_\omega (B(x, Dr) }{\mu_\omega (B(x, Dr)} \leq \frac{\mu_\omega (C_\beta)}{\mu_\omega (C_\alpha)} \leq (\min_{j\in I} \min_{i\in \lbrace 1,...,k_{j} \rbrace} {p}^{(j)} _ {i} )^{ |\beta| -|\alpha|}.$$
Note that the latter bound depends on $D$, and in particular is uniform in $\omega$.

Set $J=B(x,r)$ and $J' = B(x, Dr)$. We first assume $J'\cap K_\omega \subsetneq C_i$ for some 1st generation cylinder of $K_\omega$. Set 
$$n = \min\lbrace j\geq 1:\, \exists C_\alpha \subset J',\, |\alpha|=j\rbrace.$$
Then $n\geq 2$. Let $C_\alpha \subset J'$ be such that $|\alpha|=n$. Let $C_\alpha \subset C_{\alpha'}$ with $|\alpha'|=n-1$. By definition of $n$ $C_{\alpha'}\not \subset J'$, so there are two options: 
\begin{enumerate}
\item If $J'\cap K_\omega \subset C_{\alpha'}$ then by Lemma \ref{Lemma 6.1 Naud} we have
$$C^{-1} \delta_1 \left|C_{\alpha'}\right|\leq \left| C_\alpha \right| \leq \left| J' \right|.$$

\item Otherwise, there exists a cylinder $C_{\beta'}$ such that $|\beta'|=n-1$ such that $C_{\alpha'}$ and $C_{\beta'}$ are consecutive and $J'\cap K_\omega \subseteq C_{\alpha'}\cup C_{\beta'}$. Indeed, $K_\omega \cap J'$ is covered by such cylinders of generation $n-1$ and none of them are included in $J'$. Consider now a bigger cylinder $C_{\gamma}$ such that $C_{\alpha'}\cup C_{\beta'}\subset C_{\gamma}$, and assume that
$$|\gamma| = \max \lbrace j\geq 0:\, \exists C_{\beta} \supset C_{\alpha'}\cup C_{\beta'},\, |\beta|=j\rbrace.$$
The maximality of $|\gamma|$ implies that $C_{\alpha'}\subset C_{\alpha''}$ and $C_{\beta'}\subset C_{\beta''}$ where $|\alpha''|=|\beta''|=|\gamma|+1$. Since the gap between $C_{\alpha'}$ and $C_{\beta'}$ is included in $J'$, by Lemma \ref{Lemma 6.2 Naud} we find that
$$|J'|\geq \text{dist} (C_{\alpha'}, C_{\beta'}) \geq \text{dist} (C_{\alpha''}, C_{\beta''}) \geq \lambda |C_{\gamma}|.$$
\end{enumerate}
We have just shown that there exists a cylinder $C_\beta$ such that $J'\cap K_\omega \subseteq C_\beta$ and $|C_\beta|\leq C'\cdot |J'|$, where $C'$ is independent of $r,x,D$.

Finally, there exists a decreasing sequence of cylinders $C_{\gamma_i}$ such that 
$$x\in C_{\gamma_i} \subsetneq ...\subsetneq C_{\gamma_1} \subsetneq C_{\gamma_0} \subsetneq C_\beta.$$
By Lemma \ref{Lemma 6.1 Naud} and the estimate on $|C_\beta|$ we have
$$|C_{\gamma_i}| \leq C\cdot C' \cdot \delta_2 ^{|\gamma_i|-|\beta|}\cdot Dr.$$
So, whenever $C\cdot C' \cdot \delta_2 ^{|\gamma_i|-|\beta|}\cdot D <1$ we have $C_{\gamma_i} \subseteq J$. This is the same as asking that 
$$|\gamma_i|-|\beta|> \frac{\log D CC'}{\log \delta_2}.$$ Hence there is a cylinder $C_\gamma = C_{\gamma_i}$ as required, such that $C_\alpha \subset J$ and $|\alpha|-|\beta|$ only depends on $D$.

Finally, we discuss the case when $J'\cap K_\omega $ is not included in a first generation cylinder. Note that $x\in C_i$ for some $i\in \lbrace 1,...,k_{\omega_1} \rbrace$. If $C_i \subseteq J'$ then $|C_i|\leq |J'|$ and following the same ideas we find $C_\alpha \subset J$ and $C_\alpha \subset C_i$ with $|\alpha|$ only depending on $D$.  So,
$$\frac{\mu_\omega (J')}{\mu_\omega (J)} \leq \frac{1}{\mu_\omega(C_\alpha)} \leq C_D ',$$
where $C_D '$ only depends on $D$ (in particular, does not depend on $\omega$). If $C_i \not \subset J'$ then by a similar gap argument to the one previously used,
$$|C_i| \leq \frac{\max_{j \in\lbrace 1,...,k_{\omega_1} \rbrace} |C_j|}{\min_{i\neq j \in \lbrace 1,...,k_{\omega_1} \rbrace} \text{dist} (C_i, C_j)} |J'|.$$
Noting that the latter constant can be bounded above uniformly in terms of the model, the proof is concluded in the same way.

\subsection{Spectral gap and reduction to an $L^2$ contraction estimate} \label{Section reduction}
We equip $C^1([0,1])$ with the norm
\begin{equation} \label{Eq B-Q norm}
||\varphi||_{C^1} = ||\varphi||_\infty + ||\varphi'||_{\infty}.
\end{equation}
Following Dolgopyat \cite[Section 6]{Dolgopyat1998rapid} and Naud \cite[the discussion prior to Lemma 5.2]{Naud2005exp}, for every $b \neq 0$ we  define yet another norm on $C^1([0,1])$ via
\begin{equation} \label{Eq norm dol theta}
||\varphi||_{(b)} = ||\varphi||_\infty + \frac{||\varphi'||_{\infty}}{|b|}.
\end{equation}
These two norms on $C^1([0,1])$ are clearly equivalent.

Recall the notations and definitions from Section \ref{Section pre}. The main goal of this (entire) Section is to prove the following spectral gap-type estimate:
\begin{theorem} \label{Theorem spectral gap}
Let $\Phi$ be a  $C^2 ([0,1])$ IFS satisfying properties \eqref{Add property 1} and \eqref{Add property 2}, and let $\nu=\nu_{\mathbf{p}}$ be a self-conformal measure. Then there exist $C,\gamma, \epsilon,R>0$ and some $0<\alpha<1$ such that for all $|b|>R$, $a\in \mathbb{R}$ with $|a|<\epsilon$, and $n\in \mathbb{N}$
\begin{equation} \label{Eq spectral gap}
||P_{a+ib} ^{n} ||_{C^1} \leq C\cdot |b|^{1+\gamma}\cdot  \alpha^n.
\end{equation}
\end{theorem}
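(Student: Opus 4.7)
The plan is to adapt Naud's version of Dolgopyat's method to the randomized setting furnished by Theorem \ref{Theorem disint}. The first stage is to reduce the $C^1$ operator norm bound to a randomized $L^2$ contraction estimate, which will be Proposition \ref{Prop. 5.3 Naud}. Specifically, using the $\|\cdot\|_{(b)}$ norm from \eqref{Eq norm dol theta} (which is equivalent to $\|\cdot\|_{C^1}$ up to a factor of $|b|$), a standard Lasota-Yorke type computation gives a preliminary bound $\|P_{a+ib}^n\varphi\|_{(b)} \leq A \|\varphi\|_{(b)}$ for some $A$ possibly close to but no smaller than $1$. The essential improvement must then come from a sup-norm gain after $n_0 \approx \log|b|$ iterates: one needs $\int |P^{n_0}_{a+ib}\varphi|^2 \, d\nu \leq \alpha_1^{n_0}\int h^2 \, d\nu$ for a suitable ``majorant'' $h$ of $|P^{n_0}_{a+ib}\varphi|$ lying in a Dolgopyat-type cone, and some $\alpha_1 < 1$. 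Using the disintegration identity \eqref{Dis of transfer} together with Cauchy-Schwarz and Part (1) of Theorem \ref{Theorem disint}, this reduces to proving a corresponding $L^2(\mu_\omega)$ contraction for each random piece $P_{a+ib,\omega,n_0}$, with constants uniform in $\omega$.

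The second stage is the construction of randomized Dolgopyat operators, in the spirit of Lemma \ref{Lemma 5.4 Naud}. For each $\omega$, the UNI condition in Theorem \ref{Theorem disint} Part (5) gives branches $\alpha_1^N, \alpha_2^N \in X_N^{(\omega)}$ whose log-derivatives differ by a transverse amount in $[m, m']$; this produces genuine phase cancellation in $e^{2\pi i b \, c(I,x)}$ between these two branches on intervals of length $\sim |b|^{-1}$. Partitioning $[0,1]$ into a grid at scale $|b|^{-1}$, one defines on each cell a damping operator $N_J$ that shaves $h$ by a definite factor on a subset chosen to capture this cancellation, producing an $N_J h$ that still majorizes $|P_{a+ib,\omega,n_0}\varphi|$ while satisfying $\|N_J h\|_{L^2(\mu_\omega)}^2 \leq (1-\eta)\|h\|_{L^2(\mu_\omega)}^2$ for some $\eta > 0$ independent of $\omega$. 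Here the separation property (Part (4)) ensures the branch images do not overlap and obliterate the cancellation, while the uniform Federer property (Part (6)) converts the small-ball cancellation into the $L^2$ gain by controlling the mass doubling on the grid cells.

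The third stage iterates: grouping the $n$ factors of $P_{a+ib}$ into blocks of length $n_0$, applying the contraction in each block, and converting back from $\|\cdot\|_{(b)}$ to $\|\cdot\|_{C^1}$, one obtains the desired bound with $\alpha$ close to $1$, the factor $|b|$ absorbed into $|b|^{1+\gamma}$ coming from the norm comparison and from the cost of choosing $n_0$ depending on $|b|$. The main obstacle will be performing the Dolgopyat damping construction \emph{uniformly in} $\omega$: the cancellation cells, the damping functions, and the quantitative gain $\eta$ must all be measurable and bounded independently of $\omega$, even though the branching structure $X_N^{(\omega)}$ itself changes with $\omega$. This is exactly what the uniformity of the constants $m, m', \lambda, C_D$ across $\omega$ in Theorem \ref{Theorem disint} allows; without such uniformity, integrating the local contraction estimates against $\mathbb{Q}$ would fail to yield a global $L^2$ contraction for $\nu$.
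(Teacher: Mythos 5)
Your outline follows essentially the same route as the paper: reduce the $C^1$ (equivalently $\|\cdot\|_{(b)}$) bound via the disintegration \eqref{Dis of transfer}, Cauchy--Schwarz and an a-priori Lipschitz contraction to a randomized $L^2(\mu_{\sigma^{nN}\omega})$ contraction uniform in $\omega$ (this is exactly Proposition \ref{Prop. 5.3 Naud}), prove that by constructing randomized Dolgopyat operators from the UNI, separation and Federer properties of Theorem \ref{Theorem disint} (Lemma \ref{Lemma 5.4 Naud}, using a partition at scale $\epsilon'/|b|$), and then iterate in blocks of length $\sim\log|b|$, the norm comparison and block length accounting for the $|b|^{1+\gamma}$ factor. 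This matches the paper's argument in structure and in where the uniformity in $\omega$ is needed, so no substantive divergence or gap to report.
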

Recall that by Claim \ref{Claim key} every not conjugate to linear $C^2$ IFS admits an induced IFS satisfying the conditions of Theorem \ref{Theorem spectral gap}.

We will  work with the model  constructed in Theorem \ref{Theorem disint}. Also, recall the definition of the operators $P_{s,\omega,k}$ from \eqref{Eq dis operator}.  We first reduce Theorem \ref{Theorem spectral gap} to the following statement about the $L^2$-contraction of \textit{all} parts of the transfer operator. 
\begin{Proposition} \label{Prop. 5.3 Naud} Assume the conditions of Theorem \ref{Theorem spectral gap}, and let $\Sigma$ be the model from Theorem \ref{Theorem disint}. Then there is some $N\in \mathbb{N}$ and $0<\alpha<1$ such that for $s=a+ib$ with $|a|$ small enough and $|b|$ large enough:

For every $\omega \in \Omega$,
$$\int \left| P_{s,  \omega , nN} W \right|^2 \, d\mu_{\sigma^{nN} \omega} \leq \alpha^n$$
for all $W\in C^1([0,1])$ with $||W||_{(b)} \leq 1$. 
\end{Proposition}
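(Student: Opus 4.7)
The plan is to follow the cone-contraction scheme of Dolgopyat and Naud \cite[Sections 5-7]{Naud2005exp}, but carried out \emph{pathwise} for each sequence $\omega \in \Omega$. The key observation is that Theorem \ref{Theorem disint} provides separation (Part 4), UNI (Part 5), and Federer (Part 6) with constants uniform in $\omega$, so Naud's estimates go through for each $\omega$ with a contraction rate $\alpha \in (0,1)$ that does not depend on the selection sequence.

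First I would introduce a cone
\[
\mathcal{K}_E = \{ H \in C^1([0,1]) : H > 0,\ |H'| \leq E|b|\, H \},
\]
and verify that for $E$ sufficiently large and $|a|$ sufficiently small (depending on $E$ and the model) the real-parameter operator $P_{a,\omega,N}$ maps $\mathcal{K}_E$ into itself for every $\omega$; this is a routine bounded-distortion computation using \eqref{Eq bdd distortion} together with the $C^2$-smoothness of the branches. Any $W \in C^1$ with $||W||_{(b)} \leq 1$ is dominated pointwise by some $H \in \mathcal{K}_E$ of absolutely bounded sup-norm. Thus it suffices to prove a one-step cone contraction: there exists $\alpha_0 \in (0,1)$, uniform in $\omega$, such that for every pair $(W, H)$ with $|W| \leq H \in \mathcal{K}_E$ and $||W||_{(b)}$ bounded by a fixed constant, one can produce an operator $\mathcal{N} = \mathcal{N}(\omega, W, H)$ with $\mathcal{N} H \in \mathcal{K}_E$, $|P_{s,\omega,N}W| \leq \mathcal{N} H$ pointwise, and $\int (\mathcal{N} H)^2\, d\mu_{\sigma^N \omega} \leq \alpha_0 \int H^2\, d\mu_\omega$. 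Iterating this via the semigroup relation \eqref{Eq equivariance 2} then yields the proposition with $\alpha = \alpha_0$.

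The operator $\mathcal{N}$ is the Dolgopyat operator whose construction is the content of Lemma \ref{Lemma 5.4 Naud}. It should have the form $\mathcal{N} H = P_{a,\omega,N}(H \cdot (1 - \eta \chi))$ for a smooth cutoff $\chi$ supported on a union of depth-$N$ cylinders where phase cancellation is forced. Concretely, for each pair $(W,H)$ one selects this union so that on each of its cylinders the two phases $b c(\alpha_1^N, \cdot)$ and $b c(\alpha_2^N, \cdot)$ coming from the UNI pair of Theorem \ref{Theorem disint} Part (5) have derivative difference sandwiched between $m|b|$ and $m'|b|$; after calibrating the scale by $|b|\rho^N \gtrsim 1/m$, on a region of relative length $\sim 1$ in such a cylinder the triangle inequality for the sum $e^{ibc(\alpha_1^N, x)} W(f_{\alpha_1^N}(x)) + e^{ibc(\alpha_2^N, x)} W(f_{\alpha_2^N}(x))$ acquires a quantitative defect bounded away from zero, which is what is encoded by $\chi$ and $\eta$. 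The cutoff height $\eta$ must be chosen small enough so that $\mathcal{N} H$ remains in $\mathcal{K}_E$, but large enough so that the $L^2$ contraction has a definite gap; the Federer property (Part 6) enters at this step, giving the uniform-in-$\omega$ lower bound on the $\mu_\omega$-mass of the cancellation region, which through the equivariance \eqref{Eq equivariance} applied at $s=0$ yields the $L^2$ contraction against $\mu_{\sigma^N\omega}$.

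The main obstacle is verifying that all the geometric inputs -- cylinder diameters and separation, the UNI constants $m, m'$, and the Federer constant -- are genuinely uniform in $\omega$, so that the constants $\alpha_0$ and $N$ in the one-step estimate do not depend on the sequence. This uniformity is precisely what Theorem \ref{Theorem disint} supplies; the remaining work is a careful adaptation of the Dolgopyat construction of Naud \cite[Sections 6-7]{Naud2005exp}, with constants tracked through the random model and the simultaneous preservation of the cone $\mathcal{K}_E$ balanced against the strict $L^2$ contraction.
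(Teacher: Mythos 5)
Your overall strategy matches the paper's: reduce to a one-step Dolgopyat-operator estimate whose constants are uniform in $\omega$ (which is exactly what the paper's Lemma \ref{Lemma 5.4 Naud} supplies), and then iterate via the cocycle relation \eqref{Eq equivariance 2} and the equivariance \eqref{Eq equivariance} to land the $L^2$ estimate against $\mu_{\sigma^{nN}\omega}$. The shape of $\mathcal{N}$ as $P_{a,\omega,N}(H\cdot(1-\eta\chi))$, the role of the UNI pair $\alpha_1^N,\alpha_2^N$, and the use of the Federer property to get uniform $\mu_\omega$-mass on the cancellation region are all as in the paper.

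There is, however, a gap in the way you formulate the one-step claim, and as written the iteration does not close. You impose on the pair $(W,H)$ the hypotheses ``$|W|\leq H\in\mathcal{K}_E$ and $\|W\|_{(b)}$ bounded by a fixed constant,'' and you ask only that the operator satisfy $\mathcal{N}H\in\mathcal{K}_E$, $|P_{s,\omega,N}W|\leq\mathcal{N}H$, and the $L^2$ inequality. To apply the one-step estimate again to the new pair $(P_{s,\omega,N}W,\,\mathcal{N}H)$, you would need to re-verify the hypotheses; but the absolute bound on $\|\cdot\|_{(b)}$ is the wrong invariant, because $H$ (and hence $\mathcal{N}H$) shrinks geometrically under iteration while the absolute bound does not, so a bound of the form $\|W\|_{(b)}\leq C$ is both not used by the Dolgopyat argument and not informative after several steps. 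What actually propagates is the \emph{relative} derivative bound $|W'|\leq E|b|H$: it is this that must appear in the hypothesis, and the conclusion of the one-step claim must supply the matching estimate $|(P_{s,\omega,N}W)'|\leq E|b|\,\mathcal{N}H$. This is precisely the content of Lemma \ref{Lemma 5.4 Naud} Part (3) in the paper, and it is what makes the inductive domination
\[
\bigl|P_{s,\omega,nN}W\bigr|\leq N_s^{J_{\sigma^{nN}\omega}}\cdots N_s^{J_\omega}H,\qquad
\bigl|(P_{s,\omega,nN}W)'\bigr|\leq E|b|\,N_s^{J_{\sigma^{nN}\omega}}\cdots N_s^{J_\omega}H
\]
close at each step. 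Once you replace your hypothesis and conclusion with this relative bound, the rest of your argument goes through and coincides with the paper's reduction of Proposition \ref{Prop. 5.3 Naud} to Lemma \ref{Lemma 5.4 Naud}.
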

This is a randomized analogue of \cite[Proposition 5.3]{Naud2005exp} in Naud's work.

In the reminder of this Section, we reduce Theorem \ref{Theorem spectral gap} to  Proposition \ref{Prop. 5.3 Naud}. Our argument is roughly based on Naud's corresponding argument \cite[Section 5]{Naud2005exp}, with some significant variations due to our  model construction. To this end, we require the following two Lemmas. For $f\in C^1 ([0,1])$ let $||f||_{L([0,1]}$ denote the Lipschitz constant of $f$. 
\begin{Lemma} \label{Lemma 5.2 Naud} \cite[Proof of Theorem 2.1.1 part (ii)]{bishop2013fractal} There exists a constant $C_2>0$  such that  for all $f\in C^1 ([0,1])$,
 for every $\omega \in \Omega, x\in [0,1]$ and $\kappa \in \mathcal{P}([0,1])$,
$$\left|  P_{0,  \omega,  n} \left(f \right) (x) - \int P_{0,  \omega,  n} \left(f \right)(y) d\kappa(y) \right|  \leq C_2 \rho^n || f||_{L([0,1])}.$$

\end{Lemma}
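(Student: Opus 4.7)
The statement is really a direct consequence of uniform contraction applied to each branch of $P_{0,\omega,n}$, and I expect the proof to fit in a single short paragraph with no genuine obstacle; below is the plan in a little more detail.

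First, I would unwind the definition \eqref{Eq dis operator} at $s=0$:
\[
P_{0,\omega,n}(f)(x)=\sum_{I\in X_n^{(\omega)}}\eta^{(\omega)}(I)\,f\!\circ\! f_I(x),
\]
so that, after integrating against any $\kappa\in\mathcal{P}([0,1])$ and using $\sum_I \eta^{(\omega)}(I)=1$,
\[
P_{0,\omega,n}(f)(x)-\int P_{0,\omega,n}(f)(y)\,d\kappa(y)=\int\!\sum_{I\in X_n^{(\omega)}}\eta^{(\omega)}(I)\bigl[f(f_I(x))-f(f_I(y))\bigr]\,d\kappa(y).
\]

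Next I would bound each summand using the Lipschitz constant of $f$ together with uniform contraction. Every map $f_a^{(i)}$ appearing in any $\Phi_i$ belongs to $\Phi$, so $\|(f_a^{(i)})'\|_\infty\le\rho<1$. By the chain rule, for every $I\in X_n^{(\omega)}$ and every $z\in[0,1]$ we have $|f_I'(z)|\le\rho^n$, hence by the mean value theorem $|f_I(x)-f_I(y)|\le\rho^n|x-y|\le\rho^n$. This gives
\[
\bigl|f(f_I(x))-f(f_I(y))\bigr|\le\|f\|_{L([0,1])}\rho^n.
\]

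Inserting this estimate into the displayed identity, using that $\eta^{(\omega)}$ is a probability measure on $X_n^{(\omega)}$ and that $\kappa$ is a probability measure on $[0,1]$, I obtain
\[
\left|P_{0,\omega,n}(f)(x)-\int P_{0,\omega,n}(f)(y)\,d\kappa(y)\right|\le\rho^n\|f\|_{L([0,1])}.
\]
Thus the statement holds with constant $C_2=1$ (independent of $\omega$, $x$, $\kappa$, and $f$), and no genuine difficulty arises. The only thing worth flagging is that the estimate truly is uniform in $\omega$: this relies on the fact that the constituent IFSs $\Phi_i$ of the model all lie inside $\Phi$, so the single contraction ratio $\rho$ controls every branch of $P_{0,\omega,n}$ simultaneously.
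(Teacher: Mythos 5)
Your proof is correct and rests on exactly the same mechanism as the paper's: every branch $f_I$ with $I\in X_n^{(\omega)}$ is an $n$-fold composition of maps from $\Phi$, so $\|f_I'\|_\infty\le\rho^n$, and hence $P_{0,\omega,n}(f)$ has Lipschitz constant at most $\rho^n\|f\|_{L([0,1])}$, which is then compared against the $\kappa$-average. The paper merely packages this as the statement that $\mu\mapsto\int P_{0,\omega,n}(f)\,d\mu$ contracts $\mathcal{P}([0,1])$ in the dual Lipschitz metric (taking $C_2$ to be the diameter of that space), whereas your direct mean-value computation gives the same bound with the explicit constant $C_2=1$; the uniformity in $\omega$ is justified exactly as you flag it.
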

\begin{proof}
The  operator
$$\mu \mapsto \int P_{0,  \omega,  n} \left(f \right)(y) d\mu(y)$$
contracts the space $\mathcal{P} \left( [0,1] \right)$ with the dual Lipschitz metric by a factor of $\rho^n$. Indeed, the IFS $\lbrace f^{(\omega)} _\beta \rbrace_{\beta\in X_n ^{(\omega)}}$ satisfies that $\max_{\beta\in X_n ^{(\omega)}} || \left( f^{(\omega)} _\beta \right)'||_\infty \leq \rho^n$. The Lemma now follows by noting that $P_{0,  \omega,  n} \left(f \right) (x) = \int P_{0,  \omega,  n} \left(f \right)(y)\,d\delta_x(y)$, taking $C_2$ to be the diameter of $\mathcal{P} \left( [0,1] \right)$. 
\end{proof}

Recall that $D'$ is as in \eqref{Eq C and C prime}. 
\begin{Lemma} (A-priori bounds) \label{Lemma iterations}
There exists $C_1>0$  such that for all $|a|$ small enough and $|b|$ large enough, for all $f\in C^1 ([0,1])$, writing $s=a+ib$,
$$|| \left( P_s ^n f\right)' ||_\infty \leq C_1 |b| \cdot ||P_a ^n f||_\infty+\rho ^n || P_a ^n \left( |f'| \right)||_\infty, \text{ and }$$
$$|| \left( P_{s, \omega, n} f\right)' ||_\infty \leq C_1 |b| \cdot ||P_{a, \omega, n}  f||_\infty+\rho ^n || P_{a, \omega, n} \left( |f'| \right)||_\infty, \, \text{ for all }\omega.$$
In particular, there exists a constant $C_6>0$ such that for all $n\geq 0$ and $s=a+ib$
$$|| P_s ^{n} ||_{(b)} \leq C_6 \cdot e^{nD' |a|}.$$
\end{Lemma}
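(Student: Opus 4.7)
The plan is straightforward: directly differentiate the sum defining $P_s^n f$ (and its model analogue $P_{s,\omega,n} f$) via the product rule, obtain pointwise two-term bounds, and then deduce the $(b)$-norm estimate from these together with a standard a-priori $L^\infty$ bound.

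For the first two inequalities, I would apply the product rule to
\[
(P_s^n f)(x) = \sum_{\eta \in \mathcal{A}^n} p_\eta\, e^{2\pi s c(\eta, x)} f(f_\eta(x)),
\]
splitting $(P_s^n f)'(x)$ into a ``phase'' part carrying a factor $2\pi s \cdot c'(\eta, x)$ and a ``composition'' part carrying $f'(f_\eta(x)) \cdot f_\eta'(x)$. Here $c'(\eta, x) = -(\log|f_\eta'|)'(x)$, and the essential uniform input is that
\[
\sup_{\eta, n, x} |c'(\eta, x)| \leq C_0 < \infty.
\]
This follows by expanding $\log|f_\eta'|$ as a telescoping sum along the composition and using that each $(\log|f_j'|)'$ is bounded (by $C^2$-smoothness of $\Phi$) while the resulting prefactors $|f'_{\eta_{k+1}\cdots \eta_n}(x)| \leq \rho^{n-k}$ decay geometrically, making the sum a convergent geometric series. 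Taking absolute values and using $|e^{2\pi s c}| = e^{2\pi a c}$, $|f_\eta'(x)| \leq \rho^n$, and $|s| \leq 2|b|$ for $|b|$ large, yields pointwise
\[
|(P_s^n f)'(x)| \leq C_1 |b| (P_a^n |f|)(x) + \rho^n (P_a^n |f'|)(x),
\]
with $C_1 = 4\pi C_0$, giving the first claimed inequality. The argument for $P_{s,\omega,n}$ is identical: the weights $p_\eta$ are replaced by $\eta^{(\omega)}(I)$, but every uniform bound carries over because the model of Theorem \ref{Theorem disint} is built entirely from maps drawn from $\Phi$, so the constants are independent of $\omega$.

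For the last statement, observe that since $0 \leq c(\eta, x) \leq nD'$ and $\sum_\eta p_\eta = 1$, we get $\|P_a^n g\|_\infty \leq e^{2\pi nD' |a|}\|g\|_\infty$ for every continuous $g$, and similarly $\|P_s^n f\|_\infty \leq e^{2\pi nD' |a|}\|f\|_\infty$. Applying this with $g = f$ and $g = |f'|$ inside the first inequality, dividing through by $|b|$, and adding the $L^\infty$ bound on $P_s^n f$ yields
\[
\|P_s^n f\|_{(b)} \leq e^{2\pi nD' |a|}\bigl(1 + C_1 + \rho^n/|b|\bigr)\|f\|_{(b)}.
\]
Absorbing the bounded prefactor into $C_6$ (and, as elsewhere in the paper, the factor $2\pi$ into the constant $D'$) gives the stated bound $\|P_s^n\|_{(b)} \leq C_6 e^{n D' |a|}$. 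The only non-routine ingredient anywhere in this proof is the uniform bound on $(\log|f_\eta'|)'$, which is a standard $C^2$-bounded distortion computation; I do not anticipate any serious obstacle.
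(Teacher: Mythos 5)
Your proposal is correct and takes essentially the same route as the paper: differentiate the defining sum by the product rule, bound the ``phase'' term using the uniform bound $\sup_{I,x}\left|\frac{d}{dx}\log f_I'(x)\right|<\infty$ (which the paper simply cites rather than re-deriving via the telescoping/geometric-series argument you sketch), bound the ``composition'' term by $\rho^n$, and then combine with the trivial bound $\|P_s^n f\|_\infty\leq e^{2\pi nD'|a|}\|f\|_\infty$ to get the $\|\cdot\|_{(b)}$ estimate. The only cosmetic discrepancies (the bound naturally producing $P_a^n(|f|)$ rather than $\|P_a^n f\|_\infty$, and the absorbed factor $2\pi$) are present in the paper's own proof as well.
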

\begin{proof}
By  Definition \ref{Def transfer operator} $\left( P_s ^n f\right)'(x)$ equals
$$\sum_{|I|=n} e^{(a+ib)\cdot 2 \pi \cdot  c(I,x)}  (a+ib) 2 \pi\cdot \left( \log f_I ' \right)' (x)  \mathbf{p}(I) f\circ f_I (x) + e^{(a+ib)\cdot 2 \pi \cdot  c(I,x)} \mathbf{p}(I)  \left(  f \circ f_I \right)' (x).$$
So, the first and second assertions follows by  noting that, as in e.g. \cite[proof of Claim 2.12]{algom2021decay}
$$\sup_{x\in [0,1], I\in \mathcal{A}^*} \left| \frac{d}{dx} \left( \log f_I ' \right) (x) \right| = C_1 <\infty.$$
For the last one,  if $||f||_{(b)}\leq 1$ then $||f||_\infty \leq 1$ and so $||f'||_\infty  \leq |b|$. Thus, by the first  assertion, and since  $||P_s ^{n} f||_\infty \leq e^{n D' |a|}$ as $||f||_\infty \leq 1$, we have
$$|| P_s ^{n} f ||_{(b)} = ||P_s ^{n} f||_\infty + \frac{|| \left( P_s ^n f\right)' ||_\infty}{|b|} \leq e^{n D' |a|} +C_1 e^{n D' |a|} +e^{n D' |a|} = C_6 e^{n D' |a|}. $$
\end{proof}

\noindent{ \textbf{Proof that Proposition \ref{Prop. 5.3 Naud} implies Theorem \ref{Theorem spectral gap}}} Fix $s=a+ib$ and $N>0$ as in Proposition \ref{Prop. 5.3 Naud}. Set
$$n=2[\frac{C}{N} \log |b|], \quad \tilde{n}=[\frac{C}{N} \log |b|],\quad \text{ with } C>0 \text{ to be chosen later.}$$
For all $s\in \mathbb{C}$ and $f\in C^1([0,1])$ with $||f||_{(b)}\leq 1$ and all $x\in [0,1]$ we have, by \eqref{Dis of transfer},
\begin{equation} \label{Eq convex combination 2}
\left| P_s ^{nN} \left(f \right) \right|(x) \leq  \sum_{\omega \in \Omega^{nN}} \mathbb{Q}\left([\omega]\right) \left| P_{s,\omega, nN} \left( f \right) \right|(x).
\end{equation}
Now, fix $\omega \in \Omega$. Then, by \eqref{Eq equivariance 2},
$$\left| P_{s,\omega, nN} \left( f \right) \right|(x) =  \left| P_{s, \sigma^{\tilde{n}N} \omega,  nN-\tilde{n}N}  \left( P_{s,\omega, \tilde{n}N} \left( f \right)  \right) \left( x \right) \right| \leq P_{a, \sigma^{\tilde{n}N} \omega,  nN-\tilde{n}N} \left( \left| P_{s,\omega, \tilde{n}N} \left( f \right) \right| \right)(x).$$
Set $m=(n-\tilde{n})N$. Note that
$$ P_{a, \sigma^{\tilde{n}N} \omega,  m} \left( \left| P_{s,\omega, \tilde{n}N} \left( f \right) \right| \right)(x) = \sum_{I \in X^{(\sigma^{\tilde{n}N} \omega)} _m} e^{2\pi a c(I,x)} \sqrt{ \eta ^{(\sigma^{\tilde{n}N} \omega) }(I)}  \cdot  \left( \sqrt{ \eta ^{(\sigma^{\tilde{n}N} \omega) }(I)}  \left| P_{s,\omega, \tilde{n}N} \left( f \right) \right|\circ f_I (x) \right).$$
Then by Cauchy-Schwartz,
$$ \left( P_{a, \sigma^{\tilde{n}N} \omega,  m} \left( \left| P_{s,\omega, \tilde{n}N} \left( f \right) \right| \right)(x) \right)^2 \leq e^{2m |a|\cdot D'} \cdot P_{0, \sigma^{\tilde{n}N} \omega,  m} \left( \left| P_{s,\omega, \tilde{n}N} \left( f \right)  \right|^2 \right) (x).$$

We wish to invoke Lemma \ref{Lemma 5.2 Naud}, so we must estimate $|| \left| P_{s,\omega, \tilde{n}N} \left( f \right)  \right|^2 ||_{L([0,1])}$: By Lemma \ref{Lemma iterations}, using that $||f||_\infty \leq 1$ and $||f'||_\infty \leq |b|$,
$$|| \left| P_{s,\omega, \tilde{n}N} \left( f \right)  \right|^2 ||_{L([0,1])} \leq  2 \cdot ||  P_{s,\omega, \tilde{n}N} \left( f \right) ||_\infty \cdot  || \left( P_{s,\omega, \tilde{n}N} \left( f \right) \right)' ||_\infty $$
$$\leq 2e^{\tilde{n}N |a|D'} \cdot \left( C_1 |b|\cdot  || P_{a,\omega, \tilde{n}N} f||_\infty + \rho^{\tilde{n}N} || P_{a,\omega, \tilde{n}N} \left( |f'|\right)||_\infty \right)$$
$$\leq  2e^{\tilde{n}N aD'} \cdot \left( C_1 |b| e^{\tilde{n}N |a|D'} + \rho^{\tilde{n}N}\cdot |b|\cdot e^{\tilde{n}N |a|D'} \right)$$
$$\leq C_3 |b| e^{\tilde{n}N 2|a|D'}$$

Via Lemma \ref{Lemma 5.2 Naud} applied with $\kappa = \mu_{ \sigma^{nN} (\omega)}$, \eqref{Eq equivariance},  and the previous calculation we have
$$\Vert P_{s,\omega, nN} \left( f \right)  \Vert ^2 _\infty \leq e^{2m |a|\cdot D'} \left( \int P_{0, \sigma^{\tilde{n}N} \omega,  m} \left( \left| P_{s,\omega, \tilde{n}N} \left( f \right)  \right|^2 \right) \,d\mu_{ \sigma^{nN} (\omega)} + C_2 \rho ^m || \left| P_{s,\omega, \tilde{n}N} \left( f \right)  \right|^2 ||_{L([0,1])} \right)$$
$$\leq |b|^{C |a| 2 D'} \left( \int \left| P_{s,\omega, \tilde{n}N} \left( f \right)  \right|^2  \,d\mu_{ \sigma^{\tilde{n}N} (\omega)} +  C_3C_2 \rho^m |b| e^{\tilde{n}N 2|a|D'} \right)$$
$$\leq  |b|^{C |a| 2 D'} \left( \int \left| P_{s,\omega, \tilde{n}N} \left( f \right)  \right|^2  \,d\mu_{ \sigma^{\tilde{n}N} (\omega)} +  C_3C_2 \rho^m |b|^{1+2|a|D'C} \right).$$
Applying Proposition \ref{Prop. 5.3 Naud}, 
$$||  P_{s,\omega, nN} \left( f \right) ||_\infty ^2 \leq |b|^{C |a| 2 D'} \left( \frac{C_4}{|b|^{ \frac{C}{N}|\log \alpha| }}+  \frac{C_5}{|b|^{ C|\log \rho| -1-2|a|D'C} } \right),$$
where $C_4,C_5$ are positive constants. Recalling that $|a|$ is already assumed to be small, we can now choose $C$ with $C |\log \rho|>1+2|a|D'C$ and $|a|$ even closer to $0$ so that if $|a|$ is small enough and $|b|$ is large enough,
$$||  P_{s,\omega, nN} \left( f \right) ||_\infty \leq \frac{1}{|b|^\beta}$$
for some $\beta>0$. Using that the same bound works for every term in the convex combination \eqref{Eq convex combination 2}, we obtain
$$||  P_s ^{nN} f ||_\infty  \leq \frac{1}{|b|^\beta}.$$

Next, by \eqref{Dis of transfer}
\begin{equation}
\left( P_s ^{nN} \left(f \right) \right)' (x) = \sum_{\omega \in \Omega^{nN}} \mathbb{Q}\left([\omega]\right) \left( P_{s,\omega, nN} \left( f \right) \right)' (x).
\end{equation}
So, since $||f'||_\infty \leq |b|$ (as $||f||_{(b)}\leq 1$) we have, as in Lemma \ref{Lemma iterations}, for some $c_1>0$
$$ \left| \left( P_s ^{nN} \left(f \right) \right)' (x)  \right| \leq  \sum_{\omega \in \Omega^{nN}} \mathbb{Q}\left([\omega]\right) \left( |b|\cdot c_1\cdot \left| P_{s,\omega, nN} \left( f \right)(x)\right|+ \rho^{nN} P_{a, \omega, {nN}} \left( \left| f' \right| \right)(x)  \right) $$
$$ \leq  c_1 \cdot |b| \sum_{\omega \in \Omega^{nN}}  \mathbb{Q}\left([\omega]\right)  \left| P_{s,\omega, nN} \left( f \right)(x)\right| + \rho^{nN} \cdot  |b| \cdot e^{nN D' |a|}.$$
Thus, 
$$ \frac{1}{|b|} || \left( P_s ^{nN} f \right)' ||_\infty \leq c_1 \sum_{\omega \in \Omega^{nN}}  \mathbb{Q}\left([\omega]\right)  || P_{s,\omega, nN} \left( f \right) ||_\infty  + \rho ^{nN} e^{nN D' |a|}$$
$$\leq c_1 \sum_{\omega \in \Omega^{nN}}  \mathbb{Q}\left([\omega]\right)  || P_{s,\omega, nN} \left( f \right) ||_\infty  + \frac{1}{\rho^{2N}}\cdot  |b|^{2C( D' |a|-  |\log \rho|) }.$$
So, using similar ideas, for $|a|$ small and $|b|$ large and possibly a large $C$,
$$|| P_s ^{Nn} ||_{(b)} \leq \frac{1}{|b|^{\beta'}},$$
for some $\beta'>0$ and $n=[\frac{C}{N}\log |b|]$.

Finally, given $m\in \mathbb{N}$ we write 
$$m=d\cdot N [\frac{C}{N}\log |b|]+r \text { for the corresponding } d,0\leq r\leq N\cdot [\frac{C}{N}\log |b|].$$
  Let $\beta''>0$ be such that $C D' |a| - \beta' < -\beta''$ for all small enough $|a|$. Applying Lemma \ref{Lemma iterations},
$$|| P_s ^m ||_{(b)} \leq || P_s ^r ||_{(b)} \cdot || P_s ^{dN\cdot [\frac{C}{N}\log |b|]} ||_{(b)}  \leq  C_6 \cdot e^{C (\log |b|)\cdot D' |a|}  \cdot \left( \frac{1}{|b|^{\beta'}} \right)^d$$ 
$$ \leq  C_6 \cdot \left( \frac{ |b|^{ CD'|a|}}{|b|^{\beta'}} \right)^d \leq C_6 \cdot \left( \frac{1}{|b|^{\beta''}} \right)^d \leq C_6 \cdot |b|^{\beta''} \rho_{\beta''} ^m,$$
for some $\rho_{\beta''}\in (0,1)$.  This is true for all $\beta''>0$ with $\beta''$ small, and since $$||\cdot ||_{C^1} \leq |b|\cdot  ||\cdot ||_{(b)}$$ Theorem 2.3 follows. \hfill{$\Box$}

\subsection{The key Lemma} \label{Section key Lemma}
Given $A>0$ let
$$\mathcal{C}_A = \lbrace f\in C^1([0,1]): \, f>0,\text{ and } \left|f'(x)\right|\leq A\cdot f(x), \, \forall x\in [0,1] \rbrace.$$
Note: If $f\in \mathcal{C}_A$ and $u,v \in [0,1]$ then
$$e^{-A|u-v|} \leq \frac{f(u)}{f(v)}\leq e^{A|u-v|}.$$
We will prove the following variant of Naud's result \cite[Lemma 5.4]{Naud2005exp}, that is the key to the proof of Proposition \ref{Prop. 5.3 Naud}:
\begin{Lemma} \label{Lemma 5.4 Naud}
There exist $N>0, A>1$ and $0< \alpha <1$ such that for all $s=a+ib$ with $|a|$ small and $|b|$ large:

For every $\omega\in \Omega$ there exist a finite set of bounded operators $\lbrace N_s ^J \rbrace_{J\in \mathcal{E}_{s,\omega}}$ on $C^1([0,1])$ such that:
\begin{enumerate}
\item The cone $\mathcal{C}_{A|b|}$ is stable under $N_s ^J$ for all $J\in \mathcal{E}_{s,\omega}$. 

\item For all $H\in \mathcal{C}_{A|b|}$ and $J\in \mathcal{E}_{s,\omega}$,
$$\int | N_s ^J H|^2 d \mu_{\sigma^N \omega} \leq \alpha\cdot \int P_{0,\omega,N} \left( \left| H^2 \right| \right) d \mu_{\sigma^N \omega}.$$

\item Let $H\in \mathcal{C}_{A|b|}$ and $f\in C^1([0,1])$ be such that $|f|\leq H$ and $|f'|\leq A|b|H$. Then for every $\omega \in \Omega$ there exists $J\in \mathcal{E}_{s,\omega}$ such that
$$|P_{s,\omega, N} f| \leq N_s ^J H \quad \text{ and } |(P_{s,\omega, N} f)'|\leq A|b|N_s ^J H.$$
\end{enumerate} 
\end{Lemma}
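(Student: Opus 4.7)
The plan is to construct randomized Dolgopyat operators along the lines of Naud \cite[Lemma 5.4]{Naud2005exp}, with crucial use of the uniform-in-$\omega$ UNI (Theorem \ref{Theorem disint}(5)) and Federer (Theorem \ref{Theorem disint}(6)) properties so that all constants are independent of $\omega$. Fix $\omega\in\Omega$ and extract the UNI branches $\alpha_1:=\alpha_1^N$, $\alpha_2:=\alpha_2^N\in X_N^{(\omega)}$, and set $\Delta(x):=2\pi b(c(\alpha_1,x)-c(\alpha_2,x))$, so that $|\Delta'|\asymp m|b|$ uniformly on $[0,1]$. Partition $[0,1]$ into consecutive intervals $\{I_\ell\}$ of length $2\pi/(m|b|)$, and pick $C^1$ bumps $\psi_\ell$ with $0\le\psi_\ell\le \mathbf{1}_{I_\ell}$, $\psi_\ell\equiv 1$ on the middle half of $I_\ell$, and $\|\psi_\ell'\|_\infty\lesssim |b|$. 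Let $\mathcal{E}_{s,\omega}$ be the collection of all choice functions $J:\ell\mapsto J(\ell)\in\{1,2\}$, and set
$$N_s^J H(x):=P_{0,\omega,N}H(x)-\theta\sum_\ell \eta^{(\omega)}(\alpha_{J(\ell)})\,\psi_\ell(x)\,H\circ f_{\alpha_{J(\ell)}}(x),$$
with $\theta\in(0,1/4)$ a small tuning parameter chosen at the end.

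Property (1) follows from a direct computation: the derivative of the dampening factor $(1-\theta\psi_\ell)$ contributes at most $\lesssim|b|$, which the $\mathcal{C}_{A|b|}$ cone accommodates for $A$ large, while the undampened summands of $P_{0,\omega,N}$ preserve the cone by the standard bounded-distortion argument. For property (2), note that pointwise $N_s^J H\le (1-\theta\widetilde\psi_J)P_{0,\omega,N}H$ for the nonnegative $\widetilde\psi_J:=\sum_\ell \frac{\eta^{(\omega)}(\alpha_{J(\ell)})\psi_\ell\, H\circ f_{\alpha_{J(\ell)}}}{P_{0,\omega,N}H}$, which is bounded below by a uniform constant on a set $E_J$ covering the middle halves of the $I_\ell$. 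Combining Cauchy--Schwarz $(P_{0,\omega,N}H)^2\le P_{0,\omega,N}(H^2)$ with the uniform Federer lower bound $\mu_{\sigma^N\omega}(E_J)\ge c_0>0$ (uniform in $\omega$ by Theorem \ref{Theorem disint}(6) and the fact that $E_J$ occupies a fixed fraction of $[0,1]$) yields $L^2$-contraction by $\alpha=1-c_0\theta<1$.

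Property (3) is the heart of the argument. Given $f$ with $|f|\le H$ and $|f'|\le A|b|H$, for each $\ell$ pick $x_\ell\in I_\ell$ with $|\Delta(x_\ell)-\pi|<\pi/8$, which exists since $\Delta$ sweeps through $\gtrsim 2\pi$ across $I_\ell$. Set $r_i:=|f\circ f_{\alpha_i}(x_\ell)|/H\circ f_{\alpha_i}(x_\ell)\in[0,1]$. A standard Dolgopyat-type trichotomy yields: \textbf{(a)} if $r_i\le 1-2\theta$ for some $i\in\{1,2\}$, set $J(\ell):=i$, and the pointwise deficit $|f\circ f_{\alpha_i}|\le(1-2\theta)H\circ f_{\alpha_i}$ at $x_\ell$ propagates to all of $I_\ell$ since $(f/H)\circ f_{\alpha_i}$ is Lipschitz of size $|b|$ over length $|I_\ell|\lesssim 1/|b|$; or \textbf{(b)} if both $r_i>1-2\theta$, then $\Delta(x_\ell)\approx\pi$ forces the cancellation $|\sum_{i=1,2}\eta^{(\omega)}(\alpha_i)e^{2\pi ibc(\alpha_i,x_\ell)}f\circ f_{\alpha_i}(x_\ell)|\le(1-3\theta)\sum_i\eta^{(\omega)}(\alpha_i)H\circ f_{\alpha_i}(x_\ell)$, and either choice of $J(\ell)$ succeeds after the same propagation. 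Combining this with the trivial bound $|f\circ f_I|\le H\circ f_I$ for all other branches gives $|P_{s,\omega,N}f|\le N_s^J H$ on $I_\ell$, hence on all of $[0,1]$. The matching derivative bound $|(P_{s,\omega,N}f)'|\le A|b|N_s^J H$ follows from the a-priori estimate of Lemma \ref{Lemma iterations} after possibly enlarging $A$.

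\emph{Main obstacle.} The delicate step is the propagation argument in case (b) of property (3): one must simultaneously control three Lipschitz quantities of scale $|b|$ — the phase $\Delta$, the ratios $(f/H)\circ f_{\alpha_i}$, and the weights $H\circ f_{\alpha_i}$ — over intervals of length $\sim 1/|b|$, tuning the interval length and the parameter $\theta$ so that the cancellation gained at $x_\ell$ is not wiped out by oscillation across $I_\ell$. A secondary subtlety is the uniformity in $\omega$ of the Federer lower bound used in (2) and of the UNI constants $m,m'$ used throughout; both are guaranteed precisely by Theorem \ref{Theorem disint}(5)--(6), which is the reason the randomized construction yields a single contraction rate $\alpha<1$ valid across all $\omega$.
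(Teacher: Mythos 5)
Your overall blueprint (UNI branches $\alpha_1^N,\alpha_2^N$, cutoff-dampened operators, cone stability, Cauchy--Schwarz plus a lower bound on the dampened region) is the right one, but the key step, your case (b) of property (3), has a genuine gap. The quantity that must be kept away from $2\pi\mathbb{Z}$ for a Lemma \ref{Lemma 5.12 Naud}-type cancellation is the \emph{full} relative argument $2\pi b\,(c(\alpha_1^N,x)-c(\alpha_2^N,x))+\arg f(f_{\alpha_1^N}(x))-\arg f(f_{\alpha_2^N}(x))$, not your $\Delta$ alone: the cone hypothesis only gives $|f'|\le A|b|H$, so the difference of the arguments of $f$ at the two well-separated branch images is uncontrolled, and choosing $x_\ell$ with $\Delta(x_\ell)\approx\pi$ forces nothing. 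Moreover, since the full phase has derivative of size $\asymp m|b|$, it wraps around by more than $2\pi$ across your intervals of length $2\pi/(m|b|)$, so even in the middle half of $I_\ell$ (where $\psi_\ell\equiv1$ and your dampening is active for \emph{every} $J$) there are points where the two UNI summands are in phase with $|f|=H$; there the desired inequality fails for every $J\in\mathcal{E}_{s,\omega}$. Concretely, take $H\equiv1$ and $f=e^{i\phi}$ with $\phi$ interpolating values at the (uniformly separated) points $f_I(x^*)$, $I\in X_N^{(\omega)}$, so that all phases align at a chosen $x^*$ in a middle half: then $f\in\mathcal{C}_{A|b|}$, $|P_{s,\omega,N}f(x^*)|=P_{a,\omega,N}H(x^*)$, while $N_s^JH(x^*)\le 1-\theta\,\eta^{(\omega)}(\alpha_{J(\ell)}^N)$, a contradiction (already at $a=0$; your use of $P_{0,\omega,N}$ rather than $P_{a,\omega,N}$ as the base of $N_s^J$ adds a further, secondary failure for $a>0$). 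This is exactly the difficulty the paper's proof is organized around: it uses cells of size $\epsilon'/|b|$ with $\epsilon'$ \emph{small}, so that the full phase $\Phi$ moves by an amount in $[B_1\epsilon',B_2\epsilon']$ with $(B_2+\tfrac{B_1}{2})\epsilon'\le\pi$ across a triad $V_j,V_{j+1},V_{j+2}$, concludes that $\Phi$ stays $\epsilon$-away from $2\pi\mathbb{Z}$ on at least one of the two extreme cells (Lemma \ref{Lemma 5.10 Naud}, via Lemmas \ref{Lemma 5.11 Naud} and \ref{Lemma 5.12 Naud}), and accordingly dampens only on a \emph{dense subset} $J$ of cells --- which is why $\mathcal{E}_{s,\omega}$ consists of dense subsets rather than of branch-choice functions that dampen in every cell.

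Your step (2) also has a gap: from ``$E_J$ occupies a fixed Lebesgue fraction of $[0,1]$'' plus the Federer property you cannot deduce $\mu_{\sigma^N\omega}(E_J)\ge c_0>0$, let alone the weighted bound $\int_{E_J}H\,d\mu_{\sigma^N\omega}\ge\tilde\epsilon\int H\,d\mu_{\sigma^N\omega}$ that the contraction actually needs. The measure $\mu_{\sigma^N\omega}$ is singular and supported on $K_{\sigma^N\omega}$; at scale $1/|b|$ it may give the middle halves of an arbitrary partition very little (even zero) mass. The paper gets around this by building the partition \emph{adapted} to $K_{\sigma^N\omega}$ (Proposition \ref{Prop. 5.6 Naud}: cells meeting $K_{\sigma^N\omega}$ have boundary at distance $\gtrsim|V_i|$ from it and sit in triads of cells meeting $K_{\sigma^N\omega}$) and then proving Lemma \ref{Lemma 5.7 Nuad} by comparing, for each cell, a ball around a point of $K_{\sigma^N\omega}$ with a ball inside the nearby dampened cell, which is where the uniform doubling constant of Theorem \ref{Theorem disint}(6) (applied to $P_{a,\omega,N}(H^2)$, which lies in a cone by the analogue of Lemma \ref{Lemma cone}) enters; density of $J$ is again essential. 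These two points are not repairable by tuning $\theta$ and the interval length within your setup; they require the adapted partition, the triad argument, and the dense-subset indexing used in the paper.
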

As is customary in the literature (e.g. \cite{Naud2005exp, Stoyanov2011spectra})  we will refer to the operators constructed in Lemma \ref{Lemma 5.4 Naud} as \textit{Dolgopyat operators}. 
$$ $$

\noindent{ \textbf{Proof that Lemma \ref{Lemma 5.4 Naud} implies Proposition \ref{Prop. 5.3 Naud}}} First, observe that for every $\omega\in \Omega, s\in \mathbb{C}$ and $N,n\in \mathbb{N}$ we have, by \eqref{Eq equivariance 2}
$$P_{s,\omega, nN} = P_{s,\sigma^{nN} \omega, N} \circ ...\circ P_{s,\sigma^N \omega, N} \circ P_{s,\omega, N}.$$
Let $f\in C^1([0,1])$ with $f\neq 0$ and $||f||_{(b)} \leq 1$. Put $H= ||f||_{(b)}$. Then $|f|\leq H$ and $|f'|\leq |b|\cdot ||f||_{(b)} \leq A|b|H$.  By induction, for all $n\geq 1$  and every $\omega$ there are $J_{\omega} \in \mathcal{E}_{s,\omega}, J_{\sigma^N \omega} \in \mathcal{E}_{s,\sigma^N \omega},...,J_{\sigma^{nN} \omega}\in \mathcal{E}_{s,\sigma^{Nn} \omega}$ such that
$$\left| P_{s,\omega,nN} f \right| \leq N_s ^{J_{\sigma^{nN} \left( \omega \right)}} N_s ^{J_{\sigma^{(n-1)N} \left(  \omega  \right) }}....N_s ^{J_{\omega}} H   \text{ and }  \left|\left(P_{s,\omega,nN}\right)'\right|\leq A|b|N_s ^{J_{\sigma^{nN} \left( \omega \right)}} N_s ^{J_{\sigma^{(n-1)N} \left(  \omega  \right) }}....N_s ^{J_{\omega}} H. $$
So, by Lemma \ref{Lemma 5.4 Naud} and our equivariance relations,
\begin{eqnarray*} \int \left|P_{s,\omega,nN} f\right|^2 d\mu_{\sigma^{nN} \omega}  &=&  \int |N_s ^{J_{n,\omega}} N_2 ^{J_{n-1,\omega}}....N_s ^{J_{1,\omega}} H |^2 d\mu_{\sigma^{nN}\omega }  \\
&\leq & \alpha\cdot \int  P_{0, \sigma^{N(n-1)} \omega, N}  \left( | N_2 ^{J_{n-1,\omega}}....N_s ^{J_{1,\omega}} H |^2 \right) d\mu_{\sigma^{nN}\omega}\\
&= & \alpha\cdot \int    \left( | N_2 ^{J_{n-1,\omega}}....N_s ^{J_{1,\omega}} H |^2 \right) d\mu_{\sigma^{(n-1)N} \omega}\\
&\leq & \alpha^n \cdot \int P_{0,  \omega, N}   \left(  |H |^2 \right) d\mu_{\sigma^{N} \omega}\\
&= & \alpha^n \cdot \int  |H |^2  d\mu_{ \omega}\\
&\leq  & \alpha^n.\\
\end{eqnarray*}
$\hfill{\Box}$

\subsection{The triple intersections property}
The following Proposition allows for the construction, for every $\omega$, of a special partition of $[0,1]$ that has the triple intersections property on $K_\omega$: This means that whenever a cell intersects $K_\omega$, two other nearby cells must also intersect $K_\omega$. It is based upon \cite[Proposition 5.6]{Naud2005exp}, but as usual there are significant variation due to our model setting. Thus, while the partitions themselves depend on $\omega$, certain metric features of them, e.g. the size of the cells and the distance of the endpoints from $K_\omega$, are  uniform across our model $\Sigma$. 
\begin{Proposition} \label{Prop. 5.6 Naud}
There exist constants $A_1 ', A_1, A_2>0$ such that for all $\omega$ and every $\epsilon>0$ small enough:

There exists a finite collection of closed intervals $(V_i)_{1\leq i \leq q}$ ordered along $[0,1]$ such that:
\begin{enumerate}
\item $[0,1] = \bigcup_{i=1} ^q V_i$, and $i\neq j \Rightarrow \text{Int } V_i \cap \text{Int } V_j = \emptyset$. 

\item For all $1\leq i \leq q$ we have $\epsilon A_1 ' \leq |V_i| \leq \epsilon A_1$. 

\item For all $1\leq j \leq q$ such that $V_j\cap K_\omega \neq \emptyset$, either  $V_{j-1}\cap K_\omega \neq \emptyset$ and $V_{j+1}\cap K_\omega \neq \emptyset$, or $V_{j-2}\cap K_\omega \neq \emptyset$ and $V_{j-1}\cap K_\omega \neq \emptyset$, or $V_{j+1}\cap K_\omega \neq \emptyset$ and $V_{j+2}\cap K_\omega \neq \emptyset$.

\item For all $1\leq i \leq q$ such that $V_i \cap K_\omega \neq \emptyset$, we have 
$$\text{dist}(\partial V_i,\, K_\omega)\geq A_2 |V_i|$$
\end{enumerate}
\end{Proposition}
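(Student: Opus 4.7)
My plan is to adapt the construction from \cite[Proposition 5.6]{Naud2005exp} to the random model setting. The key observation is that the metric estimates of Lemmas \ref{Lemma 6.1 Naud} and \ref{Lemma 6.2 Naud}, together with the splitting property from Theorem \ref{Theorem disint}(3), are all uniform in $\omega$, so the resulting constants $A_1'$, $A_1$, $A_2$ will automatically not depend on the fiber $\omega \in \Omega$.

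The first step is to fix, for small $\epsilon > 0$ and a given $\omega$, the generation $n = n(\omega, \epsilon)$ equal to the smallest integer such that every generation-$n$ cylinder $C_\alpha$ of $K_\omega$ satisfies $|C_\alpha| \leq B \epsilon$, for a fixed constant $B$. By Lemma \ref{Lemma 6.1 Naud} applied to the parent at generation $n-1$, each such cylinder also satisfies $|C_\alpha| \geq B' \epsilon$ with $B' = BC^{-1}\delta_1$, uniformly in $\omega$, and by Theorem \ref{Theorem disint}(4) they are pairwise disjoint; order them as $C_{\alpha_1} < \cdots < C_{\alpha_q}$ along $[0,1]$. Inside each $C_{\alpha_i}$ I use the generation-$(n+2)$ descendants to build a finer partition: by Theorem \ref{Theorem disint}(3) applied twice, $C_{\alpha_i}$ has at least four such descendants, and by Lemma \ref{Lemma 6.2 Naud} combined with Lemma \ref{Lemma 6.1 Naud} the gaps between consecutive descendants inside $C_{\alpha_i}$ are of width at least $c\epsilon$ for a uniform constant $c > 0$. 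Placing splitting points at the midpoints of two appropriate such gaps yields at least three consecutive closed sub-intervals of $C_{\alpha_i}$, each meeting $K_\omega$ and each with endpoints at distance $\geq c\epsilon / 2$ from $K_\omega$.

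Next, I would tile the complement $[0,1] \setminus \bigcup_i C_{\alpha_i}$ (the gaps between consecutive $n$-cylinders, together with the two end regions $[0, \min C_{\alpha_1}]$ and $[\max C_{\alpha_q}, 1]$) by closed intervals of length in $[A_1'\epsilon, A_1\epsilon]$, absorbing any short leftover into an adjacent cell so that the lower bound on cell size is preserved; these filler cells do not meet $K_\omega$ by construction. Properties (1) and (2) are then immediate. Property (4) holds because every cell meeting $K_\omega$ has each boundary point placed in a gap of width $\gtrsim \epsilon$ between $(n+2)$-descendants, so $\text{dist}(\partial V_i, K_\omega) \geq c\epsilon/2 \geq A_2 |V_i|$ with $A_2 = c/(2B)$. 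Property (3), which is the main new point, follows because each $n$-cylinder is partitioned into at least three consecutive cells all meeting $K_\omega$, so any $V_j$ meeting $K_\omega$ — being either leftmost, middle, or rightmost within this local triple — realises one of the three admissible neighbor configurations listed in (3).

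I expect the most delicate step to be the bookkeeping of sizes at the transitions between cylinder cells and filler cells without violating the lower bound in Property (2). This is handled by choosing $A_1$ sufficiently large compared with $B$ and by using the uniform-in-$\omega$ gap estimates from Lemma \ref{Lemma 6.2 Naud} to decide, for each leftover piece, whether it should form its own $V_i$ or be merged with an adjacent one; since all of the underlying constants from Lemmas \ref{Lemma 6.1 Naud}, \ref{Lemma 6.2 Naud} and Theorem \ref{Theorem disint} are independent of $\omega$, the final $A_1'$, $A_1$, $A_2$ come out uniform as required.
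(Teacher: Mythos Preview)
Your approach is a natural ``bottom-up'' alternative to the paper's ``top-down'' argument, but there is a genuine gap in the very first step. You define $n$ as the least integer for which \emph{every} generation-$n$ cylinder has diameter $\leq B\epsilon$, and then claim that each such cylinder has diameter $\geq B'\epsilon$ because its parent at generation $n-1$ is large. This does not follow: minimality of $n$ only guarantees that \emph{some} $(n-1)$-cylinder has diameter $>B\epsilon$, not that all of them do. Since the ratio $\delta_2/\delta_1$ in Lemma \ref{Lemma 6.1 Naud} is strictly greater than $1$, cylinders at a fixed generation can differ in size by a factor $(\delta_2/\delta_1)^n$, which is unbounded; so many $n$-cylinders may already be far smaller than $\epsilon$, and then your lower bound in (2), your gap estimate $c\epsilon$ for the sub-splitting, and the control in (4) all collapse for those cells. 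The fix is standard: replace the fixed generation by a Moran-type stopping family, i.e.\ take the maximal cylinders $C_\alpha$ with $|C_\alpha|\leq B\epsilon$. Then each parent genuinely satisfies $|C_{\alpha^-}|>B\epsilon$, and Lemma \ref{Lemma 6.1 Naud} with $|\alpha|-|\alpha^-|=1$ gives the uniform two-sided bound you need. Separation (Theorem \ref{Theorem disint}(4)) still makes these pairwise disjoint, and the rest of your outline goes through.

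For comparison, the paper goes the other way: it starts from a uniform $\epsilon$-grid of $[0,1]$, then perturbs each gridpoint that lies close to $K_\omega$ into a nearby gap between sub-cylinders (using Lemmas \ref{Lemma 7.1 Nuad} and \ref{Lemma 7.2 Naud}) to secure property (4), and finally splits each resulting cell that meets $K_\omega$ into three pieces along two further sub-cylinder gaps to get the triple-intersection property (3). This sidesteps entirely the ``delicate bookkeeping'' you anticipate at the interface between cylinder cells and filler cells, since the cells are contiguous from the outset; the price is the extra endpoint-perturbation step. Both routes ultimately rest on the same uniform estimates, and both give constants independent of $\omega$.
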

It is critical to our argument is that the constants $A_1',A_1, A_2$ may be chosen uniformly across the model.

The proof of Proposition \ref{Prop. 5.6 Naud}, that we discuss now, is roughly modelled after Naud's arguments in \cite[Section 7]{Naud2005exp}. Fix $\omega$ and recall Definition \ref{Def cylinders} (cylinders of $K_\omega$). We require the following Lemmas:
\begin{Lemma} \label{Lemma 7.1 Nuad}
There exists a constant $B_1$ such that for all $x\in K_\omega$ and all $r>0$, there exists a cylinder $C_\alpha$ such that
$$C_\alpha \subseteq B(x,r) \text{ and } B_1 \cdot r \leq |C_\alpha|.$$
\end{Lemma}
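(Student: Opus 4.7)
The strategy is the classical one: look at the descending chain of cylinders shrinking to $x$, take the deepest one that still fits inside $B(x,r)$, and observe that its parent must then overflow $B(x,r)$, which forces a lower bound on the chosen cylinder's size. Since $x\in K_\omega$, pick any $\xi\in X_\infty^{(\omega)}$ with $\Pi_\omega(\xi)=x$ and set $C_n:=C_{\xi|_n}$. Each $C_n$ contains $x$, the chain is nested and decreasing, and by Lemma \ref{Lemma 6.1 Naud} the diameters $|C_n|$ shrink geometrically to zero.

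Let $n=n(x,r)\geq 1$ be the smallest index with $C_n\subseteq B(x,r)$. In the main case $n\geq 2$, we have $C_{n-1}\not\subseteq B(x,r)$, so some $y\in C_{n-1}$ satisfies $|y-x|\geq r$; since $x\in C_{n-1}$ as well, the triangle inequality gives $|C_{n-1}|\geq r$. Invoking the lower bound in Lemma \ref{Lemma 6.1 Naud} applied to the inclusion $C_n\subseteq C_{n-1}$ yields
$$|C_n|\ \geq\ C^{-1}\delta_1\,|C_{n-1}|\ \geq\ C^{-1}\delta_1\,r,$$
so taking $C_\alpha:=C_n$ and $B_1:=C^{-1}\delta_1$ concludes this case. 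The constant $B_1$ is uniform in $\omega$ because both $C$ and $\delta_1$ produced in Lemma \ref{Lemma 6.1 Naud} depend only on the bounded-distortion constant $L$ and the model-wide lower bound $\min_{i\in I,\,f\in \Phi_i}\min_{x\in[0,1]}|f'(x)|$.

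The residual case $n=1$ arises only when $[0,1]\not\subseteq B(x,r)$ but $C_1\subseteq B(x,r)$, which forces $r$ to be bounded; here bounded distortion \eqref{Eq bdd distortion} gives $|C_1|\geq L^{-1}\min_{i\in I,\,f\in \Phi_i}\min_{x}|f'(x)|>0$, uniformly across the model, and this dominates $B_1 r$ once $B_1$ is shrunk suitably. (This is the only regime needed for the application in Proposition \ref{Prop. 5.6 Naud}, where $r$ will be of order $\epsilon$.) The only bookkeeping point is ensuring that every constant is truly uniform in $\omega$, and this is automatic from the uniform-in-$\omega$ nature of Lemma \ref{Lemma 6.1 Naud} together with the model-wide contraction bounds, so no substantial obstacle remains.
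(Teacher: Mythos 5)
Your proof is correct and takes essentially the same route as the paper: choose the minimal (deepest-fitting) cylinder containing $x$ inside $B(x,r)$, observe that its parent is not contained in $B(x,r)$ and hence has diameter at least $r$, and apply Lemma \ref{Lemma 6.1 Naud} to conclude $|C_\alpha|\geq C^{-1}\delta_1\,r$ with constants uniform in $\omega$. Your handling of the $n=1$ edge case is in fact more explicit than the paper's (which tacitly treats $[0,1]$ as a generation-zero cylinder); the only small slip is the claim that $n=1$ forces $[0,1]\not\subseteq B(x,r)$ — it can also occur for large $r$ — but that regime is exactly the one you correctly note is irrelevant for the application in Proposition \ref{Prop. 5.6 Naud}.
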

\begin{proof}
Consider a cylinder $C_\alpha$ with $x\in C_\alpha\subseteq B(x,r)$, and assume $|\alpha|$ is minimal. Since $x\in K_\omega$ such a cylinder exists. Let $C_\alpha \subseteq C_\beta$ with $|\beta|=|\alpha|-1$. By minimality of $|\alpha|$ we cannot have $C_\beta$ included in $B(x,r)$, and thus $|C_\beta|\geq r$. Via Lemma \ref{Lemma 6.1 Naud} we obtain 
$$C^{-1} \delta_1 r \leq C^{-1} \delta_1 |C_\beta| \leq |C_\alpha|$$
as claimed.  
\end{proof}

\begin{Lemma} \label{Lemma 7.2 Naud}
Let $C_\beta$ be a cylinder. Then there a finite set of at least $3$ words $A_\beta$ such that
$$C_\beta \cap K_\omega \subseteq \bigcup_{\gamma \in A_\beta} C_\gamma,$$
where $C_\gamma \subseteq C_\beta$ and $|\gamma| = |\beta|+2$.
\end{Lemma}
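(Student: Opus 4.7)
The plan is to deduce the lemma essentially for free by iterating Lemma \ref{Lemma 6.2 Naud} twice and then invoking the non-trivial branching property of Theorem \ref{Theorem disint} part (3). Concretely, I would first apply Lemma \ref{Lemma 6.2 Naud} to $C_\beta$ to obtain
$$C_\beta \cap K_\omega \subseteq \bigcup_{C_\alpha \subseteq C_\beta,\ |\alpha|=|\beta|+1} C_\alpha,$$
and then apply Lemma \ref{Lemma 6.2 Naud} again to each of the intermediate cylinders $C_\alpha$. Composing the two inclusions yields
$$C_\beta \cap K_\omega \subseteq \bigcup_{C_\gamma \subseteq C_\beta,\ |\gamma|=|\beta|+2} C_\gamma,$$
which is exactly the covering relation required in the statement. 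This takes care of the covering part with $A_\beta$ defined to be the set of all words $\gamma$ of length $|\beta|+2$ extending $\beta$.

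For the cardinality $|A_\beta|\geq 3$, I would use Theorem \ref{Theorem disint} part (3), which says that $k_i\in\{2,3\}$ for every $i\in I$. Writing $n=|\beta|$, the number of generation-$(n{+}1)$ cylinders contained in $C_\beta$ equals $k_{\omega_{n+1}}\ge 2$, and for each such intermediate cylinder the number of generation-$(n{+}2)$ cylinders it contains equals $k_{\omega_{n+2}}\ge 2$. Hence
$$|A_\beta| \;=\; k_{\omega_{n+1}}\cdot k_{\omega_{n+2}} \;\geq\; 2\cdot 2 \;=\; 4 \;\geq\; 3,$$
and $A_\beta$ is clearly finite.

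There is essentially no obstacle here: the two ingredients (the covering relation from Lemma \ref{Lemma 6.2 Naud}, and the lower bound $k_i\ge 2$ on branching) have already been established, and the lemma follows by direct combination. The only point worth noting is that, in contrast to the classical Markov-partition setting, the list $A_\beta$ now depends on $\omega$ through the letters $\omega_{n+1}, \omega_{n+2}$ selecting which sub-IFS is applied at each level; but this dependence is harmless for the statement since we only require a uniform lower bound on $|A_\beta|$, which the branching condition provides.
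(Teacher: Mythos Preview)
Your proof is correct and follows essentially the same approach as the paper: the paper unwinds the definitions directly to write $C_\beta \cap K_\omega = f_\beta^{(\omega)}(K_{\sigma^n\omega})$ and then decomposes $K_{\sigma^n\omega}$ two levels down, whereas you equivalently cite Lemma \ref{Lemma 6.2 Naud} twice; both routes yield the same covering by the $k_{\omega_{n+1}}\cdot k_{\omega_{n+2}}\in\{4,\dots,9\}$ length-$(|\beta|+2)$ words via Theorem \ref{Theorem disint} Part (3). The paper additionally remarks that Part (4) ensures these words give genuinely distinct cylinders (no exact overlaps), which is needed downstream in Proposition \ref{Prop. 5.6 Naud}; this is implicit in your argument but worth stating.
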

\begin{proof}
By definition, there is some $\beta \in X_n ^{(\omega)}$ such that
$$C_\beta := f^{ (\omega_1)} _{\beta_1} \circ \circ \circ f^{ (\omega_n)} _{\beta_n} ([0,1]).$$
Furthermore, by Theorem \ref{Theorem disint} Part (4)
$$C_\beta \cap K_\omega = f^{ (\omega_1)} _{\beta_1} \circ \circ \circ f^{ (\omega_n)} _{\beta_n} (K_{\sigma^n (\omega)}).$$
Now, by the definition of the model,
$$K_{\sigma^n (\omega)}  = \bigcup_{\zeta\in X^{(\sigma^n \omega)} _{2}} f^{ (\omega_{n+1})} _{\zeta_{1}} \circ  f^{ (\omega_{n+2})} _{\zeta_{2}} ( K_{\sigma^{n+2} (\omega)}), $$
and $4\leq \left|X^{(\sigma^n \omega)} _{2} \right|\leq 9$ by Theorem \ref{Theorem disint} Part (3). Note that we are also using Theorem \ref{Theorem disint} Part (4) to see that $ X^{(\sigma^n \omega)} _{2}$ does not have exact overlaps (maps with different coding are not equal). Thus,
$$C_\beta \cap K_\omega \subseteq \bigcup_{\gamma\in X^{(\sigma^n \omega)} _{2}} f^{ (\omega_1)} _{\beta_1} \circ \circ \circ f^{ (\omega_n)} _{\beta_n} \circ f^{ (\omega_{n+1})} _{\zeta_{1}} \circ  f^{ (\omega_{n+2})} _{\zeta_{2}} ( [0,1]),$$
which is a union of $4\leq k \leq 9$ cylinders that are contained in $C_\beta$. As required.
\end{proof}

\begin{Remark}
By Lemma \ref{Lemma 6.1 Naud}, there are uniform constants $B_2,B_3>0$ independent of $\beta, A_\beta$, such that  for all $\gamma \in A_\beta$ we have
$$B_2 |C_\beta|\leq |C_\gamma| \leq B_3 |C_\beta|.$$
Applying Lemma \ref{Lemma 6.2 Naud}, or Theorem \ref{Theorem disint} Part (4) directly, there is a uniform constant $B_4>0$ independent of $\beta, A_\beta$,  such that for all $\gamma_1\neq \gamma_2 \in A_\beta$ we have
$$\text{dist} \left( C_{\gamma_1},\, C_{\gamma_2} \right) \geq B_4 \left| C_\beta \right|.$$
\end{Remark}

\noindent{ \textbf{Proof of Proposition \ref{Prop. 5.6 Naud}}} Let $0<\epsilon \ll 1$. Set $p:=[\frac{1}{\epsilon}]$. First, we divide $[0,1]$ into $p$ closed intervals $J_i$ with disjoint interiors such that
$$[0,1] = \bigcup_{i=1} ^p J_i \text{ and } \epsilon \leq \left| J_i \right| \leq 2 \epsilon.$$
For every $1\leq i \leq p$ write $J_i = [x_i, x_{i+1}]$. We may  assume $x_1 =0$ and $x_{p+1}=1$.

We first deal with part (4): For $2\leq i \leq p$, if $B(x_i, \frac{\epsilon}{8})\cap K_\omega = \emptyset$ we set $\tilde{x}_i :=x_i$. Otherwise, let $x_i ' \in B(x_i, \frac{\epsilon}{8})\cap K_\omega$. Applying Lemma \ref{Lemma 7.1 Nuad}, we find a cylinder $C_\alpha \subseteq B(x_i ' , \frac{\epsilon}{8})$ with $|C_\alpha| \geq B_1 \frac{\epsilon}{8}$. By Lemma \ref{Lemma 7.2 Naud}, there are consecutive cylinders $C_{\gamma_1},C_{\gamma_2} \subset C_\alpha$ such that
$$\text{dist} \left( C_{\gamma_1},\, C_{\gamma_2} \right) \geq B_4 \left| C_\alpha \right|.$$ 
Set 
$$\tilde{x}_i := \frac{1}{2}\left( \max C_{\gamma_1} + \min C_{\gamma_2}\right).$$
In both cases 
$$\left| \tilde{x}_i - x_i \right| \leq \frac{\epsilon}{4} \text{ and } \text{dist} \left( \tilde{x}_i , K_\omega \right) \geq \min \left( \frac{\epsilon}{8},\, B_1 B_4 \frac{\epsilon}{16}\right).$$
As for the boundary points $x_1=0,x_{p+1}=1$, by \eqref{Assumption on endpoints}
$$\text{dist} \left( K_\omega,\, \partial [0,1] \right) \geq \text{dist} \left( K,\, \partial [0,1] \right) > 0.$$
So, upon taking $\epsilon \ll 1$, we may put  $\tilde{x}_1 =0$ and $\tilde{x}_{p+1} =1$. 

For all $1\leq i \leq p$ set $\tilde{J}_i:=[\tilde{x}_i ,\tilde{x}_{i+1}]$. Then, writing $B_5 = \frac{2}{5} \min \left( \frac{B_1 B_4}{16}, \frac{1}{8} \right)$ we have
$$\frac{\epsilon}{2}\leq \left|  \tilde{J}_i \right| \leq \frac{5}{2}\epsilon, \,   \text{dist} \left( \partial \tilde{J}_i  , K_\omega \right) \geq B_5 \left|  \tilde{J}_i \right|, \text{ and } [0,1] \subseteq \bigcup_{i=1} ^p \tilde{J}_i.$$
Furthermore, the intervals $\tilde{J}_i$ still have disjoint interiors. Thus, this collection of intervals satisfies parts (1),(2), and (4) of the Proposition. Let us call these intervals $J_i$.

We now deal with property (3): Fix $J_i$ such that $K_\omega\cap J_i \neq \emptyset$. Let $x\in K_\omega \cap J_i$, so that $B(x, B_5 \frac{\epsilon}{2})\subset J_i$. By Lemma \ref{Lemma 7.1 Nuad} and Lemma \ref{Lemma 7.2 Naud} there are $3$ consecutive cylinders $C_{\gamma_1}, C_{\gamma_2}, C_{\gamma_3}$ such that
$$C_{\gamma_1} \cup  C_{\gamma_2} \cup  C_{\gamma_3} \subseteq B(x, B_5 \frac{\epsilon}{2})\subset J_i.$$
In addition, we have for $i=1,2$ 
$$\text{dist} \left( C_{\gamma_1} , C_{\gamma_2} \right) \geq B_4 \cdot B_1 \cdot B_5 \frac{\epsilon}{2}.$$
We set
$$y_i := \frac{1}{2}\left( \max C_{\gamma_1} + \min C_{\gamma_2}\right) \text{ and } z_i:= \frac{1}{2}\left( \max C_{\gamma_2} + \min C_{\gamma_3}\right),$$
and write
$$J_i ^1 = [x_i,\, y_i],\, J_i ^2 = [y_i,\, z_i],\, J_i ^3 = [z_i,\, x_{i+1}].$$
Then for all $j=1,2,3$ we have  $J_i ^{j} \cap K \neq \emptyset$, and 
$$B_1 B_2 B_5 \frac{\epsilon}{2}\leq \left| J_i ^j \right| \leq \frac{5}{2}\epsilon \text{ and } \text{dist} \left( \partial \tilde{J}_i ^j , K_\omega \right) \geq \min \left( B_5 B_4 B_1 \frac{\epsilon}{4},\, B_5 \frac{\epsilon}{2}\right).$$

Finally, the set of intervals
$$\lbrace J_i:\, J_i \cap K_\omega = \emptyset \rbrace \bigcup_{j=1} ^3 \lbrace J_i ^j:\, J_i \cap K_\omega \neq \emptyset \rbrace$$
now satisfy all the properties in Proposition \ref{Prop. 5.6 Naud}. \hfill{$\Box$}

\subsection{Proof of Lemma \ref{Lemma 5.4 Naud}}
Fix $\omega$ and let $s=a+ib$. We begin by constructing the Dolgopyat operators as in Lemma \ref{Lemma 5.4 Naud}.  Let $N\in \mathbb{N}$ be sufficiently large in the sense of Theorem \ref{Theorem disint} part (5), and in other ways that will be specified soon, and let $\alpha_1 ^N, \alpha_2 ^N \in X_N ^{(\omega)}$ be the length $N$ words satisfying the conclusion of  Theorem \ref{Theorem disint} Part (5). Let us fix $\epsilon' ,\frac{1}{|b|}>0$  uniformly in $\omega$, that are small enough (to be determined later). Let $(V_i)_{1\leq i\leq q}$ be a triadic partition  as in Proposition \ref{Prop. 5.6 Naud} of $K_{\sigma^N \omega}$ of modulus $\epsilon=\frac{\epsilon'}{|b|}$. For all $(i,j)\in \lbrace 1,2\rbrace \times \lbrace 1,...,q\rbrace$ set
$$Z_j ^i = f_{\alpha_i ^N} \left( V_j \right).$$

By Proposition \ref{Prop. 5.6 Naud} part (4)
$$\text{dist} \left( K_{\sigma^N \omega} \cap V_j ,\, \partial V_j \right) \geq A_2 A_1 ' \frac{\epsilon'}{|b|} \text{ whenever } K_{\sigma^N \omega} \cap V_j \neq \emptyset.$$
Then there exists a cut off function $\chi_j \in C^1 ([0,1])$ such that $0\leq \chi_j \leq 1$ on $[0,1]$, $\chi_j =1$ on $\text{conv} \left( K_{\sigma^N \omega} \cap V_j \right)$, and $\chi_j =0$  outside of $\text{Int}(V_j)$. Then there exists $A_3>0$ that depends only on the previous (uniform in $\omega$) constants such that
$$||\chi_j '||_\infty \leq A_3 \frac{|b|}{\epsilon'}.$$
Define
$$J_{s,\omega} = \lbrace (i,j)\in \lbrace 1,2\rbrace \times \lbrace 1,...,q\rbrace : \, V_j \cap K_{\sigma^N \omega} \neq \emptyset \rbrace.$$
Fix $0< \theta <1$ to be determined later. Let $\emptyset \neq J \subseteq J_{s,\omega}$. Define a function $\chi_J \in C^1 ([0,1])$ by
$$\chi_J (x) = 1- \theta \cdot \chi_j \circ f_{\alpha_i ^N} ^{-1} (x)  \text{ if } (i,j)\in J \text{ and } x\in Z_i ^j, \, \text{ and } \chi_J (x) = 1 \text{ otherwise. }$$
Note that $\chi_J$ is well defined by the separation property in Theorem \ref{Theorem disint} Part (4), and since the $V_j$'s intersect potentially only at their endpoints by Proposition \ref{Prop. 5.6 Naud}, where all the $\chi_j$ vanish.

We can now define the Dolgopyat operators $N_s ^J$ on $C^1 ([0,1])$ by:
$$N_s ^J \left( f \right)(x):= P_{a,\omega, N} \left( \chi_J \cdot f \right).$$
We proceed to prove the three assertions of Lemma \ref{Lemma 5.4 Naud}.
\subsubsection{Part 1: construction of an invariant cone}
We follow the same notations of the construction carried out in the previous section, and prove Lemma \ref{Lemma 5.4 Naud} Part (1):
\begin{Lemma} \label{Lemma cone}
There exist $A>1$, $N\in \mathbb{N}$ and $0<\theta <1$ such that for $s=a+ib$ with $|a|$ sufficiently small and $|b|$ sufficiently large, for every $\omega$,  
\begin{enumerate}
\item The cone $\mathcal{C}_{A|b|}$ is stable under  every $N_s ^J$.

\item If $f\in C^1([0,1])$ and $H\in \mathcal{C}_{A|b|}$ satisfy
$$ |f|\leq H \text{ and } \left| f' \right| \leq A|b|H$$
then
$$\left| \left( P_{s,\omega,N} \left( f \right) \right)' (x) \right| \leq  A|b| N_s ^J (H) (x).$$

\item If $H\in \mathcal{C}_{A|b|}$ then $P_{a,\omega,N} \left(H^2 \right) \in \mathcal{C}_{ \frac{3}{4}A|b|}$.
\end{enumerate}
\end{Lemma}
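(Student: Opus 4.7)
All three parts of the lemma follow by directly differentiating $P_{a,\omega,N}$ applied to the relevant input, term-by-term absorbing each piece into the right-hand side, and then choosing the parameters $A$, $N$, $\theta$, and the allowed range of $|a|$ in the correct order. The common calculational ingredient is the identity
\[
(P_{a,\omega,N}g)'(x)=\sum_{I\in X_N^{(\omega)}}\eta^{(\omega)}(I)\,e^{2\pi a\,c(I,x)}\Bigl[\,2\pi a\,\partial_x c(I,x)\,g(f_I x)\;+\;g'(f_I x)\,f_I'(x)\Bigr],
\]
where $|\partial_x c(I,\cdot)|=|(\log|f_I'|)'|\le C_1$ (the same uniform $C_1$ used in Lemma~\ref{Lemma iterations}) and $|f_I'|\le\rho^N$. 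These, together with the cone hypothesis $|H'|\le A|b|H$, are the only analytic facts needed.

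\textbf{Parts (3) and (2).} For Part~(3), write $(H^2)'=2HH'$ so that $|(H^2)'|\le 2A|b|H^2$; the formula above immediately gives
\[
|(P_{a,\omega,N}H^2)'(x)|\le\bigl(2\pi|a|C_1+2A|b|L\rho^N\bigr)P_{a,\omega,N}(H^2)(x),
\]
which is at most $\tfrac34A|b|\,P_{a,\omega,N}(H^2)(x)$ once $N$ is large enough that $2L\rho^N\le 1/4$ and $|a|$ is small enough that $2\pi|a|C_1/|b|\le A/4$. For Part~(2), substitute $|f|\le H$, $|f'|\le A|b|H$, and $|s|\le 2|b|$ into the same identity; the bound
\[
|(P_{s,\omega,N}f)'(x)|\le\bigl(4\pi|b|C_1+A|b|L\rho^N\bigr)P_{a,\omega,N}(H)(x)
\]
follows, and since $\chi_J\ge 1-\theta$ pointwise one has $P_{a,\omega,N}(H)\le(1-\theta)^{-1}N_s^J(H)$. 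Taking $A$ large enough (after fixing $N$, $\theta$) makes the coefficient at most $A|b|$.

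\textbf{Part (1), the cone stability.} Apply the differentiation formula with $g=\chi_J H$ and decompose $(\chi_JH)'=\chi_J'H+\chi_JH'$. The three resulting contributions to $|(N_s^JH)'(x)|$ are an $|a|$-term bounded by $2\pi|a|C_1\,N_s^J H(x)$, an $H'$-term bounded by $A|b|L\rho^N\,N_s^J H(x)$, and a delicate $\chi_J'$-term. On each $Z_j^i$ we have $\chi_J'=-\theta(\chi_j'\circ f_{\alpha_i^N}^{-1})\cdot(f_{\alpha_i^N}^{-1})'$, so with $\|\chi_j'\|_\infty\le A_3|b|/\epsilon'$, bounded distortion \eqref{Eq bdd distortion}, and $|f_{\alpha_i^N}'|\ge\rho_{\min}^N$, one obtains an estimate of the shape $|\chi_J'(f_I x)||f_I'(x)|\le \theta\cdot K_N\cdot|b|$, where $K_N$ depends only on the model and on $N$ and is uniform in $\omega$ thanks to the uniformity clauses of Proposition~\ref{Prop. 5.6 Naud} and \eqref{Eq bdd distortion}. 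Using $\chi_J\ge 1-\theta\ge 1/2$ on the support of $\chi_J'$, the inequality $H\le 2\chi_JH$ there converts this contribution into $2\theta K_N|b|\,N_s^J H(x)$. Summing,
\[
|(N_s^JH)'(x)|\le\bigl(2\pi|a|C_1+A|b|L\rho^N+2\theta K_N|b|\bigr)\,N_s^JH(x),
\]
and we pick $N$ large so that $L\rho^N\le 1/4$, then $\theta$ small so that $2\theta K_N\le A/4$, then $|a|$ small so that $2\pi|a|C_1/|b|\le A/4$. The right-hand side is then $\le A|b|\,N_s^J H(x)$, establishing Part~(1).

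\textbf{Main obstacle.} The delicate point is the $\chi_J'$-term in Part~(1): the cutoff transitions on sets $Z_j^i$ whose length is only comparable to $\rho^N\epsilon'/|b|$, so $\|\chi_J'\|_\infty$ is potentially as large as $(\rho_{\min}^{-N})|b|/\epsilon'$, and the contracting factor $|f_I'|\le\rho^N$ offsets this only up to the ratio $(\rho/\rho_{\min})^N$ since $I$ and $\alpha_i^N$ need not be related. Consequently $\theta$ must be taken small in a manner that depends on $N$; this is compatible with the argument because $\theta$ enters Part~(1) only multiplicatively, and the value of $\theta$ itself may be fixed once $A$ and $N$ have been chosen. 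Equally crucial is that the constants $A_1,A_1',A_2,A_3,L,\rho,\rho_{\min}$ and hence $K_N$ are all uniform over $\omega\in\Omega$, so the final triple $(A,N,\theta)$ does not depend on $\omega$ either — this is precisely what we will need when iterating $N_s^J$ along a random $\omega$-trajectory in the deduction that Lemma~\ref{Lemma 5.4 Naud} implies Proposition~\ref{Prop. 5.3 Naud}.
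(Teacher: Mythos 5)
Your argument is correct and follows the paper's strategy for this lemma (differentiate $P_{a,\omega,N}$ term by term, absorb the $a$-term, the $H'$-term and the cutoff-term into $N_s^J H$, then fix $A$, $N$, $\theta$, $|a|$ in a consistent order), but it diverges at the one step you flag as the ``main obstacle'', and that obstacle is in fact absent in the paper's treatment. The point is the separation property, Theorem \ref{Theorem disint} Part (4): since the images $f_I([0,1])$, $I\in X_N^{(\omega)}$, are pairwise disjoint and $\chi_J$ differs from $1$ only on $Z_j^i\subseteq f_{\alpha_i^N}([0,1])$, the composition $\chi_J\circ f_I$ is identically $1$ unless $I=\alpha_i^N$, and in that case $\chi_J\circ f_{\alpha_i^N}=1-\theta\chi_j$ on $V_j$, so the factors $(f_{\alpha_i^N}^{-1})'$ and $f_{\alpha_i^N}'$ cancel exactly and $\left|\left(\chi_J\circ f_I\right)'\right|\le\theta A_3|b|/\epsilon'$ with no loss in $N$. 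Consequently the paper's admissible range for $\theta$ is $\theta\le\min\left(\tfrac12,\ \epsilon'\tfrac{A-1}{4A_3}\right)$, independent of $N$, whereas your global bound $\|\chi_J'\|_\infty\le\theta A_3|b|\rho_{\min}^{-N}/\epsilon'$ combined with $|f_I'|\le\rho^N$ costs a factor $(\rho/\rho_{\min})^N$ and forces $\theta$ to shrink with $N$. Your version is still a valid proof: the lemma only asserts the existence of some triple $(A,N,\theta)$ uniform in $\omega$, downstream $\theta$ enters only through upper-bound constraints (it must be $\le 1/8$ and $<\tfrac12\delta(M,\epsilon)$ in Lemma \ref{Lemma 5.10 Naud}, and any fixed $\theta>0$ suffices in Proposition \ref{Prop 5.9 Naud} at the price of a smaller strip in $a$ and a worse $\alpha$), and your ordering of choices ($A$ from Part (2), then $N$, then $\theta$, then $|a|$) is free of circularity because enlarging $A$ only relaxes the Part (1) constraint on $\theta$. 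So the proposal stands, but invoking separation as in the paper removes the $N$-dependence of $\theta$ and the $(\rho/\rho_{\min})^N$ loss entirely.
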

Our proof  is roughly based on \cite[proof of equation (4)]{Naud2005exp}:
\begin{proof}
Fix $H\in \mathcal{C}_{A|b|}$ where $A$ is yet to be determined. Then for all $x\in [0,1]$,
$$\left| N_s ^J \left( H \right)' (x) \right| = \left| P_{a,\omega,N} \left( \chi_J \cdot H \right)'(x) \right|$$
$$ \leq \sum_{I\in X_N ^{(\omega)}} e^{a\cdot 2 \pi \cdot  c(I,x)} \left| a 2 \pi\cdot \left( \log f_I ' \right)' (x) \right|  \eta^{(\omega)}(I) \left(H\cdot \chi_J \right)\circ f_I (x) + e^{a\cdot 2 \pi \cdot  c(I,x)} \eta^{(\omega)}(I)  \left| \left(  \left(H\cdot \chi_J \right) \circ f_I \right)' (x) \right|.$$
Now, for every $I\in X_N ^{(\omega)}$, by separation (Theorem \ref{Theorem disint} Part (4)) we have
$$\left| \left(\chi_J \circ f_{I} \right)' \right| \leq \theta A_3 \frac{|b|}{\epsilon'}.$$
Also, we can find a constant $\tilde{C}$ uniform in $N$, and $a$ such that if $|a|$ is small enough then
$$\left| a 2 \pi\cdot \left( \log f_{I} ' \right)' (x) \right|  \leq \tilde{C}.$$
Therefore,
$$\left| N_s ^J \left( H \right)' (x) \right| \leq \sum_{I\in X_N ^{(\omega)}} e^{a\cdot 2 \pi \cdot  c(I,x)}  \eta^{(\omega)}(I)) \left(H\cdot \chi_J \right)\circ f_I (x) \cdot \tilde{C}  + e^{a\cdot 2 \pi \cdot  c(I,x)} \eta^{(\omega)}(I)  \left(H\cdot \chi_J \right)\circ f_I  (x) \cdot  A \cdot |b| \rho^N$$
$$+ e^{a\cdot 2 \pi \cdot  c(\alpha_1 ^N,x)} \eta^{(\omega)}(I)  H \circ f_{I}  (x) \cdot \theta A_3 \frac{|b|}{\epsilon'}.$$
Using that $H = \frac{ \left(\chi_J H\right) }{\chi_J}\leq \frac{1}{1-\theta}\chi_J H$ we obtain
$$\left| N_s ^J \left( H \right)' (x) \right| \leq \left( \frac{\tilde{C}}{|b|}+A_3 \frac{\theta}{(1-\theta)\epsilon'}+A \rho^{N} \right) |b| N_s ^J \left( H \right) (x) \leq A|b|N_s ^J \left( H \right) (x),$$
assuming $|b|$ is large enough, $|a|$ is small enough, and 
$$\theta \leq \min \left( \frac{1}{2},\, \epsilon' \frac{A-1}{4 A_3} \right) \text{ and } \rho^N \leq \frac{A-1}{2A}.$$

Note that the above calculation works for any $A>1$. Now, if $f\in C^1([0,1])$ and $H\in \mathcal{C}_{A|b|}$ satisfy
$$ |f|\leq H \text{ and } \left| f' \right| \leq A|b|H$$
then
$$\left| P_{s,\omega, N} \left( f \right)' (x) \right|  \leq \sum_{I\in X_N ^{(\omega)}} e^{a\cdot 2 \pi \cdot  c(I,x)} \left| (a+ib) 2 \pi\cdot \left( \log f_I ' \right)' (x) \right|  \eta^{(\omega)}(I) H\circ f_I (x)$$
$$ + e^{a\cdot 2 \pi \cdot  c(I,x)} \eta^{(\omega)}(I) \left| \left( f\circ f_I \right)' (x) \right|$$
$$ \leq \sum_{I\in \mathcal{A}^N} e^{a\cdot 2 \pi \cdot  c(I,x)}   \eta(I) H\circ f_I (x)\cdot \hat{C}\cdot |b| + e^{a\cdot 2 \pi \cdot  c(I,x)} \eta(I) H\circ f_I (x) \cdot A|b|\rho^N,$$
where $\hat{C}$ is independent of $N$ and $|b|$ is large enough. Note that $\chi_J ^{-1} \leq 2$ when  $\theta \leq \frac{1}{2}$ and so in this case $H\leq 2\chi_J H$. So, under this assumption
$$\left| P_{s,\omega, N} \left( f \right)' (x) \right|  \leq A|b| N_s ^J \left( H \right) (x)$$
as long as $A \geq 4\hat{C}$ and $\rho^N \leq \frac{1}{4}$.

We thus fix $A\geq \max \left( 2, 4\hat{C} \right)$ and take $N$ large enough so that $\rho^N \leq \min \left( \frac{A-1}{2A}, \frac{1}{4} \right)$ and fix $\theta \leq \min \left( \frac{1}{2}, \epsilon' \frac{A-1}{4A_3} \right)$.  A similar argument shows that this choice of parameters also yields Part  (3).

\end{proof}

\subsubsection{Part 2: $L^2$ contraction of the cones}
We proceed to prove that the operators $N_s ^J$ contract these cones in the $L^2$ norms. We require  the following Definition:
\begin{Definition}
A subset $J\subseteq J_{s,\omega}$ is called dense if for every $1\leq j \leq q$ such that $V_j \cap K_{\sigma^N \omega} \neq \emptyset$ there exists $1\leq j' \leq q$ such that:
$$\exists i \text{ such that } (i,j')\in J \text{ and } |j' - j|\leq 2.$$
For a dense subset $J$ we write
$$W_J = \lbrace x\in K_{\sigma^N \omega}:\, \exists(i,j)\in J,\, x\in V_j\rbrace.$$
\end{Definition}
We need the following key Lemma, a variant of \cite[Lemma 5.7]{Naud2005exp}:
\begin{Lemma} \label{Lemma 5.7 Nuad} Let $J$ be dense and fix $H\in C_{A|b|}$. Then there exists $\tilde{\epsilon}>0$ independent of $H,|b|,\omega, N$ and $J$ such that
$$\int_{W_J} H \, d \mu_{\sigma^N \omega} \geq \tilde{\epsilon}\int_{K_{\sigma^N \omega}} H d  \mu_{\sigma^N \omega}.$$
\end{Lemma}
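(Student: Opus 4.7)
The plan is to reduce the integral inequality to a cell-by-cell comparison, using two separate ingredients that make $H$ and $\mu_{\sigma^N\omega}$ individually easy to control at the scale of the cells $V_j$. For $H$, the cone condition $H\in\mathcal{C}_{A|b|}$ together with the bound $|V_j|\leq A_1\epsilon/|b|$ from Proposition \ref{Prop. 5.6 Naud}(2) forces $H$ to oscillate by at most the $\omega$-independent factor $e^{5AA_1\epsilon'}$ on any block of five consecutive cells. For $\mu_{\sigma^N\omega}$, the Federer property of Theorem \ref{Theorem disint}(6) will make neighboring cells of comparable $\mu$-mass. Density of $J$ then provides, for each cell $V_j$ meeting $K_{\sigma^N\omega}$, a cell in $W_J$ within distance $2$ in the cell ordering, so the two comparisons chain together.

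Concretely, I would fix $j$ with $V_j\cap K_{\sigma^N\omega}\neq\emptyset$ and take any $j'$ with $|j-j'|\leq 2$ and $V_{j'}\cap K_{\sigma^N\omega}\neq\emptyset$. Picking $x\in V_{j'}\cap K_{\sigma^N\omega}$, Proposition \ref{Prop. 5.6 Naud}(4) yields $B(x,A_2A_1'\epsilon/|b|)\subseteq V_{j'}$, while the union $V_{j-2}\cup\cdots\cup V_{j+2}$ is contained in $B(x,5A_1\epsilon/|b|)$. Applying Theorem \ref{Theorem disint}(6) with the $\omega$-independent dilation $D=5A_1/(A_2A_1')$ then gives $\mu_{\sigma^N\omega}(V_j)\leq C_D\,\mu_{\sigma^N\omega}(V_{j'})$, and combining with the oscillation bound on $H$ yields
$$\int_{V_j} H\,d\mu_{\sigma^N\omega}\ \leq\ e^{5AA_1\epsilon'}\,C_D\int_{V_{j'}} H\,d\mu_{\sigma^N\omega}.$$

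To conclude, I would use density of $J$ to choose, for each such $j$, some $\varphi(j)=j'$ with $(i,j')\in J$ for some $i$ and $|j-j'|\leq 2$. Since each value $j'$ has at most five preimages under $\varphi$, summing over $j$ gives
$$\int_{K_{\sigma^N\omega}} H\,d\mu_{\sigma^N\omega}\ \leq\ 5\,e^{5AA_1\epsilon'}\,C_D\int_{W_J} H\,d\mu_{\sigma^N\omega},$$
and the lemma follows with $\tilde\epsilon=(5e^{5AA_1\epsilon'}C_D)^{-1}$. The main obstacle to watch for is the uniformity of every constant in $\omega$: the oscillation factor is uniform because $A,A_1,\epsilon'$ are fixed model parameters, and $A_1',A_2,C_D$ are uniform precisely because Proposition \ref{Prop. 5.6 Naud} and Theorem \ref{Theorem disint}(6) were established with constants independent of $\omega$. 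Without this uniform-in-$\omega$ Federer property in the random model, Naud's non-random argument would not transfer, so the model construction in Theorem \ref{Theorem disint} is exactly what makes this step possible.
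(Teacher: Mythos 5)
Your proof is correct and follows essentially the same route as the paper: density of $J$ gives the multiplicity-$5$ map $j\mapsto j'$, the cone condition controls the oscillation of $H$ over a block of five consecutive cells by a factor $e^{5AA_1\epsilon'}$ uniform in $|b|$ and $\omega$, and the uniform Federer property of Theorem \ref{Theorem disint}(6) applied to nested balls around a point of $K_{\sigma^N\omega}\cap V_{j'}$ compares the masses of $V_j$ and $V_{j'}$. The only (harmless) difference is that you center both balls at one point $x\in V_{j'}\cap K_{\sigma^N\omega}$, whereas the paper uses two centers $u_i\in V_i$, $v_i\in V_{j(i)}$ and the chain $B(v_i,r')\subset B(u_i,r)\subset B(v_i,2r)$; your constants (modulo the notational slip $\epsilon$ vs.\ $\epsilon'=\epsilon|b|$) come out the same.
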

\begin{proof}
Let
$$G:=\lbrace 1\leq i \leq q: V_i\cap K_{\sigma^N \omega}\neq \emptyset\rbrace$$
and note that $K_{\sigma^N \omega} \subseteq \bigcup_{i\in G} V_i$. For every $i\in G$, by density of $J$, there exists some index $j(i)$ such that $\left(i',\, j(i)\right)\in J$ for some $i'\in \lbrace 1,2\rbrace$ such that $\left| j(i)-i \right| \leq 2$. We  thus get a function $j:G\rightarrow G$ such that for every $i\in G$ the set $ j^{-1} \left( \lbrace i \rbrace \right)$ contains at most $5$ elements.

Let $r=3A_1 \frac{\epsilon'}{|b|}$. For every $i\in G$ fix some $u_i \in V_i \cap K_{\sigma^N \omega}$.  By Proposition \ref{Prop. 5.6 Naud}, 
$$V_{j(i)},\, V_i \subseteq B(u_i,\, r).$$
Moreover, by Proposition \ref{Prop. 5.6 Naud} part (4), for $r' = \frac{1}{2}A_2 A_1' \frac{\epsilon'}{|b|}$ and some carefully chosen $v_i \in K_{\sigma^N \omega}\cap V_{j(i)}$ we have
$$\text{dist} \left( K_{\sigma^N \omega} \cap V_{j(i)},\, \partial V_{j(i)} \right) = \text{dist} \left( v_i,\, \partial V_{j(i)} \right)  \text{ and so } V_{j(i)} \supset B(v_i,\, r').$$
Note that
$$B(v_i, r') \subset B(u_i, r) \subset B(v_i, 2r).$$
So, by the Federer property of $\mu_{\sigma^N \omega}$ proved  in Theorem \ref{Theorem disint},
$$\mu_{\sigma^N \omega}\left( B(u_i, r)  \right) \leq \mu_{\sigma^N \omega} \left( B(v_i, 2r)  \right) \leq C_{ \frac{2r}{r'}} \mu_{\sigma^N \omega} \left( B(v_i, r')  \right) \leq C_{ \frac{2r}{r'}} \mu_{\sigma^N \omega} \left( V_{j(i)} \right),$$
where we note that $C_{ \frac{2r}{r'}}$ does not depend on ${\sigma^N \omega}$.

Now, let $H\in C_{A|b|}$.  Then, as long as $|b|$ is large enough,
\begin{eqnarray*}
\int_{K_{\sigma^N \omega}} H d\mu_{\sigma^N \omega} &=&\sum_{i\in G} \int_{V_i} H d\mu_{\sigma^N \omega}\\
&\leq & \sum_{i\in G} \int_{B(u_i, r)} H d\mu_{\sigma^N \omega}\\
&\leq & \sum_{i\in G} \left( \max_{B(u_i,r)} H \right) \mu_{\sigma^N \omega} \left( B(u_i,r) \right) \\
&\leq & C'\sum_{i\in G} e^{2A|b|r} \left( \min_{V_{j(i)}} H\right) \mu_{\sigma^N \omega}\left( V_{j(i)} \right) \\
&\leq & C'e^{C''}\sum_{i\in G} \int_{V_{j(i)}} H d\mu_{\sigma^N \omega} \\
&\leq & C'e^{C''}\sum_{j: \exists i, (i,j)\in J} \int_{V_{j(i)}} H d\mu_{\sigma^N \omega} \\
&\leq &5 C'e^{C''}\int_{W_J} Hd\mu_{\sigma^N \omega}. \\
\end{eqnarray*}
Since the constant $C',C''$ are uniform (in particular, in $|b|$ and $\omega$), the proof is complete.
\end{proof}

\begin{Definition}
We define
$$\mathcal{E}{s,\omega} := \lbrace J\subseteq J_{s,\omega}:\, J \text{ is dense } \rbrace.$$
\end{Definition}
We can now prove the required contraction property in Lemma \ref{Lemma 5.4 Naud} Part (2). 
\begin{Proposition} \label{Prop 5.9 Naud} There exists $0<\alpha<1$ uniform in $\omega$ such that for all $s=a+ib$ with $|a|$ small and $|b|$ large, for all $H\in C_{A|b|}$ and all $J\in \mathcal{E}{s,\omega}$ we have
$$\int_{K_{\sigma^N \omega}} \left| N_s ^J \left( H \right) \right|^2  d\mu_{\sigma^N \omega} \leq \alpha \int_{K_{\sigma^N \omega}} P_{0,\omega,N} \left( H^2 \right) \,d \mu_{\sigma^N \omega}.$$

\end{Proposition}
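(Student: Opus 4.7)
The starting move is to apply the Cauchy--Schwarz inequality in the ``split'' form
\[
|P_{a,\omega,N}(\chi_J H)(x)|^2 \leq P_{a,\omega,N}(\chi_J^2)(x)\cdot P_{a,\omega,N}(H^2)(x),
\]
valid because $\chi_J, H\geq 0$. Using $|c(I,x)|\leq ND'$ from \eqref{Eq C and C prime}, for $|a|$ small one has $P_{a,\omega,N}(g)\leq e^{2\pi|a|ND'}P_{0,\omega,N}(g)$ for every nonnegative $g$, hence pointwise
\[
|N_s^J H(x)|^2 \leq e^{4\pi|a|ND'}\,P_{0,\omega,N}(\chi_J^2)(x)\cdot P_{0,\omega,N}(H^2)(x).
\]

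The second step is a pointwise bound on $P_{0,\omega,N}(\chi_J^2)$. By the separation in Theorem \ref{Theorem disint} Part (4), for every $I\in X_N^{(\omega)}\setminus\{\alpha_1^N,\alpha_2^N\}$ the image $f_I([0,1])$ is disjoint from the region $\bigcup_i f_{\alpha_i^N}([0,1])$ where $\chi_J$ differs from $1$, so $\chi_J\circ f_I\equiv 1$. On the two marked branches, $\chi_J\circ f_{\alpha_i^N}=1-\theta\Xi_i$ with $\Xi_i:=\sum_{j:(i,j)\in J}\chi_j\in[0,1]$ (the $\chi_j$'s having pairwise disjoint supports since the $V_j$'s have disjoint interiors by Proposition \ref{Prop. 5.6 Naud}). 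Expanding the square yields
\[
P_{0,\omega,N}(\chi_J^2)(x)\leq 1-\theta(2-\theta)\sum_{i=1,2}\eta^{(\omega)}(\alpha_i^N)\,\Xi_i(x).
\]
For any $x\in W_J$, choose $(i,j)\in J$ with $x\in V_j\cap K_{\sigma^N\omega}$; then $\chi_j(x)=1$ by construction, so $\Xi_i(x)\geq 1$. Setting $\eta^{**}:=\theta(2-\theta)\min_{i=1,2}\eta^{(\omega)}(\alpha_i^N)>0$, uniform across the model since the probability vectors $\mathbf{p}^{(k)}$ have strictly positive entries, this gives
\[
P_{0,\omega,N}(\chi_J^2)(x)\leq 1-\eta^{**}\mathbf{1}_{W_J}(x)\quad \text{on } K_{\sigma^N\omega}.
\]

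The final step is to feed this bound into Lemma \ref{Lemma 5.7 Nuad} applied to the \emph{averaged} observable $G:=P_{0,\omega,N}(H^2)$. Since $H\in\mathcal{C}_{A|b|}$ implies $H^2\in\mathcal{C}_{2A|b|}$ and $|G'(x)|\leq 2A|b|\rho^N G(x)$, we have $G\in\mathcal{C}_{2A|b|\rho^N}\subseteq\mathcal{C}_{A|b|}$ provided $\rho^N\leq 1/2$, which we arrange by enlarging $N$ at the outset. Since $J$ is dense, Lemma \ref{Lemma 5.7 Nuad} applied to $G$ gives $\int_{W_J}G\,d\mu_{\sigma^N\omega}\geq\tilde\epsilon\int G\,d\mu_{\sigma^N\omega}$. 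Integrating the pointwise estimate above against $G\,d\mu_{\sigma^N\omega}$ and using the equivariance relation \eqref{Eq equivariance} (so that $\int G\,d\mu_{\sigma^N\omega}=\int H^2\,d\mu_\omega$), one obtains
\[
\int|N_s^J H|^2\,d\mu_{\sigma^N\omega}\leq e^{4\pi|a|ND'}(1-\eta^{**}\tilde\epsilon)\int P_{0,\omega,N}(H^2)\,d\mu_{\sigma^N\omega}.
\]
Taking $|a|$ small enough that $\alpha:=e^{4\pi|a|ND'}(1-\eta^{**}\tilde\epsilon)<1$ finishes the proof.

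The main obstacle is the choice of Cauchy--Schwarz splitting. The more obvious variant $|P_{a,\omega,N}(\chi_J H)|^2\leq P_{a,\omega,N}(1)\cdot P_{a,\omega,N}(\chi_J^2 H^2)$ reduces the problem to bounding $\int_{T_J}H^2\,d\mu_\omega$ from below by a uniform fraction of $\int H^2 d\mu_\omega$, where $T_J$ is concentrated on only two branches out of all of $X_N^{(\omega)}$; doing this uniformly for $H\in\mathcal{C}_{A|b|}$ would require comparing values of $H$ at points separated by distances of order $1$, ratios which blow up like $e^{A|b|}$. The splitting adopted above bypasses this: the averaged observable $P_{0,\omega,N}(H^2)$ is already regular enough to lie in $\mathcal{C}_{A|b|}$, so Lemma \ref{Lemma 5.7 Nuad} applies to it directly, and the role of the marked branches $\alpha_1^N,\alpha_2^N$ is reduced to contributing the uniform positive weight $\eta^{**}$, requiring no cross-branch comparison of $H$.
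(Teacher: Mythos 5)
Your proof is correct and follows essentially the same route as the paper's: the same Cauchy--Schwarz split $|P_{a,\omega,N}(\chi_J H)|^2\leq P_{a,\omega,N}(\chi_J^2)\cdot P_{a,\omega,N}(H^2)$, the gain of order $\theta\,\eta^{(\omega)}(\alpha_i^N)$ on $W_J$ from the marked branches, Lemma \ref{Lemma 5.7 Nuad} applied to the averaged observable to convert the $W_J$-integral into a uniform fraction $\tilde\epsilon$ of the full integral, and absorption of the $e^{O(|a|N D')}$ factor by shrinking $|a|$. The only differences are cosmetic: you pass to $P_{0,\omega,N}$ at the outset and verify the cone membership of $P_{0,\omega,N}(H^2)$ by a direct computation (using $\rho^N\leq 1/2$), whereas the paper keeps $P_{a,\omega,N}$ and cites Lemma \ref{Lemma cone} Part (3) for $P_{a,\omega,N}(H^2)\in\mathcal{C}_{\frac34 A|b|}$.
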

This is a randomized analogue of \cite[Proposition 5.9]{Naud2005exp}.
\begin{proof}
Let $H\in C_{A|b|}$. For every $x\in [0,1]$, by the Cauchy-Schwartz inequality and the definition of $N_s ^J$,
$$\left( N_s ^J \left( H \right) \right)^2 (x)$$
is bounded above by the product of
\begin{equation} \label{Eq first prod}
\sum_{I\in X_N ^{(\omega)}} e^{a\cdot 2 \pi \cdot  c(I,x)} \eta ^{(\omega)} (I) \chi_J ^2 \circ f_{I}(x) 
\end{equation}
and
$$P_{a,\omega,N} \left( H^2 \right)(x).$$
For all $x\in W_J$ there exists $i\in \lbrace 1,2\rbrace$ such that
$$\chi_J \circ f_{\alpha_i ^N} (x) = 1-\theta.$$
Recall that for every $I\in X_N ^{(\omega)}$ we have by \eqref{Eq C and C prime}
$$\left| a\cdot c(I,x) \right| \leq |a|\cdot D' \cdot N.$$
Therefore, recalling the notations from Section \ref{Section dis}, if $x\in W_J$ then the sum in \eqref{Eq first prod} is bounded by 
$$e^{|a|\cdot D' \cdot N}-\theta\cdot e^{N\left( \min_{i\in I, 1\leq j \leq k_i} \log \mathbf{p}_j ^{(i)} +|a|\cdot D' \right)}.$$

Since 
$$\int_{K_{\sigma^N \omega}} \left( N_s ^J \left( H \right) \right)^2  d \mu_{\sigma^N \omega} = \int_{W_J} \left( N_s ^J \left( H \right) \right)^2 d\mu_{\sigma^N \omega} + \int_{K_{\sigma^N \omega}\setminus W_J} \left( N_s ^J \left( H \right) \right)^2 d\mu_{\sigma^N \omega}$$
the previous discussion and the fact that $e^{|a|\cdot D' \cdot N}$ always bounds \eqref{Eq first prod}  show that
$$\int_{K_{\sigma^N \omega}} \left( N_s ^J \left( H \right) \right)^2 d\mu_{\sigma^N \omega} \leq  \left(e^{|a|\cdot D' \cdot N}-\theta\cdot e^{N\left( \min_{i\in I} \log \mathbf{p}_j ^{(i)} +|a|\cdot D' \right)} \right) \int_{W_J} P_{a,\omega,N} \left( H^2 \right) d \mu_{\sigma^N \omega}$$
$$ + e^{|a|\cdot D' \cdot N}\cdot \int_{K_{\sigma^N \omega}\setminus W_J} P_{a,\omega,N} \left( H^2 \right) d \mu_{\sigma^N \omega}$$
$$ = e^{|a|\cdot D' \cdot N}\cdot \int_{K_{\sigma^N \omega}} P_{a,\omega,N} \left( H^2 \right) d \mu_{\sigma^N \omega} - \theta\cdot e^{N\left( \min_{i\in I} \log \mathbf{p}_j ^{(i)} +|a|\cdot D' \right)}  \int_{W_J} P_{a,\omega,N} \left( H^2 \right) d \mu_{\sigma^N \omega}.$$

Now, by Lemma \ref{Lemma cone} $P_{a,\omega,N} \left( H^2 \right)\in C_{\frac{3}{4}A|b|}$ since $H\in C_{a|b|}$.   Therefore, applying Lemma \ref{Lemma 5.7 Nuad} to this function we obtain
$$\int_{K\sigma^N \omega} \left( N_s ^J \left( H \right) \right)^2 d \mu_{\sigma^N \omega} \leq \left( e^{|a|\cdot D' \cdot N}- \tilde{\epsilon}\cdot \theta\cdot e^{N\left( \min_{i\in I} \log \mathbf{p}_j ^{(i)} +|a|\cdot D' \right)} \right) \int_{K_{\sigma^N \omega}} P_{a,\omega,N} \left( H^2 \right) d\mu_{\sigma^N \omega}.$$
Recall that
$$P_{a,\omega,N} \left( H^2 \right) \leq e^{N\cdot D' \cdot |a|} P_{0,\omega,N}\left( H^2 \right).$$
So, if $|a|$ is small enough there is some $0\leq \alpha <1$ such that
$$\left( e^{|a|\cdot D' \cdot N}- \tilde{\epsilon}\cdot \theta\cdot e^{N\left(  \min_{i\in I} \log \mathbf{p}_j ^{(i)} +|a|\cdot D' \right)} \right) \cdot e^{N\cdot D' \cdot |a|} \leq \alpha <1.$$
Therefore
$$\int_{K_{\sigma^N \omega}} \left( N_s ^J \left( H \right) \right)^2 d \mu_{\sigma^N \omega} \leq \alpha \cdot \int_{K_{\sigma^N \omega}} P_{0,\omega,N} \left( H^2 \right) d\mu_{\sigma^N \omega},$$
as claimed.

\end{proof}

\subsubsection{Part 3: Domination of the Dolgopyat operators}
We now turn to Part (3) of Lemma \ref{Lemma 5.4 Naud}. First we need the following key Lemma:
\begin{Lemma} \label{Lemma 5.10 Naud} Let $H\in C_{A|b|}, f\in C^1 \left( [0,1] \right)$ be such that
$$\left| f \right| \leq H, \text{ and } \left| f' \right| \leq A|b|H.$$
For every $j=1,2$ define functions $\Theta_j:[0,1]\rightarrow \mathbb{R}_+$ via
$$\Theta_1 (x):= \frac{ \left| e^{(a+ib)\cdot 2 \pi \cdot  c(\alpha_1 ^N,x)} \eta^{(\omega)}(\alpha_1 ^N) f \circ f_{\alpha_1 ^N} (x)+e^{(a+ib)\cdot 2 \pi \cdot  c(\alpha_2 ^N,x)} \eta^{(\omega)}(\alpha_2 ^N) f\circ f_{\alpha_2 ^N} (x)     \right| }{ (1-2\theta)e^{a\cdot 2 \pi \cdot  c(\alpha_1 ^N,x)} \eta^{(\omega)}(\alpha_1 ^N) H \circ f_{\alpha_1 ^N} (x)+e^{a\cdot 2 \pi \cdot  c(\alpha_2 ^N,x)} \eta^{(\omega)}(\alpha_2 ^N) H\circ f_{\alpha_2 ^N} (x) }$$
$$\Theta_2 (x):= \frac{ \left| e^{(a+ib)\cdot 2 \pi \cdot  c(\alpha_1 ^N,x)} \eta^{(\omega)}(\alpha_1 ^N) f \circ f_{\alpha_1 ^N} (x)+e^{(a+ib)\cdot 2 \pi \cdot  c(\alpha_2 ^N,x)} \eta^{(\omega)}(\alpha_2 ^N) f\circ f_{\alpha_2 ^N} (x)     \right| }{ e^{a\cdot 2 \pi \cdot  c(\alpha_1 ^N,x)} \eta^{(\omega)}(\alpha_1 ^N) H \circ f_{\alpha_1 ^N} (x)+(1-2\theta)e^{a\cdot 2 \pi \cdot  c(\alpha_2 ^N,x)} \eta^{(\omega)}(\alpha_2 ^N) H\circ f_{\alpha_2 ^N} (x) }.$$
Then for $\theta$ and $\epsilon'$ small enough, and for all $|a|$ small enough and $|b|$ large enough, for all $j$ such that $V_j\cap K_{\sigma^N \omega} \neq \emptyset$ there exist $j'$ such that $|j'-j|\leq 2$ and $V_{j'}\cap K_{\sigma^N \omega} \neq \emptyset$ and some $i\in \lbrace 1,2\rbrace$ such that: For every $x\in V_{j'}$,
$$\Theta_i (x) \leq 1.$$
\end{Lemma}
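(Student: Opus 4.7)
The plan is a dichotomy on the ratio $|f|/H$, following the scheme of Naud's Lemma 5.10 in \cite{Naud2005exp} but adapted to our random-model setting and respecting the uniformity guaranteed by Theorem \ref{Theorem disint}. First, apply Proposition \ref{Prop. 5.6 Naud}(3) to the given $j$: it produces three consecutive indices $j_0, j_0+1, j_0+2$ in $\{j-2,\dots,j+2\}$ such that each $V_{j_\ell}$ meets $K_{\sigma^N \omega}$. These are the candidates for $V_{j'}$. Write $A_i(x) = e^{2\pi s c(\alpha_i^N,x)} \eta^{(\omega)}(\alpha_i^N) f\circ f_{\alpha_i^N}(x)$, $R_i(x) = e^{2\pi a c(\alpha_i^N,x)} \eta^{(\omega)}(\alpha_i^N) H\circ f_{\alpha_i^N}(x)$, and $\mu_i(x) = f\circ f_{\alpha_i^N}(x) / H\circ f_{\alpha_i^N}(x) \in \mathbb{C}$, so that $|\mu_i|\le 1$ and $\Theta_i(x)\le 1$ amounts to $|A_1+A_2|\le(1-2\theta)R_i+R_{3-i}$.

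For the easy alternative, suppose there exist $\ell \in\{0,1,2\}$, $i \in\{1,2\}$ and $x_0\in V_{j_\ell}$ with $|\mu_i(x_0)|\le 1-4\theta$. A direct computation using $|f|\le H$, $|f'|\le A|b|H$ and $H\in\mathcal{C}_{A|b|}$ yields $|\mu_i'(x)|\le 2A|b|\rho^N$; hence the oscillation of $|\mu_i|$ across $V_{j_\ell}$ is at most $2AA_1\rho^N\epsilon'$. Since $N$ will be chosen so that $\rho^N$ is tiny (compared to any inverse polynomial in the fixed constants of the model), this oscillation is less than $2\theta$, so $|\mu_i|\le 1-2\theta$ throughout $V_{j_\ell}$, and the triangle inequality gives $|A_1+A_2|\le(1-2\theta)R_i+R_{3-i}$. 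Take $V_{j'}=V_{j_\ell}$.

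The other alternative is that $|\mu_i(x)|>1-4\theta$ for every $x$ in each of the three cells and every $i$. Here I would introduce the phase $\Phi(x)=2\pi b(c(\alpha_1^N,x)-c(\alpha_2^N,x))+\arg\mu_1(x)-\arg\mu_2(x)$, namely $\arg(A_1/A_2)$. Because $|f|>(1-4\theta)H$, one has $|f'/f|\le A|b|/(1-4\theta)$, so $|(\arg\mu_i)'|\le A|b|\rho^N/(1-4\theta)$. Combined with the UNI lower bound $|\Psi'|\ge 2\pi bm$ from Theorem \ref{Theorem disint}(5), choosing $N$ large gives $|\Phi'(x)|\in[\pi bm,\,4\pi bm']$; hence $\Phi$ is strictly monotone across the three cells, with per-cell increment in $[\pi m A_1' \epsilon',\,4\pi m'A_1\epsilon']$. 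The ``alignment arc'' $\{\cos\Phi>1-4\theta\}\subset\mathbb{R}/2\pi\mathbb{Z}$ has length of order $\sqrt\theta$. A pigeonhole argument on the unit circle, using that the allowance $|j'-j|\le 2$ actually provides five nearby cells meeting $K_{\sigma^N\omega}$ (via the triple intersections property applied to neighbors), shows that for $\theta$ small enough relative to $\epsilon'$, at least one candidate cell has $\cos\Phi(x)\le 1-4\theta$ throughout. In that cell, picking $i$ to minimize $R_i$, the identity $|A_1+A_2|^2=R_1^2|\mu_1|^2+R_2^2|\mu_2|^2+2R_1R_2|\mu_1||\mu_2|\cos\Phi$ together with $|\mu_j|\le 1$ bounds $|A_1+A_2|^2$ by $R_1^2+R_2^2+2(1-4\theta)R_1R_2$; a brief manipulation then shows this is at most $((1-2\theta)R_i+R_{3-i})^2$, since $R_i\le R_{3-i}$ forces $R_1(1-\theta)\le R_2$ (WLOG $i=1$), which is exactly the residual inequality after expanding the squares.

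The main obstacle will be the quantitative bookkeeping. The parameters must be chosen in a strict order: $A$ first (from Lemma \ref{Lemma cone}), then $N$ large enough that $\rho^N$ dominates both the correction from $\arg\mu_i$ in the phase derivative and the oscillation of $|\mu_i|$ in the easy case, then $\epsilon'$ small enough that within-cell phase shifts are a small fraction of $2\pi$, and finally $\theta$ so small that the alignment arc of size $\sim\sqrt\theta$ is dominated by the cell-to-cell phase shift $\asymp\epsilon'$. The delicate point is the phase case: one must verify that the UNI lower bound on $|\Psi'|$ genuinely survives the subtraction of $(\arg\mu_1-\arg\mu_2)'$ uniformly in $\omega$, for which the uniformity of the constants $A_1',A_1,m,m'$ across the model $\Sigma$ provided by Theorem \ref{Theorem disint} and Proposition \ref{Prop. 5.6 Naud} is essential.
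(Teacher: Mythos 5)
Your skeleton is the paper's own (Naud's scheme): a dichotomy on $|f|/H$, a UNI-driven phase argument on a triad of cells supplied by Proposition \ref{Prop. 5.6 Naud}, and a cancellation estimate on one good cell; your law-of-cosines identity and the residual inequality $(1-\theta)R_i\le R_{3-i}$ are correct as pointwise computations. But two steps do not deliver what Lemma \ref{Lemma 5.10 Naud} actually asserts. First, the lemma demands a \emph{single} $i\in\{1,2\}$ with $\Theta_i(x)\le 1$ for \emph{every} $x\in V_{j'}$. You pick $i$ to minimize $R_i(x)$, and the minimizer may switch inside the cell; at points where $R_{3-i}<(1-\theta)R_i$ your sufficient condition for that $i$ is no longer certified, so the pointwise argument does not produce one $i$ valid on all of $V_{j'}$. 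The paper avoids exactly this by first establishing that on the whole cell either $\left|z_1/z_2\right|\le M$ or $\left|z_2/z_1\right|\le M$ (the factor $e^{2AA_1\epsilon'}$ in $M$ absorbs the oscillation of $H\circ f_{\alpha_1^N}/H\circ f_{\alpha_2^N}$ across $V_{j'}$) and then invoking Lemma \ref{Lemma 5.12 Naud} with $\theta<\tfrac12\delta(M,\epsilon)$. Your route can be repaired by an oscillation bound on $\log(R_1/R_2)$ over the cell, but that repair imposes a \emph{lower} bound on $\theta$, which feeds the second problem.

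Second, your parameter bookkeeping is circular as stated. You replace the paper's use of Lemma \ref{Lemma 5.11 Naud} (absolute thresholds $3/4$ and $1/4$ valid on every $Z_j^i$ once $\epsilon'$ is small, with no constraint on $\theta$ beyond $\theta\le 1/8$) by a threshold $1-4\theta$ at a single point plus the oscillation bound $2AA_1\rho^N\epsilon'<2\theta$. That is a lower bound on $\theta$ in terms of $N$ and $\epsilon'$; combined with your upper bound $\sqrt{\theta}\ll\epsilon'$, the window for $\theta$ is nonempty only if $\rho^N\lesssim\epsilon'$, which your declared order ($N$ before $\epsilon'$, $\theta$ last) does not guarantee. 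This is fixable — nothing caps $N$ from above, so $N$ may be enlarged after $\epsilon'$ is fixed — but as written the quantifiers do not close; the paper's absolute thresholds and its $M$-dichotomy are precisely what allow $\theta$ to be taken freely small at the end. (Minor: the claim that the triple intersections property applied to neighboring cells yields five cells meeting $K_{\sigma^N\omega}$ is unjustified, but also unnecessary: three consecutive cells suffice, since the alignment arc has width $O(\sqrt{\theta})$, shorter than a single cell's phase increment, and the total phase variation over the triad is kept below $2\pi$.)
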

This is a model version of \cite[Lemma 5.10]{Naud2005exp}. For the proof, we require the following basic Lemmas from \cite{Naud2005exp}:
\begin{Lemma} \cite[Lemma 5.11]{Naud2005exp} \label{Lemma 5.11 Naud}
 Let $Z\subseteq I$ be an interval with $|Z|\leq \frac{c}{|b|}$. Let $H,f$ be as in Lemma \ref{Lemma 5.10 Naud}. Then for all $c$ small enough, either $\left| f(u) \right| \leq \frac{3}{4}H(u)$ for all $u\in Z$, or $ \left| f(u) \right| \geq \frac{1}{4}H(u)$ for all $u \in Z$.
\end{Lemma}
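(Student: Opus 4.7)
My plan is to prove this by a direct oscillation argument, exploiting the fact that both types of smoothness control available, namely $|f'| \le A|b|H$ and the log-Lipschitz control $|H'| \le A|b| H$ coming from $H \in \mathcal{C}_{A|b|}$, live on the spatial scale $1/|b|$, which matches the length bound $|Z| \le c/|b|$. Because of this matching, the factor $|b|$ cancels and the oscillations of both $|f|$ and $H$ on $Z$ are controlled by the absolute constant $c$ alone. This is exactly the mechanism one expects: the hypothesis of the lemma has been calibrated so that the dichotomy becomes quantitative at scale $1/|b|$.

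Concretely, the first step is to record the two one-line consequences of the hypotheses: for any $u, v \in Z$,
\[
\bigl| |f(u)| - |f(v)| \bigr| \;\le\; A\,c\,\sup_Z H,
\qquad
e^{-Ac} \;\le\; \frac{H(u)}{H(v)} \;\le\; e^{Ac}.
\]
The first comes from $|f'| \le A|b|H$ integrated across $Z$; the second comes from the cone condition $H \in \mathcal{C}_{A|b|}$.

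The second step is to set up the dichotomy at a single point. Fix any $u_0 \in Z$. If $|f(u_0)| \le \tfrac12 H(u_0)$, the two bounds combine, for every $u \in Z$, into
\[
|f(u)| \;\le\; \tfrac12 H(u_0) + A c\,\sup_Z H \;\le\; \bigl( \tfrac12 e^{Ac} + A c\, e^{Ac} \bigr)\, H(u),
\]
which is $\le \tfrac34 H(u)$ once $c$ is small enough, depending only on the fixed constant $A$. If instead $|f(u_0)| > \tfrac12 H(u_0)$, the analogous reverse-triangle argument yields
\[
|f(u)| \;\ge\; \tfrac12 H(u_0) - A c\, \sup_Z H \;\ge\; \bigl( \tfrac12 e^{-Ac} - A c\, e^{Ac}\bigr)\, H(u),
\]
which is $\ge \tfrac14 H(u)$ once $c$ is small enough. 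Since these two cases exhaust the possibilities at any chosen point of $Z$, the dichotomy at $u_0$ propagates globally on $Z$ and gives the statement.

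I do not anticipate any real obstacle: the comfortable gap between the thresholds $\tfrac14$ and $\tfrac34$ leaves room to absorb both the additive error $A c$ and the multiplicative error $e^{\pm A c}$, and the whole argument is entirely quantitative in the already-fixed constant $A$, so a universal choice $c = c(A)$ works uniformly in $f$, $H$, $b$, and in the model parameter $\omega$. The only point to watch is that $c$ must be fixed once and for all after $A$ but before any dependence on $|b|$, which is the way the constants are chained in Section~\ref{Section key Lemma}.
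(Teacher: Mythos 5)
Your proof is correct, and since the paper itself gives no argument for this lemma (it is quoted directly from Naud), the right comparison is with Naud's original proof, which runs on exactly the same mechanism: the bounds $|f'|\le A|b|H$ and $H\in\mathcal{C}_{A|b|}$ control the oscillation of $|f|$ and of $H$ on an interval of length $c/|b|$ by constants depending only on $Ac$, so a threshold test at a single point (you use $\tfrac12 H(u_0)$) propagates to the whole interval once $c=c(A)$ is small, uniformly in $f$, $H$, $b$, $\omega$. Your chaining of constants ($A$ fixed first, then $c$, i.e.\ $\epsilon'$, chosen small) is also consistent with how the lemma is invoked in the proof of Lemma \ref{Lemma 5.10 Naud}, so nothing further is needed.
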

In the following arguments, for $0\neq z\in \mathbb{C}$ let $\arg (z)\in (-\pi,\, \pi]$ be the unique real number such that $|z|e^{i\arg(z)}=z$.
\begin{Lemma} \cite[Lemma 5.12]{Naud2005exp} \label{Lemma 5.12 Naud} Let $0\neq z_1,z_2 \in \mathbb{C}$ be such that
$$\frac{|z_1|}{|z_2|} \leq M \text{ and } 0< \epsilon\leq \left| \arg (z_1) - \arg(z_2) \right| \leq 2\pi -\epsilon.$$
Then there exists some $0<\delta(M,\epsilon)<1$ such that
$$\left| z_1 + z_2 \right| \leq (1-\delta)|z_1| +|z_2|.$$

\end{Lemma}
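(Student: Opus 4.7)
The plan is to square both sides and invoke the law of cosines. Writing $\theta := \arg(z_1) - \arg(z_2)$ (so that $\cos\theta$ is unambiguous regardless of how arguments are normalized) one has
$$|z_1+z_2|^2 = |z_1|^2 + |z_2|^2 + 2|z_1||z_2|\cos\theta.$$
Since $\cos$ is even, $2\pi$-periodic, and decreasing on $[0,\pi]$, the hypothesis $|\arg(z_1)-\arg(z_2)|\in[\epsilon,2\pi-\epsilon]$ forces $\cos\theta \le \cos\epsilon$. Setting $c := \cos\epsilon \in [-1,1)$, this yields the key bound
$$|z_1+z_2|^2 \;\le\; |z_1|^2 + |z_2|^2 + 2c\,|z_1||z_2|. \qquad(\star)$$

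My next step is to expand the square of the target expression,
$$\bigl((1-\delta)|z_1|+|z_2|\bigr)^2 = (1-\delta)^2|z_1|^2 + |z_2|^2 + 2(1-\delta)|z_1||z_2|,$$
and to choose $\delta$ so that this quantity dominates the right-hand side of $(\star)$. After cancelling $|z_2|^2$ and rearranging, the required inequality becomes
$$|z_1|^2(2\delta-\delta^2) \;\le\; 2|z_1||z_2|\bigl(1-\delta-c\bigr).$$
Using $|z_1|\le M|z_2|$ on the left and discarding the favorable $-\delta^2$ term, it suffices to arrange $M\delta \le 1-\delta-c$, i.e.\ $\delta \le (1-c)/(M+1)$. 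I would therefore set
$$\delta(M,\epsilon) \;:=\; \frac{1-\cos\epsilon}{2(M+1)},$$
which is strictly positive because $\epsilon>0$ forces $\cos\epsilon<1$, and strictly less than $1$ because $1-\cos\epsilon\le 2 \le 2(M+1)$ (note that $M>0$ since $z_1,z_2\ne 0$). Taking square roots of $(\star)$ and of the dominating expression then produces the desired inequality.

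No genuine obstacle is anticipated: the whole argument collapses to the law of cosines followed by an elementary comparison of quadratics. The one point that warrants a brief sanity check is the bound $\cos\theta \le \cos\epsilon$ under the stated hypothesis, which holds because $\cos\theta$ depends only on $|\theta|\bmod 2\pi$, and on the interval $[\epsilon,2\pi-\epsilon]$ cosine attains its maximum at the two endpoints, both of which give value $\cos\epsilon$.
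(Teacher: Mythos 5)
Your proof is correct. The paper does not supply its own argument for this lemma; it simply cites Naud. Your law-of-cosines computation — squaring $|z_1+z_2|$, bounding $\cos(\arg z_1 - \arg z_2)$ by $\cos\epsilon$ using the hypothesis $|\arg z_1 - \arg z_2|\in[\epsilon,2\pi-\epsilon]$, and then comparing quadratics to solve for $\delta$ — is the standard elementary proof of this kind of quantitative strict triangle inequality, and it is essentially the same route Naud takes. The choice $\delta=\tfrac{1-\cos\epsilon}{2(M+1)}$ works: it guarantees $1-\delta-\cos\epsilon\ge 0$ (so the reduction using $|z_1|\le M|z_2|$ is legitimate), it satisfies $(M+1)\delta\le 1-\cos\epsilon$, and it lies strictly in $(0,1)$; taking square roots is then valid since both sides are nonnegative. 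No gaps.
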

$$ $$
\noindent{ \textbf{Proof of Lemma \ref{Lemma 5.10 Naud}}} First, we choose $\epsilon'$ small enough so that the conclusion of Lemma \ref{Lemma 5.11 Naud} is true for all $Z=Z_j ^i=f_{\alpha_i ^N} \left(V_j\right)$, and one checks that this does not change $A$ and $N$ (see the end of the proof of Lemma \ref{Lemma cone}, and recall that $f_{\alpha_i ^N}$ are contractions). It is clear that $|Z_i ^j| \leq |V_j|$. We add the assumption that
$$0<\theta \leq \frac{1}{8} \text{ so that } 1-2\theta \geq \frac{3}{4}.$$
Let $V_j, V_{j+1}, V_{j+2}$ be a triad of intervals such that each of them intersects $K_{\sigma^N \omega}$. Write
$$\widehat{V_j} = V_j \cup V_{j+1} \cup V_{j+2}.$$

If for some $(i,j') \in \lbrace 1,2 \rbrace \times\lbrace j,j+1,j+2\rbrace$  we have $\left| f(u) \right| \leq \frac{3}{4}H(u)$ for all $u\in Z_{j'} ^i$ then $\Theta_i (u)\leq 1$ for all $u\in Z_{j'} ^i$, and we are done.

Otherwise, by Lemma \ref{Lemma 5.11 Naud} we have for all $(i,j')\in \lbrace 1,2 \rbrace \times\lbrace j,j+1,j+2\rbrace$ and every $u\in Z_{j'} ^i$,
$$\left| f(u) \right| \geq \frac{1}{4}H(u).$$
We aim to make use of Lemma \ref{Lemma 5.12 Naud}. For every $x\in \widehat{V_j}$ set
$$z_1(x):=e^{(a+ib)\cdot 2 \pi \cdot  c(\alpha_1 ^N,x)} \eta^{(\omega)}(\alpha_1 ^N) f \circ f_{\alpha_1 ^N} (x),\quad z_2(x):=e^{(a+ib)\cdot 2 \pi \cdot  c(\alpha_2 ^N,x)} \eta^{(\omega)}(\alpha_2 ^N) f \circ f_{\alpha_2 ^N} (x).$$
Let 
$$M=4e^{2N\left(-  \min_{i\in I, 1\leq j \leq k_i} \log \mathbf{p}_j ^{(i)} +|a|\cdot D' \right) } e^{2A\epsilon' A_1}.$$
We claim that for  $j'\in \lbrace j,j+1,j+2\rbrace$, 
$$ \left| \frac{z_1(x)}{z_2(x)} \right| \leq M \text{ for all } x\in V_{j'},\, \text{ or } \left| \frac{z_2(x)}{z_1(x)} \right| \leq M \text{ for all } x\in V_{j'}.$$
Indeed, 
$$\frac{1}{4}e^{-2N\left( -\min_{i\in I, 1\leq j\leq k_i} \log \mathbf{p}_j ^{(i)}  +|a|\cdot D' \right) } \frac{H\circ \alpha_1 ^N(x)}{H\circ \alpha_2 ^N(x)} \leq \left| \frac{z_1(x)}{z_2(x)} \right| \leq  4e^{2N\left(- \min_{i\in I, 1\leq j\leq k_i} \log \mathbf{p}_j ^{(i)}  +|a|\cdot D' \right) } \frac{H\circ \alpha_1 ^N(x)}{H\circ \alpha_2 ^N(x)}. $$
If for some $x_0 \in V_{j'}$ we have $\frac{H\circ \alpha_1 ^N(x_0)}{H\circ \alpha_2 ^N(x_0)}\leq 1$ then for all $x\in V_{j'}$ we obtain
$$ \frac{H\circ \alpha_1 ^N(x)}{H\circ \alpha_2 ^N(x)} \leq \frac{e^{A A_1 \epsilon'} H\circ \alpha_1 ^N(x_0)  }{ e^{-A A_1 \epsilon'} H\circ \alpha_2 ^N(x_0) }\leq e^{2AA_1 \epsilon'},$$
and so $\left| \frac{z_1(x)}{z_2(x)} \right| \leq M$. If $\frac{H\circ \alpha_1 ^N(x)}{H\circ \alpha_2 ^N(x)}\geq 1$ for all $x \in V_{j'}$ then 
$$\left| \frac{z_2(x)}{z_1(x)} \right|  \leq 4e^{2N\left( \min_{i\in I} \log \mathbf{p}_j ^{(i)}  +|a|\cdot D' \right)} \leq M.$$

We next control the relative variations in the arguments of $z_1,z_2$. Since
$$\left| z_i (x) \right| \geq e^{-N\left( \min_{i\in I, 1\leq j\leq k_i} \log \mathbf{p}_j ^{(i)}  +a\cdot D \right) } \frac{1}{4} H\circ f_{\alpha_i ^N }(x)>0,\quad \text{ for all } x\in \widehat{V_j} \text{ and } i=1,2, $$
there exist two $C^1$ functions $L_i :\widehat{V_j}\rightarrow \mathbb{C}$ such that 
$$L_i ' (x) = \frac{z_i ' (x)}{z_i (x)} \text{ and } e^{L_i (x)} = z_i (x) \text{ for all } x\in \widehat{V_j}.$$
For one possible construction see \cite[Proof of Lemma 5.10]{Naud2005exp}. For all $ x\in \widehat{V_j}$ set
$$\Phi(x):= \Im \left(L_1 (x)) - \Im(L_2(x)\right).$$
Then for $ x\in \widehat{V_j}$
$$\Phi'(x) = \Im \left( \frac{z_1 ' (x)}{z_1(x)}- \frac{z_2'(x)}{z_2(x)} \right)$$
$$= b \frac{d}{dx} \left( \log f_{\alpha_1 ^N} ' - \log f_{\alpha_2 ^N} ' \right)(x)+ \Im \left( \frac{ \left( f\circ f_{\alpha_1 ^N} \right)' (x) }{f\circ f_{\alpha_1 ^N} (x)}  - \frac{\left( f\circ f_{\alpha_2 ^N} \right)' (x) }{f\circ f_{\alpha_2 ^N} (x)}  \right).$$
Now, by our assumptions on $f,H$
$$\left| \frac{ \left( f\circ f_{\alpha_1 ^N} \right)' (x) }{f\circ f_{\alpha_1 ^N} (x)}  - \frac{\left( f\circ f_{\alpha_2 ^N} \right)' (x) }{f\circ f_{\alpha_2 ^N} (x)}   \right|\leq 8A|b| \rho^N,$$
and so by Theorem \ref{Theorem disint} Part (5) for all $ x\in \widehat{V_j}$
$$m-8A\rho^N \leq \frac{\left| \Phi' (x) \right|}{|b|}\leq m'+8A\rho^N.$$
Let $x\in V_j, x' \in V_{j+2}$. By the mean value Theorem
$$\left( m-8A\rho^N  \right)A_1 ' \epsilon' \leq \left| \Phi(x)-\Phi(x')\right| \leq \left( m'+8A\rho^N \right)3A_1 \epsilon'.$$
So, if $N$ is large enough then there are constants $B_1,B_2>0$ such that, independently of $x,x'$ and $|b|$,
$$B_1 \epsilon' \leq \left| \Phi(x)-\Phi(x')\right| \leq B_2 \epsilon'.$$
We now pick $\epsilon'$ so that 
$$(B_2 + \frac{B_1}{2})\epsilon' \leq \pi,$$
and put $\epsilon=B_1 \frac{\epsilon'}{4}$. 

Suppose, towards a contradiction, that there are $x\in V_j$ and $x'\in V_{j+2}$ such that both
$$\Phi(x),\Phi(x')\in \bigcup_{k\in \mathbb{Z}} [ 2 k \pi-\epsilon, 2k \pi +\epsilon].$$ 
Since $\left| \Phi(x)-\Phi(x')\right| \leq B_2 \epsilon'$ we cannot have
$$\Phi(x) \in [ 2 k_1 \pi-\epsilon, 2k_1 \pi +\epsilon] \text{ and } \Phi(x') \in [ 2 k_2 \pi-\epsilon, 2k_2 \pi +\epsilon]$$
with $k_1\neq k_2$. Indeed, it will imply
$$2\pi - B_1 \frac{\epsilon'}{2}\leq 2\pi - 2\epsilon \leq \left| \Phi(x)-\Phi(x')\right|  \leq B_2 \epsilon',$$
a contradiction. But then 
$$ B_1 \epsilon'\leq \left| \Phi(x)-\Phi(x')\right|  \leq 2\epsilon = B_1 \epsilon'/2,$$
which is also a contradiction.

We conclude that there exists $j'\in \lbrace j,j+2\rbrace$ such that for all $x\in V_{j'}$,
$$\text{dist} \left( \Phi(x),\,2\pi \mathbb{Z} \right)>\epsilon.$$
Since $e^{i\Phi(x)} = e^{i \left( \arg(z_1)-\arg(z_2)\right)}$, the conditions of Lemma \ref{Lemma 5.12 Naud} are met. Thus, either for every $x\in V_{j'}$
$$\left| z_1(x)+z_2(x) \right| \leq \left(1-\delta(M,\epsilon)\right) \left| z_1(x) \right|+\left|z_2(x)\right|,$$
or for every $x\in V_{j'}$
$$\left| z_1(x)+z_2(x) \right| \leq \left(1-\delta(M,\epsilon) \right) \left| z_2(x) \right|+\left|z_1(x)\right|,$$
depending on whether
$$ \left| \frac{z_1(x)}{z_2(x)} \right| \leq M \text{ for all } x\in V_{j'},\, \text{ or } \left| \frac{z_2(x)}{z_1(x)} \right| \leq M \text{ for all } x\in V_{j'}.$$
Choosing $0<\theta<\frac{1}{2}\delta(M,\epsilon)$, we have $\Theta_i(x)\leq 1$ for all $x\in V_{j'}$ and some $i\in \lbrace 1,2\rbrace$. $\hfill{\Box}$
$$ $$

\noindent{ \textbf{Proof of Lemma \ref{Lemma 5.4 Naud} Part (3)} Fix constants $N,A,\epsilon',\theta$ so that parts (1) and (2) of Lemma \ref{Lemma 5.4 Naud}, and so that Lemma \ref{Lemma 5.10 Naud}, all hold true. Let $f\in C^1([0,1],H\in C_{A|b|}$ be such that
$$\left| f \right| \leq H,\text{ and } \left| f' \right| \leq A|b|H.$$
We aim to show that there exists a dense subset $J\in \mathcal{E}_{s,\omega}$ such that
$$|P_{s,\omega, N} f| \leq N_s ^J H \quad \text{ and } |(P_{s,\omega, N} f)'|\leq A|b|N_s ^J H.$$
Since the latter statement holds true for all $J\neq \emptyset$ by Lemma \ref{Lemma cone}, we focus on the first one.

Let
$$J:=\lbrace (i,j):\, \Theta_i(x)\leq 1 \text{ for all } x\in V_j\rbrace.$$
By Lemma \ref{Lemma 5.10 Naud} $J$ is dense. Recall that by Theorem \ref{Theorem disint} Part (4), for all $I\neq J \in X_N ^{(\omega)}$,
\begin{equation} \label{Eq separated UNI}
f_{I} ([0,1]) \cap f_{J} ([0,1]) = \emptyset.
\end{equation}

Let $x\in [0,1]$. If $x \notin \text{ Int } V_j$ for all  $(i,j)\in J$, then by the definition of $\chi_J$ and by \eqref{Eq separated UNI}, 
$$ \chi_J \circ f_{I} (x) =1 \text{ for all } I\in X_N ^{(\omega)}.$$
Therefore,
\begin{eqnarray*}
\left| P_{s,\omega,N} \left( f \right) (x) \right| &\leq & \sum_{I\in X_N ^{(\omega)}} e^{a\cdot 2 \pi \cdot  c(I,x)} \eta^{(\omega)}(I) H\circ f_I (x)\\
&=& \sum_{I\in X_N ^{(\omega)}} e^{a\cdot 2 \pi \cdot  c(I,x)} \eta^{(\omega)}(I) (H\cdot \chi_J) \circ f_I (x)\\
&=& N_s ^J \left( H \right) (x).
\end{eqnarray*}

Now, suppose $x \in \text{ Int }V_j$ for some  $(i,j)\in J$.

\textbf{Case 1} If $(1,j)\in J$ but $(2,j)\notin J$ then by \eqref{Eq separated UNI} $\chi_J \circ f_{I} (x) =1$ for all $I\neq \alpha_1 ^N$: Indeed, \eqref{Eq separated UNI} implies  that $f_{I}(x) \notin f_{\alpha_1 ^N}(V_j)$ since $f_{I}(x) \notin f_{\alpha_1 ^N}([0,1])$. Since $\Theta_1(x)\leq 1$ we find that
\begin{eqnarray*}
\left| P_{s,\omega,N} \left( f \right) (x) \right| &\leq & \sum_{I\neq \alpha_i ^N,  I\in   X_N ^{(\omega)}} e^{a\cdot 2 \pi \cdot  c(I,x)} \eta^{(\omega)}(I) H\circ f_I (x) + \\
&&(1-2\theta)e^{a\cdot 2 \pi \cdot  c(\alpha_1 ^N,x)} \eta^{(\omega)}(\alpha_1 ^N) H \circ f_{\alpha_1 ^N} (x)+e^{a\cdot 2 \pi \cdot  c(\alpha_2 ^N,x)} \eta^{(\omega)}(\alpha_2 ^N) H  \circ f_{\alpha_2 ^N} (x)\\
&\leq & \sum_{I\neq \alpha_i ^N, I\in   X_N ^{(\omega)}} e^{a\cdot 2 \pi \cdot  c(I,x)} \eta^{(\omega)}(I) \left( H\cdot \chi_J \right) \circ f_I (x) + \\
&&e^{a\cdot 2 \pi \cdot  c(\alpha_1 ^N,x)} \eta^{(\omega)}(\alpha_1 ^N) (H\cdot \chi_J)  \circ f_{\alpha_1 ^N} (x)+e^{a\cdot 2 \pi \cdot  c(\alpha_2 ^N,x)} \eta^{(\omega)}(\alpha_2 ^N) (H\cdot \chi_J)   \circ f_{\alpha_2 ^N} (x)\\
&=& N_s ^J \left( H \right) (x).
\end{eqnarray*}
The case $(2,j)\in J$ but $(1,j)\notin J$ is similar.

\textbf{Case 2} Suppose now both $(1,j)\in J$ and $(2,j)\in J$. Then both $\Theta_1(x),\Theta_2 (x) \leq 1$. Taking half of the sum of these inequalities, we deduce that
$$ \left| e^{(a+ib)\cdot 2 \pi \cdot  c(\alpha_1 ^N,x)} \eta^{(\omega)}(\alpha_1 ^N) f \circ f_{\alpha_1 ^N} (x)+e^{(a+ib)\cdot 2 \pi \cdot  c(\alpha_2 ^N,x)}\eta^{(\omega)}(\alpha_2 ^N) f\circ f_{\alpha_2 ^N} (x)     \right| $$
$$\leq \left(1-\theta \right)e^{a\cdot 2 \pi \cdot  c(\alpha_1 ^N,x)} \eta^{(\omega)}(\alpha_1 ^N) H \circ f_{\alpha_1 ^N} (x)+\left(1-\theta \right)e^{a\cdot 2 \pi \cdot  c(\alpha_2 ^N,x)} \eta^{(\omega)}(\alpha_2 ^N) H\circ f_{\alpha_2 ^N} (x) $$
$$\leq e^{a\cdot 2 \pi \cdot  c(\alpha_1 ^N,x)} \eta^{(\omega)}(\alpha_1 ^N) (H\cdot \chi_J)  \circ f_{\alpha_1 ^N} (x)+e^{a\cdot 2 \pi \cdot  c(\alpha_2 ^N,x)}\eta^{(\omega)}(\alpha_2 ^N) (H\cdot \chi_J)   \circ f_{\alpha_2 ^N} (x).$$
Since for every $I\in X_N ^{(\omega)}$ with $I\neq \alpha_i ^N$ we have $\chi_J \circ f_{I} (x) =1$, it follows that, similarly to case 1,
$$\left| P_{s,\omega,N} \left( f \right) (x) \right|\leq N_s ^J \left( H \right) (x).$$
The proof is complete. $\hfill{\Box}$

\section{From spectral gap to a renewal theorem with an exponential error term} \label{Section renewal}
We keep our notations and assumptions from Section \ref{Section non-linearity implies s-gap}. In particular, we are working with the induced IFS from Claim \ref{Claim key}, so that Theorem \ref{Theorem spectral gap} holds as stated. However, from this point forward we will no longer require the model constructed in Theorem \ref{Theorem disint}. Recall also  that $\Phi$ is $C^2$, and that $\nu_\mathbf{p}$ is a self-conformal measure such that $\mathbf{p}$ is a strictly positive probability vector. Recall that $\mathbb{P}=\mathbf{p}^\mathbb{N}$ is our Bernoulli measure on $\mathcal{A}^\mathbb{N}$.

 First, we record the following consequence of Theorem \ref{Theorem spectral gap}; More precisely, only the third part follows from Theorem \ref{Theorem spectral gap}, the rest are already well known in our setting. For every $t>0$ let 
$$\mathbb{C}_t = \lbrace z\in \mathbb{C}:\, |\mathcal{R}z|<t\rbrace.$$ 
\begin{theorem} \label{Theorem U}
Let $\epsilon,\gamma>0$ be as in Theorem \ref{Theorem spectral gap}. 
\begin{enumerate}
\item \cite[Lemma 11.17]{Benoist2016Quint} The transfer operator $P_{s+i\theta}$ is a bounded operator on $C^1([0,1])$ for every $|s|<\epsilon$ and $\theta$, that depends analytically on $s+i\theta$.

\item \cite[Proposition 4.1]{Li2018decay}  There exists an analytic operator $U(s+i\theta )$  for $s+i\theta \in \mathbb{C}_\epsilon$ on $C^1([0,1])$ such that, for $s+i\theta \in \mathbb{C}_\epsilon$,
$$(I-P_{-(s+i\theta)})^{-1} = \frac{1}{\chi(s+i\theta)}N_0+U(s+i\theta ),\, \text{ where } N_0(f)=\int f \, d \mathbb{P}.$$
\item \cite[Proposition 4.26]{li2018fourier} There exists some $C>0$ such that for all $s+i\theta \in \mathbb{C}_\epsilon$,
$$||U(s+i\theta )||\leq C(1+\left| \theta \right| )^{1+\gamma}, \, \text{ where the operator norm is defined via \eqref{Eq B-Q norm}.} $$
\end{enumerate}
\end{theorem}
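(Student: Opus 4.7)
The plan is to establish part (3); parts (1) and (2) are cited verbatim from \cite{Benoist2016Quint} and \cite{Li2018decay} respectively, so no new work is required there. The argument for (3) mirrors Li \cite[Proposition 4.26]{li2018fourier}, using Theorem \ref{Theorem spectral gap} as the crucial new input, and proceeds by separating the problem into a large and a bounded imaginary parameter regime.

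In the regime $|\theta|>R$ with $R$ as in Theorem \ref{Theorem spectral gap}, I would estimate the full resolvent directly via the Neumann series
$$
(I-P_{-(s+i\theta)})^{-1}=\sum_{n=0}^{\infty}P_{-(s+i\theta)}^{n}.
$$
Since $|{-s}|<\epsilon$ and $|{-\theta}|>R$, Theorem \ref{Theorem spectral gap} gives $\Vert P_{-(s+i\theta)}^{n}\Vert_{C^1}\leq C|\theta|^{1+\gamma}\alpha^{n}$ with $\alpha<1$, so the series converges in operator norm with total bound $\frac{C}{1-\alpha}|\theta|^{1+\gamma}$. To transfer this estimate to $U(s+i\theta)=(I-P_{-(s+i\theta)})^{-1}-N_0/\chi(s+i\theta)$, I also need $|\chi(s+i\theta)|$ to be bounded below uniformly in this regime, which follows from the spectral gap: the zeros of $\chi$ correspond to places where $P_{-z}$ admits $1$ as an eigenvalue, but Theorem \ref{Theorem spectral gap} rules out any such resonance when $|\theta|$ is sufficiently large. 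Enlarging $R$ if necessary and using that $N_0$ is a bounded rank-one operator, the triangle inequality then yields $\Vert U(s+i\theta)\Vert_{C^1}\leq C'|\theta|^{1+\gamma}$ in this regime.

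In the regime $|\theta|\leq R$, I would use that by part (2) the operator $U$ is analytic on the open strip $\mathbb{C}_\epsilon$, so its operator norm is bounded on the compact set $\{s+i\theta:\, |s|\leq \epsilon/2,\, |\theta|\leq R\}$ by some constant $C''$. Since $(1+|\theta|)^{1+\gamma}\geq 1$ always, the inequality $C''\leq C''(1+|\theta|)^{1+\gamma}$ absorbs this estimate into the asserted bound; the final constant in the theorem is then the maximum of those from the two regimes.

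The main obstacle lies in verifying the lower bound $|\chi(s+i\theta)|\geq c>0$ uniformly for $|\theta|>R$. This requires unpacking the construction of $\chi$ from \cite[Proposition 4.1]{Li2018decay} in order to see that its zeros are confined to a bounded region of $\theta$; Theorem \ref{Theorem spectral gap} is the key input, since it rules out top eigenvalues of $P_{-z}$ accumulating to $1$ as $|\theta|\to\infty$, but some care is needed to exclude spurious zeros of the specific analytic function $\chi$. If necessary, this technicality can be absorbed by further enlarging $R$, which does not affect the final bound.
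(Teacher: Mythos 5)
Your proposal is correct and follows essentially the same route as the paper, which handles part (3) exactly this way: the Neumann series $\sum_n P_{-(s+i\theta)}^n$ controlled by Theorem \ref{Theorem spectral gap} for $|\theta|>R$, and analyticity of $U$ (hence boundedness on a compact, after slightly shrinking the strip in $s$, which is harmless) for $|\theta|\leq R$, with parts (1) and (2) cited. The only comment is that your ``main obstacle'' is vacuous: in the notation of part (2), $\chi(s+i\theta)$ denotes the product of the constant Lyapunov exponent $\chi$ with $(s+i\theta)$ (as is used later in the proof of Proposition \ref{Proposition renewal}, where $\frac{1}{\chi(s-i\theta)}$ is rewritten as $\frac{1}{\chi}\cdot\frac{1}{s-i\theta}$), so for $|\theta|>R$ one has $|\chi(s+i\theta)|\geq \chi R>0$ trivially, and no resonance or zero-free-region analysis is needed.
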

Note that in \cite[Section 4]{Li2018decay} the transfer operator has an additional minus in the exponent, which explains the left hand side of the equation in Part (2) in our setting.  Part (3) was treated by Li in \cite{li2018fourier} in a related setting to ours (cocycles that arise from  actions of algebraic groups on certain projective spaces). It follows from the analyticity of $U$ when $|\theta|<R$ for the $R>0$ as in Theorem \ref{Theorem spectral gap}, and otherwise from a direct application of Theorem \ref{Theorem spectral gap} via the  identity
$$(I-P_{s+i\theta})^{-1} = \sum_{n=0} ^\infty P_{s+i\theta} ^n.$$
We refer to the discussion in  \cite[Proposition 4.26]{li2018fourier} for more details.

We now define a renewal operator as follows: For a non-negative bounded function $f$ on $[0,1] \times \mathbb{R}$ and $(z,t)\in [0,1]\times \mathbb{R}$, define
$$Rf(z,\, t):= \sum_{n=0} ^\infty \int f( \eta.z,\,  \sigma(\eta,\, z) -t) d\mathbf{p}^n (\eta).$$
Since $f$ is positive, this sum is well defined. For every $(z,x)\in [0,1]\times \mathbb{R}$ let $f_z :\mathbb{R} \rightarrow \mathbb{R}$  be the function
$$f_z (x):=f(z,x).$$
We also define, for $(z,\theta) \in [0,1]\times \mathbb{C}$ the Fourier transform
$$\hat{f}(z,\theta) := \int e^{i \theta u} f(z,u)\,du=\hat{f_z}(\theta).$$
The following Proposition is the main result of this Section:
\begin{Proposition} \label{Proposition renewal}
Let $\epsilon$ be as in Theorem \ref{Theorem spectral gap}, and let $C\subseteq \mathbb{R}$ be a compact set. Fix a non-negative bounded and continuous function $f(y,x)$ on $[0,1] \times \mathbb{R}$ such that $f_y \in C_C ^{3} (\mathbb{R})$ for every $y\in [0,1]$, and $\hat{f}_z (\theta) \in C^1([0,1])$ for every fixed $\theta\in \mathbb{R}$. Assume
$$\sup_{y}  \left( |f_y ^{(3)}|_{L^1} + |f_y |_{L^1} \right)<\infty.$$
Then  for every $z\in [0,1]$ and $t\in \mathbb{R}$
$$Rf(z, t) = \frac{1}{\chi} \int_{[0,1]} \int_{-t} ^\infty f(y,u)\,du d\nu(y) + e^{-\epsilon\cdot |t|}O\left(e^{\epsilon | \supp f_z|} \left(  |( f_z)^{(3)}|_{L^1} +  |f_z|_{L^1} \right) \right)$$
where $|\text{supp}(f_z)|$ is the supremum of the absolute value of the elements of $\supp(f)$.
\end{Proposition}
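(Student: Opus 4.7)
The plan is to follow the Fourier/Laplace inversion strategy of Li \cite{Li2018decay, li2018fourier} and Boyer \cite{boyer2016rate}, expressing $R$ in terms of the resolvent of the transfer operator and exploiting the decomposition together with the resolvent bounds supplied by Theorem \ref{Theorem U}.

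\textbf{Step 1 (Fourier setup).} I would begin by applying Fourier inversion to $f$ in its second variable. Writing $\sigma(\eta,z)$ as the Birkhoff sum of the cocycle $c$, and using that (under the normalization of $P_s$) $\int g(\eta.z)\,e^{-i\theta\sigma(\eta,z)}\,d\mathbf{p}^n(\eta) = P_{-i\theta/(2\pi)}^n g(z)$, a Fubini interchange (justified by the $\theta$-decay of $\hat f(\cdot,\theta)$ inherited from $f_y\in C^3$) yields
\begin{equation*}
Rf(z,t) \;=\; \tfrac{1}{2\pi}\int_{\mathbb R} e^{i\theta t}\,\bigl(I-P_{-i\theta/(2\pi)}\bigr)^{-1}\bigl(\hat f(\cdot,\theta)\bigr)(z)\,d\theta.
\end{equation*}
Inserting the decomposition from Theorem \ref{Theorem U}(2) splits this into a ``pole part'' $M(z,t)$ coming from the $\tfrac{1}{\chi(\cdot)}N_0$ summand and an ``analytic part'' $E(z,t)$ coming from $U$.

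\textbf{Step 2 (Main term from the pole).} For $M(z,t)$, note that $N_0(\hat f(\cdot,\theta)) = \int\hat f(y,\theta)\,d\nu(y)$ and that $\chi$ has a simple zero at $0$. I would split $\tfrac{1}{\chi(i\theta/(2\pi))} = \tfrac{2\pi}{i\theta\,\chi'(0)} + r(\theta)$ with $r$ analytic on $\mathbb{C}_\epsilon$. The principal piece, evaluated by recognizing $1/(i\theta)$ as the Fourier multiplier corresponding to integration (equivalently, the Fourier transform of the Heaviside function), produces exactly the main term $\tfrac{1}{\chi}\int\!\!\int_{-t}^\infty f(y,u)\,du\,d\nu(y)$, with $\chi$ matching the normalization in the statement. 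The analytic remainder $r(\theta)\int\hat f(y,\theta)\,d\nu(y)$ is merged with $E(z,t)$ and handled by contour shift.

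\textbf{Step 3 (Exponential error via contour shift).} For the combined analytic piece, shift the integration contour from $\mathbb R$ to $\mathbb R + i\epsilon'\,\mathrm{sgn}(t)$ with $0<\epsilon'<\epsilon$. This is legitimate since $U(s+i\theta)$ is analytic on $\mathbb{C}_\epsilon$ by Theorem \ref{Theorem U}(1)(2), and $\hat f(z,\cdot)$ extends to an entire function of $\theta$ with vertical growth bounded by $e^{\epsilon'|\mathrm{supp}\,f_z|}$ by Paley--Wiener. On the shifted contour $|e^{i\theta t}| = e^{-\epsilon'|t|}$, and combining the polynomial bound $\|U(s+i\theta)\|\leq C(1+|\theta|)^{1+\gamma}$ from Theorem \ref{Theorem U}(3) with the decay $|\hat f(z,\theta)|\leq C(1+|\theta|)^{-3}\bigl(|f_z^{(3)}|_{L^1}+|f_z|_{L^1}\bigr)$ (from three integrations by parts), the shifted integral converges absolutely and is bounded by the required $e^{-\epsilon'|t|}\,e^{\epsilon'|\mathrm{supp}\,f_z|}\bigl(|f_z^{(3)}|_{L^1}+|f_z|_{L^1}\bigr)$. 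Letting $\epsilon'$ approach $\epsilon$ produces the stated error.

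\textbf{Main obstacle.} The delicate point is justifying the operator-valued contour shift together with the Fubini interchange: one must ensure that at each stage the $C^1([0,1])$-valued integrand lies in $L^1$ in $\theta$, uniformly as the contour is deformed. This is precisely where the interplay between $C^3$-smoothness of $f_y$ (giving cubic $\theta$-decay of $\hat f$), Paley--Wiener growth (controlling vertical behavior), and the $(1+|\theta|)^{1+\gamma}$ polynomial bound on $U$ becomes crucial -- the exponent $3$ on $f_y^{(3)}$ in the hypothesis is exactly what makes the estimate $1+\gamma-3<-1$ absolutely convergent.
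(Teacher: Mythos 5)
Your overall strategy (Fourier inversion, the resolvent decomposition from Theorem \ref{Theorem U}, and a Paley--Wiener contour shift for the $U$-part) is the same as the paper's, and your Step 3 essentially reproduces the paper's error estimate via Li's contour-shift lemma. However, Step 1 contains a genuine gap. The interchange of $\sum_{n\ge 0}$ with $\int_{\mathbb{R}}d\theta$ is \emph{not} justified by the $\theta$-decay of $\hat f$: that decay makes each term finite, but $\sum_{n}\int \int |\hat f(\eta.z,\theta)|\,d\theta\,d\mathbf{p}^n(\eta)$ diverges (each summand is bounded below by a fixed positive constant), so Fubini fails as stated. Relatedly, the identity $Rf(z,t)=\tfrac{1}{2\pi}\int e^{i\theta t}(I-P_{-i\theta})^{-1}\hat f(\cdot,\theta)(z)\,d\theta$ is ill-posed on the nose: $I-P_0$ is not invertible, and near $\theta=0$ the pole part $\tfrac{1}{\chi\cdot(-i\theta)}N_0$ makes the integrand non-absolutely-integrable, so the formula can at best be read as a principal value --- which is incompatible with the absolute convergence you would need for the very Fubini step that produced it. The same issue resurfaces in Step 2: the Fourier transform of the Heaviside function is $\mathrm{p.v.}\,\tfrac{1}{i\theta}+\pi\delta$, not $\tfrac{1}{i\theta}$, so ``recognizing $1/(i\theta)$ as the multiplier for integration'' silently drops a delta contribution and hides exactly the singular behaviour at $\theta=0$ that needs to be controlled.

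The missing ingredient --- and the first half of the paper's proof --- is an Abelian regularization in the spirit of Li and Boyer. One introduces, for $s>0$, the operators $B_sf(z,t)=\int e^{-s\sigma(i,z)}f(i.z,\sigma(i,z)+t)\,d\mathbf{p}(i)$, notes that $Rf(z,-t)=\lim_{s\to0^+}\sum_{n\ge0}B_s^nf(z,t)$ by monotone convergence, and performs the Fourier inversion and Fubini at fixed $s>0$, where the factor $e^{-s\sigma(\eta,z)}\le e^{-sDn}$ makes the double sum/integral absolutely convergent. One then applies Theorem \ref{Theorem U} to $(I-P_{-(s-i\theta)})^{-1}$, evaluates the pole term classically via $\tfrac{1}{s-i\theta}=\int_0^\infty e^{-(s-i\theta)u}\,du$ (no distributions needed), and only afterwards lets $s\to0^+$, using monotone convergence for the main term and dominated convergence (with the $(1+|\theta|)^{1+\gamma}$ bound on $U$ and the cubic decay of $\hat f$) for the remainder, which is then estimated exactly as in your Step 3. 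Without this regularization your Steps 1--2 do not go through as written; with it, your argument becomes the paper's proof.
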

We remark that Proposition \ref{Proposition renewal} is an IFS-type analogue of the renewal Theorem of Li \cite[Theorem 1.1 and Proposition 4.27]{li2018fourier}. Similarly to Li, we derive it from our spectral gap result Theorem \ref{Theorem spectral gap} via Theorem \ref{Theorem U}.
\begin{proof}
We begin by arguing, similarly to \cite[Lemma 4.6]{Li2018decay} and \cite[Proposition 4.14]{boyer2016rate}, that for every $(z,t)\in [0,1]\times \mathbb{R}$,
\begin{equation} \label{Eq Lemma 4.6 Li}
Rf(z, t) = \frac{1}{\chi} \int_{[0,1]}  \int_{-t} ^\infty f(y,u)\,du d\nu(y) + \lim_{s\rightarrow 0^+} \frac{1}{2\pi} \int e^{-it \theta}  U(s-i\theta)\hat{f}(z,\theta)\,d\theta,
\end{equation}

Indeed, for $s\geq 0$ write
$$B_s f(z,t)= \int e^{-s \sigma(i,\, z)} f(i.z,\, \sigma(i,z)+t)d\mathbf{p}(i),$$
and put $B:=B_0$. Note that 
$$Rf(z,-t) = \sum_{n\geq 0} B^n f(z,t).$$
By the monotone convergence Theorem, since $f\geq 0$ and $\sigma(\eta, z)\geq 0$ we have
$$\lim_{s\rightarrow 0^+} \sum_{n\geq 0 } \int e^{-s \sigma(\eta,\, z)} f(\eta.z,\, \sigma(\eta,\,z)+t) d\mathbf{p}^n (\eta) = \sum_{n\geq 0 } \int  f(\eta.z,\, \sigma(\eta,\,z)+t) d\mathbf{p}^n (\eta).$$
Thus,
\begin{equation} \label{Eq (4.3) Li}
\sum_{n\geq 0} B^n (f)(z,t) = \lim_{s\rightarrow 0^+} \sum_{n\geq 0 } B_s ^n (f) (z,t).
\end{equation}

Recall that $f_y \in C_C ^{3} (\mathbb{R}) \subset L^1 (\mathbb{R})$ for every $y\in [0,1]$. So, using the inverse Fourier transform, we have
$$\sum_{n\geq 0 } B_s ^n (f) (z,t) = \sum_{n\geq 0 } \int e^{-s \sigma(\eta,\, z)} f(\eta.z,\, \sigma(\eta,\,z)+t) d\mathbf{p}^n (\eta)$$
\begin{equation} \label{Eq (4.4) Li}
= \sum_{n\geq 0 } \int e^{-s \sigma(\eta,\, z)} \frac{1}{2\pi} \int_\mathbb{R} e^{ i\theta( \sigma(\eta,z)+t)} \hat{f}(\eta.z,\, \theta) d\theta d\mathbf{p}^n (\eta).
\end{equation}
We now argue that the sum in  \eqref{Eq (4.4) Li} is absolutely convergent: Since $f_y$ is compactly supported for every $y\in [0,1]$, $\hat{f_y}\in C^\omega (\mathbb{C})$. Note that for every $y\in [0,1]$, $k\in \mathbb{N}\cup \lbrace 0 \rbrace$, $\delta\geq 0$, and $\theta\in \mathbb{R}$,
$$\left| \hat{f_y}(\pm i\delta+\theta) \right| \leq e^{\delta | \supp f_y|} \frac{1}{|\theta|^k} |(f_y)^{(k)}|_{L^1}$$
So, plugging in $k=0$ and $k=3$, 
\begin{equation} \label{Eq 4.60}
\left|  \hat{f_y}(\pm i\delta+\theta) \right|  \leq e^{\delta | \supp f_y|} \frac{2}{|\theta|^{3}+1} \left( |(f_y)^{(3)}|_{L^1} + |f_y|_{L^1} \right)
\end{equation}

Thus, as long as $s>0$, putting $C=\int_\mathbb{R} \frac{2}{|\theta|^{3}+1} d\theta$,
$$ \sum_{n\geq 0 } \int e^{-s \sigma(\eta,\, z)} \int_\mathbb{R} | \hat{f}(\eta.z,\, \theta)| d\theta d\mathbf{p}^n (\eta) \leq  C\cdot e^{\delta | \supp f_y|} \cdot  \sup_{y}  \left( |f_y ^{(3)}|_{L^1} + |f_y |_{L^1} \right) \cdot  \sum_{n\geq 0 } \int e^{-s \sigma(\eta,\, z)}\cdot d\mathbf{p}^n (\eta) $$
$$\leq  C\cdot e^{\delta | \supp f_y|} \cdot  \sup_{y}  \left( |f_y ^{(3)}|_{L^1} + |f_y|_{L^1} \right) \cdot  \sum_{n\geq 0 }  e^{-s D\cdot n}<\infty.$$
It follows that the sum in  \eqref{Eq (4.4) Li} is absolutely convergent. Thus, we can use Fubini's Theorem to change the order of integration. Since for every $\theta \in \mathbb{R}$ $\hat{f}(\cdot,\theta) \in C^1 ([0,1])$, we can apply Theorem \ref{Theorem U} to obtain
\begin{eqnarray*}
\sum_{n\geq 0 } B_s ^n (f) (z,t) &=& \frac{1}{2\pi} \int_\mathbb{R} \sum_{n\geq0} \int e^{(-s+ i\theta)\sigma(\eta,z)} \hat{f}(\eta.z,\, \theta)d\mathbf{p}^n (\eta) e^{it\theta} d\theta \\
 &=& \frac{1}{2\pi} \int_\mathbb{R} \sum_{n\geq0} P_{-s+ i\theta} ^n  \hat{f}(z,\, \theta) e^{it\theta} d\theta \\ 
 &=& \frac{1}{2\pi} \int_\mathbb{R} (1-P_{-s+ i\theta})^{-1}  \hat{f}(z,\, \theta) e^{it\theta} d\theta \\
  &=& \frac{1}{2\pi} \int_\mathbb{R} (1-P_{-(s- i\theta)})^{-1}  \hat{f}(z,\, \theta) e^{it\theta} d\theta \\
   &=& \frac{1}{2\pi} \int_\mathbb{R} \left( \frac{1}{\chi(s- i\theta)}N_0+U(s- i\theta ) \right)  \hat{f}(z,\, \theta) e^{it\theta} d\theta. \\
\end{eqnarray*}
Since $\frac{1}{s-i\theta} = \int_0 ^\infty e^{-(s-i\theta)u} du$ for $s>0$, and since  $\hat{f_y} \in L^1 (\mathbb{R})$ for all $y$ by \eqref{Eq 4.60}, we have
\begin{eqnarray*}
\frac{1}{2\pi} \int_\mathbb{R} \frac{N_0}{\chi(s-i\theta)}  \hat{f}(z,\, \theta) e^{it\theta} d\theta &=& \frac{1}{2\pi} \frac{1}{\chi} \int_{[0,1]} \int_\mathbb{R} \frac{\hat{f}(y,\, \theta)}{s-i\theta} e^{it\theta} d\theta d\nu(y)\\
&=& \frac{1}{\chi} \int_{[0,1]} \int_0 ^\infty f(y,\,u+t) e^{-su} du d\nu(y).\\
\end{eqnarray*}
When $s\rightarrow 0^+$, since $f$ is integrable with respect to $\nu\times du$, this converges to $$\frac{1}{\chi} \int \int_0 ^\infty f(y,\,u+t)  du d\nu(y)$$  by monotone convergence. Taking tally of our computations, \eqref{Eq Lemma 4.6 Li} is proved.

Furthermore, using the bound on the norm of $U$ from Theorem \ref{Theorem U} and   \eqref{Eq 4.60},  by dominated convergence
$$\lim_{s\rightarrow 0^+} \frac{1}{2\pi} \int e^{-it \theta}  U(s-i\theta)\hat{f}(z,\theta)\,d\theta =  \frac{1}{2\pi} \int e^{-it \theta}  U(-i\theta)\hat{f}(z,\theta)\,d\theta.$$
Define
$$T_z(\theta) = U(-i\theta)\hat{f}(z,\theta).$$
Then $T_z$ is a tempered distribution with an analytic continuation to $|\Im \theta |<\epsilon$ by Theorem \ref{Theorem spectral gap} and Theorem \ref{Theorem U}, such that for any $|b|<\epsilon$ we have $T_z(\cdot + ib)\in L^1$. Furthermore, by Theorem \ref{Theorem U} and \eqref{Eq 4.60}, for any $|b|<\epsilon$, $\sup_{\eta <|b|} ||T(\cdot + i\eta)||_{L^1}<\infty$ (see the computation below).  Applying \cite[Lemma 4.28]{li2018fourier} we have, for all $0<\delta<\epsilon$
$$\left| \frac{1}{2\pi} \int e^{-it \theta}  U(-i\theta)\hat{f}(z,\theta)\,d\theta \right| = \left| \check{T}(t) \right| \leq e^{-\delta |t|} \max \left| T(\pm i \delta+\theta) \right|_{L^1 (\theta)}.$$
Finally, by \eqref{Eq 4.60} and Theorem \ref{Theorem U},
$$\max \left|  U(-i\theta\pm\delta)\hat{f}(z, \theta\pm i\delta) \right|_{L^1 (\theta)} \leq  \int \left||U(\pm \delta-i\theta)\right||\cdot  \frac{e^{\delta | \supp f_z|}}{|\theta|^{3}+1}  \left( |(f_z)^{(3)}|_{L^1} + |f_z|_{L^1} \right) \,d\theta $$
$$ \leq C\cdot  e^{\delta | \supp f_z|} \left( |(f_z)^{(3)}|_{L^1} + |f_z|_{L^1} \right) \int \frac{1}{|\theta|^{3}+1}\cdot (1+|\theta|)^{1+\gamma}\, d\theta = O\left(e^{\delta | \supp f_z|} \left( |(f_z)^{(3)}|_{L^1} + |f_z|_{L^1} \right) \right), $$
where in the last equality we used that $3-(1+\gamma)>1$. Via \eqref{Eq Lemma 4.6 Li} and the preceding paragraph, the proof is complete.

\end{proof}

\section{From the renewal Theorem to equidistribution} \label{Section equi}
We keep our notations and assumptions from Section \ref{Section non-linearity implies s-gap}. In particular, we are working with the induced IFS from Claim \ref{Claim key} so that Theorem \ref{Theorem spectral gap}, and therefore Proposition \ref{Proposition renewal}, hold as stated. Recall also  that $\Phi$ is $C^2$, and that $\nu_\mathbf{p}$ is a self-conformal measure such that $\mathbf{p}$ is a strictly positive probability vector. Recall that $\mathbb{P}=\mathbf{p}^\mathbb{N}$ is our Bernoulli measure on $\mathcal{A}^\mathbb{N}$.

Recall the definition of the semigroup $G$ from Section \ref{Section pre}.  In this Section we discuss an effective equidistribution result for a certain random walk, driven by a symbolic version the derivative cocycle $\tilde{c}$: It is defined as $\tilde c:G\times \mathcal{A}^\mathbb{N} \rightarrow \mathbb{R}$ 
\begin{equation} \label{The symbolic der cocycle}
\tilde c(I,\omega)=-\log f'_I(x_\omega).
\end{equation}

We can now define a random walk as follows: Recalling that $\sigma: \mathcal{A}^\mathbb{N} \rightarrow \mathcal{A}^\mathbb{N}$ is the left shift, for every $n\in \mathbb{N}$ we define a function  on $\mathcal{A}^\mathbb{N}$ via
\begin{equation} \label{Eq for Sn}
S_n(\omega)=-\log f'_{\omega|_n}(x_{\sigma^n(\omega)}).
\end{equation}
Let $X_1: \mathcal{A}^\mathbb{N}\rightarrow \mathbb{R}$ be the random variable
\begin{equation} \label{Eq for X1}
X_1(\omega):= \tilde c(\omega_1,\sigma(\omega))=-\log f'_{\omega_1}(x_{\sigma(\omega)}).
\end{equation}
For every integer $n>1$ we define 
$$X_n(\omega) =  - \log f_{\omega_n} ' \left( x_{\sigma^n (x_\omega)} \right)  = X_1 \circ \sigma^{n-1}.$$
Let $\kappa$ be the law of the random variable $X_1$. Then for every $n$, $X_n \sim \kappa$.  By uniform contraction  there exists $D,D'\in \mathbb{R}$ as in \eqref{Eq C and C prime},  
so $\kappa \in \mathcal{P}([D,D'])$. In particular, the support of $\kappa$ is bounded away from $0$. It is easy to see that for every $n \in \mathbb{N}$ and $\omega\in A^\mathbb{N}$ we have
$$S_n(\omega) = \sum_{i=1} ^n X_i (\omega).$$
Thus, in this sense $S_n$ is a random walk.

Next, we define a function  on $\mathcal{A}^\mathbb{N}$ that resembles a stopping time: For $k>0$ let
$$ \tau_k(\omega):=\min\{n: S_n(\omega) \geq k\}.$$
We emphasize that $k$  is allowed to take non-integer values. We also recall that $\chi$ is the corresponding Lyapunov exponent.  Recalling \eqref{Eq C and C prime}, it is clear that for every $k>0$ and $\omega\in A^\mathbb{N}$ we have
$$-\log |f_{\omega|_{\tau_k(\omega)}}'(x_{\sigma^{\tau_k(\omega)}(\omega)})|= S_{\tau_k(\omega)} (\omega) \in [k, k+D'].$$
We also define a local $C^3$ norm on $[-1-D, D'+1]$ by, for $g\in C^3(\mathbb{R})$,
\begin{equation}\label{Def local norm}
||g||_{C^3} = \max_{i=0,1,2,3} \max_{x\in [-1-D, D'+1]} \left|g^{(i)}(x)\right|.
\end{equation}
The following Theorem is an effective equidistribution result for the random variables $S_{\tau_k}$:
\begin{theorem} \label{Theorem equi} Let $\epsilon>0$  be as in Proposition \ref{Proposition renewal}. Then for every $k>D'+1$, $g\in C^{3}  (\mathbb{R})$, and $\omega \in \mathcal{A}^\mathbb{N}$,
$$\mathbb{E}\left( g \left( S_{\tau_k(\eta)} (\eta)-k \right) \big|\, \sigma^{\tau_k(\eta)} \eta  =\omega \right) = \frac{1}{\chi} \int_{D} ^{D'} \int_{-y} ^0 g(x+y)\,dxd\kappa(y)+e^{-\frac{\epsilon k}{2}} O\left(  ||g||_{C^3} \right).$$
\end{theorem}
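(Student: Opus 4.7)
The plan is to unwind the conditional expectation into a first-passage sum, identify that sum with a value of the renewal operator $R$ from Section \ref{Section renewal}, apply Proposition \ref{Proposition renewal} after a $C^3$-smoothing of the underlying indicator, and finally recognise the claimed main term.

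First I would perform a last-letter decomposition. Fix $\omega$ and parametrise any $\eta$ with $\sigma^{\tau_k(\eta)}\eta = \omega$ as $\eta = Ja\omega$ with $a \in \mathcal{A}$, $J \in \mathcal{A}^{n-1}$, $n = \tau_k(\eta)$. Because each $X_j$ is positive (by uniform contraction), $(S_n)$ is strictly increasing and the first-passage condition $\tau_k = n$ collapses to $S_{n-1} < k \leq S_n$. Writing $z_a := f_a(x_\omega)$, $u_a := -\log f'_a(x_\omega) \in [D,D']$ and $W_J := -\log f'_J(z_a)$, the chain rule gives $S_{n-1}(\eta) = W_J$ and $S_n(\eta) = W_J + u_a$, so the first-passage condition reads $W_J \in [k-u_a,k)$ and the overshoot is $W_J + u_a - k \in [0,u_a)$. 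Unwinding the conditional expectation accordingly,
\[
\mathbb{E}\bigl(g(S_{\tau_k}-k)\,\bigm|\,\sigma^{\tau_k}\eta = \omega\bigr) \;=\; \sum_{a\in\mathcal{A}} p_a\, T_a(\omega),
\]
where
\[
T_a(\omega) \;:=\; \sum_{m\geq 0}\sum_{J\in\mathcal{A}^m}\mathbf{p}(J)\, g(W_J+u_a-k)\,\mathbf{1}_{W_J\in[k-u_a,k)}.
\]

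Next I would identify each $T_a(\omega)$ with a renewal value. Setting $h_a(y,v) := g(v)\,\mathbf{1}_{v\in[0,u_a)}$ (independent of $y$), a direct comparison with the definition of the renewal operator gives $T_a(\omega) = R h_a(z_a,\, k-u_a)$. The crucial observation is that the second argument $t := k - u_a$ satisfies $t \geq k - D' > 1$, so Proposition \ref{Proposition renewal}'s factor $e^{-\epsilon|t|}$ already provides $e^{-\epsilon k}$-type decay (up to the harmless constant $e^{\epsilon D'}$). The main obstacle, however, is that $h_a$ is only an indicator times $g$ and hence fails the $C^3$ hypothesis of Proposition \ref{Proposition renewal}.

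To overcome this I would construct a two-sided $C^3$ sandwich $h_a^- \leq h_a \leq h_a^+$ at scale $\delta$: convolve $\mathbf{1}_{[0,u_a)}$ with a fixed, compactly supported $C^3$ bump of width $\delta$ and multiply by $g$, arranging that $\mathrm{supp}(h_a^\pm) \subset [-1, D'+1]$ (so the factor $e^{\epsilon|\mathrm{supp}|}$ in Proposition \ref{Proposition renewal} remains $O(1)$), $|h_a^+ - h_a^-|_{L^1} = O(\delta\,\|g\|_\infty)$, and $|(h_a^\pm)^{(3)}|_{L^1} = O(\delta^{-2}\,\|g\|_{C^3})$. Applying Proposition \ref{Proposition renewal} to $h_a^\pm$ and squeezing $T_a(\omega)$ between the resulting bounds gives
\[
T_a(\omega) \;=\; \tfrac{1}{\chi}\int_0^{u_a} g(v)\,dv \;+\; O(\delta\,\|g\|_\infty) \;+\; e^{-\epsilon(k-u_a)}\,O(\delta^{-2}\,\|g\|_{C^3}).
\]
Choosing $\delta = e^{-\epsilon k/3}$ balances the two error terms, and a harmless shrinkage of $\epsilon$ promotes the resulting exponent to the claimed $\epsilon k/2$. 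Summing against $p_a$ and identifying the weighted average over $(a,\omega)\sim \mathbf{p}\otimes \mathbb{P}$ as an integral against $\kappa = \mathrm{Law}(X_1)$, then performing the substitution $x = v - y$, delivers the announced main term $\tfrac{1}{\chi}\int_D^{D'}\int_{-y}^0 g(x+y)\,dx\,d\kappa(y)$. The principal difficulty throughout is precisely this calibration of the smoothing scale against the $\delta^{-2}$ blow-up of the $C^3$ norm in the renewal-theorem error.
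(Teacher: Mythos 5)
Your unwinding of the conditional expectation is where the argument breaks. Granting your algebra, each $T_a(\omega)=Rh_a(z_a,k-u_a)$ is computed correctly, but since your test function $h_a(y,v)=g(v)\mathbf{1}_{[0,u_a)}(v)$ does not depend on the first variable, the main term that Proposition \ref{Proposition renewal} returns is $\frac{1}{\chi}\int_0^{u_a}g(v)\,dv$ with $u_a=u_a(\omega)=-\log f_a'(x_\omega)$ frozen. Summing against $p_a$ you therefore obtain the main term $\frac{1}{\chi}\sum_a p_a\int_0^{-\log f_a'(x_\omega)}g(v)\,dv$, which depends on $\omega$ and is \emph{not} the asserted quantity $\frac{1}{\chi}\int_D^{D'}\int_{-y}^0 g(x+y)\,dx\,d\kappa(y)$; the two agree only after integrating $x_\omega$ against $\nu$, and your final step of "averaging over $(a,\omega)\sim\mathbf{p}\otimes\mathbb{P}$" is not available because $\omega$ is fixed in the statement. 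The case $g\equiv 1$ already exposes the problem: your unnormalized sum gives $\approx\frac{1}{\chi}\sum_a p_a\bigl(-\log f_a'(x_\omega)\bigr)$ rather than $1$. This also shows the first displayed identity $\mathbb{E}(\cdot\mid\sigma^{\tau_k}\eta=\omega)=\sum_a p_a T_a(\omega)$ is unjustified: since $S_n(\eta)$ depends on the whole tail through the evaluation point, $\tau_k$ is not a stopping time for the coordinate filtration, the words $w$ with $\tau_k(w\omega)=|w|$ do not carry full $\mathbf{p}$-mass for a fixed $\omega$, and the conditional measure on the fibre requires a normalization which you never address.

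The paper avoids both issues by decomposing at the \emph{other} end of the word. Its cutoff residue operator $E_Cf(\omega,k)$ isolates the increment $\tilde c(i,\eta.\omega)=-\log f_i'\bigl(f_\eta(x_\omega)\bigr)$ of the outermost map, i.e.\ the increment attached to the orbit point $f_\eta(x_\omega)$; read inside-out this is a genuine stopping-time decomposition, and the residue process (Proposition \ref{Proposition 4.7}, obtained from Proposition \ref{Proposition renewal} applied to $Qf(z,u)=\int f(\sigma(i,z),u)\,d\mathbf{p}(i)$) integrates that point against $\nu$. That $\nu$-average of the crossing increment is exactly where the $d\kappa(y)$ in the stated main term comes from, uniformly in $\omega$; with your decomposition the crossing increment is pinned to the fixed value $-\log f_a'(x_\omega)$ and the $\kappa$-average never appears. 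The smoothing half of your plan is fine in spirit and parallels the paper (convolution with $\psi_\delta$, balancing the $\delta$-loss against the $e^{-\epsilon k}\delta^{-\text{power}}$ blow-up, the paper taking $\delta=e^{-\epsilon k/12}$), the weaker exponent $\epsilon k/3$ would be harmless for the applications, and the two-sided sandwich needs only a routine fix when $g$ changes sign (e.g.\ add a constant, or bound the error through $R$ of smoothed edge indicators as in Proposition \ref{Proposition 4.5} and Lemma \ref{Lemma 4.15}); but the core decomposition, and hence the identification of the main term, must be redone along the lines of $E_C$.
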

Theorem \ref{Theorem equi} is  a version of results of Li and Sahlsten \cite[Propositions 2.1 and  2.2]{li2019trigonometric}, that has  better (exponential) error terms due to our non-linear setting. Indeed, Li-Sahlsten work with self-similar IFS's, where $S_{\tau_k}$ necessarily has slower equidistribution rates (no faster than polynomial is possible). This also makes our treatment  more complicated since the random walk $S_n$ is not IID as in their work. Another point of difference is that our renewal operator from Section \ref{Section renewal}, that is used critically for the proof of Theorem \ref{Theorem equi} via Proposition \ref{Proposition renewal}, is defined for functions on $[0,1]\times \mathbb{R}$, rather than just on $\mathbb{R}$. Nonetheless, we will follow along the scheme of proof as in \cite[Section 4]{li2019trigonometric},  and deduce Theorem \ref{Theorem equi} from Proposition \ref{Proposition renewal}.
\subsection{Regularity properties of renewal measures}
Let $\psi$ be the even smooth bump function given by
$$\psi (x) :=C_0\cdot \exp\left( \frac{-1}{1-x^2} \right) \text{ if  } x\in (-1,1), \text{ and } 0 \text{  otherwise.}$$
Here $C_0$ is chosen so that $\psi$ is also a probability density function. Next, for $\delta\neq0$ write 
$$\psi_\delta (x):= \frac{1}{\delta^2} \psi(\frac{x}{\delta^2}).$$
The following Proposition is an analogue of \cite[Proposition 4.5]{li2019trigonometric}:
\begin{Proposition} \label{Proposition 4.5}  There exists some $C_1=C_1 (\psi)>0$ such that, for our $\epsilon>0$ from Theorem \ref{Theorem spectral gap}:

For all $k>0$, $z\in [0,1]$, $\delta\leq 1$, and $b,a$ such that $b-a\geq 2\delta$,
$$R\left( 1_{[a,b]} \right)(z, k) \leq 3(b-a) \left(\frac{1}{\chi}+ e^{\epsilon\left(-k+b-a+2\right)} C_\psi O\left(\frac{1}{\delta^{6}} + 1 \right) \right),$$
where the function $1_{[a,b]}(z,x)= 1_{[a,b]}(x)$ is constant in $z$ (the $[0,1]$ coordinate).

\end{Proposition}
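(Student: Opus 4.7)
The plan is to approximate the indicator $1_{[a,b]}$ pointwise from above by a smooth compactly supported function, and then invoke Proposition \ref{Proposition renewal} together with positivity of the renewal operator $R$. Concretely, I would set
$$\tilde f(u) := \left(1_{[a-\delta^2,\,b+\delta^2]} * \psi_\delta\right)(u),$$
regarded as a function $\tilde f(z,u) := \tilde f(u)$ constant in $z$. Standard mollifier properties give $\tilde f \in C^\infty_c(\mathbb{R})$, $0 \le \tilde f \le 1$, $\tilde f \ge 1_{[a,b]}$ pointwise, and $\mathrm{supp}(\tilde f) \subseteq [a-2\delta^2, b+2\delta^2]$. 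Because $\tilde f$ is $z$-independent, all hypotheses of Proposition \ref{Proposition renewal} are trivially satisfied, and positivity of $R$ reduces matters to an upper bound on $R\tilde f(z,k)$.

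I would then apply Proposition \ref{Proposition renewal} at $t = k > 0$, giving
$$R\tilde f(z,k) = \frac{1}{\chi}\int_{-k}^\infty \tilde f(u)\,du \;+\; e^{-\epsilon k}\,O\!\left(e^{\epsilon|\mathrm{supp}(\tilde f)|}\bigl(|\tilde f^{(3)}|_{L^1} + |\tilde f|_{L^1}\bigr)\right),$$
and estimate the right-hand side term by term. Young's convolution inequality immediately yields $|\tilde f|_{L^1} \le b-a+2\delta^2$. For the third derivative, differentiation falls onto the mollifier, and the scaling identity $|\psi_\delta^{(3)}|_{L^1} = \delta^{-6}|\psi^{(3)}|_{L^1}$ gives $|\tilde f^{(3)}|_{L^1} \le (b-a+2\delta^2)\delta^{-6}|\psi^{(3)}|_{L^1}$. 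The hypothesis $b-a \ge 2\delta$ combined with $\delta \le 1$ forces $b-a+2\delta^2 \le 2(b-a)$, so both $L^1$-norms are controlled by $2(b-a)$ up to $\psi$-constants and the factor $\delta^{-6}$; the main term is in turn bounded by $\tfrac{1}{\chi}|\tilde f|_{L^1} \le \tfrac{2(b-a)}{\chi}$. Pulling out $b-a$ and absorbing $|\psi^{(3)}|_{L^1}+1$ into $C_\psi$ then reproduces the claimed inequality.

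The main obstacle is handling the factor $e^{\epsilon|\mathrm{supp}(\tilde f)|}$, since the proposition demands the supremum of absolute values of points in $\mathrm{supp}(\tilde f)$, which depends on $|a|$ and $|b|$ separately rather than on the diameter $b-a$. To reduce to the required bound $|\mathrm{supp}(\tilde f)| \le b-a+2$, I would first translate via the identity $R(g(\cdot - c))(z,k) = Rg(z, k+c)$ and apply the argument to $g(u) := \tilde f(u+(a+b)/2)$, whose support is centered at the origin and has absolute radius at most $(b-a)/2+2\delta^2 \le b-a+2$. The shift of $k$ by $(a+b)/2$ alters the $e^{-\epsilon k}$ factor by a multiplicative constant that is absorbed into the $O$. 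Apart from this translation bookkeeping, the proof is a routine mollifier computation.
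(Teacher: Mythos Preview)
Your approach is essentially the paper's: dominate $1_{[a,b]}$ by a smooth compactly supported mollification and invoke Proposition \ref{Proposition renewal}. The paper mollifies by $\psi_\delta * 1_{[a,b]}$ and uses the observation that this convolution is $\geq \tfrac12$ on $[a,b]$ (this is where the hypothesis $b-a\geq 2\delta$ enters), giving $1_{[a,b]}\leq 3\,\psi_\delta * 1_{[a,b]}$. Your variant $\tilde f = 1_{[a-\delta^2,\,b+\delta^2]}*\psi_\delta$ is a harmless alternative, and the norm estimates via Young's inequality and the scaling $|\psi_\delta^{(3)}|_{L^1}=\delta^{-6}|\psi^{(3)}|_{L^1}$ are exactly the paper's.

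The gap is in your translation step. You write that shifting $k$ by $(a+b)/2$ ``alters the $e^{-\epsilon k}$ factor by a multiplicative constant that is absorbed into the $O$'', but $(a+b)/2$ is not a constant: it depends on $a,b$, so the factor $e^{\pm\epsilon(a+b)/2}$ cannot be absorbed. Concretely, after centering, the error from Proposition \ref{Proposition renewal} reads $e^{-\epsilon|k+(a+b)/2|}\cdot e^{\epsilon((b-a)/2+O(1))}(\cdots)$, and there is no uniform way to bound this by $e^{-\epsilon k+\epsilon(b-a+2)}(\cdots)$ for arbitrary $a,b$ (take, say, $b$ large negative with $a=b-1$). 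You have in fact put your finger on a point the paper itself glosses over: its proof records that the support lies in $[a-1,b+1]$ and then writes $e^{\epsilon(b-a+2)}$ as if $|\mathrm{supp}|$ were the diameter rather than the supremum of absolute values as defined in Proposition \ref{Proposition renewal}. The statement as written is therefore only literally correct when $a,b$ range over a fixed bounded set; this is all that is needed downstream (the sole direct application is to $[a,b]=[-\delta,\delta]$ in Proposition \ref{Prop 4.10}, and Lemma \ref{Lemma 4.6} uses $[0,s]$ and $[-D',0]$), so the defect is cosmetic, but your translation argument does not repair it.
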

Note that our error term is upgraded compared with \cite[Proposition 4.5]{li2019trigonometric}, due to the exponential error term in Proposition \ref{Proposition renewal}.
\begin{proof}
If $x\in [a,b]$ then $[x-b,x-a]$ contains either $[0,\delta]$ or $[-\delta,0]$. Therefore, as $\psi_\delta$ is even
$$\psi_\delta * 1_{[a,b]}(x) = \int_a ^b \psi_\delta (x-v)dv\geq  \int_0 ^{\delta} \psi_\delta (v)dv = \frac{1}{2}.$$
So, as  functions on $\mathbb{R}$,
$$1_{[a,b]}\leq 3 \psi_\delta * 1_{[a,b]}.$$
We proceed to bound $R(\psi_\delta * 1_{[a,b]})$, where we note that $\psi_\delta * 1_{[a,b]}$ is a compactly supported function in $C^3 (\mathbb{R})$. By Proposition \ref{Proposition renewal}, since our function is constant in the $[0,1]$-coordinate,
$$R(\psi_\delta * 1_{[a,b]})(z,k) = \frac{1}{\chi}\int_{-k} ^\infty \psi_\delta * 1_{[a,b]}(x)dx  +e^{-\epsilon\cdot |k|} O\left(e^{\epsilon | \supp \psi_\delta * 1_{[a,b]} |} \left( |(\psi_\delta * 1_{[a,b]})^{(3)}|_{L^1} + |\psi_\delta * 1_{[a,b]}|_{L^1} \right) \right).$$
Up to global multiplicative constants, the first and third terms are less than $\int \psi_\delta * 1_{[a,b]}(x)dx =(b-a)$. For the second term, recall that 
$$(\psi_\delta * 1_{[a,b]})^{(3)}=\psi_\delta ^{(3)} * 1_{[a,b]}$$
and so
$$||(\psi_\delta * 1_{[a,b]})^{3)}||_{L^1}\leq ||\psi_\delta ^{(3)}||_{L^1} \cdot  ||1_{[a,b]}||_{L^1}\leq C_\psi \cdot \frac{1}{\delta^{2\cdot 3}}\cdot (b-a).$$
Finally, $\psi_\delta * 1_{[a,b]}$ is supported on $[a,b]+[-\delta^2,\delta^2]\subseteq [a-1,b+1]$. This yields the desired bound.
\end{proof}
We will also require the following Lemma from \cite{li2019trigonometric}, that follows directly from Proposition \ref{Proposition 4.5} in our case:
\begin{Lemma} \label{Lemma 4.6} \cite[Lemma 4.6]{li2019trigonometric}
There is some $C>0$ such that for all $s$, $z\in [0,1]$, and $k\in \mathbb{R}$ we have
$$R(1_{[0,s]})(z, k)\leq C\cdot \max\lbrace 1,s\rbrace,$$
where the function $1_{[0,s]}(y,x)= 1_{[0,s]}(x)$ is constant in the $[0,1]$ coordinate.
\end{Lemma}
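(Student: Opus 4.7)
My plan is to deduce Lemma~\ref{Lemma 4.6} from Proposition~\ref{Proposition 4.5} via a covering argument that reduces matters to a uniform bound on $R(1_{[0,1]})(z,t)$.

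First, I would cover $[0,s]$ by unit subintervals. Setting $N = \lceil s \rceil$, the pointwise bound $1_{[0,s]} \leq \sum_{j=0}^{N-1} 1_{[j,j+1]}$ holds on $\mathbb{R}$. Since $R$ is a positive linear operator, and since the translation identity
$$R(1_{[j,j+1]})(z,k) = \sum_{n=0}^\infty \mathbf{p}^n\{\eta : \sigma(\eta,z) - k \in [j,j+1]\} = R(1_{[0,1]})(z,\, k+j)$$
is immediate from the definition of $R$, I obtain
$$R(1_{[0,s]})(z,k) \leq \sum_{j=0}^{N-1} R(1_{[0,1]})(z,\, k+j).$$
It therefore suffices to produce a constant $C_1 > 0$ such that $R(1_{[0,1]})(z,t) \leq C_1$ for every $z \in [0,1]$ and every $t \in \mathbb{R}$.

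For the uniform bound on $R(1_{[0,1]})$, I would split according to the size of $t$. When $t \geq 3$, Proposition~\ref{Proposition 4.5} applies with $a = 0$, $b = 1$, and $\delta = 1/2$ (so $b-a \geq 2\delta$), giving
$$R(1_{[0,1]})(z,t) \leq 3\!\left(\tfrac{1}{\chi} + e^{\epsilon(-t+3)} C_\psi \cdot O(2^{6}+1)\right),$$
which is uniformly bounded since $e^{\epsilon(-t+3)} \leq 1$ in this range. When $t < 3$, I would use a deterministic counting argument based on \eqref{Eq C and C prime}: the cocycle satisfies $\sigma(\eta,z) \geq |\eta| D$, so the summand $\mathbf{p}^n\{\eta : \sigma(\eta,z) \in [t, t+1]\}$ vanishes unless $nD \leq t+1 < 4$. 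Only at most $\lceil 4/D \rceil$ values of $n$ contribute, and each summand is at most $1$, yielding a uniform bound independent of $t$ and $z$.

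Combining the two steps gives $R(1_{[0,s]})(z,k) \leq N C_1 \leq (s+1) C_1 \leq 2 C_1 \max\{1,s\}$, which is the claim with $C = 2 C_1$. The only substantive obstacle is the uniform bound on $R(1_{[0,1]})$; the key point is that in the regime where the exponential factor in Proposition~\ref{Proposition 4.5} is not under control, the uniform contraction constant $D > 0$ truncates the renewal sum after a bounded number of steps, so the trivial pointwise bound suffices.
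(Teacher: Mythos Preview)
Your proposal is correct and follows the route the paper indicates: the paper does not give a detailed proof of this lemma but simply states that it ``follows directly from Proposition~\ref{Proposition 4.5} in our case,'' and your argument is a clean realization of exactly that, combining Proposition~\ref{Proposition 4.5} (for large $t$) with the elementary observation that the uniform contraction constant $D>0$ truncates the renewal sum after boundedly many steps (for small $t$), together with a covering of $[0,s]$ by unit intervals and the translation identity for $R$.
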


\subsection{Residue process} Let $f$ be a non-negative bounded Borel function on $[D,D']\times \mathbb{R}$. For $k\in \mathbb{R}$ and $z\in [0,1]$ define the residue operator by
$$Ef(z, k) := \sum_{n\geq 0} \int \int f(\sigma(i, \eta.z),\, \sigma(\eta,z)-k) d\mathbf{p}^n (\eta)d\mathbf{p}(i).$$
Recall that for every $y\in [0,1]$ we let $f_y :\mathbb{R} \rightarrow \mathbb{R}$ be the function 
$$f_y (x):=f(y,x).$$ 
The following Proposition is an analogue of \cite[Proposition 4.7]{li2019trigonometric}:
\begin{Proposition} \label{Proposition 4.7} (Residue process) Let  $C\subseteq \mathbb{R}$ be a compact set. Fix a non-negative bounded, compactly supported, and continuous function $f(y,x)$ on $[D,D'] \times \mathbb{R}$ such that $f_y \in C_C ^{3} (\mathbb{R})$ for every $y\in [D,D']$, and $\hat{f}_z (\theta) \in C^1([D,D'])$ for every $\theta \in \mathbb{R}$. Assume
$$\sup_{y}  \left( |f_y ^{(3)}|_{L^1} + |f_y |_{L^1} \right)<\infty.$$
 Then for every $k>0$ and $z\in [0,1]$,
$$Ef(z, k) = \frac{1}{\chi} \int_{-k} ^\infty \int_{\mathcal{A}} \int_{[0,1]} f( \sigma(i,y), x) d\nu (y) d\mathbf{p}(i)dx+e^{-\epsilon\cdot k}  O\left(e^{\epsilon | \supp f|} \left( \sup_{y,y'\in [D',D]} |( f_y)^{(3)}|_{L^1} +  |f_{y'}|_{L^1} \right) \right)$$
\begin{proof}
For  $z \in [0,1]$ and $u\in \mathbb{R}$ define a non-negative bounded Borel function
$$Qf(z,u) = \int f( \sigma(i,z), u)d\mathbf{p}(i).$$
Then  
$$Ef(z, k) = \sum_{n\geq 0 } Qf(\eta.z, \sigma(\eta, z)-t) d\mathbf{p}^n (\eta) = R(Qf)(z,k).$$
Since $\Phi$ is a $C^2$ IFS, the map $z\mapsto \sigma(i,z)\in C^1([0,1],\, [D',D])$ for all $i\in \mathcal{A}$, and therefore $z\mapsto \hat{f}_{\sigma(i,z)} (\theta) \in C^1([0,1])$ for all $i\in \mathcal{A}$ and $\theta \in \mathbb{R}$. So,  as our assumptions on $f$  imply that  $Qf(z,k)$  meets its conditions,  the result is now a direct application of Proposition \ref{Proposition renewal}.
\end{proof}
\end{Proposition}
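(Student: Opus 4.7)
The plan is to reduce the residue operator $E$ to the renewal operator $R$ from Proposition \ref{Proposition renewal} applied to an auxiliary function, and then invoke that proposition directly. Specifically, I would define for $(z,u)\in [0,1]\times\mathbb{R}$ the function
$$Qf(z,u) := \int_\mathcal{A} f(\sigma(i,z),u)\,d\mathbf{p}(i),$$
and observe that by definition of the cocycle and of $E$,
$$Ef(z,k) = \sum_{n\geq 0} \int Qf(\eta.z,\, \sigma(\eta,z)-k)\,d\mathbf{p}^n(\eta) = R(Qf)(z,k).$$
So the whole task reduces to applying Proposition \ref{Proposition renewal} to $Qf$ and then identifying the resulting main term with the right-hand side of the statement.

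Next, I would verify that $Qf$ satisfies the hypotheses of Proposition \ref{Proposition renewal}. Non-negativity, boundedness and continuity are immediate from the same properties of $f$, and for each fixed $z$, $Qf_z = \int f_{\sigma(i,z)}\,d\mathbf{p}(i)$ is a convex combination of functions in $C^3_C(\mathbb{R})$ with a common compact support, hence lies in $C^3_C(\mathbb{R})$ with $L^1$-norms of derivatives controlled by the supremum over $y$ of the corresponding norms of $f_y$. The regularity in the $z$-variable needed for the Fourier transform $\widehat{Qf}_z(\theta)$ to lie in $C^1([0,1])$ comes from the fact that $\Phi$ is $C^2$, so the map $z\mapsto \sigma(i,z) = -\log|f'_i(x_z)|$ is $C^1$ on $[0,1]$ for every $i\in\mathcal{A}$; hence $z\mapsto \widehat{f}_{\sigma(i,z)}(\theta)$ is $C^1$ and the same holds after integrating in $i$. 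The uniform bound $\sup_y(|f_y^{(3)}|_{L^1}+|f_y|_{L^1})<\infty$ transfers to $Qf$ trivially.

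Applying Proposition \ref{Proposition renewal} to $Qf$ then gives
$$R(Qf)(z,k) = \frac{1}{\chi}\int_{[0,1]} \int_{-k}^\infty Qf(y,u)\,du\,d\nu(y) + e^{-\epsilon|k|}\,O\!\left(e^{\epsilon|\supp(Qf)_z|}\bigl(|(Qf_z)^{(3)}|_{L^1}+|Qf_z|_{L^1}\bigr)\right).$$
Unpacking the definition of $Qf$, the main term becomes
$$\frac{1}{\chi}\int_{-k}^\infty \int_{\mathcal{A}}\int_{[0,1]} f(\sigma(i,y),u)\,d\nu(y)\,d\mathbf{p}(i)\,du,$$
which matches the claimed expression after relabeling $u\mapsto x$. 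For the error term, $|\supp(Qf)_z|$ is bounded by $|\supp f|$, and the $L^1$-norms of $Qf_z$ and its derivatives are dominated by $\sup_{y\in [D,D']}(|f_y^{(3)}|_{L^1}+|f_y|_{L^1})$.

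I do not see a serious obstacle here: once Proposition \ref{Proposition renewal} is in hand, Proposition \ref{Proposition 4.7} is essentially a formal consequence, and the only substantive point is that the $C^2$ smoothness of $\Phi$ is precisely what is needed to preserve the $C^1$-in-$z$ hypothesis under the single-step averaging that defines $Qf$.
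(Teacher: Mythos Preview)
Your proposal is correct and follows essentially the same approach as the paper's proof: define $Qf(z,u)=\int_\mathcal{A} f(\sigma(i,z),u)\,d\mathbf{p}(i)$, observe $Ef=R(Qf)$, verify the hypotheses of Proposition~\ref{Proposition renewal} (using that $\Phi$ is $C^2$ to get $z\mapsto\sigma(i,z)\in C^1$), and apply it. Your write-up is in fact slightly more detailed than the paper's in checking the hypotheses and unpacking the error term; one trivial slip is that $\sigma(i,z)=-\log|f_i'(z)|$ for $z\in[0,1]$ rather than $-\log|f_i'(x_z)|$, but this does not affect the argument.
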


\subsection{Residue process with cut-off}
Define a cutoff operator $E_C$ on real non-negative bounded Borel functions on $[D,D']\times \mathbb{R}$  via, for $(z,k)\in [0,1]\times \mathbb{R}$,
$$E_C f(z, k) = \sum_{n\geq 0} \int_{\sigma(\eta, z)<k\leq \sigma(i, \eta.z)+\sigma(\eta,z)} f( \sigma(i, \eta.z),\, \sigma(\eta,z) - k) d\mathbf{p}(i)d\mathbf{p}^n (\eta).$$
We have the following analogue of \cite[Lemma 4.9]{li2019trigonometric}, which follows here from  Lemma \ref{Lemma 4.6} (or Proposition \ref{Proposition 4.5}):
\begin{Lemma}  \label{Lemma 4.9}
There exists $C_2>0$ such that for all $k\in \mathbb{R}$ and $z\in [0,1]$ we have
$$E_C (\mathbf{1})(z, k)  \leq C_2$$
\end{Lemma}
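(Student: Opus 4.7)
\textbf{Proof plan for Lemma \ref{Lemma 4.9}.} The plan is to bound $E_C(\mathbf{1})(z,k)$ above by a single application of the renewal operator $R$ to an indicator function of bounded length, and then invoke Lemma \ref{Lemma 4.6}. The key observation is that the constraint $\sigma(\eta,z) < k \leq \sigma(i,\eta.z) + \sigma(\eta,z)$ forces $\sigma(\eta,z)$ to lie in a window of length at most $D'$ just below $k$. Indeed, by \eqref{Eq C and C prime} we have $\sigma(i,\eta.z) \in [D,D']$ for every letter $i$ and every $z$, so the inequality $k - \sigma(\eta,z) \leq \sigma(i,\eta.z) \leq D'$ forces
\[
\sigma(\eta,z) \in [k-D',\, k).
\]

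First I would use this observation to dominate $E_C(\mathbf{1})$ pointwise. Namely, replacing the indicator on the crossing event by the weaker indicator that only requires $\sigma(\eta,z) \in [k-D',k)$, and using that $\int d\mathbf{p}(i) = 1$, one obtains
\[
E_C(\mathbf{1})(z,k) \leq \sum_{n \geq 0} \int \mathbf{1}_{[k-D',\,k)}\bigl(\sigma(\eta,z)\bigr)\, d\mathbf{p}^n(\eta).
\]
Next I would recognize the right-hand side as a value of $R$ applied to a shifted indicator. Setting $h(y,x) := \mathbf{1}_{[0,D')}(x)$ (constant in the first coordinate), a direct check gives
\[
R(h)(z,\, k-D') = \sum_{n \geq 0} \int \mathbf{1}_{[0,D')}\bigl(\sigma(\eta,z) - (k-D')\bigr)\, d\mathbf{p}^n(\eta) = \sum_{n \geq 0} \int \mathbf{1}_{[k-D',\,k)}\bigl(\sigma(\eta,z)\bigr)\, d\mathbf{p}^n(\eta),
\]
so $E_C(\mathbf{1})(z,k) \leq R(\mathbf{1}_{[0,D')})(z,\, k-D')$.

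Finally I would apply Lemma \ref{Lemma 4.6} with $s = D'$ to conclude
\[
E_C(\mathbf{1})(z,k) \leq C \cdot \max\{1,\, D'\} =: C_2,
\]
which is uniform in $z$ and $k$ as required. There is no serious obstacle here: the argument only uses the positivity and boundedness of the cocycle together with the already-established bound of Lemma \ref{Lemma 4.6}. The one minor point to be careful about is the half-open versus closed nature of the indicator window $[k-D', k)$, but since Lemma \ref{Lemma 4.6} is stated for closed intervals $[0,s]$ and $R$ is monotone in the integrand, bounding $\mathbf{1}_{[0,D')} \leq \mathbf{1}_{[0,D']}$ handles this without changing the constant.
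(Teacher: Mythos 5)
Your proposal is correct and follows essentially the same route as the paper: the paper likewise rewrites the crossing condition as $\sigma(\eta,z)-k\in[-\sigma(i,\eta.z),0)$, enlarges it to $\sigma(\eta,z)-k\in[-D',0]$ using \eqref{Eq C and C prime}, integrates out $i$, identifies the result as $R$ of an indicator of an interval of length $D'$, and applies Lemma \ref{Lemma 4.6}. Your explicit shift to $R(1_{[0,D']})(z,k-D')$ versus the paper's $R(1_{[-D',0]})(z,k)$ is only a cosmetic difference.
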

\begin{proof}
Recalling \eqref{Eq C and C prime}, we have
\begin{eqnarray*}
E_C (\mathbf{1})(z, k) &=& \sum_{n\geq 0} \mathbf{p}\times \mathbf{p}^n \left( \lbrace (i,\eta): \sigma(\eta,z)-k\in [-\sigma(i,\eta.z),0) \rbrace \right)\\
&\leq & \sum_{n\geq 0} \mathbf{p}\times \mathbf{p}^n \left( \lbrace (i,\eta): \sigma(\eta,z)-k\in [-D',0] \rbrace \right)\\
&=& R\left( 1_{[-D',0]} \right)(z,k)\\
&\leq & C\cdot \max \lbrace 1, D' \rbrace,
\end{eqnarray*}
where in the last inequality we applied Lemma \ref{Lemma 4.6} (or Proposition \ref{Proposition 4.5}).
\end{proof}
By Lemma \ref{Lemma 4.9} the operator $E_C$ is well defined for bounded Borel functions. For a function $f(y,x)$ on $\mathbb{R}^2$ we denote, for all $x$,
$$f_x (y) = f(y,x).$$
 We have the following analogue of \cite[Proposition 4.10]{li2019trigonometric}:

\begin{Proposition} \label{Prop 4.10} Let $K\subseteq \mathbb{R}^2$ be a compact set, and fix a  bounded and continuous function $f(y,x)$ on $[D',D] \times \mathbb{R}$ such that $K$ contains $\supp(f)$, $f_y \in C_{P_2 K} ^{3} (\mathbb{R})$ for every $y\in [D',D]$, and $f_x \in C^1([0,1])$ for every $x\in \mathbb{R}$. Assume
$$\sup_{y}  \left( |f_y ^{(3)}|_{L^1} + |f_y |_{L^1} \right)<\infty.$$
Then for $|K|<k$ and every $z\in [0,1]$ we have
$$E_C f(z,k)= \int_{[0,1]} \int_\mathcal{A} \int_{-\sigma(i,y)} ^0 f(\sigma(i,y),x)dx d\mathbf{p}(i)d\nu(y) + e^{-\frac{\epsilon k}{2}} O_{|K|}(\left( \sup_{y,y'\in [D',D]} ||f_y ^{(3)} ||_{L^1} +  || f _{y'}||_{L^1}  \right)$$
\end{Proposition}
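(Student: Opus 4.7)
The plan is to apply Proposition \ref{Proposition 4.7} to a suitable smoothing of the indicator-modified function
$$\tilde f(y,x) := f(y,x)\cdot \mathbf{1}_{[-y,0)}(x).$$
The key observation is that $E_C f = E(\tilde f)$: unpacking the cutoff condition $\sigma(\eta,z)<k\leq\sigma(i,\eta.z)+\sigma(\eta,z)$ under the substitution $y=\sigma(i,\eta.z)$, $x=\sigma(\eta,z)-k$ turns it into $x\in[-y,0)$, which is precisely the range of the indicator in $\tilde f$.

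Since $\tilde f$ is discontinuous in $x$ (at $x=-y$ and $x=0$), it does not meet the $C^3$-hypothesis of Proposition \ref{Proposition 4.7}, and a regularization is required. Fix $\delta>0$ (to be optimized later) and, using the bump $\psi_\delta$ from Proposition \ref{Proposition 4.5}, build $\chi^\delta_y(x)$ jointly smooth in $(y,x)\in[D,D']\times\mathbb{R}$ with $|\chi^\delta_y-\mathbf{1}_{[-y,0)}|\leq \mathbf{1}_{A_\delta(y)}$ where $A_\delta(y):=[-y-\delta,-y+\delta]\cup[-\delta,\delta]$, and with $\|\partial_x^j\chi^\delta_y\|_{L^1(\mathbb R)}=O(\delta^{1-j})$ for $1\leq j\leq 3$. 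A concrete choice is $\chi^\delta_y:=\mathbf{1}_{[-y,0]}*\psi_\delta$; dependence on $y$ is only by translation, so $y\mapsto \chi^\delta_y(x)$ is $C^\infty$, and the Proposition \ref{Proposition 4.7} hypothesis that $\widehat{f^\delta_x}(\theta)\in C^1([D,D'])$ is preserved for $f^\delta(y,x):=f(y,x)\chi^\delta_y(x)$.

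Now decompose $E_C f=E(f^\delta)+E(\tilde f-f^\delta)$. The second term is controlled using $|\tilde f-f^\delta|\leq\|f\|_\infty\,\mathbf{1}_{A_\delta(y)}(x)$ together with Proposition \ref{Proposition 4.5} (or Lemma \ref{Lemma 4.6}) applied to the two intervals composing $A_\delta$, yielding $|E(\tilde f-f^\delta)(z,k)|=O(\delta)$. For the first term, Proposition \ref{Proposition 4.7} gives the main term $\frac{1}{\chi}\int_{-k}^\infty\!\int_\mathcal{A}\!\int_{[0,1]}f^\delta(\sigma(i,y),x)\,d\nu\,d\mathbf{p}\,dx$ plus an exponential error. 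Since $|K|<k$, this tail integral captures the whole support of $f^\delta$, and replacing $\chi^\delta_{\sigma(i,y)}$ by $\mathbf{1}_{[-\sigma(i,y),0)}$ inside costs another $O(\delta)$; this recovers the claimed main term $\int\!\int\!\int_{-\sigma(i,y)}^0 f(\sigma(i,y),x)\,dx\,d\mathbf{p}(i)\,d\nu(y)$. The exponential error in Proposition \ref{Proposition 4.7} applied to $f^\delta$ is controlled by Leibniz:
$$\sup_y\|(f^\delta_y)^{(3)}\|_{L^1}=O_{|K|}\!\left(\sup_y\|f_y^{(3)}\|_{L^1}+\sup_y\|f_y\|_{L^1}\right)\cdot O(\delta^{-2}),$$
giving a contribution $e^{-\epsilon k}\delta^{-2}$ times the universal factor.

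Balancing the smoothing error $O(\delta)$ against the exponential error $O(e^{-\epsilon k}\delta^{-2})$ by choosing $\delta$ of order $e^{-\epsilon k/3}$ yields total error of order $e^{-\epsilon k/3}$ times $\sup_y\|f_y^{(3)}\|_{L^1}+\sup_y\|f_y\|_{L^1}$; by a more careful placement of the smoothing region (or by slightly enlarging the exponent using Theorem \ref{Theorem spectral gap} with a smaller strip width), one can absorb the loss into the factor $e^{-\epsilon k/2}$ claimed. The main obstacle is precisely this balancing step: the $C^3$-regularization unavoidably blows up $\|(f^\delta_y)^{(3)}\|_{L^1}$ like $\delta^{-2}$ while the $L^1$-mass lost from the indicator scales linearly, and reaching the stated exponent $\epsilon/2$ (rather than the naive $\epsilon/3$) calls for either sharper derivative bounds on $\chi^\delta_y$ or the mild downgrade of $\epsilon$ noted above. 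A secondary, easier obstacle is ensuring joint regularity of $f^\delta$ in $(y,x)$ so that Proposition \ref{Proposition 4.7} genuinely applies, handled by the translation-type construction of $\chi^\delta_y$.
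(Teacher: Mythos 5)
Your outline is the same as the paper's: reduce $E_C f$ to the residue operator applied to $f_o(y,x)=1_{-y\leq x<0}f(y,x)$ (the paper quotes this identity as \cite[Lemma 4.12]{li2019trigonometric}), mollify at a scale $\delta$, feed the mollified function into Proposition \ref{Proposition 4.7}, control the difference by splitting into an interior strip, two boundary strips and an exterior region (as in Lemma \ref{Lemma 4.15}) and bounding each via Proposition \ref{Proposition 4.5}, Lemma \ref{Lemma 4.6} and Lemma \ref{Lemma 4.9}, and finally optimize $\delta$ against $e^{-\epsilon k}$. The gap is the quantitative step you yourself flag: because you regularize by multiplying $f$ with a smoothed indicator $\chi^\delta_y$, the Leibniz rule forces $\sup_y\|(f^\delta_y)^{(3)}\|_{L^1}$ to blow up polynomially in $\delta^{-1}$, and your balancing only delivers an exponent of order $\epsilon/3$ rather than the stated $\epsilon/2$. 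Neither of your proposed repairs closes this: repositioning the smoothing window cannot remove the third derivative of a cutoff that must transition over a window of length $\delta$, and "using Theorem \ref{Theorem spectral gap} with a smaller strip width" only replaces $\epsilon$ by a smaller constant — it cannot produce the exponent $\frac{\epsilon}{2}$ relative to the $\epsilon$ that Proposition \ref{Proposition renewal} fixes and that the statement refers to.

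The paper avoids this loss by a different mollification: it sets $f_\delta=\psi_\delta*f_o$ (convolution in the $x$-variable, with $\psi_\delta$ supported on $[-\delta^2,\delta^2]$) and, in Lemma \ref{Lemma 4.13} and the conclusion of the proof, bounds $\|(f_{\delta,y})^{(3)}\|_{L^1}\leq\|\psi_\delta\|_{L^1}\,\|(f_{o,y})^{(3)}\|_{L^1}\leq\|f_y^{(3)}\|_{L^1}$ by Young's inequality, so the exponential error coming from Proposition \ref{Proposition 4.7} carries no $\delta$-dependence at all. The only $\delta$-blowup, the factor $\delta^{-6}$, enters through Proposition \ref{Proposition 4.5} applied to the two boundary strips of width $\delta$, while the exterior region contributes exactly zero because $\psi_\delta$ lives at scale $\delta^{2}$ (Lemma \ref{Lemma 4.9}); choosing $\delta=e^{-\epsilon k/12}$ then yields the claimed $e^{-\epsilon k/2}$. (Whether one may commute three derivatives past the jump discontinuities of $f_o$ as freely as the paper does is itself delicate, but that transfer of derivatives onto $f_o$ is precisely the device by which the paper reaches $\epsilon/2$; with your multiplicative cutoff the analogous step is unavailable, which is why your exponent stalls.) So the architecture of your argument is right, but as written it proves the proposition only with a strictly worse exponent, and the missing ingredient is the convolution-plus-Young bookkeeping that decouples the $C^3$-norm of the mollified function from $\delta$.
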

\begin{Remark}
By a standard decomposition into real and imaginary parts, and then each one into positive and negative parts, it suffices to prove Proposition \ref{Prop 4.10} under the assumption that $f$ is non-negative (with the same parameters assumed in the original Proposition). 
\end{Remark}
The following Lemma relates the operators $E_C$ and $E$:
\begin{Lemma} \cite[Lemma 4.12]{li2019trigonometric} Under the assumptions of Proposition \ref{Prop 4.10}, let
$$f_o (y,x):=1_{-y\leq x <0} f(y,x).$$
Then
$$E_C f(z,k)= Ef_o (z,k).$$
\end{Lemma}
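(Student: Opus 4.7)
The plan is to unpack the two definitions and show that the indicator function baked into $f_o$ corresponds exactly to the domain restriction in $E_C$. Writing out
\[
Ef_o(z,k) = \sum_{n\geq 0} \int\!\!\int f_o\bigl(\sigma(i,\eta.z),\,\sigma(\eta,z)-k\bigr)\,d\mathbf{p}^n(\eta)\,d\mathbf{p}(i),
\]
substitute $y = \sigma(i,\eta.z)$ and $x = \sigma(\eta,z) - k$ into the indicator $1_{-y\leq x<0}$ defining $f_o$. The resulting condition $-\sigma(i,\eta.z)\leq \sigma(\eta,z)-k < 0$ is equivalent, by elementary rearrangement of both inequalities, to
\[
\sigma(\eta,z) < k \leq \sigma(i,\eta.z)+\sigma(\eta,z),
\]
which is exactly the range of integration in the definition of $E_C f(z,k)$.

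Consequently, on the set where the indicator is $1$ we have $f_o(\sigma(i,\eta.z),\sigma(\eta,z)-k)=f(\sigma(i,\eta.z),\sigma(\eta,z)-k)$, while outside this set $f_o$ vanishes. This lets me drop the indicator and instead restrict the domain of $(i,\eta)$-integration, obtaining term-by-term (for each $n\geq 0$) the summand appearing in $E_Cf(z,k)$. Summing over $n$ yields $Ef_o(z,k) = E_C f(z,k)$.

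The only subtlety is justifying that all the integrals are finite so that the manipulation is legitimate. This is immediate under our standing hypothesis that $f$ is bounded with $\supp(f)\subseteq K$ compact: $f_o$ is bounded, and by Lemma \ref{Lemma 4.9} applied to the constant function $\mathbf{1}$, we have $E_C(\mathbf{1})(z,k)\leq C_2$. So the series for $Ef_o(z,k)=E_Cf(z,k)$ converges absolutely and the rearrangement of integrand-versus-domain is valid. There is no analytic obstacle here; the content is purely combinatorial bookkeeping converting the indicator inside the integrand into a restriction of the integration region.
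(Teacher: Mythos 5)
Your proof is correct: translating the indicator $1_{-y\leq x<0}$ under the substitution $y=\sigma(i,\eta.z)$, $x=\sigma(\eta,z)-k$ into the domain restriction $\sigma(\eta,z)<k\leq\sigma(i,\eta.z)+\sigma(\eta,z)$ is exactly the content of the lemma, and the paper itself gives no argument, simply citing \cite[Lemma 4.12]{li2019trigonometric}, whose proof is this same definitional bookkeeping. The finiteness remark via Lemma \ref{Lemma 4.9} is harmless (and not even needed for the identity, since each summand is literally the same integral term by term), so there is nothing to add.
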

Fix $1>\delta>0$ small enough so that $|K|+\delta \leq k$. 
We use $\psi_\delta$ to regularize these functions; Let
$$f_\delta(y,x) := \int f_o (y,x-x_1)\psi_\delta(x_1) dx_1 = \psi_\delta*f_o(y,x)$$
The following Lemma is an upgraded version of  \cite[Lemma 4.13]{li2019trigonometric}, as it has an exponential error term:
\begin{Lemma} \label{Lemma 4.13} Under the assumptions of Proposition \ref{Prop 4.10},
$$E(f_\delta)(z,k) = \int_{[0,1]} \int_{\mathcal{A}} \int_{-\sigma(i,y)} ^0 f(\sigma(i,y),x)dx d\mathbf{p}(i) d\nu(y) +e^{-\epsilon\cdot k}  O\left(e^{\epsilon | \supp f_\delta|} \left( \sup_{y, y'\in [D',D]} |( f_{\delta,y})^{(3)}|_{L^1} + |f_{\delta,y'}|_{L^1} \right) \right)$$
\end{Lemma}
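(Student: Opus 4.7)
The plan is to deduce Lemma \ref{Lemma 4.13} as a direct application of Proposition \ref{Proposition 4.7} to the mollified function $f_\delta = \psi_\delta * f_o$, together with the observation that for $k$ large enough compared to the size of $K$ the main term simplifies exactly into the expression on the right-hand side.

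First I would verify that $f_\delta$ satisfies all the hypotheses of Proposition \ref{Proposition 4.7}. Since $f_o(y,\cdot) = 1_{[-y,0)} f(y,\cdot)$ is bounded, compactly supported in $x$ (inside the projection of $K$), and since $\psi_\delta \in C_c^\infty(\mathbb{R})$, the convolution $f_{\delta, y} = \psi_\delta * f_{o, y}$ lies in $C_c^3(\mathbb{R})$ with support contained in $[-D-\delta^2, \delta^2]$, which is in particular uniformly bounded (independently of $\delta \leq 1$). The $C^1$-regularity of $z \mapsto \hat{f}_{\delta}(z, \theta)$ in the first variable is inherited from the corresponding property of $f$ by differentiating under the integral, using that $z \mapsto \sigma(i, z)$ is $C^1$ by Section \ref{Section pre}. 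Applying Proposition \ref{Proposition 4.7} to $f_\delta$ thus gives
\begin{equation*}
E(f_\delta)(z, k) = \frac{1}{\chi}\int_{-k}^\infty \int_\mathcal{A} \int_{[0,1]} f_\delta(\sigma(i,y), x)\, d\nu(y)\, d\mathbf{p}(i)\, dx + e^{-\epsilon k} O\!\left(e^{\epsilon |\supp f_\delta|}\bigl(\sup_y |(f_{\delta, y})^{(3)}|_{L^1} + |f_{\delta, y}|_{L^1}\bigr)\right).
\end{equation*}

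Next I would exploit the choice $|K| + \delta \leq k$ to simplify the main term. Since $\supp f_\delta \subset [D', D] \times [-D - \delta^2, \delta^2] \subset [D',D] \times [-k, \infty)$ for $k$ this large, the $x$-integral from $-k$ to $\infty$ captures the entire $x$-support of $f_\delta$. Hence by Fubini and the fact that $\psi_\delta$ is a probability density,
\begin{equation*}
\int_{-k}^\infty f_\delta(\sigma(i,y), x)\, dx = \int_\mathbb{R} (\psi_\delta * f_{o, \sigma(i,y)})(x)\, dx = \int_\mathbb{R} f_{o, \sigma(i,y)}(x)\, dx = \int_{-\sigma(i,y)}^0 f(\sigma(i,y), x)\, dx.
\end{equation*}
(I'll proceed assuming the $\frac{1}{\chi}$-factor from Proposition \ref{Proposition 4.7} is indeed absorbed in the resulting expression as given in the statement of Lemma \ref{Lemma 4.13}.) This yields precisely the first term on the right-hand side of Lemma \ref{Lemma 4.13}.

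Finally, I would handle the error term. Since $|\supp f_\delta|$ is bounded in terms of $|K|$ alone (uniformly in $\delta \leq 1$), the factor $e^{\epsilon |\supp f_\delta|}$ can be absorbed into the $O_{|K|}(\cdot)$ notation. The other two quantities are automatically of the required form, namely $\sup_y |(f_{\delta, y})^{(3)}|_{L^1}$ and $\sup_{y'} |f_{\delta, y'}|_{L^1}$. The only mildly delicate point — and what I would flag as the place to double-check — is checking that the support bound $|\supp f_\delta| \leq |K| + 2\delta^2 \leq |K|+2$ is genuinely uniform, so that $e^{\epsilon |\supp f_\delta|}$ contributes only an $|K|$-dependent multiplicative constant independent of $k$. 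Once this is done, the entire error term collapses to the form claimed in Lemma \ref{Lemma 4.13}, and the proof is complete. The subsequent choice of $\delta$ (needed to balance $\delta^{-6}$ against $e^{-\epsilon k}$ and produce the $e^{-\epsilon k/2}$ bound in Proposition \ref{Prop 4.10}) takes place downstream, so it does not enter this Lemma.
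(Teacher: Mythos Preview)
Your proof is correct and follows essentially the same route as the paper: apply Proposition \ref{Proposition 4.7} to $f_\delta$ after checking its hypotheses, then use the support condition $|K|+\delta\le k$ to reduce the main term to $\int_{-\sigma(i,y)}^0 f(\sigma(i,y),x)\,dx$ (the paper does this by a slightly more explicit Fubini computation rather than invoking $\int \psi_\delta * g = \int g$, but the content is identical). Your observation about the missing $\tfrac{1}{\chi}$ factor is well taken---the paper's proof also produces it but the statement of Lemma \ref{Lemma 4.13} (and Proposition \ref{Prop 4.10}) drops it, which appears to be a typographical slip.
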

\begin{proof}
We first wish to apply Proposition \ref{Proposition 4.7} to the function $f_\delta$. To this end, notice that by our assumptions for every $y\in [D',D]$ the function $f_y \in C^3 _{P_2 K} (\mathbb{R})$, and satisfies the required integrability conditions. Also, since $f_x \in C^1([0,1])$ for every $x\in \mathbb{R}$, it is clear that for every $\theta \in \mathbb{R}$, the function
$$z\mapsto \left(\hat{f_o}\right)_z (\theta) = \int e^{i \theta u} f_o(z,u)\,du =  \int_{-z} ^0 e^{i \theta u} f(z,u)\,du$$
belongs to $C^1([D,D'])$. 

So, applying Proposition \ref{Proposition 4.7},
$$E(f_\delta)(z,k) =\frac{1}{\chi} \int_{-k} ^\infty \int_\mathcal{A} \int_{[0,1]} f_\delta( \sigma(i,y), x) d \nu(y)  d\mathbf{p}(i)dx +e^{-\epsilon\cdot k}  O\left(e^{\epsilon | \supp f_\delta|} \left( \sup_{y,y'} |( f_{\delta,y})^{(3)}|_{L^1} +  |f_{\delta,y'}|_{L^1} \right) \right)$$
Also, for every $y \in P_1 \supp(f)$
$$\int_{-k} ^\infty f_\delta (y,x)dx =\int_{-k} ^\infty \int_{-y} ^0 f(y,x_1)\psi_\delta (x-x_1)dx_1  dx =  \int_{-y} ^0 f(y,x_1) \int_{-k} ^\infty \psi_\delta (x-x_1)dx dx_1. $$
Now, since $k-\delta \geq |K|$ and $y\in P_1 \supp( f)$ then 
$$-k-x_1 \leq -k+y\leq -\delta.$$ 
It follows that for every $x_1 \in P_2 (\supp f)$ we have
$$1 \geq \int_{-k} ^\infty \psi_\delta (x-x_1)dx \geq \int_{-\delta} ^\infty \psi_\delta =1 .  $$
So,
$$\int_{-k} ^\infty \int_\mathcal{A} \int_{[0,1]} f_\delta( \sigma(i,y), x) d \nu(y)  d\mathbf{p}(i)dx = \int_{[0,1]} \int_{\mathcal{A}} \int_{-\sigma(i,y)} ^0 f(\sigma(i,y),x)dx d\mathbf{p}(i) d\nu(y)$$
which implies the Lemma.
\end{proof}


The following Lemma is based on \cite[Lemma 4.15]{li2019trigonometric}. It is upgraded due to the fact that our bump function vanishes outside of $[-1,1]$.
\begin{Lemma} \label{Lemma 4.15}
Let $\varphi$ be a $C^1 (\mathbb{R})$ function with $||\varphi'||_\infty<\infty$ and $||\varphi||_\infty \leq 1$. Let 
$$\varphi_o (u) = 1_{[a,b]}(u) \varphi(u).$$ 
Then $|\psi_\delta * \varphi_o(u) - \varphi_o (u)|$ is bounded by:
\begin{itemize}
\item $||\varphi'||_\infty\cdot \delta$ if $u\in [a+\delta,\, b-\delta]$.
\item $2$ if $u\in [a-\delta,\,  a+\delta]$ or $u\in [b-\delta,\, b+\delta]$. 
\item $\psi_\delta *1_{[a,b]} (u)$ if $u<a-\delta$ or $u>b+\delta$.
\end{itemize}
\end{Lemma}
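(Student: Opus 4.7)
The argument splits into the three regimes according to the distance from $u$ to the endpoints $a,b$. The underlying observations are that $\psi_\delta$ is a nonnegative probability density supported in $(-\delta^2,\delta^2)$, and that $|\varphi_o|\leq 1_{[a,b]}$ pointwise thanks to $\|\varphi\|_\infty\leq 1$.

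\textbf{Interior case $u\in[a+\delta, b-\delta]$.} The plan is to note that for every $x\in\mathrm{supp}(\psi_\delta)\subset(-\delta^2,\delta^2)$ we have $u-x\in[a+\delta-\delta^2,\, b-\delta+\delta^2]\subset[a,b]$ (for all $\delta<1$), so the indicator in $\varphi_o$ is identically $1$ in this range and one can write
\[
\psi_\delta*\varphi_o(u)-\varphi_o(u)=\int\psi_\delta(x)\bigl(\varphi(u-x)-\varphi(u)\bigr)\,dx.
\]
The mean value theorem applied to $\varphi$ gives $|\varphi(u-x)-\varphi(u)|\leq\|\varphi'\|_\infty|x|\leq\|\varphi'\|_\infty\delta^2\leq\|\varphi'\|_\infty\delta$, and since $\psi_\delta$ integrates to $1$ we get the claimed bound.

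\textbf{Boundary layer $u\in[a-\delta,a+\delta]\cup[b-\delta,b+\delta]$.} Here we will use only the triangle inequality and the trivial bounds $|\varphi_o(u)|\leq 1$ and $|\psi_\delta*\varphi_o(u)|\leq\|\varphi_o\|_\infty\int\psi_\delta=1$, yielding $|\psi_\delta*\varphi_o(u)-\varphi_o(u)|\leq 2$.

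\textbf{Exterior case $u<a-\delta$ or $u>b+\delta$.} In this regime $u$ is at distance $>\delta>\delta^2$ from $[a,b]$, hence $\varphi_o(u)=0$, and the quantity to estimate reduces to $|\psi_\delta*\varphi_o(u)|$. Using $|\varphi_o|\leq 1_{[a,b]}$ and nonnegativity of $\psi_\delta$, this is bounded by $\psi_\delta*1_{[a,b]}(u)$, as desired.

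There is no real obstacle here: the main care needed is just keeping track of the scaling of $\psi_\delta$ (supported in $(-\delta^2,\delta^2)$, not $(-\delta,\delta)$), which ensures that the transition zones of width $\delta$ in the statement strictly contain the convolution's kernel width. Because of this overgenerous buffer, the interior estimate has the slack factor $\delta^2\leq\delta$, and the exterior estimate is clean since $\mathrm{supp}(\psi_\delta)$ is well inside the gap to $[a,b]$.
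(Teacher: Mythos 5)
Your proof is correct and follows essentially the same route as the paper's: the mean value theorem on $[a,b]$ (where $\varphi_o=\varphi$) for the interior case, the trivial bound $2$ near the endpoints, and $|\varphi_o|\leq 1_{[a,b]}$ together with $\varphi_o(u)=0$ and $\psi_\delta\geq 0$ in the exterior case. The only (harmless) difference is that you track the kernel support $[-\delta^2,\delta^2]$ more carefully, which even yields the slightly stronger interior bound $\|\varphi'\|_\infty\delta^2$.
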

\begin{proof}
If $u\in [a+\delta,b-\delta]$ then, since $\psi_\delta$ is supported on $[-\delta^2, \delta^2]$
$$\left|\psi_\delta * \varphi_o(u) - \varphi_o (u)\right| = \left|\int \psi_\delta (t)(\varphi_o(u-t)-\varphi_o(u))dt\right|$$
$$\leq \int_{-\delta} ^{\delta} \psi_\delta(t) \left|\varphi_o(u-t)-\varphi_o(u)\right|dt.$$
If $|t|\leq \delta$ then $u-t \in [a,b]$. Since $|\varphi_o ' (u)|\leq ||\varphi'||_\infty$ for $u\in [a,b]$ we obtain
$$\int_{-\delta} ^{\delta} \psi_\delta(t)\left|\varphi_o(u-t)-\varphi_o(u)\right|dt \leq \int_{-\delta} ^{\delta} \psi_\delta(t)\cdot \left|t \right| \cdot ||\varphi'||_\infty dt \leq \delta ||\varphi'||_\infty.$$

In the second case, we use the trivial bound
$$|\psi_\delta * \varphi_o(u) - \varphi_o (u)|\leq 2.$$

In the third case, if $u\in (-\infty, a-\delta)\cup(b+\delta,\infty)$ then $\varphi_o (u)=0$, and so
$$|\psi_\delta*\varphi_o|\leq |\psi_\delta*1_{[a,b]}|.$$
This gives the Lemma.
\end{proof}

\noindent{\textbf{Proof of Proposition \ref{Prop 4.10}}} By Lemma \ref{Lemma 4.13}, we only need to estimate $E(|f_\delta - f_o|)(z,k)$.

Since $f_o(y,x)=1_{-y\leq x<0}(x) f(y,x)$, applying Lemma \ref{Lemma 4.15}, $|f_\delta- f_o|(x)$ is bounded by the sum of the following three terms (we sometimes omit the $y$ variable in the following computation):
\begin{itemize}
\item $\sup_{y} || f' _y||_\infty \cdot \delta$ if $x\in [-y+\delta,-\delta]$.
\item $2$ if $-y-\delta\leq x\leq -y+\delta$ or $-\delta \leq  x \leq  \delta$
\item $\psi_\delta*1_{[-y,0]} (x)$ if $x< -y-\delta$ or $x>\delta$.  
\end{itemize}
By the definition of $|K|$, the first term is smaller than
$$\sup_{y} | f' _y|_\infty \cdot \delta\cdot 1_{[-|K|+\delta,-\delta]}.$$
The third term is equal to
$$1_{[-\infty, -y-\delta]\cup[\delta,\infty]}\psi_\delta*1_{[-y,0]}(x) = 1_{[-\infty, -y-\delta]\cup[\delta,\infty]}(x) \int_{-y} ^0 \psi_\delta (x-x_1)dx_1$$
$$=1_{[-\infty, -y-\delta]\cup[\delta,\infty]}(x) \int_x ^{x+y} \psi_\delta (x_1) dx_1.$$

This gives us 
$$E\left( \left|f_\delta - f_o\right| \right) (z, k) = \sum_{n\geq 0} \int \int |f_\delta - f_o|(\sigma(i, \eta.z),\, \sigma(\eta,\,z)-k) d\mathbf{p}^n (\eta)d\mathbf{p}(i)$$
$$\leq \sum_{n\geq 0} \int ( \sup_{y} || f' _y||_\infty \cdot \delta\cdot 1_{[-|K|,-\delta]}(\sigma(\eta,z)-k) $$
$$+2\cdot  1_{[-\sigma(i, \eta.z)-\delta,-\sigma(i,\eta.z)+\delta] \cup [-\delta,\delta]} (\sigma(\eta,z)-k)$$
$$+ 1_{[-\infty, -\sigma(i, \eta.z)-\delta]\cup[\delta,\infty]} (\sigma(\eta,z)-k) \int_{\sigma(\eta,z)-k} ^{\sigma(\eta,z)-k+\sigma(i,\eta.z)} \psi_\delta (x_1)  dx_1)  d\mathbf{p}^n (\eta) d\mathbf{p}(i).$$

By Lemma \ref{Lemma 4.6} the first term is dominated by 
$$\sup_{y} | f' _y|_\infty \cdot \delta|K|.$$
For the second term, since $\sigma$ is a cocycle,
$$\int  1_{[-\sigma(i, \eta.z)-\delta,-\sigma(i,\eta.z)+\delta]} (\sigma(\eta,z)-k) d\mathbf{p}^n (\eta) \mathbf{p}(i) = \int 1_{[-\delta,\delta]} (\sigma(\eta,z) - k)d\mathbf{p}^{n+1}(\eta).$$
So, the second term  is smaller than $ 4R\left(1_{[-\delta,\delta]}\right)(z,k)$. By proposition \ref{Proposition 4.5}, this is dominated by
$$3\delta\left(\frac{1}{\chi}+e^{-\epsilon\cdot k} C_\psi O\left(\frac{1}{\delta^{6}} + 1 \right) \right)$$
The third term, by a change of the order of integration (as in \cite[Proof of Proposition 4.10]{li2019trigonometric}), is smaller than
$$\int_{[-\infty, -\delta] \cup \delta,\infty]} \psi_\delta (x_1) E_C (1)(z, x_1+k)dx_1.$$
By Lemma \ref{Lemma 4.9}, this is smaller than
$$  C_2 \int_{[-\infty, -\delta] \cup \delta,\infty]} \psi_{\delta} (x_1) dx_1.$$
This latter term is $0$ since $\psi_\delta$ is supported on $[-\delta^2, \delta^2]$.

$$ $$
\noindent{\textbf{Conclusion of proof}} Assuming $0<\delta\leq 1$ is small enough, we have shown that
$$E_C f(z,k)= \int_{[0,1]} \int_\mathcal{A} \int_{-\sigma(i,y)} ^0 f(\sigma(i,y),x)dx d\mathbf{p}(i)d\nu(y)$$
with the sum following error terms: From Lemma \ref{Lemma 4.13} we have
$$e^{-\epsilon\cdot k}  O\left(e^{\epsilon | \supp f_\delta|} \left( \sup_{y,y'} |( f_{\delta,y})^{(3)}|_{L^1} +  |f_{\delta,y'}|_{L^1} \right) \right)$$
and from the previous argument above,
$$\sup_{y} | f' _y|_\infty \cdot \delta|K|+6\delta\left(\frac{1}{\chi}+e^{-\epsilon\cdot k} C_\psi O\left(\frac{1}{\delta^{6}} + 1 \right) \right).$$

Note that, for every $y\in [D,D']$,
$$|( f_{\delta,y})^{(3)}|_{L^1} = |( \psi_\delta*f_o(y,x))^{(3)}|_{L^1} =  |\psi_\delta*(f_o(y,x))^{(3)}|_{L^1} \leq  |\psi_\delta|_{L^1}\cdot |(f_o(y,x))^{(3)}|_{L^1} \leq |( f_{y})^{(3)}|_{L^1}.  $$
Indeed, the second equality holds since $\frac{d}{dx}f_o (y,x)$ exists except for  at most two points, the third inequality is Young's inequality, and the last one is trivial. A similar calculation shows that
$$|f_{\delta,y}|_{L^1} \leq |f_{y}|_{L^1}.$$
Next, put 
$$\delta = e^{-\frac{\epsilon k}{12}}$$
so that
$$e^{-\epsilon k} \cdot \delta^{-6} = e^{-\frac{\epsilon k}{2}}$$
and the error becomes
$$ e^{-\frac{\epsilon k}{2}} O_{|K|}\left( \sup_{y,y'\in [D',D]} ||f_y ||_{L^1} + ||f_{y'} ^{(3)} ||_{L^1} \right).$$
This completes the proof.

 \subsection{Proof of Theorem \ref{Theorem equi}}
First, let use relate our previous discussion to the symbolic setting outlined prior to Theorem \ref{Theorem equi}: Recall that  for $x\in [0,1]$ and $k\in \mathbb{R}$ we defined a cutoff operator
$$E_C f(x, k) = \sum_{n\geq 0} \int_{c(\eta, x)<k\leq c(i, \eta.x)+c(\eta,x)} f( c(i, \eta.x),\, c(\eta,x) - k) d\mathbf{p}(i)d\mathbf{p}^n (\eta).$$ 
We can define a symbolic analogue via, for $\omega \in \mathcal{A}^\mathbb{N}$,
$$E_C f(\omega, k) = \sum_{n\geq 0} \int_{\tilde c (\eta, \omega)<k\leq \tilde c (i, \eta.\omega)+\tilde c (\eta,\omega)} f( \tilde c (i, \eta.\omega),\, \tilde c (\eta,\omega) - k) d\mathbf{p}(i)d\mathbf{p}^n (\eta).$$ 
Then, by the definition of our cocycles $c$ \eqref{The der cocycle} and $\tilde c$ \eqref{The symbolic der cocycle} 
$$E_C f(\omega, k) = E_C f(x_\omega, k).$$
Thus, the conclusion of Proposition \ref{Prop 4.10} applies to $E_C (\omega,k)$ with the same error terms.

 Now, let $g\in C^{3}  (\mathbb{R})$. Let $\rho$ be a smooth cutoff function such that $\rho|_{[0, D']}=1$, and such that it becomes $0$ outside of $[-1, D' +1]$. Let 
 $$f(y,x):=g(y+x)\rho(y) \rho(x+y).$$
 Then $f(y,x)=g(x+y)$ when $y,x+y\in [0, D']$. By definition,
 $$\mathbb{E}\left( g \left( S_{\tau_k(\eta)} (\eta)-k \right) \big|\, \sigma^{\tau_k(\eta)} \eta  =\omega \right) = E_C f(\omega,k) = E_C f(x_\omega, k).$$
The function $f$ satisfies the conditions of Proposition \ref{Prop 4.10}, and so we can also apply  this proposition to $E_C f(x_\omega, k)$. The conclusion of Theorem \ref{Theorem equi} follows, with an error term of
$$e^{-\frac{\epsilon k}{2}} O_{|K|}(\left( \sup_{y,y'\in [D,D']} ||f_y ^{(3)} ||_{L^1} +  || f _{y'}||_{L^1}  \right).$$
Since $\supp(\rho)\subseteq [-1,D'+1]$, for every $y\in [D,D']$ the function $f_y$ is supported on $[-1-D, 1+D']$. So, we can take $K=[D,D']\times [-1-D, 1+D']$, and for all $y\in [D,D']$
$$|| f _y||_{L^1} = O_\rho\left( \max_{x\in [-1-D, D'+1]} \left|g(x)\right| \right).$$
Similarly, for all $y'\in [D,D']$
$$||f_{y'} ^{(3)} ||_{L^1} = O_\rho\left( \max_{i=0,1,2,3} \max_{x\in [-1-D, D'+1]} \left|g^{(i)}(x)\right| \right).$$
Combining the previous three displayed equation, the Theorem is proved.

\section{From equidistribution to Fourier decay} \label{Section Fourier decay}
We keep our assumptions and notations from Section \ref{Section non-linearity implies s-gap}.  In particular, we are working with the induced IFS from Claim \ref{Claim key} so that Theorem \ref{Theorem spectral gap} holds. Therefore, Proposition \ref{Proposition renewal} holds, and so we have Theorem \ref{Theorem equi} at our disposal. This Theorem will be the key to our arguments in this Section.  Recall also  that $\Phi$ is $C^2$, and that $\nu_\mathbf{p}$ is a self-conformal measure such that $\mathbf{p}$ is a strictly positive probability vector. Recall that $\mathbb{P}=\mathbf{p}^\mathbb{N}$ is our Bernoulli measure on $\mathcal{A}^\mathbb{N}$.

In this section we show that:
$$\text{ There exists }  \alpha>0 \text{ such that } \left|\mathcal{F}_q (\nu)\right| \leq O\left( \frac{1}{ \left| q \right| ^\alpha} \right), \text{ as } |q|\rightarrow \infty.$$
We will prove this under the additional (minor) assumption that $\Phi$ is orientation preserving; The general case is similar, but notationally heavier.

Let $|q|$ be large and let $k=k(q)\approx \log |q|$ to be chosen later. Let $\epsilon>0$ be as in Theorem \ref{Theorem equi}.  We define a stopping time $\beta_k :\mathcal{A}^\mathbb{N} \rightarrow \mathbb{N}$ by
$$\beta_k (\omega)= \min \lbrace m:\, \left| f_{\omega|_m} ' (x_0) \right| < e^{-k-\frac{\epsilon k}{8}} \rbrace, \, \text{ where } x_0 \in I \text{ is a prefixed point}.$$
We require the following Theorem from \cite{algom2020decay}, that relates the random variables $S_{\tau_k}$, $\tau_k$ (defined in Section \ref{Section equi}), and $\beta_k$. For any $t\in \mathbb{R}$ let $M_t:\mathbb{R}\rightarrow \mathbb{R}$ denote the scaling map $M_t (x)=t\cdot x$. 
\begin{theorem}   (Linearization) \label{Theorem linearization}
For any $\beta \in (0,1)$,
$$\left| \mathcal{F}_q (\nu) \right|^2 \leq \int \left| \mathcal{F}_q (M_{e^{-S_{\tau_k (\omega)}}} \circ f_{\omega|_{\tau_k{(\omega)}+1} ^{\beta_k(\omega)}}  \nu) \right|^2 d\mathbb{P}(\omega)+O\left(|q| e^{-(k+\frac{k\epsilon}{8})-\beta\cdot \frac{k\epsilon}{8}} \right).$$
Also, there exists a global constant $C'>1$ such that for every $\omega \in \mathcal{A}^\mathbb{N}$
\begin{equation} \label{Eq. uniform push}
\left| f_{\omega|_{\tau_k{(\omega)}+1} ^{\beta_k(\omega)}} ' (x) \right| = \Theta_{C'} \left(e^{-\frac{\epsilon k}{8}} \right).
\end{equation}
\end{theorem}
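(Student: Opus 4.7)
My plan is to carry out the linearization scheme of \cite{algom2020decay}. The first step is to exploit the stochastic self-conformality $\nu = \sum_a p_a f_a \nu$ iteratively up to the stopping time $\beta_k$: since $\beta_k(\omega)$ is $\mathbb{P}$-a.s.\ finite, one obtains the convex decomposition $\nu = \int f_{\omega|_{\beta_k(\omega)}} \nu \, d\mathbb{P}(\omega)$. Applying Jensen's inequality with the convex function $z \mapsto |z|^2$ then gives
$$|\mathcal{F}_q(\nu)|^2 \leq \int |\mathcal{F}_q(f_{\omega|_{\beta_k(\omega)}} \nu)|^2 \, d\mathbb{P}(\omega),$$
reducing everything to a pointwise (in $\omega$) linearization of the Fourier transform of $f_{\omega|_{\beta_k(\omega)}}\nu$.

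Next, I would split each composition as $f_{\omega|_{\beta_k}} = f_{\omega|_{\tau_k}} \circ f_{\omega|_{\tau_k+1}^{\beta_k}}$ and Taylor-expand the outer factor around the coding point $y_0 := x_{\sigma^{\tau_k(\omega)}(\omega)}$. Crucially $y_0 = f_{\omega|_{\tau_k+1}^{\beta_k}}(x_{\sigma^{\beta_k}\omega}) \in f_{\omega|_{\tau_k+1}^{\beta_k}}(K)$ by the self-coding identity, and by \eqref{Eq for Sn} one has $f'_{\omega|_{\tau_k}}(y_0) = e^{-S_{\tau_k(\omega)}}$. Taylor's theorem gives
$$f_{\omega|_{\tau_k}}(y) = f_{\omega|_{\tau_k}}(y_0) + e^{-S_{\tau_k(\omega)}}(y - y_0) + R_\omega(y),$$
where the standard $C^2$ bounded distortion estimate $|f''_{\omega|_n}| \leq C|f'_{\omega|_n}|$ (obtained by differentiating $\log f'_{\omega|_n} = \sum_i \log f'_{\omega_i}\circ f_{\omega|_{i+1}^n}$) combined with the uniform diameter bound $|y - y_0| \leq C' e^{-\epsilon k/8}$ provided by \eqref{Eq. uniform push} gives $|R_\omega(y)| \leq C'' e^{-k - \epsilon k/4}$ on the support of $f_{\omega|_{\tau_k+1}^{\beta_k}}\nu$. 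The constant affine part contributes only a unit-modulus phase factor, while the linear part $e^{-S_{\tau_k(\omega)}}(y-y_0)$ implements exactly the scaling $M_{e^{-S_{\tau_k(\omega)}}}$ under change of variables in the Fourier integral. Replacing $e^{2\pi i q R_\omega}$ by $1$ hence produces
$$\bigl|\mathcal{F}_q(f_{\omega|_{\beta_k}}\nu)\bigr|^2 \leq \bigl|\mathcal{F}_q(M_{e^{-S_{\tau_k(\omega)}}} \circ f_{\omega|_{\tau_k+1}^{\beta_k}}\nu)\bigr|^2 + O\bigl(|q|e^{-k - \epsilon k/4}\bigr).$$
Integrating over $\mathbb{P}$ and rewriting $e^{-k-\epsilon k/4} = e^{-(k+\epsilon k/8)-\epsilon k/8}$ gives the main inequality for the critical case $\beta = 1$; the stated flexibility $\beta \in (0,1)$ accommodates a small truncation against an exceptional $\omega$-set where the distortion constants or stopping time fluctuations are not tamely controlled, paying the extra factor $e^{-(1-\beta)\epsilon k/8}$ through a trivial bound on that set.

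For the uniform push estimate \eqref{Eq. uniform push}, I would use the chain-rule identity
$$f'_{\omega|_{\beta_k}}(x_0) = f'_{\omega|_{\tau_k}}\bigl(f_{\omega|_{\tau_k+1}^{\beta_k}}(x_0)\bigr) \cdot f'_{\omega|_{\tau_k+1}^{\beta_k}}(x_0),$$
together with the definitions of $\tau_k$ and $\beta_k$: the left-hand side lies in $[e^{-k-\epsilon k/8-D'},\, e^{-k-\epsilon k/8}]$ and the first factor on the right lies in $[e^{-k-D'}, e^{-k}]$ up to the uniform bounded distortion constant $L$ of \eqref{Eq bdd distortion}. Dividing yields the target order $e^{-\epsilon k/8}$ for the second factor up to a universal constant, and bounded distortion extends the bound from $x_0$ to every $x \in [0,1]$.

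The main technical obstacle I anticipate is making the linearization error uniform in $\omega$ with the sharp $\beta$-dependent exponent. The $C^2$ distortion estimate and the diameter control are pointwise sharp, but justifying the passage from the pointwise Taylor bound to the integrated statement requires a careful truncation that explains the gap between the formal $\beta = 1$ case and the stated $\beta \in (0,1)$ range; this is exactly the delicate content of \cite[Section 4.2]{algom2020decay}, on which the theorem is explicitly based.
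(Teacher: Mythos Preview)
Your proposal is correct and follows the approach of \cite{algom2020decay}, which is exactly what the paper invokes (its ``proof'' is simply a citation to Lemmas 4.3, 4.4 and Claim 4.5 there). The decomposition of $\nu$ along the stopping time $\beta_k$, Jensen, the Taylor expansion of $f_{\omega|_{\tau_k}}$ around $y_0=x_{\sigma^{\tau_k(\omega)}\omega}$, the $C^2$ distortion bound $|f''_{\omega|_n}|\le C|f'_{\omega|_n}|$, and the chain-rule derivation of \eqref{Eq. uniform push} are all the right ingredients, and your error bookkeeping $|R_\omega|=O(e^{-k-\epsilon k/4})=O(e^{-(k+\epsilon k/8)-\epsilon k/8})$ is accurate.

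One minor clarification: your discussion of $\beta$ is inverted. The statement for $\beta\in(0,1)$ is \emph{weaker} than the endpoint $\beta=1$ (the error term is monotone increasing as $\beta$ decreases), so once your argument yields the $\beta=1$ bound in the $C^2$ setting, every $\beta<1$ follows immediately with no truncation or exceptional set needed. The parameter $\beta$ in \cite{algom2020decay} is there to accommodate lower regularity (a $C^{1+\beta}$ IFS, where the Taylor remainder is only H\"older-controlled); in the present $C^2$ context your direct argument already gives the sharpest exponent. Also, since your main inequality relies on \eqref{Eq. uniform push} to control $|y-y_0|$, you should present the proof of \eqref{Eq. uniform push} first to avoid any appearance of circularity.
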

\begin{proof}
This is a combination of \cite[Lemma 4.3, Lemma 4.4, and Claim 4.5]{algom2020decay}.
\end{proof}
Theorem \ref{Theorem linearization} is the only place in the proof where we use our additional assumption that $\Phi$ is orientation preserving. See \cite[Corollary 4.6 and Remark 4.7]{algom2020decay} on how to remove this assumption.

Next, for every $k>0$ we define a measurable partition $\mathcal{P}_k$ of $\mathcal{A}^\mathbb{N}$ via the relation
$$\omega \sim_{\mathcal{P}_k} \eta \iff \sigma^{\tau_{k} (\omega)} \omega = \sigma^{\tau_{k} (\eta)} \eta.$$
Writing $\mathbb{E}_{\mathcal{P}_k (\xi)} (\cdot )$ for the expectation with respect to the  conditional measure of $\mathbb{P}$ on a cell corresponding to a $\mathbb{P}$-typical $\xi$, it follows from Theorem \ref{Theorem linearization} and the law of total probability that:  For any $\beta \in (0,1)$,
\begin{equation} \label{Eq gk}
\left| \mathcal{F}_q (\nu) \right|^2 \leq \int \mathbb{E}_{\mathcal{P}_k (\xi)} \left( \left| \mathcal{F}_q (M_{e^{-S_{\tau_k (\omega)}}} \circ f_{\xi|_{\tau_k{(\xi)}+1} ^{\beta_k(\xi)}}  \nu) \right|^2 \right) d\mathbb{P}(\xi)+O\left(|q| e^{-(k+\frac{k\epsilon}{8})-\beta\cdot \frac{k\epsilon}{8}} \right).
\end{equation}
And, for every $\xi \in \mathcal{A}^\mathbb{N}$,
$$ \left| f_{\xi|_{\tau_k{(\xi)}+1} ^{\beta_k(\xi)}} ' (x) \right| = \Theta_{C'} \left(e^{-\frac{\epsilon k}{8}} \right).$$

Now, for every fixed $\xi \in \mathcal{A}^\mathbb{N}$ and $k>0$, define the $C^\omega$ function
$$g_{k,\xi} (t)=\left| \mathcal{F}_q \left( M_{e^{(-t-k)}} \circ f_{\xi|_{\tau_k{(\xi)}+1} ^{\beta_k(\xi)}}  \nu \right) \right|^2.$$
Then by the definition of the local $C^3$ norm on $[-1-D, D'+1]$ as in \eqref{Def local norm}, assuming $|q|\cdot e^{-k}\rightarrow \infty$ as $q\rightarrow \infty$,
$$||g_{k,\xi}||_{C^3} \leq O\left( \left( |q|\cdot e^{-k} \right)^3 \right).$$
We emphasize that the bound above holds uniformly across $\xi$.  Applying Theorem \ref{Theorem equi} we obtain for a $\mathbb{P}$-typical $\xi$
\begin{eqnarray*}
\mathbb{E}_{\mathcal{P}_k (\xi)} \left( \left| \mathcal{F}_q (M_{e^{-S_{\tau_k (\omega)}}} \circ f_{\xi|_{\tau_k{(\xi)}+1} ^{\beta_k(\xi)}}  \nu) \right|^2 \right) &=&  \mathbb{E}_{\mathcal{P}_k (\xi)} \left( g_{k, \xi} \left(S_{\tau_k(\omega)}-k \right) \right) \\
&=& \frac{1}{\chi} \int_{D} ^{D'} \int_{-y} ^0 g_{k,\xi}(x+y)\,dxd\kappa(y)+e^{-\frac{\epsilon k}{2}} O\left( \left( |q|\cdot e^{-k} \right)^3 \right) \\
&=&\frac{1}{\chi} \int_{D} ^{D'} \int_{-y} ^0 g_{k,\xi}(x+y)\,dxd\kappa(y)+e^{-\frac{\epsilon k}{2}}\left( |q|\cdot e^{-k} \right)^3 O\left( 1\right). \\
\end{eqnarray*}

Plugging the above equality into \eqref{Eq gk}, using the definition of $g_{k,\xi}$ and that if $f\geq 0$ then 
$$\frac{1}{\chi} \int_{D} ^{D'} \int_{-y} ^0 f(x+y) \,dxd\kappa(y) \leq \frac{1}{\chi} \int_{0} ^{D'}  f(t) \,dt, $$
we obtain:
\begin{eqnarray*}
\left| \mathcal{F}_q (\nu) \right|^2 &\leq& \int \mathbb{E}_{\mathcal{P}_k (\xi)} \left( \left| \mathcal{F}_q (M_{e^{-S_{\tau_k (\omega)}}} \circ f_{\xi|_{\tau_k{(\xi)}+1} ^{\beta_k(\xi)}}  \nu) \right|^2 \right) d\mathbb{P}(\xi)+O\left(|q| e^{-(k+\frac{k\epsilon}{8})-\beta\cdot \frac{k\epsilon}{8}} \right) \\
&= & \frac{1}{\chi} \int \int_{D} ^{D'} \int_{-y} ^0 g_{k,\xi}(x+y)\,dxd\kappa(y) d\mathbb{P}(\xi)+O\left(|q| e^{-(k+\frac{k\epsilon}{8})-\beta\cdot \frac{k\epsilon}{8}} \right)+e^{-\frac{\epsilon k}{2}}\left( |q|\cdot e^{-k} \right)^3 O\left( 1\right)\\
&\leq  & \frac{1}{\chi}  \int \int_0 ^{D'} \left| \mathcal{F}_q \left( M_{e^{(-t-k)}} \circ f_{\xi|_{\tau_k{(\xi)}+1} ^{\beta_k(\xi)}}  \nu \right) \right|^2 dt d\mathbb{P}(\xi)+O\left(|q| e^{-(k+\frac{k\epsilon}{8})-\beta\cdot \frac{k\epsilon}{8}} \right)\\
&&+e^{-\frac{\epsilon k}{2}}\left( |q|\cdot e^{-k} \right)^3 O\left( 1\right).
\end{eqnarray*}

Finally, we use the following Lemma (originally due to Hochman \cite{Hochman2020Host}) to deal with the oscillatory integral above:
\begin{Lemma} \cite[Lemma 2.6]{algom2020decay} (Oscillatory integral) For every $\xi \in  \mathcal{A}^\mathbb{N}$, $k>0$, and $r>0$
$$\int_0 ^{D'} \left| \mathcal{F}_q \left( M_{e^{(-t-k)}} \circ f_{\xi|_{\tau_k{(\xi)}+1} ^{\beta_k(\xi)}}  \nu \right) \right|^2 dx  =O\left( \frac{1}{r|q|e^{-(k+\frac{\epsilon k}{8})}}+\sup_{y} \nu(B_r (y)) \right)$$
\end{Lemma}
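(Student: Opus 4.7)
The plan is to reduce the integral to a double $\mu_\xi \times \mu_\xi$ integral (where $\mu_\xi := F\nu$ and $F := f_{\xi|_{\tau_k(\xi)+1}^{\beta_k(\xi)}}$), extract oscillation in the $t$--variable by a single integration by parts, and finally convert the resulting $\mu_\xi$--ball estimate into a $\nu$--ball estimate via \eqref{Eq. uniform push} and the bounded distortion property \eqref{Eq bdd distortion}.

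First I would use $\mathcal{F}_q(M_{e^{-t-k}} \mu_\xi) = \mathcal{F}_{q e^{-t-k}}(\mu_\xi)$, expand the square of the Fourier transform as a double integral, and apply Fubini to obtain
$$\int_0^{D'} \left| \mathcal{F}_q(M_{e^{-t-k}} \mu_\xi) \right|^2 dt \;=\; \iint J(u-v)\, d\mu_\xi(u)\, d\mu_\xi(v), \qquad J(z) := \int_0^{D'} e^{2\pi i q e^{-t-k} z}\, dt.$$
The scalar oscillatory integral $J(z)$ has phase $\phi(t) = 2\pi q e^{-t-k} z$ satisfying $\phi'(t) = -\phi(t)$, so $|\phi'(t)| = 2\pi q e^{-t-k} |z|$ is monotone and at least $2\pi q e^{-D'-k}|z|$ on $[0, D']$. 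A single integration by parts --- whose bulk term involves $\phi''/\phi'^2 = -1/\phi'$ and is therefore of the same size as the boundary term --- yields $|J(z)| \leq C_{D'}\, e^k/(q|z|)$; combined with the trivial bound $|J(z)| \leq D'$, this gives $|J(z)| \leq \min\bigl(D',\, C e^k/(q|z|)\bigr)$.

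For a parameter $r' > 0$ to be chosen shortly, I would split the double integral at $|u-v| = r'$: the near--diagonal piece is dominated by $D' \sup_v \mu_\xi(B_{r'}(v))$, while the tail contributes at most
$$\iint_{|u-v| > r'} \frac{C e^k}{q |u-v|}\, d\mu_\xi(u)\, d\mu_\xi(v) \;\leq\; \frac{C e^k}{q r'}$$
since $\mu_\xi$ is a probability measure. The only nonroutine step --- and the main (mild) obstacle --- is translating $\mu_\xi$--balls into $\nu$--balls. By \eqref{Eq. uniform push} and \eqref{Eq bdd distortion}, the inverse branch of $F$ expands distances by a factor at most $C'' e^{\epsilon k/8}$, so $F^{-1}(B_{r'}(v))$ is contained in a ball of radius $C'' r' e^{\epsilon k/8}$ and
$$\mu_\xi(B_{r'}(v)) \;=\; \nu\bigl(F^{-1}(B_{r'}(v))\bigr) \;\leq\; \sup_y \nu\bigl(B_{C'' r' e^{\epsilon k/8}}(y)\bigr).$$
Choosing $r' := r / (C'' e^{\epsilon k/8})$, the near--diagonal piece becomes $O(\sup_y \nu(B_r(y)))$ (since an interval of length $O(r)$ in $\mathbb{R}$ is covered by boundedly many intervals of length $r$, the constant $C''$ is absorbed into the supremum), while the tail becomes $C e^{k+\epsilon k/8}/(q r) = O\bigl(1/(r|q| e^{-(k + \epsilon k/8)})\bigr)$, which is precisely the claimed bound.
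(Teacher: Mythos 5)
Your proof is correct and essentially self-contained: expanding $\left|\mathcal{F}_{qe^{-t-k}}(\mu_\xi)\right|^2$ as a double integral, applying the first-derivative (integration by parts) bound to the kernel $J(z)$, splitting at the diagonal scale $r'$, and transferring $\mu_\xi$-balls to $\nu$-balls via the uniform derivative bound \eqref{Eq. uniform push} is exactly the standard argument behind this lemma, which the paper itself does not reprove but imports verbatim from \cite[Lemma 2.6]{algom2020decay}. The only cosmetic remarks are that \eqref{Eq bdd distortion} is not actually needed (the two-sided bound \eqref{Eq. uniform push} plus the mean value theorem suffices), and the $dx$ in the displayed statement should be $dt$.
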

Note that we use \eqref{Eq. uniform push} to get uniformity in the first term on the right hand side.
$$ $$

\noindent{\textbf{Conclusion of proof}} By the argument above we can bound $\left| \mathcal{F}_q (\nu) \right|^2$ by the sum of the following terms. Every term is bounded with implicit dependence on $\mathbf{p}$ and the underlying IFS. For simplicity, we ignore global multiplicative constants so we omit the big-$O$ notation:

Linearization: For any prefixed $\beta\in (0,1)$,
$$|q| e^{-(k+\frac{k\epsilon}{8})-\beta\cdot \frac{k\epsilon}{8}};$$
Equidistribution: 
$$e^{-\frac{\epsilon k}{2}}\left( |q|\cdot e^{-k} \right)^3 ;$$
Oscillatory integral: For every $r>0$
$$\frac{1}{r|q|e^{-(k+\frac{\epsilon k}{8})}}+\sup_{y} \nu(B_r (y)).$$

\noindent{\textbf{Choice of parameters}} For $|q|$ we choose $k=k(q)$ that satisfies 
$$|q|=e^{k+\frac{k\epsilon}{7}}.$$
We also choose $r=e^{-\frac{k \epsilon}{100}}$ and $\beta = \frac{1}{2}$.  Then we get:

Linearization:
$$|q| e^{-(k+\frac{k\epsilon}{8})-\beta\cdot \frac{k\epsilon}{8}} = e^{\frac{k \epsilon}{7} -\frac{k\epsilon}{8}- \frac{k\epsilon}{16}},\, \text{ this decay exponentially fast in }k.$$
Equidistribution: 
$$e^{-\frac{\epsilon k}{2}}\left( |q|\cdot e^{-k} \right)^3 =  e^{-\frac{\epsilon k}{2} +\frac{3k\epsilon}{7}},\, \text{ this decay exponentially fast in }k.$$
Oscillatory integral: There is some $d=d(\nu)>0$ such that
$$\frac{1}{r|q|e^{-(k+\frac{\epsilon k}{8})}} + \sup_{y} \nu(B_r (y)) \leq  \frac{1}{e^{-\frac{k\epsilon}{100}+ \frac{k \epsilon}{7} -\frac{k\epsilon}{8}}}+e^{-\frac{d \epsilon k}{100}},\, \text{ this decay exponentially fast in }k.$$
Here we made use of \cite[Proposition 2.2]{Feng2009Lau}, where it is shown that there is some $C>0$ such that for every $r>0$ small enough $\sup_{y} \nu(B_r (y))\leq Cr^d$. 

Finally, summing these error terms, we see that for some $\alpha>0$ we have $\left| \mathcal{F}_q (\nu) \right| = O\left(e^{-k\alpha}\right)$. Since as $|q|\rightarrow \infty$ we have $k\geq C_0 \cdot \log |q|$ for some uniform $C_0>0$, our claim follows.   \hfill{$\Box$}

\section{On the proof of Corollary \ref{Main Corollary}}
In this Section we prove Corollary \ref{Main Corollary}: All self-conformal measures with respect to a $C^\omega (\mathbb{R})$ IFS that contains a non-affine map have polynomial Fourier decay:  First, we show that any given $C^\omega (\mathbb{R})$ IFS is either not $C^2$ conjugate to linear, or it is $C^\omega$ conjugate to a self-similar IFS (Claim \ref{Claim JLMS}). By this dichotomy,   Corollary \ref{Main Corollary} follows from Theorem \ref{Main Theorem} and from  a Fourier decay result about smooth images of self-similar measures from a paper of of Algom et al. \cite{Algom2023Wu}. 
\subsection{On conjugate to linear real analytic IFSs}
Recall that $\Phi$, a $C^2(\mathbb{R})$ IFS, is  called linear if the following holds:
$$f'' (x) =0 \, \text{ for all } x\in K_\Phi \text{ and } f\in \Phi.$$
In particular, if $\Phi$ is $C^\omega (\mathbb{R})$ and linear then it is self-similar. Recall that an IFS $\Psi$ is called $C^2$ conjugate to $\Phi$ if there is a $C^2$ diffeomorphsim $h$ between neighbourhoods of the corresponding attractors such that
$$\Psi = h\circ \Phi \circ h^{-1} := \lbrace h\circ g \circ h^{-1} \rbrace_{g\in \Phi}.$$

The geometric properties of linear non self-similar   $C^r (\mathbb{R})$ smooth  IFSs when $r\neq \omega$ are not well understand. In fact, even showing the existence of such IFSs  is a highly non-trivial question (no such example is known). However, in the analytic category our understanding is much better:
\begin{Claim} \label{Claim JLMS} Let $\Phi$ be a $C^\omega([0,1])$ IFS. Then there is a dichotomy:
\begin{enumerate}
\item $\Phi$ is not $C^2$ conjugate to linear, or

\item $\Phi$ is conjugate to a linear $C^\omega (\mathbb{R})$ IFS via  an analytic map $g$, that is a  diffeomorphism on $[0,1]$. In particular, $\Phi$ is $C^\omega$ conjugate to a self-similar IFS.
\end{enumerate}
\end{Claim}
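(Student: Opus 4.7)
The plan is to construct the analytic conjugacy $g$ directly via the Poincar\'e--Siegel theorem applied to a single map $f_1\in\Phi$, and then to leverage the given $C^2$ conjugacy $h$ to show that this $g$ conjugates the entire IFS to an affine one. Fix $f_1\in\Phi$ with fixed point $x_1$ and multiplier $\lambda_1=f_1'(x_1)\in(-1,1)\setminus\{0\}$. Since $f_1$ is analytic and hyperbolic at $x_1$, the Poincar\'e--Siegel theorem (Koenigs, in the attracting case) yields a local analytic diffeomorphism $g$ near $x_1$ with $g(x_1)=0$ and $g\circ f_1\circ g^{-1}(z)=\lambda_1 z$. The functional equation $g(x)=\lambda_1^{-n}g(f_1^n(x))$, combined with the fact that $f_1^n$ maps a fixed complex neighborhood of $[0,1]$ into arbitrarily small complex neighborhoods of $x_1$, extends $g$ to an analytic diffeomorphism on a complex neighborhood of $[0,1]$.

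It remains to prove that each $G_i:=g\circ f_i\circ g^{-1}$ is affine. Since $G_i$ is analytic on a complex neighborhood of $g([0,1])$ and $g(K_\Phi)$ is infinite (hence has accumulation points), analytic continuation reduces this to showing $G_i''(z)=0$ for every $z\in g(K_\Phi)$. To access this, introduce $\phi:=g\circ h^{-1}$, which after a harmless translation satisfies $\phi(0)=0$ and
\[
\phi\circ A_1 = L_{\lambda_1}\circ\phi,\qquad A_1:=h\circ f_1\circ h^{-1},\qquad L_{\lambda_1}(z):=\lambda_1 z.
\]
Iterating gives $\phi(y)=\lambda_1^{-n}\phi(A_1^n(y))$, and a $C^2$ Taylor expansion of $\phi$ at $0$ together with the hyperbolic estimate $|A_1^n(y)|\lesssim \lambda_1^n|y|$ identifies $\phi=\mu\,\Phi_{A_1}$, where $\mu=\phi'(0)\ne 0$ and $\Phi_{A_1}(y):=\lim_n\lambda_1^{-n}A_1^n(y)$ is the $C^2$ Koenigs limit for $A_1$.

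The crucial step is $\phi''|_{h(K_\Phi)}=0$. Since $\Psi:=\{A_i\}$ is linear, $A_1''(y)=0$ for every $y\in h(K_\Phi)$, and $A_1$ maps $h(K_\Phi)$ into itself. Applying the chain rule
\[
(A_1^n)''(y) = A_1''(A_1^{n-1}(y))\,\bigl((A_1^{n-1})'(y)\bigr)^2 + A_1'(A_1^{n-1}(y))\,(A_1^{n-1})''(y)
\]
inductively shows $(A_1^n)''(y)=0$ for every $y\in h(K_\Phi)$ and every $n\ge 0$, the base case $n=0$ being trivial. Passing to the limit (and verifying $C^2$ convergence of the Koenigs limit from the same hyperbolic estimates) gives $\Phi_{A_1}''(y)=0$, and hence $\phi''(y)=0$, on $h(K_\Phi)$.

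To finish, differentiate $G_i\circ\phi=\phi\circ A_i$ twice in $y$ and evaluate at $y\in h(K_\Phi)$. The three contributions involving $A_i''(y)$, $\phi''(y)$, and $\phi''(A_i(y))$ all vanish (the first because $\Psi$ is linear, the other two by the previous step together with the invariance $A_i(h(K_\Phi))\subseteq h(K_\Phi)$), so the chain rule collapses to
\[
G_i''(\phi(y))\,\bigl(\phi'(y)\bigr)^2 = 0.
\]
Since $\phi$ is a diffeomorphism, $\phi'(y)\ne 0$, forcing $G_i''(z)=0$ for all $z=\phi(y)\in g(K_\Phi)$, and analytic continuation upgrades this to $G_i$ being affine on $g([0,1])$. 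The main obstacle is the identification $\phi=\mu\,\Phi_{A_1}$ and the $C^2$ convergence of the Koenigs limit for the merely $C^2$ contraction $A_1$; both rely only on the hyperbolic bound $|A_1^n(y)|\lesssim\lambda_1^n|y|$ and the $C^2$ regularity of $A_1$, so the argument goes through cleanly.
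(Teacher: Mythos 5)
Your argument is correct, and its first half coincides with the paper's: both apply the Poincar\'e--Siegel/Koenigs theorem to a single map to produce an analytic linearizer $g$ on (a neighborhood of) $[0,1]$. Where you diverge is in the key step. The paper feeds the analytically conjugated IFS $g\circ\Phi\circ g^{-1}$ (which now contains an affine map) into a separate lemma (Lemma \ref{Lemma lin fede}): using the cocycle identity for $\frac{d}{dx}\log(h\circ f\circ h^{-1})'$ and substituting the iterates $f_1^{\circ k}$, it shows directly that the $C^2$ conjugacy $h$ satisfies $h''=0$ on the attractor, whence every map of the conjugated IFS has vanishing second derivative there; linearity plus analyticity then gives self-similarity. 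You instead compare the two conjugacies through $\phi=g\circ h^{-1}$, identify $\phi$ (up to the factor $\mu=\phi'(0)$) with the Koenigs limit $\lim_n\lambda_1^{-n}A_1^n$ of the merely $C^2$ map $A_1=h\circ f_1\circ h^{-1}$, kill $(A_1^n)''$ on $h(K_\Phi)$ by the chain-rule induction, and pass to the limit; the final chain-rule computation for $G_i\circ\phi=\phi\circ A_i$ and the identity theorem on the infinite set $g(K_\Phi)$ then finish the proof exactly as the paper's ``linear $+$ analytic $\Rightarrow$ affine'' step does. Both mechanisms exploit the same phenomenon (second-order terms along the orbit decay like $\lambda_1^{2n}$, faster than the linear normalization $\lambda_1^{-n}$), but the paper's route avoids any regularity analysis of the Koenigs limit, while yours requires the $C^2$ convergence of $\lambda_1^{-n}A_1^n$ and the hyperbolic bound $|A_1^n(y)|\lesssim|\lambda_1|^n|y|$; both do hold in this one-dimensional setting (the bound follows from the bounded distortion property \eqref{Eq bdd distortion} applied to $f_1$ with base point $x_1$, and the recursions for $\lambda_1^{-n}(A_1^n)'$ and $\lambda_1^{-n}(A_1^n)''$ converge uniformly because $|A_1'(A_1^ny)-\lambda_1|\lesssim|\lambda_1|^n$), so your sketch of these points is accurate, just slightly heavier than the paper's more elementary iteration of the logarithmic-derivative identity. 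A small bonus of your version is that it explains conceptually why the transition map between the $C^2$ and analytic linearizations must be ``affine along the attractor''; a small cost is the extra technical layer, which the paper's Lemma \ref{Lemma lin fede} sidesteps.
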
 
Several variants of Claim \ref{Claim JLMS} exist in the literature under various assumptions (see e.g. \cite{BedfordFisher1997Ratio}). We also note that a $C^2$ version of the Claim holds if $K_\Phi$ is an interval (via a closely related argument). 

First, we require the following Proposition, a special case of the Poincar\'{e}-Siegel Theorem \cite[Theorem 2.8.2]{Katok1995Hass}:
\begin{Proposition} \label{Prop Poincare} \cite[Proposition 2.1.3]{Katok1995Hass} Let $g\in C^\omega ([0,1])$ be a contracting map. Then there exists some non-trivial interval $J\subseteq [0,1]$ and a diffeomorphism $h\in C^\omega ([0,1], J)$ such that $h\circ g\circ h^{-1}$ is affine.
\end{Proposition}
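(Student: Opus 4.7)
The plan is to establish the classical Koenigs linearization: we will explicitly construct $h$ as the limit of the renormalized iterates of $g$ about its fixed point, and then extend it globally using the functional equation forced by $g$.

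First I will locate the unique fixed point $p\in [0,1]$ of $g$ (existence and uniqueness follow from the contraction property), and set $\lambda := g'(p)$, so that $0<|\lambda|<1$. Shrinking a neighborhood $U\subseteq [0,1]$ of $p$, Taylor expansion gives $g(y)-p=\lambda(y-p)+O((y-p)^2)$, so $g$ restricted to $U$ is Lipschitz with constant as close to $|\lambda|$ as desired; in particular I can arrange $|g(y)-p|\le \mu|y-p|$ on $U$ with $\mu^2<|\lambda|$. The Koenigs function is then defined on $U$ by
\begin{equation*}
h_0(x) \;:=\; \lim_{n\to\infty} \lambda^{-n}\bigl(g^n(x)-p\bigr),
\end{equation*}
where convergence is controlled by the telescoping identity
\begin{equation*}
\lambda^{-(n+1)}(g^{n+1}(x)-p)-\lambda^{-n}(g^n(x)-p) \;=\; \lambda^{-(n+1)}\bigl[g(g^n(x))-p-\lambda(g^n(x)-p)\bigr],
\end{equation*}
whose right-hand side is $O\bigl(|\lambda|^{-n-1}\mu^{2n}\bigr)$ and hence summable uniformly on $U$. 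This yields an analytic $h_0:U\to\mathbb{R}$ with $h_0(p)=0$ and $h_0'(p)=1$, and passing to the limit on both sides gives the conjugacy relation $h_0\circ g=\lambda\cdot h_0$ on $U$.

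Next I will extend $h_0$ globally. Since $g$ is a uniform contraction on $[0,1]$, there is $N$ with $g^N([0,1])\subseteq U$, and I will set
\begin{equation*}
h(x) \;:=\; \lambda^{-N}\, h_0\bigl(g^N(x)\bigr),\qquad x\in [0,1].
\end{equation*}
This is real-analytic on $[0,1]$ as a composition of real-analytic maps, it coincides with $h_0$ on $U$ by iterating the functional equation, and it satisfies $h\circ g=\lambda\cdot h$ throughout $[0,1]$. Writing $J:=h([0,1])$ and verifying $h$ is a diffeomorphism onto $J$ then amounts to: (i) nonvanishing of $h'$, which follows from $h'(x)=\lambda^{-N}h_0'(g^N(x))\,(g^N)'(x)$ together with $h_0'(p)=1$ (continuity) and the nonvanishing of $g'$ on $[0,1]$ (a standing hypothesis for IFS maps); and (ii) injectivity, for if $h(x)=h(y)$ then $h(g^n(x))=h(g^n(y))$, and for large $n$ both arguments lie in a neighborhood of $p$ where $h$ is injective (since $h'(p)\ne 0$), so $g^n(x)=g^n(y)$, and injectivity of $g$ forces $x=y$. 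The conjugate $h\circ g\circ h^{-1}(y)=\lambda y$ is then affine.

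The only genuinely delicate step is the convergence of the Koenigs series, which requires the Lipschitz constant of $g$ near $p$ to be taken small enough relative to $|\lambda|$; once this is secured the functional equation $h\circ g=\lambda h$ does all the remaining work, both for the global analytic extension and for the diffeomorphism property. The contracting-plus-orientation-preserving hypothesis (i.e.\ $g'>0$ on $[0,1]$), implicit in the IFS setting of this paper, is what allows the linearization to be propagated from the neighborhood $U$ to the entire interval.
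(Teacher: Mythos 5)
Your proof is correct and is exactly the classical Koenigs/Poincar\'{e} linearization argument, which is what the cited Proposition 2.1.3 of Katok--Hasselblatt gives; the paper itself offers no proof, only the citation, so there is nothing to diverge from. The convergence estimate $O(|\lambda|^{-n-1}\mu^{2n})$ with $\mu^2<|\lambda|$ is the right one, the propagation of $h_0\circ g=\lambda h_0$ to all of $[0,1]$ by pulling back along $g^N$ is standard, and the nonvanishing-of-$h'$ and injectivity checks are sound.

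Two very small points worth noting, neither of which is a gap in substance. First, to conclude that the uniform limit $h_0$ is \emph{analytic} (and that $h_0'(p)=1$ by differentiating the limit), one should extend $g$ holomorphically to a complex neighborhood of $p$ and run the same estimates there; uniform convergence on a real interval alone does not give smoothness. Second, your $h$ need not land in $[0,1]$ as required by the statement $h\in C^\omega([0,1],J)$ with $J\subseteq[0,1]$; post-composing with an affine map fixes this without affecting the conclusion, since the conjugate remains affine. Also, your closing remark that orientation-preservation is needed is unnecessary: the Koenigs construction goes through verbatim for $\lambda=g'(p)<0$, and the conjugate $y\mapsto\lambda y$ is still affine.
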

We also require the following Lemma:
\begin{Lemma} \label{Lemma lin fede}
Let $\Phi$ be a $C^2(\mathbb{R})$ IFS that is $C^2$ conjugate to a linear IFS. If there exists $g\in \Phi$ such that $g'' =0$ on its attractor $K_\Phi$, then $\Phi$ is already linear; That is, for every $f\in \Phi$, $f'' =0$ on $K_\Phi$.
\end{Lemma}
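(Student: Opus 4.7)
The strategy is to reduce the conclusion to a single coboundary-type identity, valid for every $f\in\Phi$ simultaneously, and then exploit the single map $g$ to kill the coboundary. Write $\Phi = h\Psi h^{-1}$ with $\Psi$ linear and $h$ a $C^2$ diffeomorphism on a neighborhood of $K_\Psi$, and set $\phi := \log h'$. For $f = h\tilde f h^{-1}\in\Phi$, the chain rule gives
\[
\log f'(x) \;=\; \phi\bigl(\tilde f(h^{-1}(x))\bigr)\;+\;\log \tilde f'(h^{-1}(x))\;-\;\phi(h^{-1}(x)).
\]
Differentiating in $x$, using $\tilde f''|_{K_\Psi}=0$ so that the middle term contributes nothing when $x\in K_\Phi$, and substituting $\tilde f'(h^{-1}(x)) = f'(x)\,h'(h^{-1}(x))/h'(h^{-1}(f(x)))$ back in, I will obtain
\[
\frac{f''(x)}{f'(x)} \;=\; \Theta(f(x))\,f'(x)\;-\;\Theta(x), \qquad x\in K_\Phi,
\]
where $\Theta(x):= h''(h^{-1}(x))\big/\bigl(h'(h^{-1}(x))\bigr)^{2}$ is one fixed continuous function on $K_\Phi$ depending only on the conjugacy $h$, not on the particular choice of $f\in\Phi$. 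This identity is the heart of the argument.

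Specializing to $f=g$ and using the hypothesis $g''|_{K_\Phi}\equiv 0$, the identity collapses to $\Theta(x) = \Theta(g(x))\,g'(x)$ for all $x\in K_\Phi$. Since $g(K_\Phi)\subseteq K_\Phi$, iteration gives
\[
\Theta(x) \;=\; \Theta(g^{n}(x))\cdot (g^{n})'(x), \qquad n\ge 1.
\]
Because $g$ is a strict contraction, $(g^{n})'(x)\to 0$ uniformly on $[0,1]$; because $h$ is a $C^2$ diffeomorphism with $h'$ nonvanishing, $\Theta$ is continuous on the compact set $K_\Phi$, hence bounded. Letting $n\to\infty$ forces $\Theta\equiv 0$ on $K_\Phi$. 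Plugging this back into the coboundary identity for an arbitrary $f\in\Phi$, and using $f(K_\Phi)\subseteq K_\Phi$ to evaluate $\Theta$ at $f(x)$, yields $f''(x)/f'(x)=0$ on $K_\Phi$, whence $f''|_{K_\Phi}=0$ as required.

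The main obstacle is the derivation of the coboundary identity with the same function $\Theta$ independent of $f$. This relies on the two $\phi$-terms in the chain-rule expansion of $\log f'$ combining, via the cancellation provided by $\tilde f'(h^{-1}(x)) = f'(x)\,h'(h^{-1}(x))/h'(h^{-1}(f(x)))$, into one term of the form $\Theta(f(x))f'(x)$ and one of the form $\Theta(x)$ with the same $\Theta$. Once the identity is in hand, the remainder is a one-line iteration argument, and the hypothesis that a single $g\in\Phi$ is linear on $K_\Phi$ forces the global obstruction $\Theta$ to vanish and hence propagates linearity to every member of $\Phi$.
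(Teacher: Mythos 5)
Your proposal is correct and follows essentially the same route as the paper: apply the chain rule to $\log f'$ via the conjugacy, differentiate to obtain a coboundary-type identity, specialize to $g$, iterate (using uniform contraction so that $(g^n)'\to 0$) to kill the obstruction, and then feed the vanishing of the obstruction back into the identity for an arbitrary $f\in\Phi$. The only cosmetic difference is that you package the obstruction into a single function $\Theta(x)=h''(h^{-1}(x))/h'(h^{-1}(x))^2$ and iterate the derived functional equation $\Theta(x)=\Theta(g(x))g'(x)$ directly, whereas the paper works with $h''(y)/h'(y)$ and re-substitutes $g^{\circ k}$ into the chain-rule computation (after first checking $(g^{\circ k})''|_K=0$); the two formulations carry identical content.
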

\begin{proof}
Let $h\in C^2 (\mathbb{R})$ be such that the IFS $h\circ \Phi \circ h^{-1}$ is linear. Let $g\in \Phi$ be such that $g'' =0$ on $K:=K_\Phi$. We first show that this implies that $h''$ vanishes on $K$: By our assumption, for all  $z\in h(K)$ we have
$$\left( h\circ g \circ h^{-1} \right)''(z)=0.$$
Writing $h^{-1} (z)=y\in K$, we compute:
$$\log \left( h\circ g \circ h^{-1} \right)' (z) = \log h'\circ g(y)+\log g'(y)-\log h'(y).$$
Combining the two previous displayed equations, it follow that
$$0= \frac{\left( h\circ g \circ h^{-1} \right)''(z)}{\left( h\circ g \circ h^{-1} \right)' (z)} = \frac{d}{dz}  \log\left( h\circ g \circ h^{-1} \right)' (z) = \frac{h''\circ g(y)\cdot g'(y)}{h'\circ g(y)}+\frac{g''(y)}{g'(y)}-\frac{h''(y)}{h'(y)}.$$
By our assumption $g''(y)=0$ since $y\in K$. We conclude that 
\begin{equation} \label{Eq key equation}
\frac{h''\circ g(y)\cdot g'(y)}{h'\circ g(y)} = \frac{h''(y)}{h'(y)}.
\end{equation}
Finally, let $x\in K$. Since   $g''(y)=0$ for all $y\in K$ and  for every $k\in \mathbb{N}$ we have for the $k$-fold composition $ g^{\circ k }(x) \in K$, then for all $k\in \mathbb{N}$, $\left( g^{\circ k } \right)''(x)=0$. A similar argument shows that for every $k$ the IFS $h\circ \Phi^k \circ h^{-1}$ is linear. It follows that in \eqref{Eq key equation} we can substitute $g^{\circ k}$ for $g$. As the IFS $\Phi$ is uniformly contracting, this shows that the LHS of \eqref{Eq key equation} can be made arbitrarily small, but the RHS remains fixed. This is only possible if $h''(y)=0$.  We conclude that $h''$ vanishes on $K$.

Finally, let $f\in \Phi$. Then for all  $z\in h(K)$ we have, for $y=h^{-1}(z) \in K$
$$0= \frac{d}{dx}  \log\left( h\circ f \circ h^{-1} \right)' (z) = \frac{h''\circ f(y)\cdot f'(y)}{h'\circ f(y)}+\frac{f''(y)}{f'(y)}-\frac{h''(y)}{h'(y)}.$$
Since $y\in K$ then $f(y)\in K$ and so by the previous paragraph $h''\circ f(y)=0$ and $h''(y)=0$.  As $|f'|>0$ on $[0,1]$ by assumption, this is only possible if $f''(y)=0$. It follows that $f''$ vanishes on $K_\Phi$, as claimed.
\end{proof}
$$ $$
\noindent{ \textbf{Proof of Claim \ref{Claim JLMS}}} Suppose $\Phi \in C^\omega (\mathbb{R})$ is  $C^2$ conjugate to linear. Let $g\in \Phi$ be any map. By Proposition \ref{Prop Poincare}, there is some non-trivial interval $J\subseteq [0,1]$ and a map $h\in C^\omega ([0,1], J)$ such that $h\circ g\circ h^{-1}$ is affine. Then the IFS $h\circ \Phi \circ h^{-1}$ is $C^\omega$ and contains an affine map. Since $\Phi$ is $C^2$ conjugate to linear, so is $h\circ \Phi \circ h^{-1}$. Since this IFS contains an affine map, by Lemma \ref{Lemma lin fede} it is already linear. So, it is linear and analytic, hence it must be self-similar. Thus, the second alternative of Claim \ref{Claim JLMS} holds true. \hfill{$\Box$}

\subsection{Proof of Corollary \ref{Main Corollary}}
We now prove Corollary \ref{Main Corollary}. Let $\Psi$ be a $C^\omega (\mathbb{R})$ IFS, and assume $\Psi$ contains a non-affine map. Recall that we are always assuming $K_\Psi$ is infinite.  By Claim \ref{Claim JLMS} there are two cases to consider:

The first alternative is that $\Psi$ is not $C^2$ conjugate to linear. Then, by Theorem \ref{Main Theorem}, every self-conformal measure $\nu$ admits some $\alpha>0$ such that
\begin{equation*}
\left| \mathcal{F}_q \left( \nu \right) \right| = O\left(\frac{1}{ |q|^\alpha} \right).
\end{equation*}

The second alternative is that $\Psi$ is $C^\omega$ conjugate to a self-similar IFS $\Phi$. Let $g$ denote the conjugating map. We have the following easy Lemma:
\begin{Lemma} \label{Lemma g not affine}
The analytic map $g$ is not affine.
\end{Lemma}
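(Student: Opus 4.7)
The plan is to argue by contradiction, using the elementary fact that the class of affine self-maps of $\mathbb{R}$ is closed under composition and inversion.

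Suppose for contradiction that $g$ were affine, say $g(x)=\alpha x+\beta$ with $\alpha\neq 0$. Then $g^{-1}(x)=\alpha^{-1}(x-\beta)$ is also affine. By our setup, $\Psi$ is conjugate to the self-similar IFS $\Phi$ via $g$, meaning
\[
\Psi \;=\; \{\,g\circ\phi\circ g^{-1}\,:\,\phi\in\Phi\,\},
\]
(or, depending on the direction of the conjugation established in the proof of Claim \ref{Claim JLMS}, the same relation with $g$ and $g^{-1}$ swapped; either way the argument is identical). Since $\Phi$ is self-similar, every $\phi\in\Phi$ is affine, and since affine maps form a group under composition, $g\circ\phi\circ g^{-1}$ is again affine for every $\phi\in\Phi$. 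Hence every element of $\Psi$ would be affine, contradicting the hypothesis of Corollary \ref{Main Corollary} that $\Psi$ contains a non-affine map.

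There is no real obstacle here; the only thing to be careful about is keeping track of the direction of the conjugation (which affects only notation, not the argument), and recording that since $g$ is an analytic diffeomorphism it must be either affine or genuinely non-affine, so that ``not affine'' has the expected meaning used in the remainder of the proof of Corollary \ref{Main Corollary}, where the non-affineness of $g$ will be exploited to invoke the result of \cite{Algom2023Wu} on Fourier decay of smooth (non-affine) images of self-similar measures.
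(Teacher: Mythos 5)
Your argument is correct and is essentially the same as the paper's: assume $g$ is affine, note that conjugating the affine maps of the self-similar IFS $\Phi$ by an affine $g$ yields only affine maps, so $\Psi$ would be self-similar, contradicting the standing assumption that $\Psi$ contains a non-affine map. No further comment is needed.
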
 
\begin{proof}
Suppose towards a contradiction that $g$ is affine. Recall that $g$ is a conjugating map between $\Psi$, a $C^\omega$ IFS, and a self-similar IFS. So, both IFS's in question are in fact self-similar. However, our standing assumption is  that contains $\Psi$ contains a non affine map. This is a contradiction.
\end{proof}

So, in the second alternative, $\Psi$ is  conjugate to a self-similar IFS $\Phi$ via a $C^\omega$ map $g$ that is not affine. In particular, 
$$\left| \lbrace x\in [0,1]: g''(x)=0 \rbrace \right| <\infty.$$
Since every self-conformal measure with respect to $\Psi$ can be written as $g \mu$ where $\mu$ is a self-similar measure with respect to $\Phi$, the Fourier decay bound in the second  alternative case  is a direct consequence of the following Theorem of Algom et al. \cite{Algom2023Wu}:
\begin{theorem} \label{Theorem meng} \cite[Corollary 1.3]{Algom2023Wu}  Let $\mu$ be a non-atomic self-similar measure with respect to $\Phi$, and let $g\in C^\omega ([0,1])$ be such that $g'\neq 0$, and such that  $g''\neq 0$ except for possibly  finitely many points in $[0,1]$. Then there exists some $\alpha>0$ such that
\begin{equation*}
\left| \mathcal{F}_q \left( g \mu \right) \right| = O\left(\frac{1}{|q|^\alpha} \right).
\end{equation*} 

\end{theorem}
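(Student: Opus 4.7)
The plan is to give a Kaufman--Mosquera--Shmerkin style oscillatory integral argument that exploits the non-vanishing of $g''$ on the support of $\mu$ together with the self-similar structure of $\mu$ at exponentially small scales. Write the IFS as $f_i(x) = r_i x + t_i$, and recall that a non-atomic self-similar measure $\mu$ with strictly positive weights is exact-dimensional of some dimension $d > 0$. I would first reduce to the case $|g''| \geq c > 0$ on $\supp\mu$: since $g$ is analytic and not affine, the zero set $Z = \lbrace x \in [0,1]: g''(x)=0 \rbrace$ is finite, and exact-dimensionality of $\mu$ gives $\mu(Z_\delta) = O(\delta^{d/2})$ for the $\delta$-neighborhood $Z_\delta$. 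Splitting $\mu = \mu_\delta^{\mathrm{bad}} + \mu_\delta^{\mathrm{good}}$ with $\mu_\delta^{\mathrm{bad}} = \mu|_{Z_\delta}$, the trivial bound $|\mathcal{F}_q(g\mu_\delta^{\mathrm{bad}})|\leq \mu(Z_\delta)$ is already polynomially small once $\delta = |q|^{-\kappa}$, so it suffices to control $\mathcal{F}_q(g\mu_\delta^{\mathrm{good}})$ under $|g''| \geq c > 0$ on $\supp \mu_\delta^{\mathrm{good}}$.

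Next I would iterate the self-similarity relation $\mu = \sum_i p_i f_i \mu$ to a depth $n = n(|q|)$ chosen so that every cylinder map $f_\omega(x) = r_\omega x + t_\omega$ of generation $n$ satisfies $r_\omega \approx |q|^{-1/3}$; one passes to a stopping-time partition if the IFS is non-homogeneous. Cauchy--Schwarz then yields
\begin{equation*}
\left| \mathcal{F}_q(g \mu_\delta^{\mathrm{good}}) \right|^2 \leq \sum_\omega p_\omega \left| \int e^{2\pi i q\, g(f_\omega(x))} d\mu^\omega(x) \right|^2,
\end{equation*}
where $\mu^\omega$ is $\mu$ restricted appropriately. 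Expanding $g(f_\omega(x)) = g(t_\omega) + r_\omega g'(t_\omega) x + \tfrac{1}{2} r_\omega^2 g''(\xi_\omega(x)) x^2 + O(r_\omega^3)$, the cubic remainder produces a phase of size $O(|q| r_\omega^3) = O(1)$ and is negligible, while the quadratic term has second derivative in $x$ of order $|q| r_\omega^2 \approx |q|^{1/3}$, which is polynomially large.

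To handle the singular measure $\mu^\omega$ in this oscillatory integral, I would convolve with a smooth bump $\psi_\eta$ of scale $\eta = |q|^{-\tau}$ for a small $\tau > 0$. The smoothed measure $\mu^\omega * \psi_\eta$ is a $C^\infty$ density whose $C^1$ norm is controlled by a negative power of $\eta$, and the mollification error is absorbed using $\mu(B(x,\eta)) = O(\eta^d)$ together with the Lipschitz regularity of the phase. The standard van der Corput estimate applied to the smoothed integral yields a bound of order $(|q| r_\omega^2)^{-1/2} \approx |q|^{-1/6}$; balancing $\tau$ and $\kappa$ against this exponent gives the desired uniform polynomial decay in $|q|$.

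The hard part is the final balancing step: one must select the three scale parameters $n, \delta, \eta$ as simultaneous polynomial powers of $|q|$ with compatible exponents, so that the bad set near $Z$, the mollification error, and the remaining oscillatory gain all survive together. A shortcut via the present paper's Theorem \ref{Main Theorem} is unavailable, because the IFS $g \Phi g^{-1}$ is, by construction, $C^\omega$ conjugate to the linear (self-similar) IFS $\Phi$ and therefore falls outside the non-conjugate-to-linear setting of that theorem; the non-linearity of $g$ must be exploited \emph{directly} at the level of oscillatory integrals rather than through the spectral-gap machinery developed in Section \ref{Section non-linearity implies s-gap}.
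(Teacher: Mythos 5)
First, note that this paper does not actually prove Theorem \ref{Theorem meng}: it is quoted from the companion work \cite{Algom2023Wu}, so there is no in-paper argument to compare yours against. Judged on its own terms, your proposal has a genuine gap at its core step, the treatment of the inner oscillatory integral $\int e^{2\pi i q\, g(f_\omega(x))}\,d\mu^\omega(x)$ by mollification plus van der Corput. After linearization the phase has \emph{linear} part of size $\approx |q| r_\omega \approx |q|^{2/3}$ and quadratic part of size $\approx |q| r_\omega^2 \approx |q|^{1/3}$. To replace $\mu^\omega$ by $\mu^\omega * \psi_\eta$ you must take $\eta \lesssim |q|^{-2/3}$ (the mollification error is governed by the full phase derivative $\approx |q|^{2/3}$, not by the quadratic term), and then the van der Corput bound with amplitude $\mu^\omega*\psi_\eta$ costs the total variation $\lVert(\mu^\omega*\psi_\eta)'\rVert_{L^1}\gtrsim \eta^{-1}\gtrsim |q|^{2/3}$, so the estimate $(|q| r_\omega^2)^{-1/2}\cdot \eta^{-1}\approx |q|^{-1/6+2/3}$ is worse than trivial; no choice of the exponents $(n,\delta,\eta)$ closes this, which is exactly the ``hard balancing step'' you defer. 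The failure is not just numerical but structural: each individual cylinder integral is essentially $\widehat{\mu}$ evaluated at the frequency $q\,r_\omega\, g'(t_\omega)$ perturbed by a comparatively small quadratic term, and since $\mu$ is an arbitrary self-similar measure it may have no Fourier decay at all at such frequencies (Pisot-type examples), so one cannot expect pointwise smallness cylinder by cylinder. The curvature of $g$ must instead be converted into cancellation \emph{in the average over $\omega$} (equivalently over a continuous family of scales/translations), which is precisely the mechanism in Kaufman's and Mosquera--Shmerkin's proofs and in Hochman's oscillatory-integral lemma that this paper invokes in Section \ref{Section Fourier decay} (where the $\sup_y\nu(B_r(y))$ term appears only after integrating over the scaling parameter); your scheme never performs that average before estimating, so the key idea is missing.

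Two smaller points. The cubic remainder with $r_\omega\approx|q|^{-1/3}$ gives a phase error of size $O(1)$, which is \emph{not} negligible (you need $o(1)$, or you must carry the term); and exact dimensionality only gives almost-everywhere local dimension, not the uniform bound $\mu(Z_\delta)=O(\delta^{d/2})$ --- the uniform estimate $\sup_y\mu(B(y,r))\leq Cr^d$ you actually need is available for non-atomic self-conformal measures via \cite{Feng2009Lau}, as used at the end of Section \ref{Section Fourier decay}. Your closing remark that Theorem \ref{Main Theorem} cannot be applied because $g\Phi g^{-1}$ is conjugate to linear is correct and consistent with why the paper outsources this case to \cite{Algom2023Wu}.
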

The proof of Corollary \ref{Main Corollary} is complete. \hfill{$\Box$}

\section{Acknowledgements}
We thank Simon Baker, Tuomas Sahlsten,  Meng Wu, and Osama Khalil, for useful discussions and for their remarks on this project. We also thank Joey Veltri for pointing out some bugs in a previous version of this manuscript. This research was supported by Grant No. 2022034 from the United States - Israel Binational Science Foundation 
(BSF), Jerusalem, Israel. 
\bibliography{bib}{}
\bibliographystyle{plain}

\end{document}